\newtheorem{theorem}{Theorem}
\newtheorem{lemma}[theorem]{Lemma}
\newtheorem{corollary}[theorem]{Corollary}
\newtheorem{proposition}[theorem]{Proposition}
\theoremstyle{definition}
\newtheorem{defex}[theorem]{Definition/Example}
\newtheorem{example}[theorem]{Example}
\newtheorem{remark}[theorem]{Remark}
\newtheorem{definition}[theorem]{Definition}
\font\sc=rsfs10
\newcommand{\cC}{\sc\mbox{C}\hspace{1.0pt}}
\newcommand{\cN}{\sc\mbox{N}\hspace{1.0pt}}
\newcommand{\cI}{\sc\mbox{I}\hspace{1.0pt}}
\newcommand{\cD}{\sc\mbox{D}\hspace{1.0pt}}
\newcommand{\cU}{\sc\mbox{U}\hspace{1.0pt}}
\newcommand{\cZ}{\sc\mbox{Z}\hspace{1.0pt}}
\newcommand{\cSt}{{\sc\mbox{K}\hspace{1.0pt}}}
\font\scc=rsfs7
\newcommand{\ccC}{\scc\mbox{C}\hspace{1.0pt}}
\newcommand{\ccD}{\scc\mbox{D}\hspace{1.0pt}}
\newcommand{\ccZ}{\scc\mbox{Z}\hspace{1.0pt}}
\newcommand{\quots}[1]{\vv{#1}}
\newcommand{\pdgcat}{\mathfrak{D}_{p}}
\newcommand{\addcat}{\mathfrak{A}_{\Bbbk}^{\mathrm{f}}}
\newcommand{\preaddcat}{\mathfrak{C}_{\Bbbk}}
\newcommand{\cofcat}{\mathfrak{M}_{p}}
\newcommand{\pamod}{\text{-}\mathrm{pamod}}
\newcommand{\pcmod}{\text{-}\mathrm{pcmod}}
\newcommand{\dmod}{\text{-}\mathrm{dmod}}
\newcommand{\AsubC}{\cC_{A}}
\newcommand{\AsubcC}{\ccC_{A}}
\newcommand{\Csub}[1]{\cC_{#1}}
\newcommand{\cCsub}[1]{\ccC_{#1}}
\newcommand{\adj}[4]{#1\colon #2\rightleftarrows #3\colon #4}
\newcommand{\shift}[1]{\langle #1\rangle}
\newcommand{\Hom}{\mathrm{Hom}}
\newcommand{\End}{\mathrm{End}}
\newcommand{\Fun}{\mathrm{Fun}}
\newcommand{\cEnd}{\sc\mbox{E}\mathit{nd}\hspace{1.0pt}}
\newcommand{\lmod}{\text{-}\mathrm{mod}}
\newcommand{\plmod}{\text{-}\mathrm{mod}_H}
\newcommand{\proj}{\text{-}\mathrm{proj}}
\newcommand{\rad}{\operatorname{rad}}
\newcommand{\add}{\operatorname{add}}
\newcommand{\op}{\mathrm{op}} 
\newcommand{\im}{\operatorname{Im}}
\newcommand{\cof}{\text{-}\mathrm{csf}}
\newcommand{\rE}{\mathrm{E}}
\newcommand{\rF}{\mathrm{F}}
\newcommand{\rG}{\mathrm{G}}
\newcommand{\rH}{\mathrm{H}}
\newcommand{\rK}{\mathrm{K}}
\newcommand{\rL}{\mathrm{L}}
\newcommand{\rM}{\mathrm{M}}
\newcommand{\rN}{\mathrm{N}}
\newcommand{\rP}{\mathrm{P}}
\newcommand{\rS}{\mathrm{S}}
\newcommand{\rX}{\mathrm{X}}
\newcommand{\rY}{\mathrm{Y}}
\newcommand{\rZ}{\mathrm{Z}}
\newcommand{\A}{\mathcal{A}}
\newcommand{\B}{\mathcal{B}}
\newcommand{\C}{\mathcal{C}}
\def\D{{\mathcal{D}}}
\newcommand{\E}{\mathcal{E}}
\newcommand{\F}{\mathcal{F}}
\newcommand{\I}{\mathcal{I}}
\newcommand{\J}{\mathcal{J}}
\def\L{{\mathcal{L}}}
\newcommand{\M}{\mathcal{M}}
\def\S{\mathcal{S}}
\newcommand{\X}{\mathcal{X}}
\newcommand{\Z}{\mathcal{Z}}
\def\St{{\mathcal{K}}}
\newcommand{\del}{\partial}
\newcommand{\id}{\mathrm{id}}
\newcommand{\one}{\mathbbm{1}}
\newcommand{\bfA}{\mathbf{A}}
\newcommand{\bfM}{\mathbf{M}}
\newcommand{\bfG}{\mathbf{G}}
\newcommand{\bfP}{\mathbf{P}}
\newcommand{\bfN}{\mathbf{N}}
\newcommand{\bfR}{\mathbf{R}}
\newcommand{\bfC}{\mathbf{C}}
\newcommand{\bfD}{\mathbf{D}}
\newcommand{\bfI}{\mathbf{I}}
\newcommand{\bfJ}{\mathbf{J}}
\newcommand{\bfZ}{\mathbf{Z}}
\newcommand{\bfInd}{\mathbf{Ind}}
\def\th{{\mathtt{h}}}
\newcommand{\ti}{\mathtt{i}}
\newcommand{\tj}{\mathtt{j}}
\newcommand{\tk}{\mathtt{k}}
\newcommand{\tl}{\mathtt{l}}
\newcommand{\tm}{\mathtt{m}}
\newcommand{\tI}{\mathtt{I}}
\numberwithin{equation}{section}
\numberwithin{theorem}{section}
\begin{document}

\title[Cell 2-Representations and Categorification at Prime Roots of Unity]{Cell 2-Representations and Categorification at Prime Roots of Unity}
\author{Robert Laugwitz}
\address{School of Mathematical Sciences,
University of Nottingham, Nottingham, NG7 2RD, UK}
\email{robert.laugwitz@nottingham.ac.uk}

\author{Vanessa Miemietz}
\address{
School of Mathematics, University of East Anglia, Norwich NR4 7TJ, UK}
\email{v.miemietz@uea.ac.uk}
\urladdr{https://www.uea.ac.uk/~byr09xgu/}

\date{\today}

\begin{abstract}
Motivated by recent advances in the categorification of quantum groups at prime roots of unity, we develop a theory of $2$-representations for $2$-categories, enriched with a $p$-differential, which satisfy finiteness conditions analogous to those of finitary or fiat $2$-categories. We construct cell $2$-representations in this setup, and consider a class of $2$-categories stemming from bimodules over a $p$-dg category in detail. This class is of particular importance in the categorification of quantum groups, which allows us to apply our results to cyclotomic quotients of the categorifications of small quantum group of type $\mathfrak{sl}_2$ at prime roots of unity by Elias--Qi [\emph{Advances in Mathematics} {\bf 288} (2016)]. Passing to stable $2$-representations gives a way to construct triangulated $2$-representations, but our main focus is on working with $p$-dg enriched $2$-representations that should be seen as a $p$-dg enhancement of these triangulated ones.
\end{abstract}

\subjclass[2010]{18D05,18D20,17B10}
\keywords{2-representation theory, enriched 2-categories, categorification at roots of unity, Hopfological algebra}

\maketitle

\section*{Contents}

\makeatletter
\@starttoc{toc}
\makeatother

\section{Introduction}\label{s0}

\subsection{Background}

The quantum group $U_q(\mathfrak{g})$ associated to a finite-dimensional Lie algebra depends on a parameter $q$. Depending on whether $q$ is generic or a root of unity, the representation theory has vastly different features. In particular, if $q$ is a root of unity, then $U_q(\mathfrak{g})$ has a finite-dimensional quotient $u_q(\mathfrak{g})$ which has been used to construct invariants of $3$-dimensional manifolds forming a 3-dimensional topological field theory \cite{RT}. These invariants were originally constructed using Chern--Simons theory and have been linked to the Jones polynomial (for $\mathfrak{g}=\mathfrak{sl}_2$) by Witten \cite{W}.

A seminal paper by Crane--Frenkel \cite{CF} suggested to replace such Hopf algebras as $u_q(\mathfrak{g})$ by \emph{categories} in the pursuit of constructing $4$-dimensional topological quantum field theories by algebraic means. From this, the theory of \emph{categorification} emerged. First progress was made by categorifying quantum groups at a generic parameter $q$, obtaining the so called \emph{$2$-Kac--Moody algebras}, in \cite{Ro, KL09, KL,KL11}. A breakthrough towards realizing Crane--Frenkel's vision was the construction of a categorified Jones polynomial for links in $S^3$ in \cite{Kh2}, known as \emph{Khovanov homology}. However, in order to make $4$-dimensional TQFTs a possibility, a categorification of quantum groups at roots of unity is needed. 

An important idea was introduced by Khovanov \cite{Kh}, who observed that working with algebra objects in the category of modules over a finite-dimensional Hopf algebra $H$ gives a way of categorifying algebras over the Grothendieck ring of the stable category of $H$-modules. In the case where $H=\Bbbk[\del]/(\del^p)$ for a field $\Bbbk$ of characteristic $p$, one obtains, when passing to the stable category, the relation $1+q+\ldots+q^{p-1}=0$ of the ring of cyclotomic integers. The relation $\del^p=0$ is reminiscent of the classical  equation $\del^2=0$ of complexes in homological algebra, and there is a close relationship to $N$-complexes which were considered as a new homology theory in \cite{M1,M2}, and in \cite{Ka}, where the possibility of a link to the representation theory of quantum groups was suspected.

The theory proposed in \cite{Kh} was subsequently further developed in \cite{Qi}, where $p$-dg algebras and their modules are formally introduced, in analogy to the theory of dg modules over dg algebras generalizing complexes of modules over an algebra. 
Milestones in realizing the program of categorifying quantum groups at roots of unity were reached in \cite{KQ}, \cite{EQ}, \cite{EQ2} with the categorification of Lusztig's idempotented quantum groups (see \cite{Lu}) $\dot{u}_q(\mathfrak{sl}_2)^+$, $\dot{u}_q(\mathfrak{sl}_2)$, and $\dot{U}_q(\mathfrak{sl}_2)$ for $q$ a \emph{prime} root of unity, obtained by defining a $p$-differential on the $2$-morphism spaces of the corresponding generic $2$-Lie algebra. In \cite{EQ}, categorifications of the graded simple $\dot{u}_q(\mathfrak{sl}_2)$-modules for highest weight $\lambda=0,\ldots, p-1$ are also described.

Nowadays, categorification is often formulated in the language of $2$-categories, since algebras can be viewed as a category with idempotents as objects, which is more natural from a categorical point of view. 
The idea to study representations of categorified quantum groups appears in \cite{Ro}. 
A systematic study of $2$-representations over certain classes of $2$-categories was initiated in \cite{MM1}, with the study of so-called finitary and fiat $2$-categories. These provide $2$-categorical analogues of finite-dimensional algebras and hence provide a first step towards the original vision in \cite{CF} of finding a categorification of finite-dimensional Hopf algebras (called \emph{Hopf categories}) and studying their representation theory. 
 
In \cite{MM5}, \emph{simple transitive} $2$-representations are defined and shown to be a suitable $2$-analogue of simple representations of an algebra, in the sense that they satisfy an appropriate generalisation of the classical Jordan--H\"older theorem. The \emph{cell $2$-representations} originally defined in \cite{MM1}, and inspired by cell representations of a cellular algebra, are examples of such simple transitive $2$-representations. Moreover, for large classes of $2$-categories (including $2$-Lie algebras, see \cite[Section~7.2]{MM5}), they are shown to exhaust all simple transitive $2$-representations (e.g. \cite{MM5,MM6, MMZ2, Zi}), though this is known to not be true in general (see e.g. \cite{Zh, MaMa,KMMZ}). For a survey on finitary $2$-representation theory, see \cite{Maz}.

The present paper initiates a systematic study of $2$-representations of so called $p$-dg $2$-categories, in order to contribute towards attaining some of the goals of the categorification program. This approach adopts the spirit of the series of papers started in \cite{MM1}, but works with enrichments by $p$-differentials in order to connect to the work of \cite{EQ}, and to provide a setup suitable for categorification of algebras over the ring of cyclotomic integers. Further, the construction of $p$-dg $2$-categories and their $p$-dg $2$-representations gives a way to obtain triangulated $2$-categories and $2$-representations compatible with the triangulated structure by passing to stable categories.

Another point of view is that $p$-dg categories are a natural generalization of differential graded categories which are an important tool in contemporary algebraic geometry. By specializing $p=2$, one recovers dg categories in characteristic two, and adjusting signs appropriately, our results remain valid for more general dg $2$-categories, which we expect to emerge as objects of study in their own right in the future.

\subsection{Summary}

This paper studies $p$-dg enriched $2$-categories and introduces cell $2$-representations for a class of such structures satisfying finiteness conditions. Basic properties and the relation to additive cell $2$-representations are investigated. Furthermore, structural results about passing to stable $2$-representations which are compatible with the triangulated structure of the stable $2$-categories are included.

Section \ref{pdgcatsection} introduces all technical results on the level of 1-categories enriched with $p$-differentials --- so called $p$-dg categories --- and the study of their compact semi-free modules, which are the appropriate analogue of free modules in this context, cf. \cite{Qi, EQ}. By \cite{Qi}, the stable category of (compact) cofibrant $p$-dg modules is equivalent as a triangulated category to the (compact) derived category of all $p$-dg modules, and compact cofibrant $p$-dg modules are the idempotent completion of compact semi-free modules. While the desired structure of modules over cyclotomic integers can only be seen by passing to the compact derived category and taking the Grothendieck group, we, for the most part, work on the level of the ($p$-dg enriched) category of compact semi-free modules, whose idempotent completion should be seen as a $p$-dg enhancement of the compact derived category in the spirit of \cite{BK}.

In order to work with small categories, we introduce a combinatorial category equivalent to the $p$-dg enriched category of compact semi-free modules over a $p$-dg category $\C$.
This $p$-dg category $\overline{\C}$ should be interpreted as a $p$-dg analogue of the dg category of (one-sided) twisted complexes of \cite{BK}. For $p$-dg categories of this form, we study different finiteness conditions motivated on one hand by the theory of finitary $\Bbbk$-linear categories (forgetting the differential should yield an idempotent complete (Karoubian) category with finitely many indecomposables and finite-dimensional morphism spaces), and on the other hand by the fantastic filtrations which are crucial in \cite{EQ}. We introduce (relative) tensor products, and a closure under $p$-dg quotients which can be seen as a $p$-dg enrichment of the abelianization of a $p$-dg category of the form $\overline{\C}$ (see Sections \ref{tensorproducts}, \ref{pdgquots}). Moreover, we describe how to pass to the stable category associated to a $p$-dg category of the form $\overline{\C}$ in Section \ref{derivedcats}. 

The next section, Section \ref{pdg2cats}, gathers preliminary results about the kind of $p$-dg $2$-categories for which we will study $p$-dg $2$-representations in Section \ref{pdg2repchapter}. We distinguish \emph{locally finite} and \emph{strongly finitary} $p$-dg $2$-categories $\cC$. These refer to two sets of finiteness assumptions imposed on the structure of the $1$-hom $p$-dg categories $\cC(\ti,\tj)$.
Formal definitions and results on $p$-dg $2$-representations are collected in Section \ref{pdg2repchapter}. In particular, we include two versions of the Yoneda Lemma for $p$-dg $2$-representations (one version for $p$-dg quotient completed $p$-dg categories). 

The core of the paper is formed by Sections \ref{cellchapter} and \ref{CAsec}. In Section \ref{cellchapter}, we introduce cell $2$-representations for $p$-dg $2$-categories which are strongly finitary. That is, when forgetting the $p$-differential, the morphisms categories are idempotent complete and have only finitely many indecomposable $1$-morphisms up to isomorphism. In this context, cell $2$-representations are simple transitive (for a generalization of this concept in an appropriate sense). Forgetting the $p$-differential does, in general, not give a simple transitive additive $2$-representation, but \cite[Theorem~4]{MM6} can be applied to show that it is an inflation by a local algebra of a simple transitive $2$-representation. Moreover, under some technical assumptions, \cite[Theorem~16]{MM3} can be used to prove in Section \ref{pdgendocats} that the endomorphism categories of $p$-dg cell $2$-representations are $p$-dg isomorphic to the enriched category $\Bbbk\plmod$.

A crucial role in \cite{MM3}--\cite{MM6} is played by a certain $2$-category $\cC_A$ constructed from projective bimodules over a finite-dimensional algebra $A$. In \cite{MM3}, it is shown that any fiat $2$-category that is simple in a suitable sense is biequivalent to (a slight variation) of some $\cC_A$. In particular, $2$-Lie algebras are, in some sense, built from many $\cC_{A_\lambda}$ where $\lambda$ runs over positive integral weights and $A_\lambda$ is the product of all cyclotomic quiver Hecke algebras associated to $\lambda$. The fact that all simple transitive $2$-representations of $\cC_A$ are cell $2$-representations (\cite[Theorem 15]{MM5} and \cite[Theorem 5.2]{MMZ2}) implies the same result for  $2$-Lie algebras. This motivates the construction, in Section \ref{CAsec}, of $p$-dg analogues $\cC_\A$ of such $2$-categories associated to a $p$-dg category $\A$, and, under appropriate assumptions, we prove that the cell $2$-representation again is the natural (or defining) representation and thus, when forgetting about the differential, one recovers the cell $2$-representation of the underlying finitary $2$-category.

To conclude the paper, Section \ref{sl2cat} applies some results to the cyclotomic quotient ${\cU_\lambda}$ of the $p$-dg $2$-category $\cU$ introduced by \cite{EQ} to categorify the small quantum group associated to $\mathfrak{sl}_2$. In particular, we analyze the $p$-dg cell $2$-representations of this categorification and show that the subquotient idempotent completion of the restriction of  ${\cU_\lambda}$ to its largest cell is $p$-dg biequivalent to $\cC_{\B}$ where $\B$ is constructed from coinvariant algebras. Thus the $p$-dg cell $2$-representation associated to the cell indexed by $\lambda$ is indeed the natural $2$-representation, which categorifies the simple graded $\dot{u}_q(\mathfrak{sl}_2)$-module of highest weight $\lambda$.  It further follows that the endomorphism categories of these $p$-dg cell $2$-representations are simple, i.e. $p$-dg equivalent to $\Bbbk\plmod$.

\subsection{Acknowledgments}

Part of the work of R.L. on this paper was supported by an EPSRC Doctoral Prize Grant at the University of East Anglia, and by the LMS (Research in Pairs --- Scheme~4). Parts of this article were written during V.M.'s visit to MPI, Bonn, whose hospitality and support is gratefully acknowledged.
The authors thank Ben Elias, Gustavo Jasso, and You Qi for helpful and  stimulating discussions.

\section{\texorpdfstring{$p$}{p}-dg categories}\label{pdgcatsection}

In this section, we collect basic results about $p$-dg categories and their modules and define different finiteness conditions for such categories which will be imposed in later sections of this paper. We also introduce all technical tools on the 1-categorical level, such as a bimodule tensor product action, and the closure under $p$-dg quotients, which will be needed later.

\subsection{The \texorpdfstring{$2$}{2}-category of \texorpdfstring{$p$}{p}-dg categories}\label{pdgcats}

Let $\Bbbk$ be an algebraically closed field of characteristic $p$.
A $\Bbbk$-linear additive category  $\C$ is called a {\bf $p$-dg category} if homomorphism spaces are enriched over the symmetric monoidal closed category $H\lmod$ of finite-dimensional graded $H$-modules for the Hopf algebra $H=\Bbbk[\del]/(\del^p)$ where the degree of $\del$ is chosen to be $2$,  as in \cite{EQ}. We can regard the collection of finite-dimensional graded $H$-modules itself as a $p$-dg category $\Bbbk\lmod_{H}$ whose objects are finite-dimensional graded $H$-modules, and morphisms are all $\Bbbk$-linear maps $f$ equipped with the $H$-action $\del(f) = \del\circ f-f\circ \del$. This means that the homs in $\Bbbk\lmod_{H}$ are the internal homs obtained from the closed monoidal structure \cite[Section 1.6]{K}.

Given a $p$-dg category $\C$, denote by $\Z\C$ the category with the same objects but only those morphisms which have degree $0$ and are annihilated by the action of $\del$.  We refer to the morphisms of $\Z\C$ as {\bf $p$-dg morphisms}. The category $\Z(\Bbbk\lmod_H)$ recovers $H\lmod$.

We say two objects $X$ and $Y$ are  {\bf isomorphic as $p$-dg objects} or  {\bf $p$-dg isomorphic} if there is an isomorphism $f :X \to Y$  of degree zero such that both $f$ and $f^{-1}$ are annihilated by $\del$. That is, $X$ and $Y$ are $p$-dg isomorphic if and only if they are isomorphic in $\Z\C$.

\begin{lemma}\label{pdgisolemma}
An isomorphism $f$ in $\C$ is a $p$-dg isomorphism if and only if it has degree $0$ and is annihilated by $\del$.
\end{lemma}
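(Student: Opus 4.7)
The plan is to show the non-trivial direction: assuming $f$ is an isomorphism in $\C$ of degree zero with $\del(f)=0$, deduce that $f^{-1}$ also has degree zero and is $\del$-closed. The other direction is just the definition of a $p$-dg isomorphism. I will use only the enrichment over $H\lmod$, which gives us a grading on morphism spaces, a Leibniz-type rule for $\del$ on compositions, and the fact that the identity morphisms are degree-zero and $\del$-closed (the latter being forced by the enrichment, since composing with $\id$ must preserve the $H$-module structure on each hom).

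First I would show that $f^{-1}$ has degree zero. Using the grading on $\Hom(Y,X)$, decompose $f^{-1}$ into homogeneous components $f^{-1}=\sum_{j\in \mathbb{Z}} g_j$ with $\deg g_j = j$. Since $f$ is homogeneous of degree $0$, each composition $g_j\circ f$ is homogeneous of degree $j$. But $\sum_j g_j\circ f = f^{-1}\circ f = \id_X$ lies entirely in degree $0$, so all $g_j\circ f$ with $j\neq 0$ vanish. As $f$ is an isomorphism and therefore right-cancellable, we get $g_j=0$ for $j\neq 0$, hence $f^{-1}=g_0$ has degree zero.

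Next, I would apply $\del$ to the identity $f\circ f^{-1}=\id_Y$. The $H$-enrichment gives the Leibniz rule $\del(g\circ h)=\del(g)\circ h + g\circ \del(h)$ (no sign, since $\del$ has even degree $2$) and $\del(\id_Y)=0$. Plugging in, we get
\[
0 = \del(\id_Y) = \del(f)\circ f^{-1} + f\circ \del(f^{-1}) = f\circ \del(f^{-1}),
\]
using $\del(f)=0$. Since $f$ is an isomorphism, hence left-cancellable, this forces $\del(f^{-1})=0$, completing the proof.

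There is no real obstacle: everything is forced by the enrichment axioms once one records that $\del(\id)=0$ and that the Leibniz rule has no signs in this setting. The only point demanding care is the homogeneous decomposition argument for $f^{-1}$, which requires the (implicit) assumption that morphism spaces decompose as direct sums of their homogeneous components — this is automatic for the finite-dimensional graded $H$-modules over which $\C$ is enriched.
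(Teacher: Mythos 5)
Your proposal is correct and follows essentially the same route as the paper: apply the Leibniz rule to the composition of $f$ with its inverse equal to the identity, use $\del(\id)=0$ and $\del(f)=0$, and cancel the isomorphism $f$ to conclude $\del(f^{-1})=0$ (the paper differentiates $f^{-1}f=\id$ rather than $ff^{-1}=\id$, an immaterial difference). Your homogeneous-decomposition argument for $\deg f^{-1}=0$ just spells out a detail the paper states in one line.
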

\begin{proof}
We require that composition in $\C$ is a morphism of graded $H$-modules, i.e. $\partial (gf)=\partial(g)f+g\partial(f)$. Hence, if $f$ is invertible in $\C$ and $\partial f=0$, it follows that
\begin{align}
0=\partial(\id)=\partial(f^{-1})f.
\end{align}
This implies that $\partial (f^{-1})=0$ as $f$ is an isomorphism. Further, the inverse has degree zero precisely if $f$ has degree zero.
\end{proof}

A {\bf $p$-dg functor} between $p$-dg categories is an additive $\Bbbk$-linear functor $M:\C\to \C'$ such that the map $\Hom_{\C}(X,Y) \to \Hom_{\C'}(M(X),M(Y))$ is a degree zero morphism of graded $H$-modules. Morphisms between two $p$-dg functors are natural transformations of ($\Bbbk$-linear) functors. This turns the morphism space between two $p$-dg functors $M$ and $N$ into a graded $H$-module with the action of $\del$ on a $\Bbbk$-linear natural transformation $\psi:M\to N$ given by $\psi_X:M(X)\to N(X)$ simply being the action on $\Hom_{\C'}(M(X),N(X))$, for each object $X$. Indeed, $\partial \psi$ defined this way is a natural transformation, as for a given morphism $f\colon X\to Y$ in the source category of the functors we compute
\begin{align*}
\partial\psi_{Y}\circ Mf&=\partial(\psi_Y\circ Mf)-\psi_Y\circ M(\partial f)\\
&=\partial(Nf\circ \psi_X)-N(\partial f)\circ \psi_X\\
&=Nf\circ \partial \psi_X,
\end{align*}
where the first (and third) equality uses that $M$ (respectively, $N$) is a $p$-dg functor.

We say that a $p$-dg functor $M\colon \C\to \C'$ is part of a {\bf $p$-dg equivalence} if there is a $p$-dg functor $M'\colon \C'\to \C$ such that $M\circ M'$ and $M'\circ M$ are $p$-dg isomorphic to the respective identity functors. We further call a $p$-dg functor $F\colon \C\to \D$  {\bf $p$-dg dense} if for any object $Y$ in $\D$ there exists an object $X$ in $\C$ and a $p$-dg isomorphism $F(X)\cong Y$. Note that a full, faithful, and dense $p$-dg functor is part of an equivalence of categories using the usual proof. However,  assuming $p$-dg density we have a stronger result:

\begin{lemma}\label{pdgequivlemma}
A $p$-dg functor $F\colon \C\to \D$ is full, faithful, and $p$-dg dense if and only if it is (part of) a $p$-dg equivalence.
\end{lemma}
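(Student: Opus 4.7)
The plan is to prove both implications separately; the $(\Leftarrow)$ direction is essentially immediate, while the $(\Rightarrow)$ direction requires constructing a $p$-dg quasi-inverse by hand and verifying that each piece respects the $H$-module structure.

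For $(\Leftarrow)$, suppose $F$ has a $p$-dg quasi-inverse $G$, so there are $p$-dg isomorphisms of functors $\epsilon\colon FG\to \id_{\D}$ and $\eta\colon \id_{\C}\to GF$. Each component $\epsilon_Y\colon FG(Y)\to Y$ is then a $p$-dg isomorphism, so $F$ is $p$-dg dense. Fullness and faithfulness of $F$ follow by the usual argument: faithfulness from the fact that $\eta_X$ is invertible together with naturality, and fullness by taking $g\colon F(X)\to F(X')$ to the morphism $\eta_{X'}^{-1}\circ G(g)\circ \eta_X$.

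For $(\Rightarrow)$, assume $F$ is full, faithful, and $p$-dg dense. Using $p$-dg density, for each $Y\in \D$ choose an object $G(Y)\in \C$ together with a $p$-dg isomorphism $\epsilon_Y\colon F(G(Y))\to Y$, so $\partial\epsilon_Y=0$ and, by Lemma~\ref{pdgisolemma}, also $\partial\epsilon_Y^{-1}=0$. For each morphism $g\colon Y\to Y'$ in $\D$, full-faithfulness produces a unique morphism $G(g)\colon G(Y)\to G(Y')$ in $\C$ with $F(G(g))=\epsilon_{Y'}^{-1}\circ g\circ\epsilon_Y$. Ordinary $\Bbbk$-linear functoriality of $G$ is forced by faithfulness of $F$ in the standard way.

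The key verification is that $G$ is a $p$-dg functor. The composite $F\circ G\colon \Hom_{\D}(Y,Y')\to \Hom_{\D}(FG(Y),FG(Y'))$ sends $g\mapsto \epsilon_{Y'}^{-1}g\epsilon_Y$, which is $\Bbbk$-linear, preserves degree, and, since $\partial\epsilon_Y=\partial\epsilon_{Y'}^{-1}=0$, satisfies
\begin{equation*}
\partial\bigl(\epsilon_{Y'}^{-1}g\epsilon_Y\bigr)=\epsilon_{Y'}^{-1}(\partial g)\epsilon_Y=F(G(\partial g))
\end{equation*}
by the Leibniz rule. Since $F$ itself is a $p$-dg functor and is faithful, this transfers to $\partial G(g)=G(\partial g)$ and to degree preservation for $G$. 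The family $\epsilon=\{\epsilon_Y\}$ is natural by construction, and its componentwise vanishing $\partial\epsilon_Y=0$ gives, by the formula for the $H$-action on natural transformations established just before the lemma, a $p$-dg isomorphism $FG\cong \id_{\D}$. For the other side, for each $X\in \C$ apply full-faithfulness to the $p$-dg isomorphism $\epsilon_{F(X)}^{-1}\colon F(X)\to FGF(X)$ to obtain a unique $\eta_X\colon X\to GF(X)$ with $F(\eta_X)=\epsilon_{F(X)}^{-1}$; faithfulness of $F$ forces $\eta_X$ to be invertible with $\partial\eta_X=0$, and forces naturality of $\eta$, yielding a $p$-dg isomorphism $\id_{\C}\cong GF$.

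I do not expect a genuine obstacle beyond the bookkeeping; the only subtle point is observing that $\partial\epsilon_Y^{-1}=0$ (via Lemma~\ref{pdgisolemma}) is needed in order to propagate the vanishing of $\partial$ through the conjugation formula defining $G$ on morphisms, and then to use faithfulness of $F$ systematically to pull every identity (functoriality, compatibility with $\partial$, naturality of $\eta$) back from $\D$ to $\C$.
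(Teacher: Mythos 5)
Your proposal is correct and follows essentially the same route as the paper: construct $G$ on objects via $p$-dg density, on morphisms via full-faithfulness by conjugation with the chosen $p$-dg isomorphisms, verify compatibility with $\partial$ using the Leibniz rule and $\partial\epsilon_Y=\partial\epsilon_Y^{-1}=0$ (Lemma~\ref{pdgisolemma}), and obtain the unit by applying full-faithfulness to $\epsilon_{F(X)}$. The only difference is the direction in which you orient the density isomorphisms, which is immaterial.
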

\begin{proof}
Let $F$ be full, faithful, and $p$-dg dense. Note that fully  faithfulness implies that there is a $p$-dg isomorphism
\begin{align*}
\Hom_\C(X,Y) \cong \Hom_\D(FX,FY),
\end{align*}
using Lemma~\ref{pdgisolemma}. Using the same lemma, it suffices to define a $p$-dg functor $G\colon \D\to \C$ and $p$-dg isomorphisms $\eta\colon \one_\D \to GF$, $\varepsilon \colon \one_\C\to FG$. Given an object $Y$ of $\D$ we find a $p$-dg isomorphism $\eta_Y\colon Y\to F(X)$ by $p$-dg density, and set $G(Y):=X$. Given a morphism $g\colon Y \to Y'$ in $\D$ we can use fully faithfulness of $F$ to obtain a unique morphism $f\colon G(Y) \to G(Y')$ such that  $F(f)=\eta_{Y'}\circ g\circ \eta_Y^{-1}$. We define $G(g)=f$. Uniqueness ensures that $G$ indeed is a functor. It is a $p$-dg functor as $\partial (\eta_{Y'}\circ g\circ \eta_Y^{-1})=\eta_{Y'}\circ \partial(g)\circ \eta_Y^{-1}$ and the isomorphism induced by $F$ on morphism spaces commutes with the differential. By construction, $\eta$ is a $p$-dg isomorphism. Given an object $X$ in $\C$, the other equivalence $\varepsilon_C$ can be obtained by applying fully faithfulness to the morphism $\eta_{F(X)}$. Arguing similarly, one sees that $\varepsilon$ defines a natural isomorphism consisting of $p$-dg isomorphisms, hence a $p$-dg isomorphism. This shows that $F$ is part of a $p$-dg equivalence.
The inverse implication is straightforward. 
\end{proof}

We say that an object in a $p$-dg category is {\bf $\Bbbk$-indecomposable} if it has no non-trivial idempotents.
Similarly, an object is  {\bf $p$-dg indecomposable} if its only idempotent endomorphisms which are annihilated by $\del$ are $\id$ and $0$. We say a $p$-dg category $\C$ is {\bf $p$-dg idempotent complete} if it is complete with respect to idempotent $p$-dg morphisms. 
A completion under more idempotents, so-called \emph{subquotient idempotents}, will be discussed in Section \ref{fincat}.

Throughout this article, all $p$-dg categories will be assumed to be small 
and closed under shifts. By being closed under shifts we mean  that for every object $X$ and natural number $n$, there exists an object $X\langle n\rangle$, such that $\Hom_\C(Y,X\langle n\rangle)=\Hom_\C(Y,X)\langle n\rangle$, where, if a morphism $f$ is homogeneous of degree $k$, the morphism $f\langle n\rangle$ is homogeneous of degree $k-n$. 
We will now introduce basic finiteness assumptions on $p$-dg categories.

\begin{definition}\label{pdgfinitary}
We call a $p$-dg category $\C$  {\bf locally finite} if 
morphism spaces between any two $p$-dg indecomposable objects are finite-dimensional.
\end{definition}

The collection of locally finite $p$-dg categories together with $p$-dg functors and their morphisms form a $2$-category, denoted by $\pdgcat$. Note that $\pdgcat$ is enriched over $p$-dg categories, meaning that the category of $p$-dg functors between two $p$-dg categories $\C$ and $\D$ again forms a 
$p$-dg category, which we denote by $\Fun(\C,\D)$.

Forgetting the $H$-module structure and the grading, we denote the underlying $\Bbbk$-linear additive category by $[\C]$. This gives rise to a forgetful $2$-functor $[~~]$ from $\pdgcat$ to the $2$-category with objects locally finite $\Bbbk$-linear categories, $1$-morphisms additive $\Bbbk$-linear functors, and $2$-morphisms natural transformations.

\subsection{Compact semi-free \texorpdfstring{$p$}{p}-dg modules}\label{cofibrantpdgmods}

A (locally finite-dimensional) {\bf $p$-dg module} over a $p$-dg category $\C$ is a $p$-dg functor  $M:\C\to \Bbbk\lmod_H$, i.e the map 
\begin{align}
\Hom_{\C}(X,Y) \to \Hom_{\Bbbk}(M(X),M(Y))\label{lowerstarpdgfunctor}
\end{align}
is a degree zero morphism of graded $H$-modules. Morphisms of $p$-dg modules are $\Bbbk$-linear natural transformations. This makes the category $\C\plmod$ of $p$-dg modules a $p$-dg category.

If $\C$ is locally finite, then to each object $X\in \C$, we can associate the $p$-dg module $P_X =\Hom_\C(X,-)$ over $\C$, which maps an object $Y$ to the graded $H$-module $\Hom_\C(X,Y)$, and a morphism $f\in \Hom_\C(Y,Z)$ to the morphism $$P_X(f)=f_\ast: \Hom_\C(X,Y)\to \Hom_\C(X,Z): g\mapsto f\circ g.$$
In particular, the morphism $\Hom_\C(Y,Z) \to \Hom_\Bbbk(\Hom_\C(X,Y), \Hom_\C(X,Z))$ we obtain commutes with the differentials on the respective homomorphism spaces.

A $p$-dg module $M$ over a locally finite 
$p$-dg category $\C\in \pdgcat$ is called {\bf semi-free} if it admits a filtration by $p$-dg subfunctors  $0=F_0\subset F_1\subset F_2 \subset \ldots \subset M$ such that each subquotient  $F_i/F_{i-1}$ is $p$-dg isomorphic to 
$P_{X_i}$ for some $X_i\in \C$. A semi-free $\C$-module is called {\bf compact} if it has a finite filtration of this form. We denote the full subcategory on compact semi-free modules in the category of $p$-dg $\C$-modules $\C\cof$. Note that this, again, is a $p$-dg category.

We remark that the definition of semi-free modules is related to modules with property (P) for a $p$-dg algebra in \cite[Definition 6.3]{Qi}, and that of compact semi-free modules to finite cell modules in \cite[2.5]{EQ}. For a $p$-dg category, a similar  definition was given in \cite[Section~4.6]{EQ2} where compact semi free modules are referred to as finite cell modules. One crucial difference is that we consider enriched homs, so to obtain the categories of finite cell modules, we need to pass to $\Z(\C\cof)$.

The following definition is a generalisation of the notion of (one-sided) twisted complexes over a dg category, due to Bondal and Kapranov \cite{BK}.  The analogy between compact semi-free $\C$-modules (denoted there by finite cell modules) and one-sided twisted complexes was also pointed out in \cite{QS}.

\begin{definition}\label{barcat}
For a $p$-dg category $\C\in \pdgcat$, we define $\overline{\C}$ as the $p$-dg category whose
\begin{itemize}
\item objects are given by pairs $(\bigoplus_{m=1}^s F_m, \alpha=(\alpha_{k,l})_{k,l})$ where the $F_m$ are objects in $\C$ and $\alpha_{k,l}\in \Hom_\C(F_l,F_k)$, $\alpha_{k,l} = 0$ for all $k\geq l$ such that the matrix $\del \cdot \mathtt{I}_s +\left((\alpha_{kl})_*\right)_{kl}$ acts as a $p$-differential on $\bigoplus_{m=1}^s P^\op_{F_m}$ in  $\C^\op\cof$ (here $\mathtt{I}_s$ is the identity matrix);
\item morphisms are matrices of morphisms between the corresponding objects, with the differential of 
$$\gamma=(\gamma_{n,m})_{n,m} \colon \left(\bigoplus_{m=1}^s F_m, \alpha=(\alpha_{k,l})_{k,l}\right)\longrightarrow \left(\bigoplus_{n=1}^t G_n, \beta=(\beta_{k,l})_{k,l}\right)$$
defined as 
$$ \partial\left((\gamma_{n,m})_{n,m} \right):= (\partial \gamma_{n,m}+(\beta\gamma)_{n,m}-(\gamma\alpha)_{n,m})_{n,m}.$$
\end{itemize}
\end{definition}

Note that the matrix $\del \cdot \tI_s +\alpha_*$ acts as a $p$-differential on the functor $\bigoplus_{m=1}^s P^\op_{F_m}$ --- and hence gives a $p$-dg module --- if and only if $(\del \cdot \tI_s +\alpha_*)^p$ acts by zero. This can in fact be checked  (as we will see in Lemma \ref{matcatequiv}) by verifying that 
\begin{align}
(\del \cdot \tI_s +\alpha_*)^p(\delta_{k,l}\id_{F_m})_{k,l}=(\del \cdot \tI_s +\alpha)^{p-1}\alpha=0, \label{matrixdiffeqn}
\end{align}
which gives conditions on the entries of $\alpha$ and their differentials. In particular, this implies that $\del^{p-1}(\alpha_{i,i+1})=0$ for all $i=1, \ldots, s-1$ and that the degree of each $\alpha_{i,j}$ is $2$.
Observe that our grading conventions differ from \cite{BK} and are closer to those of \cite{Se}. This is made possible by the fact that our $p$-dg categories are closed under grading shift.

We remark that although the symbol $\oplus$ is used in the notation of objects in $\overline{\C}$ this does not denote biproducts internal to $\C$ but rather lists of objects.

The following explicit additive structure on $\overline{\C}$ will be used later: Consider two objects $X=(\bigoplus_{m=1}^s F_m, \alpha=(\alpha_{k,l})_{k,l})$ and $Y=(\bigoplus_{n=1}^t G_n, \beta=(\beta_{k,l})_{k,l})$ in $\overline{\C}$, then the object
$$X\oplus Y=\left(\bigoplus_{m=1}^s F_m\oplus \bigoplus_{n=1}^t G_n, \begin{pmatrix}\alpha&0\\0&\beta\end{pmatrix}\right)
$$
is an object in $\overline{\C}$ which satisfies the universal properties of a biproduct. We use the common notation that if $\gamma=(\gamma_{n,m})_{n,m}\colon X\to Y$ and $\eta=(\eta_{n,m'})_{n,m'}\colon X'\to Y$ are morphisms in $\overline{\C}$, then $\begin{pmatrix}
\gamma & \eta
\end{pmatrix}
$ gives a morphism $X\oplus X'\to Y$; and if $\varphi=(\varphi_{n,m})_{n',m}\colon X\to Y'$ is another morphisms in $\overline{\C}$, then $\begin{pmatrix}
\gamma \\ \varphi
\end{pmatrix}$ is a morphism $X\to Y\oplus Y'$.

Note that with $\C$, $\overline{\C}$ again lies in $\pdgcat$.

\begin{lemma}\label{matcatequiv}
The categories $\overline{\C}$ and  $\C^{\op}\cof$ are $p$-dg equivalent for $\C$ in $\pdgcat$.
\end{lemma}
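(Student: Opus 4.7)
The plan is to define an explicit $p$-dg functor $\Phi\colon \overline{\C}\to \C^{\op}\cof$ and appeal to Lemma~\ref{pdgequivlemma} after verifying that $\Phi$ is full, faithful, and $p$-dg dense. On objects, I would send $(\bigoplus_{m=1}^s F_m,\alpha)$ to the compact semi-free $\C^{\op}$-module whose underlying graded functor is $\bigoplus_{m=1}^s P^{\op}_{F_m}$, equipped with the perturbed differential $\partial\cdot \tI_s+\alpha_*$; the semi-free filtration is the evident one, $\bigoplus_{m\le i} P^{\op}_{F_m}$, and it is preserved by the perturbed differential precisely because $\alpha$ is strictly upper triangular. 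On morphisms, the matrix $(\gamma_{n,m})$ is sent to the matrix $(\gamma_{n,m})_*$ of post-compositions. As a preliminary, I would verify that evaluating $(\partial\cdot\tI_s+\alpha_*)^p$ on the generating tuple $(\delta_{k,l}\id_{F_m})$ reduces to $(\partial\cdot\tI_s+\alpha_*)^{p-1}\alpha$, so that $p$-nilpotency is governed by~\eqref{matrixdiffeqn} and forces each $\alpha_{k,l}$ to have degree~$2$.

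Fullness and faithfulness follow from a Yoneda-type identification: for any perturbations $\alpha,\beta$, the natural transformations $\bigoplus_m P^{\op}_{F_m}\to\bigoplus_n P^{\op}_{G_n}$ are in bijection with matrices $(\gamma_{n,m})$, $\gamma_{n,m}\in \Hom_\C(F_m,G_n)$, since the perturbation affects only the differential and not the underlying additive functor. The standard formula $\partial(f)=d_N\circ f-f\circ d_M$ for the differential on morphism spaces of $p$-dg modules expands in this basis to $(\partial\gamma+\beta\gamma-\gamma\alpha)_*$, matching Definition~\ref{barcat} precisely; therefore $\Phi$ induces $p$-dg isomorphisms on morphism spaces.

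For $p$-dg density, let $M\in \C^{\op}\cof$ with filtration $0=M_0\subset M_1\subset\ldots\subset M_s=M$ and $M_i/M_{i-1}$ $p$-dg isomorphic to $P^{\op}_{X_i}$, and proceed by induction on $s$. The inductive hypothesis gives $M_{s-1}\cong \Phi(X')$ for some $X'$ of length $s-1$ in $\overline{\C}$. Forgetting the $p$-differential, the short exact sequence $0\to M_{s-1}\to M\to P^{\op}_{X_s}\to 0$ of graded $\C^{\op}$-modules splits because $P^{\op}_{X_s}$ is representable and hence projective among graded modules, so the underlying graded functor of $M$ may be identified with $\bigoplus_{m=1}^s P^{\op}_{X_m}$. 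In this basis, the $p$-differential on $M$ takes the form $\partial\cdot\tI_s+\alpha_*$ for some strictly upper triangular matrix $\alpha$; the triangularity is forced because the differential preserves each $M_i=\bigoplus_{m\le i} P^{\op}_{X_m}$, and condition~\eqref{matrixdiffeqn} is automatic because $M$ was already a $p$-dg module. This exhibits $M$ as $p$-dg isomorphic to $\Phi((\bigoplus_m X_m,\alpha))$.

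The main obstacle will be the density step, since the splitting of the underlying graded short exact sequence is not canonical: different splittings yield different matrices $\alpha$ related by $p$-dg isomorphism in $\overline{\C}$, and some bookkeeping is required to ensure that any choice of splitting produces an object of $\overline{\C}$ whose triangular data genuinely satisfies the $p$-nilpotency condition inherited from $M$.
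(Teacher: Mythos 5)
Your proposal is correct and follows essentially the same route as the paper: the same functor $(\bigoplus_m F_m,\alpha)\mapsto(\bigoplus_m P^{\op}_{F_m},\,\partial\cdot\tI_s+\alpha_*)$, fully faithfulness via the enriched Yoneda identification of natural transformations with matrices of morphisms, and density by choosing a non-$p$-dg graded splitting of the semi-free filtration and reading off the conjugated differential as a strictly upper triangular perturbation. The only cosmetic difference is that you justify the graded splitting inductively via projectivity of representables, whereas the paper simply asserts the existence of the isomorphism $\psi$ and then uses the naturality computation to identify the diagonal entries with the standard differentials and the off-diagonal entries with Yoneda-induced morphisms.
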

\begin{proof}
We define a $p$-dg functor
$\rP \colon \overline{\C} \to \C^\op\cof$  by mapping an object $X=(\bigoplus_{m=1}^s F_m, (\alpha_{k,l})_{k,l})$ to the $p$-dg $\C$-module given on $\rP X=\bigoplus_{m=1}^s P^\op_{F_m}$ with differential $\del_{\rP X}:=\del \cdot \mathtt{I}_s +\left((\alpha_{k,l})_*\right)_{k,l}$. This is an object of $\C^\op\cof$ using the filtration $F_n:=\bigoplus_{m=1}^n P^\op_{F_m}$ for $n=1,\ldots, s$. Indeed, using the upper-triangular shape of the matrix $(\alpha_{k,l})_{k,l}$ we see that the $F_n$ give a chain of $p$-dg subfunctors of $\rP X$, and the factors are $p$-dg isomorphic to $P^\op_{F_m}$. For a morphism 
$$\gamma =(\gamma_{n,m})_{n,m} \colon X=\left(\bigoplus_{m=1}^s F_m, \alpha=(\alpha_{k,l})_{k,l}\right)\longrightarrow Y=\left(\bigoplus_{n=1}^t G_n, \beta=(\beta_{k,l})_{k,l}\right),$$
we let $\rP \gamma$ be the $\Bbbk$-natural transformation given by $((\gamma_{n,m})_*)_{n,m} \colon \rP X\to \rP Y$. As $(~~)_*$ is covariant with respect to composition, this gives a functor $\rP$ as desired. It is a $p$-dg functor as
\begin{align*}
\partial \rP(\gamma)&=\partial \left((\gamma_{n,m})_*\right)_{n,m}\\
&=\left(\partial\cdot\mathtt{I}_t +\left((\beta_{k,l})_*\right)_{k,l}\right)\circ\gamma - \gamma \circ\left(\partial \cdot\mathtt{I}_s +\left((\alpha_{k,l})_*\right)_{k,l}\right)\\
&=\del \circ ((\gamma_{n,m})_*)_{n,m} -((\gamma_{n,m})_*)_{n,m}\circ \del + \left(\left((\beta\gamma-\gamma\alpha)_{n,m}\right)_*\right)_{n,m}\\
&=\left((\partial\gamma_{n,m})_*\right)_{n,m}+\left(\left((\beta\gamma-\gamma\alpha)_{n,m}\right)_*\right)_{n,m}\\
&=\rP\left( (\partial\gamma_{n,m})_{n,m}+(\beta\gamma-\gamma\beta)_{n,m}\right)\\
&=\rP\left( \partial\gamma\right),
\end{align*}
where we use that $(~~)_*$ is a $p$-dg functor in the fourth equality, see (\ref{lowerstarpdgfunctor}).
Referring to Lemma~\ref{pdgequivlemma}, it remains to show that $\rP$ is fully faithful and $p$-dg dense.

Let $M$ be a semi-free module over $\C^\op$, with a filtration $0\subset M_1\subset M_2\subset \ldots\subset M_s=M$ such that there are $p$-dg isomorphisms $M_i/{M_{m-1}}\cong P^\op_{F_m}$ for $m=1,\ldots,s$. We can choose an isomorphism (\emph{not} a $p$-dg isomorphism) of $\psi\colon M\stackrel{\sim}{\to} \bigoplus_{m=1}^s P^\op_{F_m}$, and consider the conjugation $\phi\circ \del_M\circ \phi^{-1}$ of the differential $\del=\del_M\colon M\to M$, which gives a map $\del_{m',m}\colon P^\op_{F_m}\to P^\op_{F_{m'}}$ for any $m'\leq m$. By adding $\del_{m,m'}=0$ for $m'>m$, the maps thus obtained give an upper triangular matrix $(\del_{m',m})_{m',m}$. As $M$ is a $p$-dg functor, we require that for any morphism $f\colon A\to B$ in $\C$, $\partial f$ is mapped to $\partial M(f)$. This condition translates under isomorphism to the matrix condition
\begin{align*}
(\partial_{m',m})_{m',m}f^*\mathtt{I}_s-f^*\mathtt{I}_s(\partial_{m',m})_{m',m}=(\partial f)^*\mathtt{I}_s.
\end{align*}
Evaluating componentwise, we find that
\begin{align*}
\partial_{m',m}f^*-f^*\partial_{m',m}&=0, &\forall m\geq m',\\
\partial_{m,m}f^*-f^*\partial_{m,m}&=(\partial f)^*,
\end{align*}
from which we conclude that in fact $\partial_{m,m}=\partial_{P^\op_{F_m}}$, i.e. the given differential on the $m$-th subfactor, and that $\partial_{m',m}\colon P^\op_{F_{m}}\to P^{\op}_{F_{m'}}$ is a natural transformation. Hence $\partial_{m',m}$ is induced by a morphism $\alpha_{m',m}\colon F_{m}\to F_{m'}$. We define a new object $M'$ in $\C^{\op}\cof$ as the direct sum $\bigoplus_{m=1}^sP^{\op}_{F_m}$ together with the differential given by the matrix $\partial I_s+(\alpha_{m',m})_{m',m}$. This object lies in the image of the functor $\rP$. We now observe that the isomorphism $\psi\colon M\to \bigoplus_{m=1}^sP^\op_{F_m}$ previously chosen in fact gives a $p$-dg isomorphism $M\to M'$. Indeed, by construction we have $(\partial_{m',m})_{m',m}=\psi\circ \partial_{M}\circ \psi^{-1}$, which gives $(\partial_{m',m})_{m',m}\circ \psi-\psi\circ \partial_{M}=0$; that is, $\del \psi=0$ when viewed as a morphism $M\to M'$. Hence $\rP$ is $p$-dg dense.

Finally, it is clear that $\rP$ is faithful, and using $p$-density just proved, we can construct a $p$-dg isomorphism $\Hom_{\C^{\op}\cof}(M,N)\cong \Hom_{\C^\op\cof}(M',N')$ using conjugation with $p$-dg isomorphisms, where $M'$, $N'$ are in the image of $\rP$. But a morphism $M'\to N'$ corresponds under the enriched Yoneda lemma to a unique matrix of morphisms, i.e. lies in the image of $\rP$. This shows fullness of $\rP$ and concludes the proof.
\end{proof}

Given a $p$-dg category $\C\in\pdgcat$, denote by $\iota_{\C}\colon \C\to \overline{\C}$ the canonical inclusion, which is a fully faithful $p$-dg functor. Observe that this $p$-dg functor descends to an equivalence $ [\C]\to [\overline{\C}]$.

Note that the functor $\iota$ displays $\C$ as an additive subcategory of $\overline{\C}$ in the sense that the addition of morphisms $\Hom_\C(X,Y)$ is the same as that on $\Hom_\C\left((X,0), (Y,0)\right)$, but it only commutes with biproducts up to $p$-dg isomorphism: the sum $X\oplus Y$ in $\C$ is mapped to $(X\oplus Y,0)$ which is $p$-dg isomorphic to $\left(X\oplus Y, \begin{pmatrix}
0&0\\ 0&0
\end{pmatrix}\right)$. Explicit mutually inverse $p$-dg isomorphisms are given by $\begin{pmatrix}p_X\\ p_Y
\end{pmatrix}$, $\begin{pmatrix}i_X&i_Y
\end{pmatrix}$ where $p_X,p_Y$ denote the projections and $i_X, i_Y$ the injections of $X\oplus Y$ in $\C$.

The following lemma shows that one can extend $p$-dg functors and morphisms of functors to the corresponding categories of semi-free modules.

\begin{lemma}\label{extensionlemma}$~$
Let $\C,\D\in \pdgcat$ be two $p$-dg categories.
\begin{enumerate}[(i)]
\item\label{extensionlemma1} Let $\rF\colon \C\to \D$ be a $p$-dg functor. Then there exists an induced $p$-dg functor $\overline{\rF}\colon \overline{\C}\to\overline{\D}$, such that $\overline{\rF}\circ\iota_\C=\iota_\D\circ\rF$. 
Moreover, the functor $\overline{\rF}$ is the unique $p$-dg functor $\overline{\C}\to \overline{\D}$ descending to the canonical functor $[\overline{\C}]\to [\overline{\D}]$ induced by $\rF$ via direct sums and matrices of morphisms, and the assignment is functorial.

\item\label{extensionlemma2} Given a natural transformation of $p$-dg functors $\lambda\colon \rF\to \rG$, there exists an induced natural transformation $\overline{\lambda}\colon \overline{\rF}\to \overline{\rG}$ such that $\overline{\lambda}_C=\lambda_C$ for any object $C\in \C$. 
Further, $\overline{\del(\lambda)}=\del(\overline{\lambda})$ and $\overline{\lambda}$ is the unique natural transformation such that $[\overline{\lambda}]$ corresponds to $\lambda$ under the equivalence between $[\overline{\C}]$ and $[\C]$. 

The assignment is functorial with respect to horizontal and vertical composition and induces an isomorphism $\Hom_{\Fun(\C,\D)}(\rF,\rG) \cong \Hom_{\Fun(\overline{\C},\overline{\D})}(\overline{\rF},\overline{\rG}) $.

\item\label{extensionlemma3} The $p$-dg functor $\iota_{\overline{\C}}$ gives a $p$-dg equivalence of $\overline{\C}$ and $\overline{\overline{C}}$.\end{enumerate}

The content of this lemma can be summarized by stating that $\overline{(~~)}\colon \pdgcat\to \pdgcat$ is a strict $p$-dg $2$-functor of $p$-dg $2$-categories using the language introduced below in Section~\ref{pdg2cat}.
\end{lemma}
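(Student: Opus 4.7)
The proof is essentially by unwinding definitions, exploiting the matrix-style presentation of $\overline{\C}$ and the entrywise behaviour of both $p$-dg functors and natural transformations.

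For part \eqref{extensionlemma1}, I would define $\overline{\rF}$ on objects by
$$\overline{\rF}\left(\bigoplus_{m=1}^s F_m,(\alpha_{k,l})_{k,l}\right) := \left(\bigoplus_{m=1}^s \rF(F_m),(\rF(\alpha_{k,l}))_{k,l}\right),$$
and on morphisms by $\overline{\rF}((\gamma_{n,m})_{n,m}):=(\rF(\gamma_{n,m}))_{n,m}$. Well-definedness requires checking that the matrix $\del\tI_s+(\rF(\alpha_{k,l})_\ast)_{k,l}$ squares to a $p$-th power zero operator on $\bigoplus_m P^\op_{\rF(F_m)}$ in $\D^\op\cof$. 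Since $\rF$ is a $p$-dg functor, $\rF(\partial\alpha_{k,l})=\partial\rF(\alpha_{k,l})$; together with the functoriality of $(-)_\ast$ for composition in $\D$, the calculation in \eqref{matrixdiffeqn} for $\rF(\alpha)$ reduces entrywise to the corresponding identity for $\alpha$. The $p$-dg functor axiom $\overline{\rF}(\partial\gamma)=\partial\overline{\rF}(\gamma)$ then follows by comparing the defining formula for $\partial$ on morphisms of $\overline{\C}$ and $\overline{\D}$ entrywise. Uniqueness is forced: any $p$-dg functor extending $\rF$ must send the explicit $p$-dg biproduct $(X\oplus Y,0)\cong\bigl(X\oplus Y,\bigl(\begin{smallmatrix}0&0\\ 0&0\end{smallmatrix}\bigr)\bigr)$ to the corresponding biproduct in $\overline{\D}$, and matrix morphisms decompose as sums of elementary matrix entries, so the extension is pinned down by its values on $\iota_\C$. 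Functoriality of $\rF\mapsto\overline{\rF}$ is then immediate.

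For part \eqref{extensionlemma2}, I set $\overline{\lambda}_{(\bigoplus_m F_m,\alpha)}$ to be the diagonal matrix $\mathrm{diag}(\lambda_{F_1},\ldots,\lambda_{F_s})$, viewed as a morphism $\overline{\rF}(\bigoplus_m F_m,\alpha)\to\overline{\rG}(\bigoplus_m F_m,\alpha)$. Naturality with respect to a morphism $\gamma=(\gamma_{n,m})_{n,m}$ reduces entrywise to the naturality of $\lambda$ at each $\gamma_{n,m}$. The identity $\overline{\partial\lambda}=\partial\overline{\lambda}$ is immediate since both sides are the diagonal matrix with entries $\partial\lambda_{F_m}$ (here one uses that the off-diagonal terms of the matrix $\partial$ in $\overline{\D}$, coming from $\rG(\alpha)$ and $\rF(\alpha)$, cancel against each other on the diagonal source/target object). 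Uniqueness and the claimed $\Hom$-isomorphism follow by observing that any natural transformation $\overline{\rF}\to\overline{\rG}$ must be diagonal on objects of the form $(F,0)$ (by its restriction along $\iota_\C$), and is then extended canonically by biproduct decomposition and naturality with respect to the structural injections/projections of $\overline{\C}$; compatibility with horizontal and vertical composition is verified componentwise.

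For part \eqref{extensionlemma3}, I appeal to Lemma~\ref{pdgequivlemma} and verify that $\iota_{\overline{\C}}$ is fully faithful and $p$-dg dense. Faithfulness is tautological, and fullness holds because a morphism in $\overline{\overline{\C}}$ between objects $(X,0)$ and $(Y,0)$ is a $1\times1$ matrix of morphisms in $\overline{\C}$, i.e.\ just a morphism $X\to Y$ in $\overline{\C}$, with the differential formula collapsing to the one in $\overline{\C}$. The work is in $p$-dg density: given $(\bigoplus_{n=1}^t X_n,\beta)$ in $\overline{\overline{\C}}$ with $X_n=(\bigoplus_{m=1}^{s_n} F_{n,m},\alpha_n)$ and $\beta_{n',n}\in\Hom_{\overline{\C}}(X_n,X_{n'})$ strictly upper triangular, I flatten to the object $Z=(\bigoplus_{n,m}F_{n,m},\tilde\alpha)$ of $\overline{\C}$, where $\tilde\alpha$ has diagonal blocks $\alpha_n$ and off-diagonal blocks given by the matrices $\beta_{n',n}$. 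The identity matrix then gives a candidate $p$-dg isomorphism $\iota_{\overline{\C}}(Z)\cong(\bigoplus_n X_n,\beta)$ in $\overline{\overline{\C}}$.

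The main obstacle is the bookkeeping in part \eqref{extensionlemma3}: one must verify that $\partial\tI+\tilde\alpha_\ast$ satisfies the $p$-nilpotency \eqref{matrixdiffeqn} on $\bigoplus_{n,m}P^\op_{F_{n,m}}$. The cleanest way is to identify this operator with the ``total'' differential obtained from the nested structure, by expanding $\partial\tI+\tilde\alpha_\ast=(\partial\tI+(\alpha_n)_\ast\text{ on diagonal blocks})+(\beta_\ast\text{ off diagonal})$ and observing that the first summand is exactly the differential defining each $X_n$ in $\overline{\C}$, while the second encodes the differential of $\beta$ as a morphism in $\overline{\overline{\C}}$; the $p$-nilpotency condition for $(\bigoplus_n X_n,\beta)$ in $\overline{\overline{\C}}$ then translates literally into the condition for $Z\in\overline{\C}$. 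Once this is in place, checking that the identity assembles into a $p$-dg morphism uses the same identification, and all remaining verifications in parts \eqref{extensionlemma1} and \eqref{extensionlemma2} are routine entrywise calculations.
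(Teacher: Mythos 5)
Your construction matches the paper's proof in all essentials: the entrywise definition of $\overline{\rF}$, the diagonal-matrix definition of $\overline{\lambda}$ with the off-diagonal cancellation coming from naturality of $\lambda$ applied to the twisting matrix $\alpha$, and the flattening of nested objects for part \eqref{extensionlemma3} are exactly the arguments used in the paper.

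The one place where your argument is genuinely incomplete is the uniqueness claim in part \eqref{extensionlemma1}. You argue that a $p$-dg functor $\tilde{\rF}$ descending to the canonical functor is ``pinned down by its values on $\iota_\C$'' via biproducts and elementary matrix entries. This determines $\tilde{\rF}$ on objects of the form $(\bigoplus_m F_m,0)$ and on all matrix morphisms, but it does \emph{not} determine the twisting matrix $\tilde{\alpha}$ of the image $\tilde{\rF}(\bigoplus_m F_m,\alpha)$ when $\alpha\neq 0$: the underlying functor $[\overline{\C}]\to[\overline{\D}]$ is blind to $\alpha$, and $(\bigoplus_m F_m,\alpha)$ is not a biproduct of the $(F_m,0)$ in $\Z(\overline{\C})$ (the structural injections and projections are not annihilated by $\del$ when $\alpha\neq 0$), so the biproduct argument does not reach these objects. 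The missing step, which is how the paper closes this gap, is to use the $p$-dg functor axiom on a morphism whose differential sees $\alpha$: for the projection $p_k\colon (\bigoplus_m F_m,\alpha)\to F_k$ one has $\del(p_k)=(0,\ldots,0,\alpha_{k,k+1},\ldots,\alpha_{k,s})$, and the requirement $\tilde{\rF}(\del p_k)=\del(\tilde{\rF}p_k)$ forces $\tilde{\alpha}_{k,l}=\rF(\alpha_{k,l})$ for all $k,l$. With that insertion your proof of \eqref{extensionlemma1} is complete; the remaining parts are fine as written.
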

\begin{proof}
To prove part (i), let $X=(\bigoplus_{m=1}^s G_m, \alpha=(\alpha_{k,l})_{k,l})$ be an object of $\overline{\C}$. We define $\overline{\rF}(X):=(\bigoplus_{m=1}^s \rF(G_m), \overline{\rF}(\alpha)=(\rF\alpha_{k,l})_{k,l}$. This is well-defined as the matrix $\partial\cdot \tI_s + ((\rF\alpha_{k,l})_*)_{k,l}$ acts as a $p$-differential on $\bigoplus_{m=1}^s P^\op_{\rF(G_m)}$. This can be seen using (\ref{matrixdiffeqn}). The conditions on $\partial\cdot \tI_s + ((\rF\alpha_{k,l})_*)_{k,l}$ to act as a $p$-differential are given by applying $\rF$ to the corresponding equation in terms of $\alpha$, using that the $p$-dg functor $\rF$ commutes with composition and $\del$.

For a morphism $\gamma \colon X=(\bigoplus_{m=1}^s G_m, \alpha)\to Y=(\bigoplus_{m'=1}^t K_{m'}, \beta)$ in $\overline{\C}$ given by a matrix $\gamma=(\gamma_{k,l})_{k,l}$, we define $\overline{\rF}(\gamma)=(\rF\gamma_{k,l})_{k,l}$. The functor $\overline{\rF}$ thus obtained is indeed a $p$-dg functor as
\begin{align*}
\overline{\rF}(\partial\beta)&=\overline{\rF}((\partial \beta_{k,l})_{k,l}+\beta\gamma - \gamma \alpha)\\
&=((\partial (\overline{\rF}\beta_{k,l}))_{k,l}+\overline{\rF}(\beta)\overline{\rF}(\gamma) - \overline{\rF}(\gamma)\overline{\rF}( \alpha)=\partial(\overline{\rF}\beta).
\end{align*} 

Clearly, by construction, $[\overline{\rF}]$ is the canonical functor $[\overline{\C}]\to [\overline{\D}]$ given by sending $\bigoplus X_i$ to $\bigoplus \rF X_i$, and morphisms $(\gamma_{k,l})_{k,l}$ to $(\rF\gamma_{k,l})_{k,l}$. We claim that $\overline{\rF}$ is the unique $p$-dg functor extending $\rF$ with this property. In fact, consider an object $X=(\bigoplus_{m=1}^s X_m, \alpha)$ of $\overline{\C}$. Assume $\tilde{\rF}$ satisfies the stated property, and denote the image $\tilde{\rF}(X)=(\bigoplus_{m=1}^s, \tilde{\alpha})$. Consider the projection $p_k\colon X\to X_k$. We compute
\begin{align*}
\partial p_k&=\partial(0,\ldots, 0,\id_{X_k},0,\ldots, 0)+(0,\ldots , 0,\id_{X_k},0,\ldots, 0)\alpha \\
&=(0,\ldots, 0,\alpha_{k,k+1},\alpha_{k,k+2},\ldots, \alpha_{k,s}).
\end{align*}
If $\tilde{\rF}$ is a $p$-dg functor, it follows that
\begin{align*}
\tilde{\rF}(\partial p_k)&=(0,\ldots,0,\rF \alpha_{k,k+1},\rF \alpha_{k,k+2},\ldots, \rF \alpha_{k,s})\\
&=(0,\ldots, 0, \tilde{\alpha}_{k,k+1},\tilde{\alpha}_{k,k+2},\ldots, \tilde{\alpha}_{k,s})=\partial (\tilde{\rF}p_k),
\end{align*}
using that $\tilde{\rF}$ restricts to the functor of componentwise application of $[\rF]$ under $[-]$. Hence $(\tilde{\alpha}_{k,l})_{k,l}=(\rF\alpha_{k,l})_{k,l}$, and thus $\tilde{\rF}(X)=\overline{\rF}(X)$. The uniqueness statement implies that $\overline{\rG}\circ \overline{\rF}=\overline{\rG\rF}$, as both these functors restrict to the functor of componentwise application of $[\rG\rF]=[\rG]\circ[\rF]$ under $[-]$.

To prove part (ii), consider a $\Bbbk$-linear natural transformation $\lambda \colon \rF\to \rG$.
We define $\lambda_X$, for $X\in \overline{\C}$ as above, to be the matrix with entries $\lambda_{G_m}$ on the diagonal for $m=1,\ldots, s$. Let $\gamma:(X, \alpha)\to (Y,\beta)$ be a morphism in $\overline{\C}$. 
Then the naturality condition $\lambda_Y \circ \rF\gamma =\rG\gamma \circ \lambda_X$ follows. Further, as $\lambda$ is a natural transformation, we have that $\lambda_X\circ\rF\alpha-\rG\alpha\circ\lambda_X=0$. Hence $\del (\overline{\lambda})=\overline{(\del \lambda)}$. The assignment thus defined is clearly functorial with respect to horizontal composition.
The requirement that  $[\overline{\lambda}]$ corresponds to $\lambda$ under the equivalence between $[\overline{\C}]$ and $[\C]$ forces $\lambda_X$ to be given by a diagonal matrix with entries $\lambda_{G_m}$ as described.

Furthermore,  the equality $\lambda_X \circ \rF\gamma =\rG\gamma \circ \lambda_X$, where $\gamma$ runs over all endomorphisms of $X$ given by  matrices with precisely one identity on the diagonal (say in position $k$) and zeros elsewhere, forces $(\lambda_X)_{k,k}$ to be $\lambda_{G_k}$ for all $k$, while all off-diagonal entries of $\lambda_X$ have to be zero. Thus all possible natural transformations $\lambda\colon \overline{\rF}\to \overline{\rG}$ take the shape of a diagonal matrix when evaluated on each object, and in particular, since $[\lambda]$ is a natural transformation, each diagonal entry has to be the evaluation of a natural transformation on the corresponding indexing object, and hence $\lambda$ is of the form $\overline{\mu}$ for the restriction $\mu$ of $\lambda$ along $\iota_\C$. Together with the observation that for a natural transformation $\lambda \in \Fun(\C,\D)$, the restriction of $\overline{\lambda}$ along $\iota$ recovers $\lambda$, this proves the $p$-dg isomorphism  $\Hom_{\Fun(\C,\D)}(\rF,\rG) \cong \Hom_{\Fun(\overline{\C},\overline{\D})}(\overline{\rF},\overline{\rG}) $.

Finally, to prove part (iii), consider the canonical $p$-dg functor $\C\to \overline{\C}$. Using part (i), we obtain a $p$-dg functor $\overline{\C}\to \overline{\overline{\C}}$. This $p$-dg functor is clearly fully faithful, and it is $p$-dg dense. Indeed, any object in $\overline{\overline{\C}}$ corresponds to an upper triangular matrix with entries morphisms of $\overline{\C}$, meaning it can be viewed as a (larger) upper triangular matrix with entries morphisms of $\C$, but such objects are in the image of $\overline{\C}$. To see this, consider an object $Y=(\bigoplus_{m=1}^s Y_m, \beta)$ in $\overline{\overline{\C}}$. Here, $X_m=(\bigoplus_{k=1}^{t_m}X_k^m, \alpha^m)$ are objects of $\overline{\C}$, and each matrix entry $\beta_{m,m'}$ is a $s_m\times s_{m'}$-matrix of morphisms $\beta_{m,m'}^{k,k'}\colon X^{m'}_{k'}\to X^{m}_{k}$. As $Y$ is an object of $\overline{\overline{\C}}$, we have that $(\del \tI_s+\beta)^p$ acts by zero. We compute
\begin{align*}
(\del \tI_s+\beta)&=\left( \delta_{m,m'} \del_{Y_m}+\beta_{m,m'}\right)_{m,m'}\\
&=\left( \delta_{m,m'} (\delta_{k,k'}\del_{X^m_k}+\alpha^m_{k,k'})_{k,k'} + (\beta_{m,m'}^{k,k'})_{k,k'}  \right)_{m,m'}\\
&=\left( \left(\delta_{m,m'}\delta_{k,k'}\del_{X^m_k}+\alpha^m_{k,k'} + \beta_{m,m'}^{k,k'}\right)_{k,k'}  \right)_{m,m'},
\end{align*} 
to see that the $p$-th power of the latter matrix also acts by zero.
Hence we can define the object $Y'$ of $\overline{\C}$ given by the pair $$\left(\bigoplus_{m=1}^s {\bigoplus_{k=1}^{t_m} X^m_k}, \left( \delta_{m,m'} \alpha^m_{k,k'} + \beta_{m,m'}^{k,k'}\right)_{(m,k),(m',k')}\right).$$
The $p$-dg isomorphisms $(Y_i, 0)\to (\bigoplus_{k=1}^{t_m}{X_k^m}, \alpha^m)$ assemble to a $p$-dg isomorphism $Y\to Y'$ completing the proof of $p$-dg density.
Thus, part (iii) follows using Lemma \ref{pdgequivlemma}.
\end{proof}

In fact, the following lemma shows that a $p$-dg functor of locally finite $p$-dg categories which are of the form $\overline{(-)}$ can be recovered from its restriction to $\C$ in the source category.

\begin{lemma}\label{restrictionlemma}
Let $\C$, $\D$ be categories in $\pdgcat$, and $F\colon \overline{\C}\to \overline{\D}$. Then there is a $p$-dg isomorphism of $p$-dg functors $F\cong (\iota^{-1}_{\overline{\D}})\circ \overline{\left.F\right|_{\C}}$.
\end{lemma}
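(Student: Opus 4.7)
The plan is to construct an explicit $p$-dg natural isomorphism. First, I will use that $\iota_{\overline{\D}} \colon \overline{\D} \to \overline{\overline{\D}}$ is a $p$-dg equivalence (Lemma \ref{extensionlemma}(iii)), which reduces the desired isomorphism $F \cong \iota^{-1}_{\overline{\D}} \circ \overline{F|_{\C}}$ to the task of producing a $p$-dg natural isomorphism $\iota_{\overline{\D}} \circ F \cong \overline{F|_{\C}}$ of $p$-dg functors $\overline{\C} \to \overline{\overline{\D}}$.

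For each $X = (\bigoplus_{m=1}^{s} F_{m}, \alpha) \in \overline{\C}$, I will exploit the additive biproduct structure on $[\overline{\C}]$ described after Lemma \ref{matcatequiv}, using the canonical injections $j_{m} \colon (F_{m}, 0) \to X$ and projections $p_{m} \colon X \to (F_{m}, 0)$ in $\overline{\C}$ given by column (respectively row) matrices with an identity in the $m$-th position. These are not $p$-dg morphisms in general; rather, Definition \ref{barcat} yields $\partial p_{m} = -\sum_{l} \alpha_{m,l} p_{l}$, so the failure of $p_{m}$ to be $p$-dg is precisely recorded by $\alpha$. Applying $F$, I assemble the $s \times 1$ column matrix
\begin{align*}
\eta_{X} := (F(p_{m}))_{m=1}^{s} \colon \iota_{\overline{\D}}(F(X)) \longrightarrow \overline{F|_{\C}}(X)
\end{align*}
in $\overline{\overline{\D}}$. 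Using the differential formula of Definition \ref{barcat} (source matrix $0$, target matrix $(F(\alpha_{k,l}))_{k,l}$) together with the $p$-dg functoriality of $F$, the calculation
\begin{align*}
\partial \eta_{X} = \Bigl(\partial F(p_{m}) + \textstyle\sum_{l} F(\alpha_{m,l}) F(p_{l})\Bigr)_{m} = \Bigl(F\bigl(\partial p_{m} + \textstyle\sum_{l} \alpha_{m,l} p_{l}\bigr)\Bigr)_{m} = 0
\end{align*}
shows $\eta_{X}$ is a $p$-dg morphism, and the biproduct identities $p_{m} j_{n} = \delta_{m,n} \id$ and $\sum_{m} j_{m} p_{m} = \id_{X}$ (preserved by $F$) imply it is invertible, with inverse the row matrix $(F(j_{m}))_{m=1}^{s}$.

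Finally, naturality of $\{\eta_{X}\}_{X}$ follows from the $F$-functoriality together with the naturality of the injections and projections, using that any morphism $\gamma \colon X \to Y$ of $\overline{\C}$ decomposes as $\gamma = \sum_{k,l} j_{k}^{Y} \circ \gamma_{k,l} \circ p_{l}^{X}$. I expect the main hurdle to be the nested matrix bookkeeping in $\overline{\overline{\D}}$: one must verify that the off-diagonal parts of its differential, coming from $\overline{F|_{\C}}(X)$'s matrix $(F(\alpha_{k,l}))_{k,l}$, exactly cancel the $F$-image of the relations $\partial p_{m} + \sum_{l} \alpha_{m,l} p_{l} = 0$. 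This is, however, the same cancellation mechanism that underlies the proof of Lemma \ref{extensionlemma}, so no fundamentally new ideas are required.
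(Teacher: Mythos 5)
Your proposal is correct, and the computations check out: with the differential of Definition \ref{barcat} one indeed gets $\del p_m = -\sum_l \alpha_{m,l}p_l$ (source matrix $\alpha$, target matrix $0$), and since the target of $\eta_X$ carries the matrix $(F(\alpha_{k,l}))_{k,l}$ while the source carries $0$, the term $+\sum_l F(\alpha_{m,l})F(p_l)$ in $\del\eta_X$ exactly cancels $F(\del p_m)$; invertibility and naturality follow from the biproduct identities as you say. The route differs from the paper's in how the twisting matrix $\alpha$ is handled. The paper only constructs the comparison isomorphism for untwisted objects $(\bigoplus_i X_i,0)$, then passes to the underlying additive categories, observes that $\iota_{\overline{\D}}\circ F$ and $\overline{\left.F\right|_{\C}}$ descend to the same functor $[\overline{\C}]\to[\overline{\overline{\D}}]$, and invokes the uniqueness clause of Lemma \ref{extensionlemma}\eqref{extensionlemma1} to conclude. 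Your argument instead builds $\eta_X$ for arbitrary twisted objects and verifies $\del\eta_X=0$ by hand, which is more self-contained (no appeal to the uniqueness statement, whose application to a functor that only descends to an isomorphic copy of the canonical one requires a small extra argument anyway) and makes explicit why the comparison map is a genuine $p$-dg isomorphism rather than merely an isomorphism in $[\overline{\D}]$. The trade-off is the matrix bookkeeping you anticipate, but as you note it is the same cancellation already present in the proof of Lemma \ref{extensionlemma}, so nothing new is needed.
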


\begin{proof}
Let $X_1,\dots, X_n\in \C$ and consider $F(X_i, 0)\in \overline{\D}$ for each $i=1,\ldots, n$. By additivity of $F$ and the additive structure of $\overline{\C}$, we have a $p$-dg isomorphism 
$$\bigoplus_{i=1}^n \left.F\right|_{\C}X_i=\bigoplus_{i=1}^n F(X_i, 0) \overset{\sim}{\to}   F\left(\bigoplus_{i=1}^nX_i, 0\right)\in \overline{\D},$$
where the first  zeros are $1\times 1$-matrices and the last zero is an $n\times n$-matrix. By the universal property of biproducts, this isomorphism is natural with respect to morphisms in $\overline{\C}$, and hence
$$\Hom_{\overline{\D}}\left(F\left(\bigoplus_{i=1}^nX_i, 0\right),   F\left(\bigoplus_{i=1}^mY_i, 0\right)   \right) \cong \Hom_{\overline{\D}}\left(\bigoplus_{i=1}^n \left.F\right|_{\C}X_i,\bigoplus_{i=1}^m \left.F\right|_{\C}Y_i \right)$$
for $Y_i\in \C$.

We wish to compare the two functors $\iota_{\overline{\D}}\circ F$ and  $\overline{\left.F\right|_{\C}}$.
Since, for more general objects  $X:=(\bigoplus_{i=1}^nX_i, \alpha), Y:=(\bigoplus_{i=1}^mY_i, \beta)\in \overline{\C}$, we have 
$$\Hom_{[\overline{\C}]} \left(X,Y \right)\cong \Hom_{[\overline{\C}]} \left(\left(\bigoplus_{i=1}^nX_i, 0\right),\left(\bigoplus_{i=1}^m Y_i, 0\right) \right)$$
we obtain an isomorphism 
\begin{equation*}\begin{split}
\Hom_{[\overline{\D}]} \left(F(X),F(Y) \right)&\cong \Hom_{[\overline{\D}]} \left(F\left(\bigoplus_{i=1}^nX_i, 0\right),F\left(\bigoplus_{i=1}^mY_i, 0\right) \right) \\
 &\cong\Hom_{[\overline{\D}]}\left(\bigoplus_{i=1}^n \left.F\right|_{\C}X_i,\bigoplus_{i=1}^m \left.F\right|_{\C}Y_i \right).\end{split}\end{equation*}

Recalling the canonical $p$-dg inclusion $\iota_{\overline{\D}}$, the functor $\iota_{\overline{\D}}\circ F$ descends to the same functor $[\overline{\C}]\to [\overline{\overline{\D}}]$ as $\overline{\left.F\right|_{\C}}$. By the uniqueness statement in \ref{extensionlemma}\eqref{extensionlemma1} we see that $\iota_{\overline{\D}}^{-1}\circ \overline{\left.F\right|_{\C}}$ and $F$ are $p$-dg isomorphic, choosing $\iota_{\overline{\D}}^{-1}$ a quasi-inverse to $\iota_{\overline{\D}}$ by \ref{extensionlemma}\eqref{extensionlemma3}. \end{proof}

\subsection{Subquotient completion}\label{fincat}

Let $\C \in \pdgcat$. We need  to construct a  closure $\C^{\bullet}$ of $\C$ under subquotients. 
This uses considerations from \cite[Section 4.2]{EQ2}. From there, we adapt the following definition:
 
\begin{definition}
Let $\C$ be a $p$-dg category in $\pdgcat$. We say that an idempotent $e\colon X\to X$ in $\C$ is a {\bf subquotient idempotent} if there exists another idempotent $w\colon X\to X$ such that
\begin{enumerate}
\item[(i)] $we=ew=e$,
\item[(ii)] $w\del(w)=0$, and
\item[(iii)] $(w-e)\del(w-e)=0$.
\end{enumerate}
\end{definition}

Note that the definition is symmetric in the sense that an equivalent set of axioms is is to require the existence of an idempotent $v$ on $X$ such that $ve=ev=e$, $\del(v)v=0$, and $\del(v-e)(v-e)=0$.  Note further that for any idempotent $e$ we have $\del(e)=\del(e)e+e\del(e)$, which implies that $(1-e)\partial(e)=\partial(e)e$, and hence $e\partial(e)e=0$. The following lemma is adapted from \cite[Lemma~4.2]{EQ2}.

\begin{lemma}
Given two subquotient idempotents $e\colon X\to X$, and $f\colon Y\to Y$, the subspace
$f\Hom_\C(X,Y)e$ obtains the $p$-differential given by
\begin{align}\label{restricteddiff}
\del^{\bullet}(fge):=f\del(fge)e, &&\forall g\colon X\to Y.
\end{align}
\end{lemma}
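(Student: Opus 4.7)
The plan is to lift $\del^\bullet$ to an endomorphism $\tilde\del$ of the entire morphism space $\Hom_\C(X,Y)$ for which the $p$-differential property is transparent, and then to show that $\tilde\del$ restricts to $\del^\bullet$ on the subspace $V := f\Hom_\C(X,Y)e$. Concretely, I would pick an auxiliary idempotent $w\colon Y\to Y$ for $f$ in the original form ($wf=fw=f$, $w\del(w)=0$, $(w-f)\del(w-f)=0$) and an auxiliary idempotent $v\colon X\to X$ for $e$ in the equivalent symmetric form ($ve=ev=e$, $\del(v)v=0$, $\del(v-e)(v-e)=0$) provided by the preceding remark, and set $\tilde\del(g):= w\del(g)v$.

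The first step is an easy induction on $n$: expanding $\tilde\del^{n+1}(g) = w\del\bigl(w\del^n(g)v\bigr)v$ via the Leibniz rule produces three terms, two of which vanish because of $w\del(w)=0$ and $\del(v)v=0$, leaving $\tilde\del^{n+1}(g) = w\del^{n+1}(g)v$. In particular $\tilde\del^p(g) = w\del^p(g)v = 0$, so $\tilde\del$ is a $p$-differential on the whole space $\Hom_\C(X,Y)$.

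The main step is the identity $\tilde\del|_V = \del^\bullet$. For $h = fhe \in V$, the relations $wf=f$ and $ev=e$ immediately give $wh=h$ and $hv=h$, so $(w-f)h=0$ and $h(v-e)=0$. Because $w-f$ is itself an idempotent, applying the Leibniz rule to $(w-f)^2=w-f$ together with $(w-f)\del(w-f)=0$ forces $\del(w-f) = \del(w-f)(w-f)$; hence $\del(w-f)h = 0$ and $\del(w)h = \del(f)h$. Combined with Leibniz applied to $wh=h$ and to $fh=h$, this yields $w\del(h) = \del(h) - \del(f)h = f\del(h)$. The symmetric argument using $\del(v-e)(v-e)=0$ gives $\del(h)v = \del(h)e$. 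Composing, $\tilde\del(h) = w\del(h)\cdot v = f\del(h)\cdot v = f\cdot \del(h)v = f\del(h)e = \del^\bullet(h)$. This shows simultaneously that $\del^\bullet$ takes values in $V$ and that $(\del^\bullet)^p = \tilde\del^p|_V = 0$; $\Bbbk$-linearity and the degree-two condition on $\del^\bullet$ are inherited from $\del$.

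The main obstacle is the asymmetry of the auxiliary data. Neither the ``sub''-type witness $w$ (whose role $w\del(w)=0$ makes $\tilde\del$ a $p$-differential and whose companion $(w-f)\del(w-f)=0$ absorbs the left correction term $\del(f)h$) nor the ``quotient''-type witness $v$ suffices by itself; they have to be placed on opposite sides of $\del(-)$ in the definition of $\tilde\del$. It is precisely the availability of both forms of the subquotient axioms, noted immediately before the lemma, that permits this pairing and lets the left and right corrections $\del(f)h$ and $h\del(e)$ be absorbed simultaneously.
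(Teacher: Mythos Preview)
Your argument is correct and takes a genuinely different route from the paper's. The paper first establishes the auxiliary identities $\del(e)\del(e)e = e\del(e)\del(e) = 0$ for any subquotient idempotent $e$ (using its witness $w$), and then directly verifies $(\del^\bullet)^2(fge) = f\del^2(fge)e$ by expanding and killing the cross terms with these identities together with $e\del(e)e=0$; iterating gives $(\del^\bullet)^p=0$. Your approach instead builds an ambient $p$-differential $\tilde\del = w\del(-)v$ on the full hom-space, for which $\tilde\del^n = w\del^n(-)v$ is immediate, and then identifies $\tilde\del|_V$ with $\del^\bullet$ via the absorption identities $w\del(h)=f\del(h)$ and $\del(h)v=\del(h)e$. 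What your approach buys is a cleaner conceptual picture: $\del^\bullet$ is the restriction of a manifestly $p$-nilpotent operator, and the induction is one line. What the paper's approach buys is a reusable intrinsic identity ($\del(e)\del(e)e=0$) phrased purely in terms of $e$ rather than a chosen witness, and it avoids invoking the symmetric form of the axioms for $e$. Both proofs ultimately rely on the same asymmetric placement of ``sub''-type data on the left and ``quotient''-type data on the right; you make this explicit in the definition of $\tilde\del$, while the paper hides it inside the derivation of $\del(e)\del(e)e=0$.
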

\begin{proof}
We give a direct proof for completeness. First show that if $e$ is a subquotient idempotent, then $\del(e)\del(e)e=e\del(e)\del(e)=0$.
Indeed, one computes
\begin{align*}
\del(e)\del(e)e&=\del(ew)\del(we)e\\
&=\del(e)w\del(w)e+\del(e)w\del(e)e+e\del(w)\del(w)e+e\del(w)w\del(e)e\\
&=\del(e)w\del(e)e+ew\del(w)\del(w)e+ew\del(w)w\del(e)e\\
&=\del(e)w\del(e)e=\del(e)e\del(e)e=0,
\end{align*}
where we use that $(w-e)\del(w-e)=0$ implies $w\del(e)=e\del(e)$. The equality $e\del(e)\del(e)=0$ follows similarly.

Now we compute, for $g\in f\Hom_\C(X,Y)e$, that
\begin{align*}
(\del^{\bullet})^2(fge)=\del^{\bullet}(f\del(fge)e)&=f\del(f\del(fge)e)e\\
&=f\del(f)\del(fge)e+f\del^2(fge)e+f\del(fge)\del(e)e\\
&=f\del(f)\del(f)ge+f\del^2(fge)e+fg\del(e)\del(e)e\\
&=f\del^2(fge)e.
\end{align*}
Repeated application shows that $(\del^{\bullet})^p(fge)=0$.
\end{proof}


We now define the {\bf subquotient idempotent completion}  $\C^{\bullet}$ of $\C$ in $\pdgcat$ as having objects $X_e$ for any subquotient idempotent $e\in \End_\C(X)$, together with morphisms $i_e\colon X_e\to X$, $p_e\colon X\to X_e$ such that, if $f\in \End_\C(Y)$ is another subquotient idempotent 
\begin{align}\label{completionhoms}
&\adj{i_f\circ (-)\circ p_e}{\Hom_{\C^{\bullet}}(X_e,Y_f)}{\left(f\Hom_{\C}(X,Y)e, \del^{\bullet}\right)}{p_f\circ(-)\circ i_e},
\end{align}
are mutually inverse $p$-dg isomorphisms. 
This, in particular, implies that
\begin{align}
i_ep_e&=e, &p_ei_e&=\id_{X_e},\label{completioneq1}\\
\del(i_e)&=\del(e)i_e,& \del(p_e)&=p_e\del(e).\label{completioneq2}
\end{align}
In fact, the conditions \eqref{completioneq1}, \eqref{completioneq2} ensure that the morphisms in \eqref{completionhoms} are mutually inverse $p$-dg morphisms, and that the differential on the category $\C^{\bullet}$ satisfies the Leibniz rule.

\begin{proposition}\label{idempotentcompletion}
The category $\C^{\bullet}$ defined above is a $p$-dg category in $\pdgcat$ and there exists a $p$-dg embedding $\C\hookrightarrow \C^{\bullet}$. Furthermore, $(\C^{\bullet})^{\bullet}$ is $p$-dg equivalent to $\C^{\bullet}$.
\end{proposition}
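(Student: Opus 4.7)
The proof splits into three tasks: verifying $\C^{\bullet}$ is a $p$-dg category in $\pdgcat$, producing the embedding, and establishing the idempotency of the construction.

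For the first two tasks: via the defining $p$-dg isomorphism \eqref{completionhoms}, each hom space of $\C^{\bullet}$ is realized as $(f\Hom_\C(X,Y)e, \del^{\bullet})$, on which $\del^{\bullet}$ is already a $p$-differential by the preceding lemma. Composition is well-defined since $(khf)(fge) = khge \in k\Hom_\C(X,Z)e$, and the Leibniz rule for $\del^{\bullet}$ follows from that of $\del$ together with \eqref{completioneq1}--\eqref{completioneq2}. Local finiteness is inherited since $\Hom_{\C^{\bullet}}(X_e, Y_f)$ embeds into $\Hom_\C(X,Y)$. The $p$-dg embedding $\C \hookrightarrow \C^{\bullet}$ sends $X \mapsto X_{\id_X}$ using the trivial subquotient idempotent $e = w = \id_X$, under which the defining isomorphism recovers $(\Hom_\C(X,Y),\del)$; this is clearly a fully faithful $p$-dg functor.

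For the equivalence $(\C^{\bullet})^{\bullet} \simeq \C^{\bullet}$, I would apply Lemma~\ref{pdgequivlemma} to the canonical inclusion $\C^{\bullet} \hookrightarrow (\C^{\bullet})^{\bullet}$: it is automatically fully faithful, so only $p$-dg density requires work. Given an object $(X_e)_{e'}$ of $(\C^{\bullet})^{\bullet}$ with subquotient idempotent $e'$ witnessed by $w'$, transport to $\C$ by setting $\hat{e'} := i_e \circ e' \circ p_e$ and $\hat{w'} := i_e \circ w' \circ p_e$, both idempotents lying in $e\End_\C(X)e$. The plan is to show that $\hat{e'}$ is a subquotient idempotent in $\C$ witnessed by $\tilde{w} := \hat{w'} + w - e$, where $w$ witnesses $e$. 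Then $X_{\hat{e'}}$ is a genuine object of $\C^{\bullet}$, and a $p$-dg isomorphism $(X_e)_{e'} \cong X_{\hat{e'}}$ in $(\C^{\bullet})^{\bullet}$ is readily assembled from the structural morphisms $i_{(\cdot)}, p_{(\cdot)}$ at both levels.

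The main obstacle is verifying the three witness axioms for $\tilde{w}$. Idempotency and the identities $\tilde{w}\hat{e'} = \hat{e'}\tilde{w} = \hat{e'}$ follow quickly from the multiplicative relations $\hat{w'}e = e\hat{w'} = \hat{w'}$ and $\hat{w'}w = w\hat{w'} = \hat{w'}$ (which in turn use $we = ew = e$). For $\tilde{w}\del(\tilde{w}) = 0$: expanding $(\hat{w'} + w - e)\del(\hat{w'} + w - e)$ and using $w\del(w) = 0$ together with $w\del(e) + e\del(w) = e\del(e)$ (the latter being the consequence of $(w-e)\del(w-e) = 0$), the expression collapses to $[\hat{w'}\del(\hat{w'}) - \hat{w'}\del(e)] + \hat{w'}\del(w) + (w-e)\del(\hat{w'})$. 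Each summand vanishes: $\hat{w'}\del(w) = 0$ follows by substituting $e\del(w) = e\del(e) - w\del(e)$ after factoring $\hat{w'} = \hat{w'}e$; the term $(w-e)\del(\hat{w'})$ vanishes because $(w-e)e = 0 = (w-e)\hat{w'}$ kill two parts of the Leibniz expansion $\del(\hat{w'}) = \del(e)\hat{w'} + e\del(\hat{w'})e + \hat{w'}\del(e)$, and the remaining $-e\del(w)\hat{w'}$ is zero by Leibniz applied to $w\hat{w'} = \hat{w'}$ combined with $ew = e$; and $\hat{w'}\del(\hat{w'}) = \hat{w'}\del(e)$ combines the hypothesis $\hat{w'}\del(\hat{w'})e = 0$ (equivalent to $w'\del^{\bullet}(w') = 0$) with the characteristic-independent identity $\hat{w'}\del(e)\hat{w'} = 0$, obtained from $\del(e) = \del(e)e + e\del(e)$ which yields $\hat{w'}\del(e)\hat{w'} = 2\hat{w'}\del(e)\hat{w'}$. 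The parallel identity $(\tilde{w} - \hat{e'})\del(\tilde{w} - \hat{e'}) = 0$ follows by the same argument with $\hat{w'}$ replaced by $\hat{w'} - \hat{e'}$ throughout, invoking $(w' - e')\del^{\bullet}(w' - e') = 0$ in place of $w'\del^{\bullet}(w') = 0$.
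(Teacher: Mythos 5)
Your proposal is correct, and for the first two assertions it follows the same route as the paper: composition is checked directly on elements of the form $p_f f h e i_e$, the Leibniz rule reduces to $e\del(e)e=0$, and $X\mapsto X_{\id_X}$ gives the full $p$-dg embedding. One small omission: to land in $\pdgcat$ you should also note that $\C^{\bullet}$ retains finite biproducts (the paper realizes $X_e\oplus Y_f$ as the splitting object $(X\oplus Y)_{e\oplus f}$); this is routine but worth a sentence.

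Where you genuinely diverge is the density of $\C^{\bullet}\hookrightarrow(\C^{\bullet})^{\bullet}$. The paper disposes of this in one line, asserting that a subquotient idempotent $e'$ on $X_e$ is already a subquotient idempotent in $\C$ ``using the same $w$''. That shortcut is not obviously valid: the hypothesis on the witness $w'$ in $\C^{\bullet}$ only gives $\hat{w'}\del(\hat{w'})e=0$, i.e.\ vanishing after applying the \emph{restricted} differential $\del^{\bullet}=e\del(-)e$, which is strictly weaker than $\hat{w'}\del(\hat{w'})=0$ in $\C$ (indeed your own computation shows $\hat{w'}\del(\hat{w'})=\hat{w'}\del(e)$, which need not vanish). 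Your construction of the corrected witness $\tilde{w}=\hat{w'}+w-e$, and the verification of $\tilde{w}\del(\tilde{w})=0$ and $(\tilde{w}-\hat{e'})\del(\tilde{w}-\hat{e'})=0$ via the identities $e\del(w)=e\del(e)-w\del(e)$, $(w-e)\del(e)=-e\del(w)$ and the characteristic-free $\hat{w'}\del(e)\hat{w'}=0$, is correct (I checked each collapse) and supplies exactly the detail the paper elides. So your argument is not merely an alternative; it is a repair of the weakest step in the published proof, at the cost of a page of idempotent calculus. The transport isomorphism $(X_e)_{e'}\cong X_{\hat{e'}}$ at the end is indeed immediate, since both hom-spaces are literally $\hat{f'}\Hom_\C(X,Y)\hat{e'}$ with the same restricted differential.
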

\begin{proof}
Let $f\colon X_{e_1}\to X_{e_2}$, $g\colon  X_{e_2}\to X_{e_3}$ be morphisms in $\C^{\bullet}$, where $f=p_{e_2}e_2he_1i_{e_1}$ and $g=p_{e_3}e_3ke_2i_{e_2}$. Then
\begin{align*}
gf=p_{e_3}e_3ke_2i_{e_2}p_{e_2}e_2he_1i_{e_1}=p_{e_3}e_3ke_2he_1i_{e_1},
\end{align*}
and 
\begin{align*}
\del(gf)&=p_{e_3}\del(ke_2h)i_{e_1},\\
&=p_{e_3}\del(k)e_2hi_{e_1}+p_{e_3}ke_2\del(h)i_{e_1}=\del(g)f+g\del(f),
\end{align*}
using that $k\del(e_2)h=ke_2\del(e_2)e_2h=0$ as $e_2\del(e_2)e_2=0$.

It follows that $\C^{\bullet}$ is a $p$-dg category. It is clear that mapping $X\mapsto X_1$ exhibits $\C$ as a $p$-dg full subcategory of $\C^{\bullet}$. Starting with $\C^{\bullet}$, the embedding $\C^{\bullet}\hookrightarrow (\C^{\bullet})^{\bullet}$ is $p$-dg dense. Indeed, any subquotient idempotent $e$ in $\C^{\bullet}$ is a subquotient idempotent $e=e_1ee_1$ for a subquotient idempotent $e_1$ in $\C$, which forces that $e$ is in fact a subquotient idempotent in $\C$, using the same $w$ such that $ew=w=ew$, and hence $X_e$ in $(\C^{\bullet})^{\bullet}$ is in the $p$-dg essential image of $\C^{\bullet}$.

Note that the subquotient idempotent completion $\C^{\bullet}$ again has finite biproducts. The biproduct $X_e\oplus Y_f$ in $\C^{\bullet}$ can be obtained using the splitting object $(X\oplus Y)_{e\oplus f}$ of the subquotient idempotent $e\oplus f$ on $X\oplus Y$ in $\C$. Moreover, $\C^{\bullet}$ is locally finite as the morphism spaces are subspaces of the finite-dimensional morphism spaces in $\C$.
\end{proof}

We say a $p$-dg category is {\bf subquotient idempotent complete} if the canonical embedding $\C\hookrightarrow \C^{\bullet}$ is part of a $p$-dg equivalence.

\begin{lemma}
A $p$-dg category $\C$ in $\pdgcat$ is subquotient idempotent complete if and only if it is complete under all left and right submodule idempotents, that is, idempotents $e$ such that $\del(e)=\del(e)e$ (respectively, $\del(e)=e\del(e)$).
\end{lemma}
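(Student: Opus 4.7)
The forward direction is immediate: a left submodule idempotent $e$ (with $e\del(e)=0$) is itself a subquotient idempotent with witness $w=e$, and a right submodule idempotent (with $\del(e)e=0$) is a subquotient idempotent with witness $w=\id_X$. In both cases the axioms $we=ew=e$, $w\del(w)=0$, and $(w-e)\del(w-e)=0$ hold trivially, so subquotient idempotent completeness automatically provides splittings of both kinds.

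For the converse, assume $\C$ is closed under left and right submodule idempotents. Given a subquotient idempotent $e$ on $X$ with witness $w$, my plan is to split $e$ in two successive stages, one per type. First, observe that $w$ is itself a left submodule idempotent, since $w\del(w)=0$ is equivalent to $\del(w)=\del(w)w$. By hypothesis $w$ therefore splits in $\C$, producing $X_w\in\C$ together with structure maps $i_w,p_w$ satisfying \eqref{completioneq1}--\eqref{completioneq2}. Under the identification $\End_\C(X_w)\cong w\End_\C(X)w$ with the restricted differential $\del^{\bullet}$, the idempotent $e$ descends (using $we=ew=e$) to an idempotent $\tilde e:=p_w\,e\,i_w$ on $X_w$.

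The heart of the argument is then to verify that $\tilde e$ is a \emph{right} submodule idempotent on $X_w$ with respect to $\del^{\bullet}$. Granting this, the right submodule hypothesis supplies a splitting of $\tilde e$ inside $\C$, and Proposition~\ref{idempotentcompletion} ensures that such iterated subquotient splittings collapse inside $\C^{\bullet}$, so the resulting object furnishes a splitting of the original $e$ in $\C^{\bullet}$. I expect the main obstacle to be this identity chase inside the restricted differential: one unpacks $\del^{\bullet}(\tilde e)=w\del(e)w$ and combines the three defining identities of a subquotient idempotent with the Leibniz rule to land exactly on $(\id_{X_w}-\tilde e)\del^{\bullet}(\tilde e)=0$ rather than a weaker relation. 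The skeleton of the computation is to use $(w-e)\del(w-e)=0$ together with $w\del(w)=0$ to derive $e\del(e)=w\del(e)+e\del(w)$, and then $\del(ew)=\del(e)$ to rewrite $e\del(w)=\del(e)(1-w)$; these reduce $(w-e)\del(e)w$ to $-\del(e)(1-w)w=0$. Once the right submodule condition for $\tilde e$ is in hand, matching the two-stage splitting to the axiomatic data $(i_e,p_e)$ for $e$ in $\C^{\bullet}$ is a routine verification.
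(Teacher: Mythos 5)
Your proposal is correct and takes essentially the same route as the paper: split the left submodule idempotent $w$ first, observe that $e$ descends to a right submodule idempotent on $X_w$ with respect to the restricted differential $\del^{\bullet}$, and split again. Your identity $(w-e)\del(e)w=0$ is precisely the relation $\overline{\del}(e)=e\overline{\del}(e)$ that the paper checks, derived from the same three axioms, and your explicit treatment of the (trivial) forward direction is if anything more complete than the paper's.
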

\begin{proof}
Assume $\C$ is complete under left and right submodule idempotents. Let $e\colon X\to X$ be a subquotient idempotent dominated by $w$. Then $w$ and $w-e$ are left submodule idempotents. Consider the splitting object $X_w$.  
Further, $e$ is a right submodule idempotent on $X_w$. Indeed, using that $w-e$ is a left submodule idempotent, we find
$$\overline{\del}(e)=w\del(e)w=w\del(e)=e\del(e)=ew\del(e)w=e\overline{\del}(e),$$
where the second equality uses  
$$\del(e)=\del(ew)=\del(e)w+e\del(w)=\del(e)w.$$
Hence, under the assumptions $e$ also splits, giving an object $X_e$ with the properties as in Equation (\ref{completionhoms}).
\end{proof}

%

\begin{lemma}\label{idemcompfin}
If $\C$ is locally finite
, then $\C^{\bullet}$ is again locally finite.
\end{lemma}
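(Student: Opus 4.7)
The plan is to use the defining $p$-dg isomorphism
$$\Hom_{\C^{\bullet}}(X_e, Y_f) \;\cong\; f\,\Hom_\C(X,Y)\,e \;\subseteq\; \Hom_\C(X,Y)$$
from \eqref{completionhoms}, which exhibits every morphism space in $\C^{\bullet}$ as a $\Bbbk$-subspace of a morphism space in $\C$. Thus the task reduces to showing that $\Hom_\C(X,Y)$ is finite-dimensional whenever $X_e$ and $Y_f$ are $p$-dg indecomposable objects of $\C^{\bullet}$, where $e\in\End_\C(X)$ and $f\in\End_\C(Y)$ are the corresponding subquotient idempotents.

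To verify the latter, I would write each of $X$ and $Y$ as a finite biproduct $X = \bigoplus_{i=1}^n X_i$, $Y = \bigoplus_{j=1}^m Y_j$ of $p$-dg indecomposable objects of $\C$, a decomposition available for any object of a locally finite category in $\pdgcat$. Then additivity of $\Hom$ gives
$$\Hom_\C(X,Y)=\bigoplus_{i=1}^n\bigoplus_{j=1}^m\Hom_\C(X_i, Y_j),$$
and each summand is finite-dimensional by the very definition of local finiteness of $\C$ (Definition \ref{pdgfinitary}). Therefore the ambient space $\Hom_\C(X,Y)$, and hence its subspace $f\,\Hom_\C(X,Y)\,e\cong\Hom_{\C^{\bullet}}(X_e, Y_f)$, is finite-dimensional.

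This essentially formalises the offhand remark made at the end of the proof of Proposition \ref{idempotentcompletion}; the present lemma merely gives it the status of a numbered result. I do not expect a real obstacle: the argument is a short chase through the defining equations, and the only point requiring care is the implicit decomposability of arbitrary objects of $\C$ into finite biproducts of $p$-dg indecomposables, which is the standing convention for categories in $\pdgcat$.
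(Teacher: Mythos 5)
Your proof is correct and follows essentially the same route as the paper, which simply observes that morphism spaces in $\C^{\bullet}$ are, via \eqref{completionhoms}, subspaces of morphism spaces in $\C$ and hence finite-dimensional. The only difference is that you make explicit the reduction to $p$-dg indecomposable summands of $X$ and $Y$, a step the paper's one-line argument leaves implicit.
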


\proof
Notice that morphism spaces between two $p$-dg indecomposable objects in 
$\C^{\bullet}$ are subspaces of morphism spaces between two $p$-dg indecomposable objects in $\C$, and are hence finite-dimensional; thus $\C^{\bullet}$ is locally finite. 
\endproof


We can also define the {\bf $p$-dg idempotent completion $\C^\circ$ of $\C$} by adding objects $X_e$ in the same way for $p$-dg idempotents $e$ only. This produces a $p$-dg idempotent complete $p$-dg category.


\begin{lemma}\label{idempotentcomposition}
Let $F\colon \C\to \D$ be a $p$-dg functor of locally finite $p$-dg categories. Then there exists an induced $p$-dg functor $F^{\bullet}\colon \C^{\bullet}\to \D^{\bullet}$ (respectively, $F^\circ\colon \C^\circ\to \D^\circ$). This assignment  is functorial with respect to composition, i.e. $(GF)^{\bullet}=G^{\bullet}F^{\bullet}$ (respectively, $(GF)^\circ=G^\circ F^\circ$).
\end{lemma}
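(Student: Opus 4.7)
The construction will proceed object-by-object and morphism-by-morphism, exploiting the two defining properties of a $p$-dg functor: preservation of composition and the commutation $F\del = \del F$. Together these imply that $F$ sends every piece of data defining a subquotient idempotent completion to the analogous data in the target.

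First I would handle objects. Given a subquotient idempotent $e\colon X\to X$ in $\C$ witnessed by an idempotent $w$ satisfying $we=ew=e$, $w\del(w)=0$, and $(w-e)\del(w-e)=0$, the images $F(e)$ and $F(w)$ are idempotents in $\End_{\D}(F(X))$ by functoriality, satisfy $F(w)F(e) = F(we) = F(e) = F(e)F(w)$, and one has
\begin{equation*}
F(w)\del(F(w)) = F(w)F(\del(w)) = F(w\del(w)) = 0,
\end{equation*}
with the analogous identity for $F(w)-F(e)$. Hence $F(e)$ is a subquotient idempotent in $\D$ witnessed by $F(w)$, and I would define $F^{\bullet}(X_e) := F(X)_{F(e)}$.

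For morphisms I would use the identification \eqref{completionhoms}: a morphism $X_e \to Y_f$ in $\C^{\bullet}$ corresponds to an element $fhe \in f\Hom_{\C}(X,Y)e$ equipped with the twisted differential $\del^{\bullet}$. I set $F^{\bullet}(fhe) := F(f)F(h)F(e) \in F(f)\Hom_{\D}(F(X),F(Y))F(e)$, which I then transport to $\Hom_{\D^{\bullet}}(F(X)_{F(e)}, F(Y)_{F(f)})$ via the inverse of \eqref{completionhoms} applied in $\D$. Preservation of composition and identities is immediate from functoriality of $F$. The only substantive verification is compatibility with the twisted differential, which reduces to the single computation
\begin{equation*}
F\bigl(\del^{\bullet}(fhe)\bigr) = F\bigl(f\,\del(fhe)\,e\bigr) = F(f)\,\del(F(f)F(h)F(e))\,F(e) = \del^{\bullet}\bigl(F^{\bullet}(fhe)\bigr),
\end{equation*}
using $F\del = \del F$ in the middle equality.

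The step requiring the most care, rather than genuine difficulty, is the bookkeeping between the two equivalent descriptions of $\Hom_{\C^{\bullet}}$ in \eqref{completionhoms}; naturality of these isomorphisms in the chosen subquotient idempotents makes this routine. Functoriality $(GF)^{\bullet} = G^{\bullet} F^{\bullet}$ then drops out by evaluating both sides on objects $X_e$, giving the common value $GF(X)_{GF(e)}$, and on morphisms by the identity $fhe \mapsto GF(f)GF(h)GF(e)$. The $p$-dg idempotent completion case $(-)^\circ$ is strictly easier: one restricts to idempotents $e$ with $\del(e)=0$, and since $\del(F(e)) = F(\del(e)) = 0$, the very same assignment defines $F^\circ$, with the differential compatibility check reducing to the trivial case $w=\id$.
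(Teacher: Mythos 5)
Your proposal is correct and follows exactly the route of the paper's (one-line) proof: observe that $F(e)$ is a subquotient idempotent witnessed by $F(w)$ because $F$ preserves composition and commutes with $\del$, define $F^{\bullet}(X_e)=F(X)_{F(e)}$, extend to morphisms through the identification \eqref{completionhoms}, and read off functoriality. Your verification of compatibility with the twisted differential $\del^{\bullet}$ and the reduction of the $(-)^\circ$ case to $\del(F(e))=F(\del(e))=0$ supply precisely the details the paper leaves implicit.
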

\begin{proof}
If $e$ is a subquotient idempotent, then so is $F(e)$. Thus, we may define a functor $F^{\bullet}$ by mapping $X_e$ to $F(X)_{F(e)}$, and extending to morphisms accordingly. Compatibility with composition of functors follows directly.
\end{proof}

\subsection{Fantastic filtrations and strong finitarity}\label{idempfant}

Let $\C$ be a $p$-dg category in $\pdgcat$ and $\A$ a $p$-dg subcategory of $\C$. We say that $X\in \C$ has a {\bf fantastic filtration} by objects in $\A$ if there exists a filtration $0=F_0\subset F_1\subset \cdots \subset F_m = X$, whose successive subquotients are objects $X_1,\dots, X_m \in \A$, and maps 
$$\adj{u_i}{X}{X_i}{v_i}$$
such that 
$u_iv_j = \delta_{i,j}\id_{X_i}$, $\sum_{j=1}^m v_ju_j = \id_{X}$, $\del(u_i)v_i = 0$ and $\im\del(v_i)u_i \subset F_{i-1}$ (cf. \cite[Section 5.1]{EQ}).

Let $\C^{\bullet}$ denote the subquotient idempotent completion  of $\C$, and $\B$  the $p$-dg subcategory consisting of the additive and grading shift closure of $\Bbbk$-indecomposable objects in $\C^{\bullet}$. Notice that not necessarily all idempotents in $\C^{\bullet}$ split, cf. \cite[Remark 4.3]{EQ2}.

In order to obtain sufficiently well-behaved $p$-dg categories, we address the question under what circumstances $\overline{\C}$ and $\overline{\B}$ are $p$-dg equivalent.  Both of these categories naturally live inside $\overline{\C^{\bullet}}$.

\begin{lemma}\label{ABsubs}$~$

\begin{enumerate}[(i)]
\item\label{ABsubs1} As subcategories of $\overline{\C^{\bullet}}$, the $p$-dg category $\overline{\C}$ is $p$-dg essentially contained in  $\overline{\B}$ if and only if every $X\in \C$ has a fantastic filtration by objects in $\B$.
\item\label{ABsubs2}  As subcategories of $\overline{\C^{\bullet}}$, the $p$-dg category $\overline{\B}$ is $p$-dg essentially contained in  $\overline{\C}$ if and only if, for every $Y\in \B$, $P_Y^{\op}$ is semi-free over $\C$.
\end{enumerate}

\end{lemma}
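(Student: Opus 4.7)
The plan is to prove both items by working under the equivalence $\overline{(-)} \simeq (-)^{\op}\cof$ of Lemma \ref{matcatequiv}. Once one recognises that an object of $\overline{\B}$ (respectively $\overline{\C}$) corresponds to a compact semi-free $(\C^{\bullet})^{\op}$-module built from the representables $P^{\op}_Y$ with $Y \in \B$ (respectively $F_m \in \C$), the $p$-dg essential containments translate into statements about which semi-free modules are built from which building blocks. Both items then become translation exercises between the matrix data of Definition \ref{barcat} and the filtration data named in each item.

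For \eqref{ABsubs1}, I focus on the constructive $\Leftarrow$ direction. Given $X \in \C$ with a fantastic filtration by $X_1, \ldots, X_m \in \B$ and the associated maps $u_i, v_i$, I set $\alpha_{i,j} := -\del(u_i)v_j$ for $i < j$ and $\alpha_{i,j} := 0$ otherwise, and consider the candidate object $(\bigoplus_i X_i, \alpha)$. The mutual inverse identities $u_iv_j = \delta_{ij}\id_{X_i}$ and $\sum_j v_ju_j = \id_X$ make the column $(u_i)^T$ and row $(v_j)$ mutually inverse isomorphisms between $(X,0)$ and $(\bigoplus_i X_i, \alpha)$ at the level of the underlying additive category. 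The identity $\del(u_i)v_i=0$ forces the diagonal entries of $\alpha$ to vanish, the condition $\im\del(v_i) \subset F_{i-1}$ forces strict upper triangularity, and a direct Leibniz-rule computation (using $\sum v_ju_j=\id_X$ to rewrite $\del(u_i)$ and $\del(v_j)$ in terms of the $\alpha_{i,j}$) shows that $(u_i)^T$ and $(v_j)$ are annihilated by the differential of Definition \ref{barcat}; Lemma \ref{pdgisolemma} then promotes these to $p$-dg isomorphisms. Well-definedness of $(\bigoplus_i X_i,\alpha)$ as an object of $\overline{\B}$, i.e.\ verification of \eqref{matrixdiffeqn}, reduces via the same identities to the $p$-nilpotency of $\del$ acting on $P^{\op}_X$. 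For $\Rightarrow$, starting from a $p$-dg isomorphism $(X,0) \cong (\bigoplus_i X_i,\alpha)$ in $\overline{\C^{\bullet}}$, I read off its matrix entries and those of its inverse as $u_i$ and $v_i$, set $F_k := \sum_{i\leq k}\im v_i$, and reverse the above computations to recover the fantastic-filtration axioms.

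For \eqref{ABsubs2}, applying Lemma \ref{matcatequiv} to $\C^{\bullet}$ identifies $(Y,0)\in\overline{\B}$ with the representable $P^{\op}_Y$ viewed as a $(\C^{\bullet})^{\op}$-module, and identifies objects of $\overline{\C}$ with compact semi-free $\C^{\op}$-modules. The $\Rightarrow$ direction is then immediate: a $p$-dg isomorphism between $(Y,0)$ and some $(\bigoplus_m F_m,\beta)\in\overline{\C}$ exhibits $P^{\op}_Y$ as a compact semi-free $\C^{\op}$-module. For $\Leftarrow$, given $(\bigoplus_i Y_i, \alpha)\in \overline{\B}$, the corresponding module carries a $p$-dg filtration with subquotients $P^{\op}_{Y_i}$ coming from the upper-triangular structure of $\alpha$; restricting to $\C^{\op}$ and refining each step by the assumed compact semi-free filtration of each $P^{\op}_{Y_i}$ produces a compact semi-free $\C^{\op}$-filtration of the whole module, so the object lies in the $p$-dg essential image of $\overline{\C}$ by Lemma \ref{matcatequiv}.

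The main obstacle lies in the $\Leftarrow$ direction of \eqref{ABsubs1}: one must verify both that the constructed $\alpha$ satisfies \eqref{matrixdiffeqn} and that the candidate pair $(u_i)^T$, $(v_j)$ is $\del$-closed in $\overline{\C^{\bullet}}$. Both computations rest on a careful bookkeeping of the Leibniz rule applied to the fantastic-filtration identities, complicated by the fact that the individual $u_i$ and $v_i$ are not themselves $\del$-closed; the cancellations that make everything work out only occur after summing over the filtration index.
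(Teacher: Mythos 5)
Your proposal is correct and follows essentially the same route as the paper: for (i) both directions translate between the fantastic-filtration data $(u_i, v_j)$ and the matrix $\alpha$ with entries $u_i\del(v_j) = -\del(u_i)v_j$, with strict upper triangularity coming from $\im\del(v_i)u_i\subset F_{i-1}$ and the $\del$-closedness of the mutually inverse column/row following from the Leibniz rule (the paper organises the converse as an induction on filtration length, but that is cosmetic), while (ii) is the same unwinding of definitions via semi-free modules that the paper dispatches in one line. No gaps.
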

\begin{proof}
Assume that $\overline{\C}$ is $p$-dg essentially contained in $\overline{\B}$ and let $X$ be any object of $\C$. By assumption, $X$ is $p$-dg isomorphic to an object $X'=\left( \bigoplus_{j=1}^m X_j,\alpha \right)$, where $X_j$ are objects in $\B$. We define a filtration by $F_i=\bigoplus_{j=1}^i X_i$ for all $i=0,\ldots,m$. We use the obvious projection maps $u_i\colon X'\to X_i$ and injection maps $v_i\colon X_i \to X'$ (composed with the $p$-dg isomorphism). Then clearly $u_iv_j = \delta_{i,j}\id_{X_i}$ and $\sum_{j=1}^m v_ju_j = \id_{X'}$. We further compute, using the differential in $\overline{\C}$,
\begin{align*}
\del(v_i)=&(\del(u_jv_i))_j+ \alpha v_i-v_i 0=\sum_{j=1}^{i-1}\alpha_{ji}v_i.
\end{align*}
This shows that the image of $\del(v_i)$ (and hence the image of $\del(v_i)u_i$) is contained in $F_{i-1}$. Further, $u_i\del(v_i)=0$ and hence $\del(u_i)v_i=0$ using the Leibniz rule. This shows that the $F_i$ give a fantastic filtration of $X'$ by objects in $\B$, which under $p$-dg isomorphism translates to such a filtration of $X$.

Conversely, assume that every $X$ has a fantastic filtration $0=F_0\subset F_1\subset \ldots \subset F_m=X$ where all subquotients $X_i$ are objects of $\B$. We prove that $X$ lies in the $p$-dg essential image of the subcategory $\overline{\B}$ by induction on the length $m$. The statement is clear for $m=1$. Assume that $F_{m-1}$ is $p$-dg isomorphic to the object $\left(\bigoplus_{j=1}^{m-1}X_j,\alpha \right)$ of $\overline{\B}$. The map $\del(v_m)$  factors through $F_{m-1}$ and we can hence consider the compositions $\alpha_{m,j}\colon u_j\del(v_m)\colon X_m\to X_j$ for $j=1,\ldots,m-1$. This constructs the object $\widetilde{X}:=\left(\bigoplus_{j=1}^{m}X_j,\alpha \right)$, which is $p$-dg isomorphic to $X$.

\eqref{ABsubs2} holds by definition, as $\B$ is additively generated by $\Bbbk$-indecomposables.
\end{proof}

\begin{corollary}\label{ABequiv}
Assume that every $X\in \C$ has a fantastic filtration by objects in $\B$ and for every $Y\in \B$, $P_Y^{\op}$ is semi-free over $\C$. In this case, the categories  $\overline{\C}$ and $\overline{\B}$ are $p$-dg equivalent.
\end{corollary}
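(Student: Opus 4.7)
The plan is to package together the two statements of Lemma \ref{ABsubs} using Lemma \ref{pdgequivlemma}. By the first hypothesis and Lemma \ref{ABsubs}\eqref{ABsubs1}, every object of $\overline{\C}$ is $p$-dg isomorphic (inside $\overline{\C^{\bullet}}$) to an object of $\overline{\B}$; by the second hypothesis and Lemma \ref{ABsubs}\eqref{ABsubs2}, every object of $\overline{\B}$ is $p$-dg isomorphic (again inside $\overline{\C^{\bullet}}$) to an object of $\overline{\C}$. So both $\overline{\C}$ and $\overline{\B}$ sit as $p$-dg full subcategories of $\overline{\C^{\bullet}}$ that have the same essential image.

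To extract an actual $p$-dg equivalence from this, I would construct a $p$-dg functor $F\colon \overline{\B}\to \overline{\C}$ by hand. For each object $Y\in \overline{\B}$, the second hypothesis provides an object $F(Y)\in \overline{\C}$ together with a chosen $p$-dg isomorphism $\eta_Y\colon Y\to F(Y)$ in $\overline{\C^{\bullet}}$; on morphisms, define
\[
F(f) := \eta_{Y'} \circ f \circ \eta_Y^{-1} \quad\text{for } f\colon Y\to Y'.
\]
Functoriality is immediate from the definition. To see that $F$ is a $p$-dg functor, one uses that $\eta_Y$ and $\eta_Y^{-1}$ have degree zero and are annihilated by $\partial$ (Lemma \ref{pdgisolemma}), so conjugation by them commutes with the differential on the enriched hom. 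Since $\overline{\C}$ and $\overline{\B}$ are full $p$-dg subcategories of $\overline{\C^{\bullet}}$, this conjugation is a $p$-dg isomorphism $\Hom_{\overline{\B}}(Y,Y')\cong \Hom_{\overline{\C}}(F(Y),F(Y'))$, so $F$ is fully faithful.

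Finally, $p$-dg essential surjectivity of $F$ is precisely Lemma \ref{ABsubs}\eqref{ABsubs1}: any $X\in \overline{\C}$ is $p$-dg isomorphic to some object of $\overline{\B}$, which $F$ sends (up to $p$-dg isomorphism) to an object of $\overline{\C}$ that must in turn be $p$-dg isomorphic to $X$ in $\overline{\C^{\bullet}}$, hence in $\overline{\C}$ by full faithfulness of the inclusion. Applying Lemma \ref{pdgequivlemma} then yields that $F$ is (part of) a $p$-dg equivalence. There is no serious obstacle here; the only point to double-check is that conjugation by a $p$-dg isomorphism is indeed a $p$-dg natural bijection on homs, which is a direct consequence of the Leibniz rule together with $\partial \eta_Y = 0 = \partial \eta_Y^{-1}$.
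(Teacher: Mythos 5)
Your argument is correct and is exactly the intended deduction: the paper leaves the corollary as an immediate consequence of Lemma \ref{ABsubs}, since $\overline{\C}$ and $\overline{\B}$ sit as full $p$-dg subcategories of $\overline{\C^{\bullet}}$ with the same $p$-dg essential image, and your explicit conjugation construction together with Lemma \ref{pdgequivlemma} is the standard way to turn that into a $p$-dg equivalence. No gaps.
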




\begin{definition}\label{strongfindef}
We call a $p$-dg category $\C$ in $\pdgcat$  {\bf strongly finitary}, if, letting $\B$ denote the full $p$-dg subcategory on the additive and grading shift closure of $\Bbbk$-indecomposable objects in $\C$, $\B$ has only finitely many $\Bbbk$-indecomposable objects up to $p$-dg isomorphism and grading shift, and the natural embedding $\overline{\B}\hookrightarrow \overline{\C}$ is $p$-dg essentially surjective. That is, every object has a finite fantastic filtration by  $\Bbbk$-indecomposable objects in $\C$.
\end{definition}

It follows that if $\C$ is strongly finitary $p$-dg category, then $\overline{\C}$ is also strongly finitary, since every object in $\overline{\C}$ has a fantastic filtration by $\Bbbk$-indecomposables objects which are of the form  $(X,(0))$ where $X$ is $\Bbbk$-indecomposable in $\C$.

\begin{lemma}
If $\C$ is a strongly finitary $p$-dg category, then $[\C]$ is an object in the $2$-category of finitary $\Bbbk$-linear categories $\addcat$ as defined in \cite{MM2}.
\end{lemma}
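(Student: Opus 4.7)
The plan is to verify that $[\C]$ satisfies the four defining conditions of a finitary $\Bbbk$-linear category in the sense of \cite{MM2}: small, $\Bbbk$-linear additive, finite-dimensional morphism spaces, only finitely many isomorphism classes of indecomposable objects, and idempotent split (Karoubian). The strategy is to reduce the statement to verifying these properties for $[\B]$, which, by construction, is a much more concrete category on which Krull--Schmidt applies directly.

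First, I would establish a chain of equivalences $[\C]\simeq[\overline{\C}]\simeq[\overline{\B}]\simeq[\B]$. The first equivalence was already noted immediately after Lemma~\ref{matcatequiv}: the canonical inclusion $\iota_\C$ descends to an equivalence $[\C]\to[\overline{\C}]$. The second equivalence comes from strong finitarity: since $\overline{\B}\hookrightarrow\overline{\C}$ is $p$-dg essentially surjective (every object of $\overline{\C}$ has a fantastic filtration by $\Bbbk$-indecomposable objects) and fully faithful (as $\B$ is a full $p$-dg subcategory of $\C$), forgetting the $H$-action and grading yields an equivalence $[\overline{\B}]\to[\overline{\C}]$. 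For the third equivalence, any object $\bigl(\bigoplus_m X_m,\alpha\bigr)$ of $\overline{\B}$ is isomorphic in $[\overline{\B}]$ to $\bigl(\bigoplus_m X_m,0\bigr)$ via the identity matrix: this matrix is a morphism in $\overline{\B}$ (the $p$-dg morphism condition is forgotten in $[\overline{\B}]$), so the image of $\iota_\B$ is essentially surjective, hence $[\B]\simeq[\overline{\B}]$.

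Next, I would verify each finitary axiom directly on $[\B]$. Additivity is immediate as $\B$ is by construction closed under biproducts. For morphism spaces, $\C$ is locally finite, so $\Hom_\C(X,Y)$ is finite-dimensional (as a graded $H$-module, hence as a $\Bbbk$-space) for $\Bbbk$-indecomposable $X,Y$ (since $\Bbbk$-indecomposable implies $p$-dg indecomposable); morphism spaces between arbitrary objects of $[\B]$ are matrices of such and remain finite-dimensional. For finiteness of isomorphism classes of indecomposables, strong finitarity provides finitely many $\Bbbk$-indecomposables in $\B$ up to $p$-dg isomorphism and grading shift; on passing to $[\B]$, the grading shift becomes an isomorphism, so these yield finitely many isomorphism classes. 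Since a $\Bbbk$-linear idempotent on $X$ in $\B$ is the same datum as an idempotent on $X$ in $[\B]$, an object of $[\B]$ is indecomposable if and only if the corresponding object in $\B$ is $\Bbbk$-indecomposable, matching the classification.

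Finally, idempotent splitting is obtained via Krull--Schmidt: for each $\Bbbk$-indecomposable $X\in\B$, the algebra $\End_{[\B]}(X)$ is finite-dimensional and contains no non-trivial idempotents, hence is a local $\Bbbk$-algebra. Every object of $[\B]$ is a finite biproduct of such $\Bbbk$-indecomposables, so the classical Krull--Schmidt--Azumaya theorem applies and any idempotent on such a biproduct splits. Transferring back along the equivalence $[\C]\simeq[\B]$ then yields that $[\C]$ is idempotent split and satisfies all the remaining finitary axioms. The main subtlety in executing the plan is the careful identification $[\overline{\B}]\simeq[\B]$, in particular arguing that the identity matrix genuinely provides the relevant isomorphism once the differential is forgotten; everything else reduces to standard Krull--Schmidt bookkeeping.
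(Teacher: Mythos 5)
Your proof is correct and follows essentially the same route as the paper's: finite-dimensionality of morphism spaces from local finiteness, fantastic filtrations giving decompositions into $\Bbbk$-indecomposables, and the bound on isomorphism classes by $\Bbbk$-indecomposables up to $p$-dg isomorphism and grading shift. The paper's proof is much terser (it leaves the Krull--Schmidt/idempotent-splitting step and the reduction $[\C]\simeq[\B]$ implicit), so your explicit treatment of these points is a harmless elaboration rather than a different argument.
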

\begin{proof}
Morphism spaces are finite-dimensional by assumption. Furthermore, every object has a finite fantastic filtration by $\Bbbk$-indecomposable objects. This implies that the objects in $[\C]$ decompose as finite direct sums of indecomposable objects. The number of indecomposable objects up to isomorphism is bounded above by the number of $\Bbbk$-indecomposable objects in $\C$ up to $p$-dg isomorphism and grading shift, which is finite by assumption.
\end{proof}

\begin{definition}\label{barclosure}
Let $\C$ be a strongly finitary $p$-dg category and $\S$ be the full $p$-dg subcategory on a subset of objects of $\overline{\C}$. We define the {\bf bar closure} $\widehat{\S}$ of $\S$ to be $\overline{\S^\dagger}\subset \overline{\C}$, where $\S^\dagger$ denotes the closure under $\Bbbk$-isomorphism and grading shift of all $\Bbbk$-indecomposable objects appearing in any fantastic filtration of objects in $\S$.
\end{definition}

Notice that $[\widehat{\S}]$ is equivalent to $\add([\S])$. 
If $\C$ is strongly finitary, and $\S$ a full subcategory on a set of objects of $\C$, then the bar closure $\widehat{\S}$ is again strongly finitary.

For the purpose of giving targets for $p$-dg $2$-representations in Chapter \ref{pdg2repchapter}, we define the $p$-dg $2$-subcategory $\cofcat$ (respectively 
$\cofcat^{\mathrm{sf}}$) of $\pdgcat$ as the $2$-category whose
\begin{itemize}
\item objects are $p$-dg categories $p$-dg equivalent to $\overline{\C}$ for a locally finite (respectively 
 strongly finitary) $p$-dg category $\C$;
\item $1$-morphisms are $p$-dg functors between such categories;
\item $2$-morphisms are all morphisms of such $p$-dg functors.
\end{itemize}

For future use, we record the following observation.

\begin{lemma}\label{sameideal} 
Let $\C$ be a strongly finitary $p$-dg category, $X$ a $\Bbbk$-indecomposable object in $\C$, and $\I$ the two-sided $p$-dg ideal generated by $\id_X$. Then $[\I]$ is equal to the two-sided ideal generated by $\id_X$ in $[\C]$.
\end{lemma}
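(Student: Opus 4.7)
My plan is to show that the ordinary two-sided ideal $J$ generated by $\id_X$ in $[\C]$, when regarded inside $\C$, is already closed under the $p$-differential $\del$, and hence is itself a two-sided $p$-dg ideal of $\C$ containing $\id_X$. Combined with the obvious containment $J\subseteq [\I]$ (since $\I$ contains $\id_X$ and is two-sided), minimality of $\I$ as a $p$-dg ideal will then force $\I\subseteq J$, giving the desired equality $[\I]=J$.

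The only step requiring any work will be verifying $\del(J)\subseteq J$. An arbitrary element of $J$ can be written as a finite sum $\varphi=\sum_i f_i\circ g_i$ with $g_i\colon A_i\to X$ and $f_i\colon X\to B_i$ (a factorization through $X^{\oplus n}$ in the additive category $[\C]$ decomposes componentwise into such summands). Applying the Leibniz rule, which holds because composition in $\C$ is a morphism of graded $H$-modules, yields
\begin{align*}
\del(f_i\circ g_i)=\del(f_i)\circ g_i+f_i\circ\del(g_i),
\end{align*}
and since $\del$ preserves sources and targets of homogeneous morphisms, $\del(f_i)\colon X\to B_i$ and $\del(g_i)\colon A_i\to X$ still factor through $X$, so each summand lies in $J$. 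Linearity then extends this to arbitrary elements.

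I do not expect any serious obstacle: once the framing in terms of minimality of the $p$-dg ideal $\I$ is in place, the argument reduces to a one-line Leibniz computation together with the observation that $\del$ respects sources and targets. I remark that strong finitarity of $\C$ is never actually invoked — the statement would go through for any locally finite $p$-dg category in $\pdgcat$ — and presumably appears in the hypothesis only because this is the ambient setting in which the lemma will be applied later in the paper.
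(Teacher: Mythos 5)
Your proof is correct, but it takes a genuinely different route from the paper's. You show directly that the ordinary ideal $J$ generated by $\id_X$ in $[\C]$ is already closed under $\del$: every element is a finite sum $\sum_i f_i\circ g_i$ with $g_i\colon A\to X$ and $f_i\colon X\to B$, and the Leibniz rule together with the fact that $\del$ preserves sources and targets keeps each summand factoring through $X$. Minimality of $\I$ among $p$-dg ideals containing $\id_X$ then gives $[\I]\subseteq J$, while $J\subseteq[\I]$ is immediate. The paper instead only proves the easy containment $\I_{[\ccC]}\subseteq[\I]$ directly and, for the reverse, invokes strong finitarity: it describes the ideal in terms of subquotient idempotents $e$ cutting out $X$ as a $\Bbbk$-indecomposable factor in a fantastic filtration, and checks that $\del(e)=\del(e)e+e\del(e)$ stays in $\I_{[\ccC]}$. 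Your observation that strong finitarity is never needed is accurate: your argument works for any $p$-dg category in which composition is a morphism of graded $H$-modules (which is part of the definition), and it is both more elementary and more general than the published proof. The paper's phrasing in terms of idempotents is arguably motivated by how the lemma is reused later (in the proof of Lemma \ref{biggercellskill}, where one tracks which identity $2$-morphisms lie in a $\cC$-stable ideal), but for the statement as given your one-line Leibniz computation suffices.
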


\proof
By definition, the ideal $\I_{[\C]}$ generated by $\id_X$ in $[\C]$ is obtained by starting with $\id_X$ and closing under compositions and the $\Bbbk$-action. Since hom-spaces in $\C$ and $[\C]$ are the same, $\I_{[\C]}$ is contained in $[\I]$. We need to check that additionally closing under $\del$ does not enlarge the ideal further. Since  $\C$ is strongly finitary, $\I$ consists of objects $Y$ such that $X$ is a $\Bbbk$-indecomposable factor in a fantastic filtriation of $Y$.
Such a factor of $Y$ is determined by a subquotient idempotent $e$,  and we have that $\del(e)$ is in $\I_{[\C]}$. Indeed $\del(e) = \del(e^2) = \del(e)e+e\del(e)$, 
 which is in $\I_{[\C]}$ as desired.\endproof

\subsection{Associating \texorpdfstring{$p$}{p}-dg algebras to \texorpdfstring{$p$}{p}-dg categories}\label{algcat}

\begin{example}\label{algebraex} (cf.\ \cite[Remark 2.26]{EQ})
Assume $\C$ is a locally finite $p$-dg category with a finite set of objects $\mathtt{X}:=\{X_1,\dots, X_r\}$ such that every object in $\C$ has a fantastic filtration by shifts of objects in $\mathtt{X}$. Then taking the endomorphism algebra of $X:=\bigoplus_{i=1}^r X_i$ produces a $p$-dg algebra $A_\C:=\End_\C(X)^\op$ which is finite-dimensional. 

Conversely, let $A$ be a  finite-dimensional $p$-dg algebra. We want to view $A$ as a locally finite $p$-dg category, denoted by $\A$. We choose a decomposition $1=e_{1}+\cdots+ e_{r}$ of the identity into $p$-dg idempotents. We then define $\A$ to be the closure under finite biproducts and grading shifts of objects $Y_i$ corresponding to $e_{1},\ldots, e_{r}$,  and morphism spaces $\A(Y_i,Y_j)=e_i Ae_j$, which are $H$-submodules of $A$. 
\end{example}

%
%

Note that the $p$-dg category $\A$ is typically not subquotient idempotent complete.  If the decomposition $1=e_{1}+\cdots+ e_{r}$ is one of indecomposable $p$-dg idempotents, then 
it is sufficient but not necessary that all idempotents in $A$ are $p$-dg idempotents (i.e. annihilated by $\partial$). Note that all central idempotents in a $p$-dg algebra are necessarily $p$-dg idempotents. Indeed, for any idempotent $e$, $e\partial(e)e=0$. This, if $e$ is central, implies $\partial(e)=0$ in any characteristic.


Associating $p$-dg categories to $p$-dg algebras and vice versa, we have the following lemma.

\begin{lemma}\label{cofoverline}
Let $\C$ be a locally finite $p$-dg category with a finite set of objects $\mathtt{X}:=\{X_1,\dots, X_r\}$ such that every object in $\C$ has a fantastic filtration by shifts of objects in $\mathtt{X}$. Then the category $\overline{\C}$ is $p$-dg equivalent to the category $\overline{\A_{A_\C}}$ where $A_\C$ is defined in Example \ref{algebraex} and we choose the decomposition $1= \id_{X_1}+\cdots \id_{X_r}$.
\end{lemma}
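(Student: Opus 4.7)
The plan is to exhibit a fully faithful, $p$-dg dense $p$-dg functor $\overline{F}\colon \overline{\A_{A_\C}} \to \overline{\C}$ and then invoke Lemma \ref{pdgequivlemma}.

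First I would build a fully faithful $p$-dg functor $F\colon \A_{A_\C}\to \C$ by sending the generator $Y_i$ corresponding to $e_i = \id_{X_i}$ to the object $X_i$ and extending additively via the fixed decomposition $1 = e_1+\cdots+e_r$. On morphisms, the space $\A_{A_\C}(Y_i,Y_j) = e_iA_\C e_j$, viewed inside $A_\C = \End_\C(X)^{\op}$, is naturally identified in an $H$-equivariant way with $\Hom_\C(X_i,X_j)$; a brief check shows that the composition convention adopted in Example \ref{algebraex} matches composition in $\C$ under this identification (the two applications of $(-)^{\op}$ cancel out), so $F$ is genuinely covariant. Thus $F$ is a fully faithful $p$-dg functor whose essential image in $\C$ is the closure under biproducts and grading shifts of $\{X_1,\dots,X_r\}$.

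Next, Lemma \ref{extensionlemma}\eqref{extensionlemma1} produces $\overline{F}\colon \overline{\A_{A_\C}} \to \overline{\C}$, and Lemma \ref{extensionlemma}\eqref{extensionlemma2} upgrades full faithfulness of $F$ to full faithfulness of $\overline{F}$ on morphism $H$-modules. The remaining step is $p$-dg density. Given an object $Z = (\bigoplus_{k=1}^s Z_k, \alpha)\in\overline{\C}$, each summand $Z_k$ admits, by hypothesis, a fantastic filtration by shifts of the $X_i$. The inductive argument in the proof of Lemma \ref{ABsubs}\eqref{ABsubs1}, applied with the essential image of $F$ playing the role of $\B$, produces for each $k$ a $p$-dg isomorphism from $Z_k$ to an object $Z_k'$ of the form $\overline{F}(W_k)$ with $W_k\in \overline{\A_{A_\C}}$. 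Assembling these yields a $p$-dg isomorphism $(\bigoplus_k Z_k, 0)\cong (\bigoplus_k Z_k', 0)$ in $\overline{\C}$, along which the differential data $\alpha$ transports to an upper-triangular matrix $\alpha'$ of morphisms between the $Z_k'$, giving an object of $\overline{\overline{\A_{A_\C}}}$; collapsing nested matrices via the $p$-dg equivalence $\overline{\overline{\A_{A_\C}}}\simeq \overline{\A_{A_\C}}$ of Lemma \ref{extensionlemma}\eqref{extensionlemma3} exhibits this as $\overline{F}(Z')$ for a single object $Z'\in\overline{\A_{A_\C}}$ that is $p$-dg isomorphic to $Z$.

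The main technical obstacle I anticipate is bookkeeping rather than conceptual: verifying that $e_iA_\C e_j\cong \Hom_\C(X_i,X_j)$ identifies the differential and composition correctly (so that $F$ lands in $\C$ and not in $\C^{\op}$), and then checking that the summand-wise $p$-dg isomorphisms assemble into a morphism in $\overline{\C}$ compatible with the off-diagonal part of $\alpha$, before appealing to Lemma \ref{extensionlemma}\eqref{extensionlemma3} to collapse the resulting two-layer matrix. Once these are handled, Lemma \ref{pdgequivlemma} gives the asserted $p$-dg equivalence $\overline{\A_{A_\C}} \simeq \overline{\C}$.
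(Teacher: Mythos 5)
Your proof is correct and takes essentially the same route as the paper: both identify $\A_{A_\C}$ with the additive and grading-shift closure of $\{X_1,\dots,X_r\}$ inside $\C$ via the identification of $\Hom_\C(X_i,X_j)$ with the corresponding corner of $A_\C$, and both use the fantastic-filtration hypothesis (the mechanism of Lemma \ref{ABsubs}) to pass from that subcategory to all of $\overline{\C}$. The only organizational difference is that the paper factors the equivalence as $\overline{\C}\simeq\overline{\X}\simeq\overline{\A_{A_\C}}$, with the comparison functor defined on the subcategory $\X$ of $\C$, whereas you build the single functor $\overline{\A_{A_\C}}\to\overline{\C}$ and check full faithfulness and $p$-dg density directly.
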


\proof By our assumption on $\C$, setting $\X$ to be the closure under grading shifts and finite biproducts of the full $p$-dg subcategory of $\C$ on objects in $\mathtt{X}$, we have a $p$-dg  equivalence between $\overline{\C}$ and $\overline{\X}$, so if suffices to prove that $\overline{\X}$ and $\overline{\A_{A_\C}}$ are $p$-dg equivalent.

In fact, we will prove that $\X$ and $\A_{A_\C}$ are $p$-dg equivalent.
We define a $p$-dg functor $\X\to \A_{A_\C}$ sending $X_i$ to $Y_i$,  and extend the definition to morphisms using the $p$-dg isomorphisms
$$\X(X_i,X_j)=\C(X_i,X_j)\cong\id_{X_j}\circ \End_\C(X)\circ\id_{X_i}= \A_{A_\C}(Y_i,Y_j).$$
This assignment extends to finite biproducts, and thus provides a fully faithful $p$-dg functor which is $p$-dg dense by construction.
\endproof

In the following,  denote by $A\plmod$ the $p$-dg category of finite-dimensional left $A$-modules in $H\lmod$ enriched via internal homs \cite[2.24]{EQ}. 

\begin{lemma}\label{moduletranslation}
Let $\C$ be a $p$-dg category satisfying the assumptions of Lemma \ref{algebraex}. Then there is a $p$-dg equivalence of the $p$-dg categories $\C^{\op}\plmod$ and $A_\C\plmod$.
\end{lemma}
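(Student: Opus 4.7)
The approach is a Morita-style equivalence: the functor
\[
\Phi : \C^{\op}\plmod \to A_\C\plmod, \qquad M \mapsto \bigoplus_{i=1}^r M(X_i),
\]
with $A_\C=\End_\C(X)^{\op}$ acting via the contravariant action of $\End_\C(X)$ on $M$, is the candidate $p$-dg equivalence. That $\Phi$ is a $p$-dg functor is immediate from $M$ being a $p$-dg functor and from the compatibility of the $H$-action with composition.

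Let $\X\subseteq \C$ denote the closure of $\{X_1,\dots,X_r\}$ under biproducts and grading shifts. The proof of Lemma~\ref{cofoverline} already yields a $p$-dg equivalence $\X\simeq \A_{A_\C}$, which in turn induces a $p$-dg equivalence $\X^{\op}\plmod\simeq A_\C\plmod$ (tracking the standard identification between left $A_\C$-modules and $p$-dg functors on $\A_{A_\C}$). It therefore suffices to show that restriction along the inclusion $\X\hookrightarrow \C$ gives a $p$-dg equivalence $\C^{\op}\plmod\overset{\sim}{\to}\X^{\op}\plmod$, and to check that this factorisation agrees with $\Phi$.

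Mimicking the argument of Lemma~\ref{ABsubs}\eqref{ABsubs1}, the fantastic filtration hypothesis yields a $p$-dg equivalence $\overline{\X}\simeq \overline{\C}$. I would use this in two ways. First, to produce a quasi-inverse to restriction on objects: extend $N\in \X^{\op}\plmod$ to $\overline{\X}^{\op}$ by the rule $(\bigoplus_m F_m, \alpha)\mapsto \bigoplus_m N(F_m)$ with $\partial$ twisted by the (contravariant action of) $\alpha$, in the style of Lemma~\ref{extensionlemma}\eqref{extensionlemma1}, then transport across $\overline{\X}\simeq \overline{\C}$ and restrict along $\iota_\C$. Second, to extend a morphism $\eta\colon M|_\X\to N|_\X$ to a natural transformation
\[
\widetilde{\eta}_Y := \sum_j N(u_j)\circ \eta_{X_j}\circ M(v_j)\colon M(Y)\to N(Y),
\]
where $(u_j,v_j)$ arises from a fantastic filtration of $Y$. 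Naturality of $\widetilde{\eta}$ follows from $\sum_j v_j u_j = \id_Y$ combined with the $A_\C$-linearity of $\eta$ applied to the block decomposition $f = \sum_{k,j} v_k(u_k f v_j)u_j$ of any $f\in \C(Y',Y)$, and independence of the chosen filtration follows because any two choices become $p$-dg isomorphic in $\overline{\C}$.

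The main obstacle is compatibility with $\partial$. This is precisely where the full strength of a fantastic filtration --- as opposed to a mere idempotent decomposition --- enters, namely the identities $\partial(u_j)v_j=0$ and $\im\partial(v_j)u_j\subseteq F_{j-1}$. These ensure both that the twisted matrix on $\bigoplus_m N(F_m)$ genuinely defines a $p$-differential (the check is parallel to the one in the proof of Lemma~\ref{matcatequiv}), and that $\partial(\widetilde{\eta})$ agrees with $\widetilde{\partial(\eta)}$ up to terms that vanish by the upper-triangular structure of the filtration. Once these verifications are carried out, $\Phi$ is full, faithful, and $p$-dg dense, and hence a $p$-dg equivalence by Lemma~\ref{pdgequivlemma}.
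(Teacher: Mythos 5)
Your proposal is correct and follows the same skeleton as the paper's proof: both factor the equivalence as $\C^{\op}\plmod\simeq\X^{\op}\plmod\simeq A_\C\plmod$, with the second equivalence coming from the idempotent decomposition of $A_\C$ (your $M\mapsto\bigoplus_i M(X_i)$ is just the quasi-inverse of the paper's $V\mapsto(X_i\mapsto e_iV)$), and the first being restriction along $\X\hookrightarrow\C$, with density obtained by extending a module over $\X^{\op}$ to $\overline{\X}^{\op}\simeq\overline{\C}^{\op}$ and restricting back. Where you genuinely diverge is in establishing fullness and faithfulness of the restriction: the paper cites Lemma~\ref{extensionlemma}\eqref{extensionlemma2} and Lemma~\ref{restrictionlemma}, whereas you give the explicit extension formula $\widetilde{\eta}_Y=\sum_j N(u_j)\circ\eta_{X_j}\circ M(v_j)$. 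This formula does work: naturality and independence of the filtration follow from $u_jv_k=\delta_{j,k}\id$ and $\sum_j v_ju_j=\id_Y$ exactly as you say, and it is worth noting that the compatibility $\partial(\widetilde{\eta})=\widetilde{\partial(\eta)}$ you flag as the main obstacle also follows from just these two relations: writing $N(\partial u_j)=\sum_k N(u_k)N(\partial(u_j)v_k)$ and applying naturality of $\eta$ to $\partial(u_j)v_k$, the unwanted terms cancel because $\sum_j v_j\partial(u_j)=-\sum_j\partial(v_j)u_j$ by Leibniz applied to $\id_Y$. So the specifically ``fantastic'' conditions $\partial(u_j)v_j=0$ and $\im\partial(v_j)u_j\subseteq F_{j-1}$ are not what make the morphism extension work; their role is in guaranteeing (via Lemma~\ref{ABsubs}\eqref{ABsubs1}) that every $Y$ is $p$-dg isomorphic to an object of $\overline{\X}$ with a genuinely upper-triangular twist, which is what density needs. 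This is a minor misattribution rather than a gap; your explicit route has the advantage of being self-contained, while the paper's buys brevity by reusing the already-established $\overline{(-)}$ machinery.
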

\begin{proof}
Denote $A=A_\C$. With the same notation as in Lemma \ref{cofoverline},
we first construct a $p$-dg equivalence
 $\Phi\colon A\plmod\to \X^{\op}\plmod$. 
Given an object $V$ in $A\plmod$, consider the assignment $M_{V}(X_i)=e_iV$, where $e_i=\id_{X_i}$ is the $p$-dg idempotent in $A_\C$ corresponding to $X_i$. A morphism $f\colon X_i\to X_j$ corresponds to an element $f=e_ife_j$ in $A_\C$. We assign to it the morphism $M_{V}(f)\colon e_j V\to e_i V$, $e_j(V)\mapsto f(e_j(V))$. This way, $\Phi(V)=M_V$ defines an object in $\X^{\op}\plmod$. Similarly, a morphism $\phi\colon V\to W$ in $A\plmod$ gives a morphism of $p$-dg functors $\Phi(\phi)\colon M_V\to M_W$. Hence, we obtain a $p$-dg functor $A\plmod\to \X^{\op}\plmod$. An inverse functor can be constructed as follows. Given a $p$-dg functor $M\colon \X^{\op}\to \Bbbk\plmod$, define $V=\bigoplus_i M(X_i)$. As there are finitely $X_i$, this gives a finite-dimensional $A$-module. It is readily verified that the provided functors form an equivalence of categories.

Next, we show that $\X^{\op}\plmod$ and $\C^{\op}\plmod$ are $p$-dg equivalent. There is an evident $p$-dg functor $\C^{\op}\plmod\to  \X^{\op}\plmod$ by restriction. This $p$-dg functor is $p$-dg dense as, given a $p$-dg module $M$ over $\X^{\op}$, there exists a $p$-dg functor $\overline{M}\colon \overline{\X}^{\op}\to \overline{\Bbbk\plmod}\simeq \Bbbk\plmod$  by Lemma \ref{extensionlemma}\eqref{extensionlemma1}. This gives a $p$-dg module over $\overline{\X}^{\op}$ such that its restriction to $\C^{\op}$ further restricts to a $p$-dg module over $\X^{\op}$ which is $p$-dg equivalent to $M$. Fullness of the restriction functor follows using Lemma  \ref{extensionlemma}\eqref{extensionlemma2}, and faithfulness follows from Lemma \ref{restrictionlemma}.
\end{proof}

\begin{remark}\label{endostrongfin}
For a strongly finitary $p$-dg category $\C$ we can consider the endomorphism algebra $A$
of a, up to grading shift, complete set of $p$-dg isomorphism classes of $\Bbbk$-indecomposable objects in $\C$. Then $\overline{\C}$ is $p$-dg equivalent to $\overline{\A}$ (cf.\ Example \ref{algebraex}, Lemma \ref{cofoverline}). 
Consider the composition
\[
\phi\colon \overline{\C}\stackrel{\sim}{\rightarrow}\C^{\op}\cof\hookrightarrow \C^{\op}\plmod \stackrel{\sim}{\rightarrow} A\plmod,
\]
where the first equivalence follows from Lemma \ref{matcatequiv} and the last equivalence follows from Lemma \ref{moduletranslation}. The functor $\phi$ is fully faithful, and induces 
an equivalence of $\Bbbk$-linear categories $[\overline{\C}]\simeq  A\proj$. Here, the later category is that of all finitely generated (in our setup, finite-dimensional) projective $A$-modules. 

We further remark that the underlying $\Bbbk$-algebra $A$ is basic if and only if any two $\Bbbk$-indecomposable objects which are $\Bbbk$-isomorphic are also $p$-dg isomorphic.
\end{remark}

\subsection{Tensor products}\label{tensorproducts}
For a symmetric monoidal category like $H\lmod$, enriched finitely cocomplete categories again have a tensor product structure $\boxtimes$ \cite[6.5]{K}. The finite coproducts will again be biproducts in our setup as the $p$-dg categories are, in particular, $\Bbbk$-linear.
Hence the tensor product $\C\boxtimes \D$ for two $p$-dg categories $\C$ and $\D$ has objects generated under finite biproducts by pairs in $\operatorname{Ob}(\C)\times \operatorname{Ob}(\D)$ for which we denote by the corresponding object by $X\otimes Y$. The morphism spaces are matrices of morphisms between objects of the form $X\otimes Y$, $X'\otimes Y'$, on which we use the symmetric monoidal structure of $H\lmod$ to define $\del$. That is,
\begin{align*}
\Hom_{\C\boxtimes\D}(X\otimes Y, X'\otimes Y')&=\Hom_{\C}(X,X')\otimes \Hom_{\D}(Y,Y'),\\
\del(f\otimes g)&=\del(f)\otimes g+f\otimes \del(g).
\end{align*}
One checks that this differential is compatible with compositions and matrix multiplication.

Given $p$-dg functors $F\colon \C\to \C'$ and $G\colon \D\to \D'$, we can induce a $p$-dg functor $F\boxtimes G\colon \C\boxtimes \D\to \C'\boxtimes \D'$. It extends uniquely from requiring $(F\boxtimes G)(X\otimes Y)=F(X)\otimes G(Y)$ to a $p$-dg functor. Moreover, we see that $\boxtimes \colon \pdgcat \times \pdgcat \to \pdgcat$ is a $p$-dg $2$-functor.

\begin{lemma}\label{tensorlemma}Let $\C$ and $\D$ be $p$-dg  categories in $\pdgcat$.
\begin{enumerate}[(i)]
\item\label{tensorlemma1} If $\C$ and $\D$ are locally finite, then so is $\C\boxtimes \D$. 
\item\label{tensorlemma3} The tensor product of two strongly finitary $p$-dg categories is again strongly finitary.
\item\label{tensorlemma4} There exists a fully faithful $p$-dg functor $\overline{\C}\boxtimes \overline{\D}\to\overline{\C\boxtimes \D}$.
\end{enumerate}
\end{lemma}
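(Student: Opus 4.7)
My plan is to establish part (iii) first, since it provides a clean functorial device that feeds into part (ii), and then handle part (i) by a direct finiteness check.

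For part (iii), define the $p$-dg functor $\Phi\colon\overline{\C}\boxtimes\overline{\D}\to\overline{\C\boxtimes\D}$ on a generating object $(\bigoplus_{m=1}^s X_m,\alpha)\otimes(\bigoplus_{n=1}^t Y_n,\beta)$ by sending it to $(\bigoplus_{(m,n)}X_m\otimes Y_n,\gamma)$, where the index set is totally ordered lexicographically and the only non-zero entries of $\gamma$ are $\alpha_{m',m}\otimes\id_{Y_n}$ at position $((m',n),(m,n))$ (for $m'<m$) and $\id_{X_m}\otimes\beta_{n',n}$ at position $((m,n'),(m,n))$ (for $n'<n$). Both types of entries are upper-triangular in lex order. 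To check $\gamma$ defines a $p$-differential, use the Yoneda identification $\bigoplus_{m,n}P^\op_{X_m\otimes Y_n}\cong(\bigoplus_m P^\op_{X_m})\otimes(\bigoplus_n P^\op_{Y_n})$ and the fact that $\partial$ is primitive in $H$ to write the total operator as $D_A\otimes\id+\id\otimes D_B$; the two summands commute and $\partial$ has even degree, so the Frobenius identity in characteristic $p$ gives $(D_A\otimes\id+\id\otimes D_B)^p=D_A^p\otimes\id+\id\otimes D_B^p=0$. On morphisms, send $f\otimes g$ (matrices $(f_{m',m})$, $(g_{n',n})$) to the matrix of tensors $(f_{m',m}\otimes g_{n',n})$; expanding the Leibniz rule applied to $f\otimes g$ together with the differential formula from Definition \ref{barcat} in $\overline{\C}$ and $\overline{\D}$ exactly recovers the $\overline{\C\boxtimes\D}$-differential of the image. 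Full-faithfulness is immediate because both $\Hom_{\overline{\C}\boxtimes\overline{\D}}(A\otimes B,A'\otimes B')$ and $\Hom_{\overline{\C\boxtimes\D}}(\Phi(A\otimes B),\Phi(A'\otimes B'))$ unravel to the same graded $H$-module $\bigoplus\Hom_\C(X_m,X'_{m'})\otimes\Hom_\D(Y_n,Y'_{n'})$, and $\Phi$ realizes this identification; extension to biproducts and matrix morphisms is formal.

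For part (ii), fix complete sets of representatives $\{X_i\}$ and $\{Y_j\}$ of $\Bbbk$-indecomposables in $\C$ and $\D$ (up to $p$-dg isomorphism and shift). Each $X_i\otimes Y_j$ is $\Bbbk$-indecomposable because $\End_{\C\boxtimes\D}(X_i\otimes Y_j)=\End_\C(X_i)\otimes\End_\D(Y_j)$ is local: both factors are local with residue field $\Bbbk$ (using $\Bbbk$ algebraically closed and $\Bbbk$-indecomposability), and the ideal $\rad\End_\C(X_i)\otimes\End_\D(Y_j)+\End_\C(X_i)\otimes\rad\End_\D(Y_j)$ is nilpotent with quotient $\Bbbk$, hence maximal. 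This yields the required finite list. For fantastic filtrations of a general object $\bigoplus_k X^{(k)}\otimes Y^{(k)}$ in $\C\boxtimes\D$, apply Lemma \ref{ABsubs} to $\C$ and $\D$ separately to get $p$-dg isomorphisms $X^{(k)}\cong(\bigoplus X_{a},\alpha^{(k)})$ in $\overline{\C}$ and $Y^{(k)}\cong(\bigoplus Y_{b},\beta^{(k)})$ in $\overline{\D}$; tensoring these in $\overline{\C}\boxtimes\overline{\D}$ and pushing along $\Phi$ from part (iii) gives a $p$-dg isomorphism in $\overline{\C\boxtimes\D}$ exhibiting $X^{(k)}\otimes Y^{(k)}$ as isomorphic to an object of $\overline{\widetilde{\B}}$, where $\widetilde{\B}$ is the full subcategory on the $X_a\otimes Y_b$'s; Lemma \ref{ABsubs} read in the reverse direction then yields the desired fantastic filtration, and direct sums of filtrations handle the outer $\bigoplus_k$.

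For part (i), any $p$-dg indecomposable object of $\C\boxtimes\D$ is, by construction, a finite biproduct $\bigoplus_iX_i\otimes Y_i$ in which the factors $X_i,Y_i$ can be refined into $p$-dg indecomposable summands as far as the additive structure of $\C,\D$ allows; the $\Hom$-space between two such biproducts decomposes as $\bigoplus_{i,j}\Hom_\C(X_i,X'_j)\otimes\Hom_\D(Y_i,Y'_j)$, each summand finite-dimensional by local finiteness of $\C$ and $\D$, so the $\Hom$ between two $p$-dg indecomposable summands is a finite-dimensional subspace. The main obstacle is in part (ii): a direct combinatorial interleaving of two fantastic filtrations—even in lexicographic order—breaks the image condition on the section maps (the $\partial(v)u$ term on one side leaks into summands that are too late in the ordering), and so routing through part (iii) and Lemma \ref{ABsubs} is essential to sidestep this bookkeeping.
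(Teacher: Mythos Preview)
Your argument is correct, but it diverges from the paper's in several ways, and one of your editorial claims is mistaken.

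For part~(iii), the paper does not build $\Phi$ by hand. Instead it applies Lemma~\ref{extensionlemma}\eqref{extensionlemma1} to the tensor product of the canonical embeddings $\C\boxtimes\D\hookrightarrow\overline{\C}\boxtimes\overline{\D}$ to obtain a functor $\tau\colon\overline{\C\boxtimes\D}\to\overline{\overline{\C}\boxtimes\overline{\D}}$, shows $\tau$ is a $p$-dg equivalence, and then observes that the canonical $\iota\colon\overline{\C}\boxtimes\overline{\D}\hookrightarrow\overline{\overline{\C}\boxtimes\overline{\D}}$ is fully faithful. Your explicit construction reaches the same conclusion and has the advantage of being self-contained; the paper's route avoids the explicit $p$-th power check by reusing the machinery of Lemma~\ref{extensionlemma}.

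For part~(ii), the paper simply says that the tensor product of two fantastic filtrations is a fantastic filtration, using the tensor products of the $\Bbbk$-indecomposable factors. Your detour through part~(iii) and Lemma~\ref{ABsubs} is valid, but your stated reason for it is wrong: the direct lexicographic interleaving \emph{does} satisfy the image condition. With $u_{(i,j)}=u_i\otimes u'_j$ and $v_{(i,j)}=v_i\otimes v'_j$, one has
\[
\del(v_{(i,j)})u_{(i,j)}=\del(v_i)u_i\otimes v'_ju'_j+v_iu_i\otimes\del(v'_j)u'_j,
\]
whose image lies in $F_{i-1}\otimes Y + F_i\otimes G_{j-1}$, and both pieces are below $(i,j)$ in the lex order. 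So the paper's one-line argument is legitimate.

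For part~(i), your phrasing is murkier than necessary. The clean point (which the paper uses) is that any $p$-dg indecomposable in $\C\boxtimes\D$ must be a single tensor $X\otimes Y$ with $X$ and $Y$ themselves $p$-dg indecomposable, since the projections onto the summands of any nontrivial biproduct, or onto $X_1\otimes Y$ in a splitting $X\cong X_1\oplus X_2$, are $p$-dg idempotents. Then $\Hom(X\otimes Y,X'\otimes Y')=\Hom_\C(X,X')\otimes\Hom_\D(Y,Y')$ is finite-dimensional directly.
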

\begin{proof}
Part (\ref{tensorlemma1}) is clear as the $p$-dg indecomposable objects in the tensor product category are of the form $X\otimes Y$ for $p$-dg indecomposable objects $X$, $Y$, and the morphism spaces are tensor products. Hence the finiteness conditions are inherited by the tensor product categories. To verify part (\ref{tensorlemma3}), note that the tensor products of two objects having a fantastic filtration by $\Bbbk$-indecomposables also has such a filtration, using the tensor products of the $\Bbbk$-indecomposable factors.

To prove part (\ref{tensorlemma4}), we start with the $p$-dg functor $\C\boxtimes \D\to \overline{\C}\boxtimes \overline{\D}$ obtained as the tensor product of the canonical embedding functors into the corresponding  categories $\overline{(-)}$. Using \eqref{extensionlemma1} we obtain a functor $\tau \colon \overline{\C\boxtimes \D}\to \overline{\overline{\C}\boxtimes \overline{\D}}$. We claim that this functor is a $p$-dg equivalence. The functor is clearly fully faithful, and we claim that it is $p$-dg dense. Indeed, an object $X\otimes Y$ in $\overline{\overline{\C}\boxtimes \overline{\D}}$ has the form
\begin{align*}
\left(\bigoplus_{m=1}^s{X_m\otimes Y_m}, \alpha\right), \quad\text{where } X_m=\left(\bigoplus_{k=1}^{t_m}V_{k}, \gamma^m\right), Y_m=\left(\bigoplus_{k=1}^{u_m} W_k, \delta^m\right).
\end{align*}
However, each factor $X_m\otimes Y_m$ of $X\otimes Y$ is $p$-dg isomorphic to the object $\left(\oplus_{k,l}V_k\otimes W_l,\gamma^m\otimes \tI_{u_m}+\tI_{t_m}\otimes \delta^m \right)$ in $\tau(\overline{\C\boxtimes \D})$. This can be seen using distributivity of $\otimes$ with biproducts, and the Leibniz rule for the differentials. Now, arguing similarly as in the proof of Lemma \ref{extensionlemma}, there is a $p$-dg isomorphism of the extension of the objects in  $\tau(\overline{\C\boxtimes \D})$ which are $p$-dg isomorphic to the factors $X_m\otimes Y_m$ to give that $\tau$ is $p$-dg dense, and hence gives a $p$-dg equivalence.
To complete the proof of (\ref{tensorlemma4}), we note that the canonical embedding $\iota\colon \overline{\C}\boxtimes \overline{\D}\to \overline{\overline{\C}\boxtimes \overline{\D}}\simeq \overline{\C\boxtimes \D}$ is clearly fully faithful.
\end{proof}

Let $\C$ be a locally finite $p$-dg category. In the following, we construct an action of $\overline{\C \boxtimes \C^\op}$ on $\overline{\C}$, i.e. a bimodule action
\begin{align}\label{bimoduletensor}
\boxtimes_\C\colon \overline{\C\boxtimes \C^\op} \boxtimes \overline{\C} \longrightarrow\overline{\C}.
\end{align}
Recall that $\Bbbk\plmod$ is the $p$-dg category of finite-dimensional graded $H$-modules with internal homs. All objects in this category are semi-free and the only $\Bbbk$-indecomposable object (up to grading shift) is the trivial $H$-module $\Bbbk$. By Lemma \ref{matcatequiv}, $\Bbbk\plmod$ is hence  $p$-dg equivalent to $\overline{k}$, where $k$ denotes the $p$-dg category with one object (up to grading shift), which has the trivial $H$-module $\Bbbk$ as endomorphism ring. Observe that there is a $p$-dg functor
\begin{align}\label{Hmoduletensor}
\boxtimes\colon \overline{\C} \boxtimes \overline{k} \longrightarrow \overline{\C},
\end{align}
which is obtained by extending the functor $\C\boxtimes k\to \C$, given by mapping $X\otimes \Bbbk\mapsto X$, to $\overline{(-)}$ and pre-composing with the morphism from Lemma  \ref{tensorlemma}\eqref{tensorlemma4}. Under the equivalence of Lemma \ref{matcatequiv} this corresponds to pointwise tensoring with  graded $H$-modules over $\Bbbk$.

For a concrete description, we fix the notation $\dim_t W=\sum_{i}\dim W_it^i$ for a $\mathbb{Z}$-graded $H$-module $W=\bigoplus_{i}W_i$, and write
\begin{align}
\begin{split} X^{n(t)}
&=...\oplus X^{\oplus n_{i+1}}\shift{-i-1}\oplus X^{\oplus n_i}\shift{-i}\oplus X^{\oplus n_{i-1}}\shift{-i+1}\oplus\ldots,\end{split}
\end{align}
for $n(t)=\sum_i n_i t^i \in \mathbb{N}[t]$. Note the convention that the summands are arranged with decreasing value of $i$ (from left to right).
If $V_i$ denotes the $i+1$-dimensional indecomposable $H$-module generated in degree zero, for $i=0,1, \ldots, p-1$, then
\begin{equation}\label{Hmoduletensor2}
 \left(\bigoplus_{m=1}^sX_m, \alpha\right)\otimes V_i\cong \left(\bigoplus_{m=1}^sX_m^{\dim_t V_i}, \alpha\otimes \tI_{i+1}+\id_{\oplus_m X_m}\otimes J_i\right),
\end{equation}
where $J_i$ is the eigenvalue $0$ Jordan block matrix of size $(i+1)\times (i+1)$ with the shifts by $-2$ of the identities on the respective objects as the non-zero morphisms.

Next, we observe that the functor
\begin{align*}
\Hom_{\C}(-,-)\colon &\C^{\op}\boxtimes \C\longrightarrow \Bbbk\plmod\simeq \overline{k},
\end{align*}
determined by $Y\otimes Z \mapsto \Hom_{\C}(Y,Z)$ is a $p$-dg functor. Hence, we obtain a $p$-dg functor
\begin{align*}
\boxtimes_\C=\boxtimes \circ \big(\id_\C\boxtimes \Hom_\C(-,-)\big)\colon &\C\boxtimes \C^\op\boxtimes \C\longrightarrow\overline{\C},\\
&X\otimes Y\otimes Z\longmapsto X\otimes \Hom_\C(Y,Z).
\end{align*}
Lemma \ref{extensionlemma} gives a $p$-dg functor $\overline{\C\boxtimes \C^{\op}\boxtimes\C}\to \overline{\C}$, and after pre-composing with the $p$-dg functor $\overline{\C\boxtimes \C^{\op}}\boxtimes\overline{\C}\to \overline{\C\boxtimes \C^{\op}\boxtimes\C}$ from Lemma \ref{tensorlemma}\eqref{tensorlemma4} we obtain the desired $p$-dg functor $\boxtimes_\C$ as claimed in \eqref{bimoduletensor}, providing a regular action of $\C$-bimodules on $\C$-modules which we will use later.  For objects $X$ in $\overline{\C}$ and $Y$ in $\overline{\C^\op}$, we write $\boxtimes_\C(X\otimes Y)=X\otimes_{\C}Y$.

\begin{remark}
Given a finite-dimensional $p$-dg algebra and a $p$-dg idempotent decomposition $1=e_1+\ldots+e_r$, we can consider the $p$-dg category $\A$ associated to $A$ as in Example \ref{algebraex}. Then then $Ae_i$ are projective $A$-modules under the $p$-dg equivalence from Lemma \ref{moduletranslation}. Moreover, the tensor product $\otimes_\A$ recovers the relative tensor product $\otimes_A$ of bimodules. Indeed, for $p$-dg idempotents $e_1,e_2,e_3$ in $A$, we have
\begin{align*}
(X_1\otimes X_2)\otimes_{\A} X_3 &= X_1\otimes \Hom(X_2,X_3)=X_1\otimes e_2Ae_3.
\end{align*}
This object corresponds under the equivalence from Lemma \ref{moduletranslation} to
$
Ae_1\otimes e_2Ae_3,
$ which is $p$-dg isomorphic to the relative tensor product $Ae_1\otimes_{\Bbbk} e_2 A\otimes_A Ae_3$.
\end{remark}

\subsection{Closure under \texorpdfstring{$p$}{p}-dg quotients}\label{pdgquots}

Given a locally finite $p$-dg category $\C$, we define the {\bf closure under $p$-dg quotients} of $\overline{\C}$ to be the category $\quots{\C}$
\begin{itemize}
\item whose objects are diagrams of the form $X \overset{f}{\longrightarrow} Y$ in $\overline{\C}$ such that $\deg f=0$ and $\del(f)=0$;
\item whose morphisms are pairs $(\phi_0,\phi_1)$ of morphisms in $\overline{\C}$ producing solid commutative diagrams of the form
\begin{equation}\label{vvmorph}\xymatrix{ X \ar^{f}[rr] \ar^{\phi_0}[d]&&Y \ar^{\phi_1}[d]\ar_{\eta}@{-->}[dll]\\ 
X'\ar^{f'}[rr] &&Y',
}
\end{equation}
modulo the subgroup generated by diagrams where there exists a morphism $\eta$, as indicated by the dashed arrow, such that $\phi_1=f'\eta$;
\item the differential of such a pair $(\phi_0,\phi_1)$ is simply the componentwise differential in $\overline{\C}$.
\end{itemize}

Indeed, $\vv{\C}$ is a $p$-dg category as the equation $\phi_1 f=f'\phi_0$ implies that $\del(\phi_1) f=f'\del(\phi_0)$ using the Leibniz rule. Further, if $\phi_1=f'\eta$, then $\del(\phi_1)=f'\del(\eta)$, so the ideal of homotopies $\eta$ is a $p$-dg ideal.

There exists a fully faithful $p$-dg functor
$$
\vv{\iota}\colon \overline{\C} \longrightarrow \vv{\C},
$$
which maps an object $X$ of $\overline{\C}$ to $0\to X$, and is defined on morphisms accordingly. It is easy to observe that a $p$-dg functor $\rF\colon \C \to \D$ induces a $p$-dg functor
\[
\quots{\rF}\colon \quots{\C}\longrightarrow\quots{\D}.
\]
First, we have an induced functor $\overline{\rF}$ using Lemma \ref{extensionlemma}, and for an object $X \overset{f}{\longrightarrow} Y$, $\overline{\rF}(f)$ is annihilated by the differential, and componentwise application of $\overline{\rF}$ to a morphism (given by a pair $(\phi_0,\phi_1)$) provides a $p$-dg functor as desired. Given a natural transformation $\alpha\colon \rF\to \rG$, there is an induced natural transformation $\quots{\alpha}\colon \quots{\rF}\to \quots{\rG}$, defined again componentwise. As these assignments are functorial, we obtain a $p$-dg $2$-functor.

\begin{lemma}\label{vecmod}
Assume $\C$ is a locally finite $p$-dg category with a finite set of objects $\mathtt{X}:=\{X_1,\dots, X_r\}$ such that every object in $\C$ has a fantastic filtration by shifts of objects in $\mathtt{X}$. 
Then $\vv{\C}$ is $p$-dg equivalent to 
$\C^\op\plmod$.
\end{lemma}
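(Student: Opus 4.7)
The plan is to construct a $p$-dg functor $\Phi\colon \vv{\C}\to \C^\op\plmod$ sending $X\overset{f}{\to} Y$ to $\mathrm{coker}(\rP f)$, where $\rP\colon \overline{\C}\overset{\sim}{\to}\C^\op\cof$ is the equivalence from Lemma~\ref{matcatequiv}, and then to verify the hypotheses of Lemma~\ref{pdgequivlemma}. Throughout I may identify $\C^\op\plmod$ with $A\plmod$ for $A=A_\C$ via Lemma~\ref{moduletranslation} whenever convenient. Since $\del(f)=0$, the image of $\rP f$ is $\del$-stable, so $\mathrm{coker}(\rP f)$ inherits a finite-dimensional $p$-dg $A$-module structure. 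A morphism $(\phi_0,\phi_1)$ descends to a map $\overline{\phi_1}$ on cokernels via the commutativity $\phi_1 f=f'\phi_0$, and the homotopy relation $\phi_1=f'\eta$ forces $\overline{\phi_1}=0$, so $\Phi$ is well-defined on the quotient by homotopies. The compatibility $\Phi\circ\del=\del\circ\Phi$ is immediate from $\del f=\del f'=0$ and the Leibniz rule.

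For faithfulness and fullness, I would exploit projectivity of $\rP X$ and $\rP Y$ as \emph{underlying} $A$-modules: each is a direct sum of shifts of representables, hence projective once $\del$ is forgotten, which is all that is needed since morphisms in $\vv{\C}$ and $\C^\op\plmod$ are not required to commute with $\del$. For faithfulness, if $\overline{\phi_1}=0$ then $\phi_1$ lands in the image of $\rP f'$, and projectivity of $\rP Y$ produces a lift $\eta\colon \rP Y\to \rP X'$ with $\rP f'\circ\eta=\phi_1$, i.e.\ a homotopy. For fullness, a given $\psi\colon \mathrm{coker}(\rP f)\to \mathrm{coker}(\rP f')$ lifts through the projection $\rP Y'\twoheadrightarrow \mathrm{coker}(\rP f')$ via projectivity of $\rP Y$ to some $\phi_1\colon \rP Y\to \rP Y'$; then $\phi_1\circ \rP f$ projects to zero in $\mathrm{coker}(\rP f')$ and hence factors through $\rP f'$, and projectivity of $\rP X$ delivers $\phi_0$ with $\rP f'\circ\phi_0=\phi_1\circ \rP f$, yielding a preimage of $\psi$ in $\vv{\C}$.

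The step I expect to be the main obstacle is $p$-dg density: realising every finite-dimensional $p$-dg $A$-module $M$ as $\mathrm{coker}(\rP f)$ for some $p$-dg morphism $f$ between compact semi-free modules. Naively sending a generating set of a free $A$-module to elements of $M$ will in general fail to respect $\del$, so the key is to use the free $p$-dg module functor $A\otimes_\Bbbk(-)$, which is left adjoint to the forgetful functor $A\plmod\to H\lmod$ and hence produces a canonical $p$-dg surjection $A\otimes_\Bbbk M\twoheadrightarrow M$; this is finite-dimensional since both $A$ and $M$ are. Decomposing $M$ as an $H$-module into indecomposables $V_{k_i}\langle d_i\rangle$ and invoking \eqref{Hmoduletensor2}, the module $A\otimes_\Bbbk M$ is visibly compact semi-free and hence in the image of $\rP$. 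Applying the same construction to the (still finite-dimensional) kernel produces the desired presentation $P_1\overset{f}{\to} P_0$ with $\mathrm{coker}(\rP f)\cong M$, establishing $p$-dg density and, via Lemma~\ref{pdgequivlemma}, the claim.
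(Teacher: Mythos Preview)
Your argument is correct and follows essentially the same strategy as the paper: both construct the cokernel functor via Lemma~\ref{matcatequiv}, use projectivity of the underlying (non-dg) $A$-modules for fullness and faithfulness, and build semi-free presentations from the free $p$-dg module functor $A\otimes_\Bbbk(-)$. The only cosmetic difference is in the density step: the paper writes down the bar-type map $\mu\otimes\id_M-\id_A\otimes a\colon A\otimes A\otimes M\to A\otimes M$ in one shot, whereas you first cover $M$ by $A\otimes M$ and then cover the kernel $K$ by $A\otimes K$; since the image of the paper's map is exactly that kernel, the two presentations agree.
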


We remark that while $\vv{\C}$ naturally looks more like the category of finitely presented rather than finitely generated $\C^\op$-modules, these two categories coincide given our finiteness assumptions, cf. Section \ref{pdgcats}.

\proof 
The equivalence in Lemma \ref{matcatequiv} immediately gives a full $p$-dg functor from $\vv{\C}$ to $\C^\op\plmod$, simply by taking cokernels of the given diagrams. This functor is faithful because a morphism as in \eqref{vvmorph} gives a zero morphism $Y/X\to {Y'}/{X'}$ if an only if the morphism $\phi_1\colon Y\to Y'$ factors through the image of $f'$, i.e. is homotopic to zero. 
The fact that this functor is $p$-dg dense comes from the observation that any object in $\C^\op\plmod$ can be written as the cokernel of a $p$-dg morphism between two semi-free objects. 
Explicitly, using the equivalence $\C^\op\plmod\simeq A\plmod$ for a suitable finite-dimensional $p$-dg algebra $A$ from Lemma \ref{moduletranslation},  we may prove this claim using the language of $p$-dg algebras and their modules. A module $M\in A\plmod$ is given as the cokernel of the $p$-dg morphism $$\delta:=\mu \otimes \id_M - \id_A\otimes a \,:\,A\otimes A\otimes M \to A\otimes M$$ where $\mu$ is multiplication in the algebra and $a$ is the action map, both of which are annihilated by the differential by definition.
\endproof

\begin{lemma}\label{abelianlemma}
Under the assumptions of Lemma \ref{vecmod}, the category $\Z(\vv{\C})$ is abelian. 
\end{lemma}
\begin{proof}
This follows from the equivalence between $\Z(\vv{\C})$ and $\Z(\C^\op\plmod)$ and the fact that the latter is abelian.
\end{proof}

In particular, if $\C$ is strongly finitary, $\vv{\C}\simeq\C^\op\plmod$. 

Despite the analogy to the abelianization \cite{Fr} of an additive (or $\Bbbk$-linear category), the category $\vv{\C}$ is \emph{not} abelian. Under the assumptions of the above lemma, denoting by $A$ the $p$-dg algebra associated to $\C$ as in Example \ref{algebraex}, the abelian category $\Z(\C^\op\plmod)$ gives the category of finitely generated $p$-dg $A$-modules as defined in \cite[Section 2.2]{KQ}. The construction $\vv{\C}$ is a $p$-dg enriched version of this category.

Putting together Lemma \ref{matcatequiv}, Lemma \ref{moduletranslation} and Lemma \ref{vecmod}, we have a commutative diagram of $p$-dg equivalences (respectively, inclusions):
\begin{equation}\label{equivcommute}
\vcenter{\hbox{
\xymatrix{
 \overline{\A} \ar@{-}^{\wr}[d]\ar@{^{(}->}[r]& \vv{\A}\ar@{-}^{\wr}[d]\\
 \A^\op\cof \ar@{-}^{\wr}[d]\ar@{^{(}->}[r]& \A^\op\plmod\ar@{-}^{\wr}[d]\\
 A\cof  \ar@{^{(}->}[r]& A\plmod\\
}}}\end{equation}

\begin{lemma}
For locally finite $\C$, $\vv{\C}$ is again locally finite, and $p$-dg idempotent complete.\end{lemma}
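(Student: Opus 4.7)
The plan is to treat the two assertions separately, as they are essentially independent. For local finiteness, I would note that by the construction of $\vv{\C}$, any morphism space between objects $(X\to Y)$ and $(X'\to Y')$ is a quotient of a subspace of $\Hom_{\overline{\C}}(X,X') \oplus \Hom_{\overline{\C}}(Y,Y')$. By Definition \ref{barcat}, morphism spaces in $\overline{\C}$ are matrices of morphisms in $\C$ between the (finitely many) $\C$-components of the respective source and target. Restricting to $p$-dg indecomposable objects of $\vv{\C}$, these components can be chosen $p$-dg indecomposable in $\C$, and the local finiteness hypothesis on $\C$ then yields the desired finite-dimensionality.

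For $p$-dg idempotent completeness, the strategy is to reduce to Lemma \ref{vecmod} by restricting to a sufficiently small $p$-dg subcategory. Given a $p$-dg idempotent $(e_0, e_1)$ on $(X \to Y)$ in $\vv{\C}$, let $\C'$ denote the full $p$-dg subcategory of $\C$, closed under grading shift and finite biproducts, generated by the finitely many $\C$-components of $X$ and $Y$. Then $\C'$ inherits local finiteness and possesses a finite generating set $\mathtt{X}'$; moreover every object of $\C'$ admits the trivial length-one fantastic filtration by itself, so the hypotheses of Lemma \ref{vecmod} are satisfied. That lemma then yields a $p$-dg equivalence $\vv{\C'} \simeq {\C'}^{\op}\plmod$, and since $X, Y \in \overline{\C'}$ and $e_0, e_1$ are morphisms in $\overline{\C'}$ by construction, the idempotent $(e_0, e_1)$ lies in $\vv{\C'}$.

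The category ${\C'}^{\op}\plmod$ is $p$-dg idempotent complete: a $\del$-closed idempotent $e\colon M\to M$ on a $p$-dg module yields, at each object $c\in \C'$, a $\del$-closed idempotent $e_c$ on the graded $H$-module $M(c)$, which splits in $H\lmod$ as $M(c) = e_c M(c) \oplus (1-e_c) M(c)$; naturality of $e$ assembles these componentwise splittings into $p$-dg submodules $eM$ and $(1-e)M$ of $M$ with $M = eM \oplus (1-e)M$. Hence $(e_0, e_1)$ splits in $\vv{\C'}$, and the canonical fully faithful $p$-dg embedding $\vv{\C'} \hookrightarrow \vv{\C}$, induced by $\overline{\C'} \hookrightarrow \overline{\C}$, transfers this splitting back to $\vv{\C}$. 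The step requiring the most care is verifying that the $\vv{(-)}$ construction behaves well under restriction to a $p$-dg subcategory — namely, that both the commuting-square condition defining morphisms and the subgroup of homotopic pairs match via the embedding of the underlying $\overline{(-)}$ categories — which follows from fully faithfulness of the embedding at the $\overline{(-)}$ level.
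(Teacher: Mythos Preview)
Your argument for local finiteness is essentially the same as the paper's: both observe that morphisms in $\vv{\C}$ are (quotients of) pairs of morphisms in $\overline{\C}$, which are matrices of morphisms in $\C$ between finitely many components.

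For $p$-dg idempotent completeness, however, your route differs substantially from the paper's and is considerably more elaborate. The paper simply invokes Lemma~\ref{abelianlemma}: a $p$-dg idempotent on an object of $\vv{\C}$ is in particular an idempotent in the abelian category $\Z(\vv{\C})$, where every idempotent splits via its image (or cokernel). The resulting splitting maps are morphisms in $\Z(\vv{\C})$, hence $p$-dg morphisms, so the idempotent splits in $\vv{\C}$. This one-line argument avoids any reduction to subcategories or to module categories.

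Your approach via restriction to a finitely generated $\C'$ and Lemma~\ref{vecmod} does work, but there is a small gap in the verification of the hypotheses of that lemma. You write that ``every object of $\C'$ admits the trivial length-one fantastic filtration by itself''; but Lemma~\ref{vecmod} requires a fantastic filtration by shifts of objects in a \emph{finite} set $\mathtt{X}'$, and an arbitrary biproduct in $\C'$ is not itself in $\mathtt{X}'$. The correct justification is that a biproduct $G_{i_1}\langle n_1\rangle \oplus \cdots \oplus G_{i_m}\langle n_m\rangle$ in $\C'$ admits a fantastic filtration by its summands, using that the biproduct projections and inclusions in a $p$-dg category are annihilated by $\del$ (cf.\ the discussion following Definition~\ref{barcat}). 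With this correction your argument goes through; it buys nothing over the paper's one-line proof, but it does illustrate concretely how $\vv{\C}$ looks locally like a module category.
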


\proof
Assume that $\C$ is locally finite.
Directly, from the definition, we see that morphism spaces between $p$-dg indecomposable objects are finite-dimensional, their elements being pairs of morphisms between finite sums of indecomposable objects in $\C$. Furthermore, $\vv{\C}$ is clearly $p$-dg idempotent complete. Hence $\vv{\C}$ is locally finite. Moreover, all $p$-dg idempotent split due to the existence of cokernels in $\vv{\C}$, see Lemma \ref{abelianlemma}. 
\endproof

\subsection{The compact derived category}\label{derivedcats}

Let $\C$ be a locally finite $p$-dg category.
As mentioned in \cite[2.5]{EQ} and \cite[4.6]{EQ2} it is possible to generalize the constructions of stable and derived categories for $p$-dg algebras to the setup of module categories over $\C$. 
For this, recall the tensor action $\otimes$ of graded $H$-modules  on $\overline{\C}$ defined in \eqref{Hmoduletensor}.

\begin{definition}\label{Stdefinition}
A morphism in $\Z(\overline{\C})$ is {\bf null-homotopic} if it factors through an object of the form $X\otimes V$, for $X$ in $\overline{\C}$ and $V$ in $\Bbbk\plmod$ corresponding to a projective $H$-module.

The {\bf stable category of $\overline{\C}$}, denoted by $\St(\overline{\C})$, is the quotient of $\Z(\overline{\C})$ by the ideal formed by all null-homotopic morphisms.
\end{definition}

Equivalently, $f$ is null homotopic if it factors through an object of the form $X\otimes V$ for $X$ in $\overline{\C}$.

\begin{lemma}
A morphism $f\colon X\to Y$ in $\overline{\C}$ is null homotopic if and only if $f=\del^{p-1}g$ for some morphism $g\colon X\to Y$.
\end{lemma}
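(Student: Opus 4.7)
The plan is to prove both directions by explicit calculation in $\overline{\C}$, using the tensor action of $H$-modules from Section~\ref{tensorproducts}. Since every finite-dimensional projective $H$-module is a direct sum of shifts of the indecomposable projective $V_{p-1}\cong H$, and $\del^{p-1}$ is additive on hom-spaces, I will first reduce the equivalence to the case $V=V_{p-1}$, absorbing any grading shift into the outer tensor factor using $X'\otimes V_{p-1}\langle n\rangle\cong X'\langle n\rangle\otimes V_{p-1}$. Throughout I write $c_0,\dots,c_{p-1}$ for a basis of $V_{p-1}$ satisfying $\del c_k=c_{k+1}$ for $k<p-1$ and $\del c_{p-1}=0$.

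For the direction $(\Leftarrow)$, given $f=\del^{p-1}(g)$ for a morphism $g\colon X\to Y$ of degree $-2(p-1)$, I take $V=V_{p-1}$ shifted so that the socle element $c_{p-1}$ sits in degree zero, and define $\alpha\colon X\to X\otimes V$ to be the inclusion into the $c_{p-1}$-summand. This morphism is manifestly degree zero, and since $\del c_{p-1}=0$, the description of the differential on $X\otimes V$ in \eqref{Hmoduletensor2} shows $\del\alpha=0$. Dually, I define $\beta\colon X\otimes V\to Y$ acting by $\del^k(g)$ on the $c_k$-summand. A short calculation using the Leibniz rule and $\del(\del^k g)=\del^{k+1}g$ verifies $\del\beta=0$, and by construction $\beta\circ\alpha=\del^{p-1}(g)=f$, exhibiting $f$ as null-homotopic.

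For $(\Rightarrow)$, suppose $f=\beta\circ\alpha$ factors through $X'\otimes V_{p-1}$ with $\alpha,\beta\in\Z(\overline{\C})$. I decompose $\alpha$ according to the basis $\{c_k\}$ as a column $(\alpha_0,\dots,\alpha_{p-1})^{T}$ with $\alpha_k\colon X\to X'$ of degree $-2k$, and $\beta$ as a row $(\beta_0,\dots,\beta_{p-1})$ with $\beta_k\colon X'\to Y$ of degree $2k$. Translating the closedness conditions via the differential on $X'\otimes V_{p-1}$ described in \eqref{Hmoduletensor2} yields the recursions $\alpha_{k-1}=-\del(\alpha_k)$ and $\beta_{k+1}=\del(\beta_k)$, which force
\[
\alpha_k=(-1)^{p-1-k}\del^{p-1-k}(\alpha_{p-1}),\qquad \beta_k=\del^k(\beta_0),
\]
leaving $\alpha_{p-1},\beta_0$ as free parameters. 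Composing then produces
\[
f=\sum_{k=0}^{p-1}(-1)^{p-1-k}\,\del^k(\beta_0)\circ\del^{p-1-k}(\alpha_{p-1}).
\]
The final step invokes the iterated Leibniz formula $\del^{p-1}(\beta_0\alpha_{p-1})=\sum_k\binom{p-1}{k}\del^k(\beta_0)\del^{p-1-k}(\alpha_{p-1})$ together with the classical identity $\binom{p-1}{k}\equiv(-1)^k\pmod{p}$ to identify the above sum with $(-1)^{p-1}\del^{p-1}(\beta_0\alpha_{p-1})$; since $(-1)^{p-1}=+1$ in every characteristic $p$, the choice $g=\beta_0\alpha_{p-1}$ completes the proof.

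The main obstacle I anticipate is the careful bookkeeping of degree shifts and signs in the decompositions of $\alpha$ and $\beta$ with respect to the basis of $V_{p-1}$; conceptually the argument hinges entirely on the mod-$p$ binomial identity, and the reduction from a general projective $H$-module to $V_{p-1}$ is routine by additivity of both $\del^{p-1}$ and the tensor action.
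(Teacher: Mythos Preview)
Your argument is correct. Both directions are handled cleanly: the $(\Leftarrow)$ direction produces an explicit factorisation of $\del^{p-1}g$ through $X\otimes H\langle 2p-2\rangle$, and the $(\Rightarrow)$ direction reduces a factorisation through $X'\otimes V_{p-1}$ to the recursions $\alpha_{k-1}=-\del(\alpha_k)$, $\beta_{k+1}=\del(\beta_k)$ and then appeals to the identity $\binom{p-1}{k}\equiv(-1)^k\pmod{p}$. The reduction from a general projective $V$ to a single (shifted) copy of $V_{p-1}$ is fine: decomposing $V\cong\bigoplus_i V_{p-1}\langle n_i\rangle$ and using additivity of the tensor action lets you absorb both the shifts and the direct sum into the outer factor, replacing $X'$ by $\bigoplus_i X'\langle n_i\rangle$.

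This is a genuinely different route from the paper's proof, which simply transports the statement from the module-theoretic setting via the equivalence $\overline{\C}\simeq\C^{\op}\cof$ of Lemma~\ref{matcatequiv} and invokes \cite[Lemma~5.4]{Qi} and \cite[Lemma~1]{Kh}. Your version is entirely self-contained in $\overline{\C}$ and makes the mechanism --- the mod-$p$ binomial coefficients --- explicit, at the cost of the index bookkeeping you flag. The paper's approach buys brevity and situates the lemma in the existing literature; yours would be preferable in a context where those references are not assumed.
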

\begin{proof}
For modules over $p$-dg algebras, this is proved in \cite[Lemma 5.4]{Qi}, and carries over to categories of the form $\C\cof$ using the tensor product introduced in Section \ref{tensorproducts}. The converse uses  \cite[Lemma~1]{Kh} that every null homotopic morphism $f$ factors through $\id\otimes \del^{p-1} \colon X\to X\otimes H$ as this map has a left inverse. The result then holds, under the equivalence of Lemma \ref{matcatequiv}, to categories of the form $\overline{(-)}$.
\end{proof}

%
%
%
%

\begin{lemma}\label{derivedfunctor}
Given a $p$-dg functor $F\colon \overline{\C}\to \overline{\D}$ in $\pdgcat$, there is an induced functor $$\St(F)\colon \St(\overline{\C})\to \St(\overline{\D}),$$
and passing to stable categories $\St(-)$ is functorial. 
\end{lemma}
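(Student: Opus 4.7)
The plan is to leverage the characterisation of null-homotopic morphisms just established (i.e.\ $f$ is null-homotopic iff $f=\partial^{p-1}(g)$ for some $g$) together with the defining property of a $p$-dg functor that it commutes with $\partial$ on morphism spaces. This reduces the proof to a short verification.

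First I would observe that, since $F$ is a $p$-dg functor, the induced map on homomorphism spaces is a degree zero morphism of graded $H$-modules. In particular, it sends $p$-dg morphisms to $p$-dg morphisms, so it restricts to an ordinary $\Bbbk$-linear functor $\Z(F)\colon \Z(\overline{\C})\to \Z(\overline{\D})$. Next, given a null-homotopic morphism $f\colon X\to Y$ in $\Z(\overline{\C})$, write $f=\partial^{p-1}(g)$ for some $g\colon X\to Y$. Then
\begin{align*}
\Z(F)(f)=F(\partial^{p-1}(g))=\partial^{p-1}(F(g)),
\end{align*}
where the last equality uses that $\partial$ on morphism spaces is transported by $F$. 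Hence $\Z(F)(f)$ is again null-homotopic, so $\Z(F)$ sends the ideal of null-homotopies in $\overline{\C}$ into the ideal of null-homotopies in $\overline{\D}$. Consequently $\Z(F)$ descends to a well-defined $\Bbbk$-linear functor $\St(F)\colon \St(\overline{\C})\to \St(\overline{\D})$ on the quotients.

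For functoriality, given $p$-dg functors $F\colon\overline{\C}\to\overline{\D}$ and $G\colon\overline{\D}\to\overline{\E}$, the composition $G\circ F$ is again a $p$-dg functor, and on the nose one has $\Z(G\circ F)=\Z(G)\circ\Z(F)$, so passing to the quotients gives $\St(G\circ F)=\St(G)\circ \St(F)$. Similarly $\St(\id_{\overline{\C}})=\id_{\St(\overline{\C})}$.

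No real obstacle is expected: the only point to watch is that preservation of null-homotopies is phrased most cleanly via the $\partial^{p-1}$-characterisation rather than via the factorisation through objects of the form $X\otimes V$, since an arbitrary $p$-dg functor $F$ need not preserve such tensor factorisations up to $p$-dg isomorphism. Using the $\partial$-equivariance of $F$ sidesteps this issue entirely.
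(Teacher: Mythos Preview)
Your proof is correct and takes a genuinely different route from the paper. The paper works with the factorisation definition of null-homotopic morphisms (factoring through $X\otimes V$ for $V$ a projective $H$-module) rather than the $\partial^{p-1}$-characterisation. Because an arbitrary $p$-dg functor between categories of the form $\overline{(-)}$ does not obviously preserve such tensor factorisations, the paper first invokes Lemma~\ref{restrictionlemma} to replace $F$ by $\overline{F|_{\C}}$ up to $p$-dg isomorphism; for functors of this extended form, one can verify by hand (using the componentwise formula~\eqref{Hmoduletensor2}) that $\overline{G}(X\otimes V)=\overline{G}(X)\otimes V$, so the factorisation is preserved. Functoriality is then deduced from that of the $\overline{(-)}$ construction in Lemma~\ref{extensionlemma}\eqref{extensionlemma1}.

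Your approach via the $\partial^{p-1}$-characterisation is more elementary and avoids both Lemma~\ref{restrictionlemma} and the explicit tensor computation: $\partial$-equivariance of $F$ on hom-spaces is exactly the defining property of a $p$-dg functor, so null-homotopies are preserved on the nose, and strict functoriality of $\St(-)$ is immediate from $\Z(G\circ F)=\Z(G)\circ\Z(F)$. What the paper's longer route buys is the explicit identification $\overline{G}(X\otimes V)\cong\overline{G}(X)\otimes V$, which is reused later (e.g.\ in showing that induced functors on stable categories are triangulated, Theorem~\ref{triangularfunctor}); your argument, while cleaner for the present lemma, does not directly yield that compatibility with the $H$-module action.
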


\begin{proof}

We have to show that $\Z(F)$ preserves null-homotopic morphisms.
By Lemma \ref{restrictionlemma}, the functor $F$ is $p$-dg isomorphic to $\overline{\left.F\right|_{\C}}$. As $p$-dg isomorphisms preserve null-homotopic morphisms, setting $G=\left.F\right|_{\C}$, it suffices to show that any $p$-dg functor of the form $\Z(\overline{G})$ preserves null-homotopic morphisms. Indeed, if a morphism $f$ in $\overline{\C}$ factors through an object $X\otimes V$, then $\Z(\overline{G})(f)=\overline{G}(f)$ factors through $\overline{G}(X\otimes V)=\overline{G}(X)\otimes V$. This follows from the description of the action of $H$-modules on $\overline{\C}$ in \eqref{Hmoduletensor2} and the componentwise definition of $\overline{G}$.
Hence it follows from the functoriality in Lemma \ref{extensionlemma}\eqref{extensionlemma1} that $\St(-)$ is strictly functorial, as the induced morphisms on the stable category are the functors $\overline{(-)}$, quotienting out some morphisms.
\end{proof}

For later use, we require a description of the {\bf cone} of a morphism in $\Z(\overline{\C})$. This adapts the corresponding construction for $p$-dg modules in \cite{Kh}.

\begin{lemma}\label{conelemma}
Let $f\colon X\to Y$ be a morphism in $\Z(\overline{\C})$, where $X=(\bigoplus_{i=1}^mX_i,\alpha)$, $Y=(\bigoplus_{j=1}^nY_j,\beta)$. Then, abbreviating $\tilde f:= f\langle -2\rangle$ and $\tilde \tI_m:=\tI_m\langle -2\rangle$, the cone $C_f$ of $f$ is the object 
\begin{align}\label{cone}
C_f=\left(Y\oplus X\langle 2\rangle\oplus X\langle 4\rangle\oplus\ldots \oplus X\langle 2p-2\rangle, \begin{pmatrix}
\beta&\tilde f&0&0&\hdots&0\\
0&\alpha&\tilde \tI_m&0&\hdots&0\\
0&0&\alpha&\tilde \tI_m&0&0\\
\vdots&&\ddots&\ddots&\ddots&\vdots\\
0&\hdots&&0&\alpha&\tilde \tI_m\\
0&\hdots&&&0&\alpha
\end{pmatrix}\right).
\end{align}

It is part of the pushout diagram
\begin{align}\label{conediag}\vcenter{\hbox{
\xymatrix{
X\ar[rr]^{f}\ar[d]_{\id_X\otimes \del^{p-1}\langle 2p-2\rangle}&&Y\ar[d]^v\\
X\otimes H\langle 2p-2\rangle\ar[rr]_{u}&&C_f}}}
\end{align}
in $\Z(\overline{\C})$, where $v$ is the embedding into the first summand, and $u$ is given by the diagonal matrix with $f,\id_X,\ldots, \id_X$ on the diagonal.
\end{lemma}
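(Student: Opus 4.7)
The plan is to verify four claims: \textbf{(a)} $C_f$ as defined in \eqref{cone} is a genuine object of $\overline{\C}$, i.e., the matrix $\partial \cdot \tI + M_{C_f}$ is a $p$-differential; \textbf{(b)} the morphisms $u$ and $v$ lie in $\Z(\overline{\C})$; \textbf{(c)} the square \eqref{conediag} commutes; and \textbf{(d)} $C_f$ satisfies the universal property of the pushout.

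The key structural observation simplifying \textbf{(a)} is that the bottom-right $(p-1)\times(p-1)$ submatrix of $M_{C_f}$ is precisely the differential on $X\otimes V_{p-2}\shift{2p-2}$ as given by \eqref{Hmoduletensor2}, so the quotient $C_f/Y$ is already a valid $p$-dg module. Thus $C_f$ is realized as an extension $0 \to Y \to C_f \to X \otimes V_{p-2}\shift{2p-2} \to 0$, and only the presence of the $(1,2)$-entry $\tilde{f}$ remains to be analyzed. Using \eqref{matrixdiffeqn}, the nilpotency condition reduces to checking that $\tilde{f}$ is compatible with the differentials on $Y$ and on $X\langle 2\rangle$; when rewritten via Definition \ref{barcat}, this is exactly the assumption $\partial(f)=0$ (which translates to $\partial f_{n,m} = (f\alpha - \beta f)_{n,m}$). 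Combined with $p$-nilpotency of the diagonal blocks and the fact that the strictly upper-triangular part of $M_{C_f}$ is nilpotent of order at most $p$, the full matrix identity $(\partial\cdot\tI + M_{C_f})^p = 0$ follows.

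Claims \textbf{(b)} and \textbf{(c)} are routine. For $v = (\id_Y, 0, \ldots, 0)^T$, the componentwise formula of Definition \ref{barcat} gives $M_{C_f}\, v - v\,\beta = (\beta, 0, \ldots, 0)^T - (\beta, 0, \ldots, 0)^T = 0$ and each entry has vanishing $\partial$, so $\partial(v)=0$. The analogous check for the diagonal matrix $u = \mathrm{diag}(f, \id_X, \ldots, \id_X)$ uses $\partial(f)=0 = \partial(\id_X)$ together with cancellation of the superdiagonal contributions of source and target. Commutativity of the square is immediate: $\id_X\otimes\partial^{p-1}\shift{2p-2}$ selects the first summand of $X\otimes H\shift{2p-2}$, which $u$ sends into $Y$ via $f$, producing the same composite as $v\circ f$.

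For \textbf{(d)}, given $Z\in\Z(\overline{\C})$ and $p$-dg morphisms $g_Y\colon Y\to Z$ and $g_X = (g_{X,1}, \ldots, g_{X,p})\colon X\otimes H\shift{2p-2}\to Z$ satisfying the compatibility $g_Y\circ f = g_{X,1}$ forced by the outer square, define $h\colon C_f \to Z$ as the row matrix $(g_Y, g_{X,2}, \ldots, g_{X,p})$. Then $h\circ v = g_Y$ and $h\circ u = (g_Y\circ f, g_{X,2}, \ldots, g_{X,p}) = g_X$; uniqueness is forced by these two equations, and $\partial(h)=0$ follows from the corresponding closure of $g_Y, g_X$ together with the compatibility relation. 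The main technical obstacle is the matrix calculation in \textbf{(a)}; the remaining parts amount to unravelling definitions.
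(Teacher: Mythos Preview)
Your proposal is correct and follows essentially the same approach as the paper. The paper, following Khovanov, takes the pushout diagram as the \emph{definition} of the cone and then verifies that the explicit object $C_f$ satisfies the universal property; your part \textbf{(d)} reproduces this verification exactly (your $h=(g_Y,g_{X,2},\dots,g_{X,p})$ is the paper's $\rho=(\tau,\gamma_2,\dots,\gamma_p)$), and the paper's displayed equations $\del\tau+\beta'\tau-\tau\beta=0$ and $\del\gamma_i+\beta'\gamma_i-\gamma_i\alpha=\gamma_{i-1}$ are precisely the conditions you invoke when asserting $\partial(h)=0$. Your parts \textbf{(a)}--\textbf{(c)} are more explicit than the paper, which simply states that it is easy to check commutativity and that all morphisms are $p$-dg; in particular, your observation that the bottom-right $(p-1)\times(p-1)$ block of $M_{C_f}$ is the differential on $\Sigma X = X\otimes V_{p-2}\langle 2p-2\rangle$, so that $C_f$ sits as an extension of $\Sigma X$ by $Y$, is a helpful structural remark that the paper leaves implicit.
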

\begin{proof}
Following \cite{Kh}, the cone of $f$ can be defined to be right bottom object in the pushout diagram \eqref{conediag}. We claim that $C_f$ as defined in \eqref{cone} satisfies its universal property. It is easy to check that $C_f$ makes the diagram commute and that all morphisms are $p$-dg morphisms. Now let $\gamma=(\gamma_1,\ldots, \gamma_{p})\colon X\otimes H\langle 2p-2\rangle\to Z$ and $\tau\colon Y\to Z$ be morphisms in $\Z(\overline{\C})$ such that $\gamma(\id\otimes \del^{p-1}\langle 2p-2\rangle)=\tau f$. This implies that $\gamma_1=\tau f$. For a morphism $\rho=(\rho_1,\ldots,\rho_{p})$ to satisfy $\rho u=\gamma$ and $\rho v=\tau$, it is necessary that $\rho_1 f=\gamma_1$ and $\rho_1=\tau$, as well as $\rho_i=\gamma_i$ for $i=2,\ldots, p$. Hence the unique morphism satisfying this requirement is $\rho=(\tau, \gamma_2,\ldots,\gamma_{p})$. We can show that $\rho$ is a $p$-dg morphism. Denote the differential upper triangular matrix on $Z$ by $\beta'$. As $\tau$ and $\gamma$ are $p$-dg morphisms, we obtain
\begin{align*}
\del\tau+\beta'\tau-\tau\beta&=0,\\
\del\gamma_i+\beta'\gamma_i-\gamma_i\alpha&=\gamma_{i-1}.
\end{align*}
Again using $\tau f=\gamma_1$ and the differential on $C_f$ from \eqref{cone},  we verify that these equations imply that $\rho=(\tau, \gamma_2,\ldots, \gamma_{p})$ is a $p$-dg morphism from $C_f$ to $(Z,\beta')$.
\end{proof}

Now consider the shift functor $\Sigma\colon \overline{\C}\to \overline{\C}$. It corresponds to tensoring (as defined in Section \ref{tensorproducts}) with the graded $H$-module $V_{p-2}\langle 2p-2\rangle=(H/\del^{p-1}H)\langle 2p-2\rangle$ (cf. \cite[Section 3]{Kh}). Tensoring an object $X$ of $\overline{X}$ with the short exact sequence of graded $H$-modules $$0\to\Bbbk{\longrightarrow}H\langle 2p-2\rangle\to V_{p-2}\langle 2p-2\rangle\to 0,$$ where the map from $\Bbbk$ to $H\langle 2p-2\rangle$ is given by $\del^{p-1}\langle 2p-2\rangle$,
gives a short exact sequence
$$0\longrightarrow X\longrightarrow X\otimes H\langle 2p-2\rangle\longrightarrow \Sigma X\longrightarrow 0$$
in the $\Bbbk$-additive category $\Z(\overline{\C})$. Using Lemma \ref{conelemma} we obtain an analogue of a diagram of short exact sequences  in $\Z(\overline{\C})$ considered in \cite{Kh}: 
\begin{align}
\vcenter{\hbox{
\xymatrix{
0\ar[r]&X\ar[r]\ar[d]_f &X\otimes H\langle 2p-2\rangle\ar[r]^-{q}\ar[d]^{u}& \Sigma X\ar[r]\ar[d]^{\id}&0\\
0\ar[r]&Y\ar[r]^{v}&C_f\ar[r]^{r}& \Sigma X\ar[r]&0,
}}}
\end{align}
where the $p$-dg morphisms $q$ and $r$ are both given by the block matrix $\begin{pmatrix}0&\tI_{p-1}\end{pmatrix}$. These morphisms are annihilated by the differential, and, moreover, $q(\id\otimes \del^{p-1})=0$ and $vr=0$. 

\begin{definition}
Following \cite{Kh}, we define {\bf standard distinguished triangles} in $\St(\overline{\C})$ to be sequences of the form
$$X\stackrel{f}{\longrightarrow}Y\stackrel{v}{\longrightarrow}C_f\stackrel{r}{\longrightarrow}\Sigma X,$$
for a morphism $f\colon X\to Y$ in $\Z(\overline{\C})$. A {\bf distinguished triangle} is any diagram isomorphic to a standard distinguished triangle in $\St(\overline{\C})$.
\end{definition}

The following theorem follows similarly to \cite[Theorem~1]{Kh}, which follows the strategy of \cite{Ha}.

\begin{theorem}\label{triangularfunctor}
Let $\C$ be a $p$-dg category in $\pdgcat$. Then the stable category $\St(\overline{\C})$ is triangulated. Further, a $p$-dg functor $\rF\colon \overline{\C}\to \overline{\D}$ induces a triangulated functor $\St(\rF)\colon \St(\overline{\C})\to \St(\overline{\D})$.
\end{theorem}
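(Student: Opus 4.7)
The plan is to verify the four triangulated category axioms (TR1)--(TR4) directly, following the blueprint of \cite[Theorem~1]{Kh} and the argument of Happel \cite{Ha}, and to then handle functoriality via a direct check that $\St(\rF)$ commutes with $\Sigma$ and sends standard triangles to standard triangles. First I would confirm that $\Sigma=(-)\otimes V_{p-2}\shift{2p-2}$ is well-defined on $\St(\overline{\C})$ and is an autoequivalence: tensoring with the short exact sequence $0\to\Bbbk\to H\shift{2p-2}\to V_{p-2}\shift{2p-2}\to 0$ in the stable category $H\lmod$-stab gives, after passing to the quotient by null-homotopic morphisms, an inverse functor $\Sigma^{-1}$ corresponding to tensoring with $V_{p-2}\shift{-2p+2}\otimes V_{p-2}\shift{2p-2}\cong \Bbbk$ in the stable category. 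Additivity of $\St(\overline{\C})$ is inherited from $\Z(\overline{\C})$ since the ideal of null-homotopic morphisms is compatible with biproducts in $\overline{\C}$ (which exist by the explicit construction in Section~\ref{cofibrantpdgmods}).

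For (TR1) I would note that the zero morphism $0\colon X\to 0$ has cone $\Sigma X$ using the formula \eqref{cone}, and that for $f=\id_X$ the cone is $p$-dg isomorphic (in $\Z(\overline{\C})$) to $X\otimes H\shift{2p-2}$, which is null-homotopic and hence zero in $\St(\overline{\C})$; that triangles isomorphic to standard ones are distinguished is built into the definition. For (TR2) the rotation axiom reduces, as in \cite{Kh,Ha}, to producing an isomorphism $C_v\cong \Sigma X$ in $\St(\overline{\C})$ where $v\colon Y\to C_f$ is the canonical embedding, and dually a compatibility between $C_{-\Sigma f}$ and the shifted cone. The explicit matrix form of $C_f$ in \eqref{cone}, combined with the pushout presentation \eqref{conediag}, makes these isomorphisms directly computable: one writes down the relevant $p$-dg morphism as a block matrix and exhibits an explicit homotopy (a multiple of $\del^{p-1}$) implementing its inverse in the stable category. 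For (TR3), given a commuting square $gf=f'g_0$, one lifts $(g_0,g)$ to a morphism between the standard triangles by putting $g$ in the top-left block of a morphism $C_f\to C_{f'}$ and filling in diagonal copies of $g_0$ on the $X\shift{2i}$ summands; that this is a $p$-dg morphism is a direct calculation using that the matrices $\alpha,\beta$ and $f,f'$ commute with $g_0,g$ up to $\del$ of the square.

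The main obstacle, as usual, is (TR4). Following Happel and Khovanov, the octahedral axiom is proved by choosing standard cones for $f\colon X\to Y$, $g\colon Y\to Z$ and $gf\colon X\to Z$, and exhibiting explicit block-matrix $p$-dg morphisms $C_f\to C_{gf}\to C_g$ whose composition is zero and whose third term fits into a standard triangle. I would adapt the constructions in \cite{Kh} verbatim, replacing the scalar differential of a $p$-complex by the differential $\del\cdot \tI+\alpha$ on $(\oplus F_m,\alpha)$; the combinatorics of the block matrices is identical, and the required homotopies again arise from multiples of $\del^{p-1}$. Throughout one uses the pushout characterization \eqref{conediag} to check universal-property style statements without having to inspect individual blocks.

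For the functorial statement, let $\rF\colon \overline{\C}\to \overline{\D}$ be a $p$-dg functor. Lemma~\ref{derivedfunctor} already yields the additive functor $\St(\rF)$; it remains to show it is triangulated. By Lemma~\ref{restrictionlemma} we may replace $\rF$ by $\overline{\rF|_{\C}}$ (up to $p$-dg isomorphism), after which $\rF$ acts componentwise on objects of the form $(\oplus F_m,\alpha)$ and on block matrices. Consequently $\rF(X\otimes V)\cong \rF(X)\otimes V$ for $V\in\Bbbk\plmod$, which gives a canonical $p$-dg isomorphism $\rF\Sigma\cong \Sigma \rF$, and $\rF$ applied to the matrix in \eqref{cone} yields the matrix defining $C_{\rF(f)}$; this supplies a natural isomorphism $\rF(C_f)\cong C_{\rF(f)}$ compatible with the canonical morphisms $v$ and $r$. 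Hence $\rF$ sends standard distinguished triangles to standard distinguished triangles, proving the second assertion.
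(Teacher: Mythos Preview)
Your proposal is correct and follows essentially the same approach as the paper: both defer to the formal adaptation of \cite{Kh} (based on \cite{Ha}) for the axioms (TR1)--(TR4), relying on the tensor action of $H$-modules on $\overline{\C}$ and the splitting of $\id_H\otimes\del^{p-1}\shift{2p-2}$, and both handle the functorial statement by invoking Lemma~\ref{restrictionlemma} to reduce to a componentwise functor $\overline{\rF|_{\C}}$, then reading off $\rF(C_f)\cong C_{\rF(f)}$ and $\rF(\Sigma X)\cong \Sigma(\rF X)$ from the explicit matrix description of the cone in Lemma~\ref{conelemma}. Your writeup is simply a more detailed unpacking of what the paper sketches in two sentences.
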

\begin{proof}
To verify that $\St(\overline{\C})$ is triangulated, the proof of \cite{Kh} (based on \cite{Ha}) can be adapted as it is formal, and \cite[Lemma~1]{Kh} can also be used in our setup as  it is based on the inclusion of graded $H$-modules $\id_H\otimes \del^{p-1}\langle 2p-2\rangle
:H\to H\otimes H\langle 2p-2\rangle$ splitting, which we can also utilize thanks to the tensor product action of graded $H$-modules on $\overline{\C}$ from Section \ref{tensorproducts}.

Using Lemmas \ref{restrictionlemma} and \ref{derivedfunctor} applied to the explicit description of the cone in Lemma \ref{conelemma}, we see that $\rF(C_f)\cong C_{\rF(f)}$ and $\rF(\Sigma X)\cong \Sigma(\rF X)$. Hence, $\St(\rF)$ commutes with the shift functor $\Sigma$ up to isomorphism, and standard distinguished triangles are mapped to distinguished triangles under $\St(\rF)$.
\end{proof}

Finally, we can consider natural transformations $\lambda\colon F\to G$ such that $\del\lambda=0$, for $F,G\colon \overline{\C}\to \overline{\D}$. Then if $\alpha \colon X\to Y\in \overline{\C}$ is null-homotopic, both $\alpha\lambda_X$ and $\lambda_Y\alpha$ are null-homotopic. Hence $\lambda$ induces a well-defined natural transformation $\St(\lambda)\colon \St(\overline{\C})\to \St(\overline{\D})$, which at an object $X$ of $\overline{\C}$ is defined as $\St(\lambda)_X=\lambda_X$, the image of $\lambda_X$ in the stable category $\St(\overline{\D})$. The next lemma follows immediately.

\begin{lemma}\label{stabletransformations}
Let $\C$, $\D$, $\E$ be categories in $\pdgcat$, and $F,F',F''\colon \overline{\C}\to\overline{\D}$, $G,G'\colon \overline{\D}\to \overline{\E}$ be $p$-dg functors and $\alpha\colon F\to F'$, $\alpha'\colon F'\to F''$, and $\beta\colon G\to G'$ be $p$-dg natural transformations. Then 
\begin{align}
\St(\alpha'\circ_1\alpha)&=\St(\alpha')\circ_1\St(\alpha),\\
\St(\beta\circ_0\alpha)&=\St(\beta)\circ_0\St(\alpha).
\end{align}
\end{lemma}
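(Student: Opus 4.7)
The plan is to verify each of the two identities pointwise on an object $X$ of $\overline{\C}$, directly unwinding the definition of $\St(-)$ on $p$-dg natural transformations given immediately before the lemma statement, namely $\St(\lambda)_X$ is the image in the stable category of $\lambda_X$. The well-definedness of all the pieces involved (induced functors and induced natural transformations on stable categories) has already been established, so both equalities should be essentially formal.

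For the vertical composition, I would first note that $(\alpha' \circ_1 \alpha)_X = \alpha'_X \circ \alpha_X$ in $\Z(\overline{\D})$. Then $\St(\alpha' \circ_1 \alpha)_X$ is by definition the class of $\alpha'_X \circ \alpha_X$ modulo null-homotopies. Since composition in $\St(\overline{\D})$ is induced directly from composition in $\Z(\overline{\D})$, this is the same as the composition of the class of $\alpha'_X$ with the class of $\alpha_X$, which is $\St(\alpha')_X \circ \St(\alpha)_X$; this is exactly $(\St(\alpha') \circ_1 \St(\alpha))_X$.

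For the horizontal composition, I would expand $(\beta \circ_0 \alpha)_X = \beta_{F'(X)} \circ G(\alpha_X)$ in $\Z(\overline{\E})$, so that $\St(\beta \circ_0 \alpha)_X$ is the class of $\beta_{F'(X)} \circ G(\alpha_X)$. On the other hand, $(\St(\beta) \circ_0 \St(\alpha))_X = \St(\beta)_{\St(F')(X)} \circ \St(G)(\St(\alpha)_X)$, and the crucial input here is Lemma \ref{derivedfunctor}, which tells us both that $\St(G)$ is well defined on morphisms and that on a representative $p$-dg morphism $f$ it sends the class of $f$ to the class of $G(f)$. Applying this to $f = \alpha_X$ and then composing with $\St(\beta)_{F'(X)}$, which is by definition the class of $\beta_{F'(X)}$, yields precisely the class of $\beta_{F'(X)} \circ G(\alpha_X)$, establishing the desired equality.

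There is no real obstacle here beyond assembling the definitions; the key conceptual point is that $\St(-)$ is a quotient construction that is compatible with composition on the nose (no sign or homotopy issues arise, since $p$-dg natural transformations are required to be annihilated by $\del$). Once the formula $\St(G)([f]) = [G(f)]$ from Lemma \ref{derivedfunctor} is in hand, the interchange of the functor $\St$ with vertical and horizontal composition is immediate.
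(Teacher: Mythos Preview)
Your proposal is correct. The paper itself gives no proof at all: it simply says ``The next lemma follows immediately,'' so your careful pointwise unwinding of the definitions (using that $\St(\lambda)_X$ is the class of $\lambda_X$ and that $\St(G)$ is the quotient functor induced from $\Z(G)$ via Lemma~\ref{derivedfunctor}) is exactly the verification the paper leaves implicit.
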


Denote by $\overline{\C}^\circ$ the $p$-dg idempotent completion of $\overline{\C}$ (cf. Section \ref{fincat}). Similarly to Definition \ref{Stdefinition}, we can define its stable category $\St(\overline{\C}^\circ)$, and all constructions carry through.

\begin{lemma}\label{derivedstablelemma}
Let $\C$ be a locally finite $p$-dg category with a finite set of objects $\mathtt{X}:=\{X_1,\dots, X_r\}$ such that every object in $\C$ has a fantastic filtration by shifts of objects in $\mathtt{X}$.
Then $\St(\overline{\C})$ is a full triangulated subcategory of the compact derived category $\D^c(\C^{\op}\plmod)$, and $\St(\overline{\C}^\circ)$ is equivalent to $\D^c(\C^{\op}\plmod)$.
\end{lemma}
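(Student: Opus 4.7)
The strategy is to transport the statement to the setting of $p$-dg algebras, where it follows from results of Qi. The hypothesis on $\mathtt{X} = \{X_1,\dots,X_r\}$ produces the finite-dimensional $p$-dg algebra $A := A_\C = \End_\C(\bigoplus_i X_i)^{\op}$ of Example \ref{algebraex}. By Lemma \ref{matcatequiv}, there is a $p$-dg equivalence $\overline{\C}\simeq \C^{\op}\cof$, and Lemma \ref{moduletranslation} gives a $p$-dg equivalence $\C^{\op}\plmod\simeq A\plmod$ which restricts to an equivalence $\C^{\op}\cof\simeq A\cof$ on compact semi-free modules. By Lemma \ref{derivedfunctor} and Theorem \ref{triangularfunctor}, all of these $p$-dg equivalences descend to triangulated equivalences of stable categories, so it suffices to prove both assertions with $\overline{\C}$ replaced by $A\cof$ and $\C^{\op}\plmod$ replaced by $A\plmod$.

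For the first claim, I would invoke \cite{Qi}, which identifies the compact derived category $\D^c(A\plmod)$ with the stable category of compact cofibrant $p$-dg $A$-modules. Since $A\cof$ embeds as a full $p$-dg subcategory of the category of compact cofibrant modules, and the defining ideal of null-homotopic morphisms is compatible with this inclusion, the induced functor $\St(A\cof)\to \D^c(A\plmod)$ is a fully faithful triangulated functor. Pulling back along the equivalences of the previous paragraph yields the first statement. For the second claim, I would further use the fact (also from \cite{Qi}, as recalled in the introduction) that compact cofibrant $p$-dg $A$-modules are precisely the $p$-dg idempotent completion of $A\cof$. Because $p$-dg idempotent completion is functorial (Lemma \ref{idempotentcomposition}) and the equivalences above lift to $p$-dg equivalences of the corresponding $p$-dg idempotent completions, $\overline{\C}^\circ$ is $p$-dg equivalent to the category of compact cofibrant $p$-dg $A$-modules. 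Passing to stable categories then gives $\St(\overline{\C}^\circ)\simeq \D^c(A\plmod)\simeq \D^c(\C^{\op}\plmod)$.

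The main technical hurdle will be the bookkeeping needed to verify that $p$-dg idempotent completion commutes with the construction $\overline{(-)}$ and with passage to the stable quotient, and that the resulting equivalences interact correctly with the ideal of null-homotopic morphisms -- in particular, that the equivalences of Lemmas \ref{matcatequiv} and \ref{moduletranslation} send $\partial^{p-1}$-images to $\partial^{p-1}$-images, so that null-homotopic morphisms are preserved in both directions. Once these compatibilities are in place, the theorem reduces entirely to the corresponding statement for $p$-dg algebras due to Qi.
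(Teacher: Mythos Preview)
Your proposal is correct and follows essentially the same route as the paper: reduce to the $p$-dg algebra $A=A_\C$ via Lemmas \ref{matcatequiv} and \ref{moduletranslation}, then invoke Qi's results identifying the compact derived category with the stable category of compact cofibrant modules. The paper is simply more terse, citing \cite[Corollary~6.8]{Qi} for the identification of $\Z(\overline{\C}^\circ)$ with compact cofibrant $A$-modules and \cite[Corollary~7.15]{Qi} for the final equivalence, rather than spelling out the bookkeeping you flag at the end; so the ``main technical hurdle'' you anticipate is absorbed into those citations rather than verified in detail.
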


\proof 
It follows from the discussion in Section \ref{algcat} and \cite[Corollary 6.8]{Qi} that $\Z(\overline{\C}^\circ)$ is equivalent to the category of compact cofibrant $A$-modules (see \cite[Section 6]{Qi}) for $A=\End_\C(\bigoplus_{j=1}^r X_j)$. Given the equivalence $\C^{\op}\plmod\simeq A\plmod$ from Lemma \ref{moduletranslation}, the claim follows from  \cite[Corollary 7.15]{Qi}.
\endproof
Compare for example \cite[Proposition~2.4]{Or} for an analogous statement for dg categories.

\section{\texorpdfstring{$p$}{p}-dg \texorpdfstring{$2$}{2}-categories}\label{pdg2cats}

In this section, we collect definitions and results on $2$-categorical constructions with $p$-dg enrichment to introduce the type of $2$-categories for which we will construct $2$-representations in Chapter \ref{pdg2repchapter}.

\subsection{Finitary \texorpdfstring{$p$}{p}-dg \texorpdfstring{$2$}{2}-categories}\label{pdg2cat}

We call a $2$-category $\cC$ a {\bf $p$-dg $2$-category} if the categories $\cC(\ti,\tj)$ are   $p$-dg categories for any pair of objects $\ti,\tj\in \cC$, and horizontal composition is a biadditive $p$-dg functor.

We say that a  $p$-dg $2$-category is  {\bf locally finite} (respectively,  {\bf strongly finitary}) if 
\begin{itemize} \item it has finitely many objects,
\item  for any pair of objects $\ti,\tj\in \cC$, the categories $\cC(\ti,\tj)$ are locally finite (respectively,  strongly finitary);
\item for each $\ti\in \cC$, the identity $1$-morphism $\one_\ti$ is $p$-dg (respectively $\Bbbk$-indecomposable).
\end{itemize}

{\bf A (strict) $p$-dg $2$-functor} $\Phi\colon \cC \to \cD$ of $p$-dg $2$-categories a strict functor of $2$-categories such that for any two objects $\ti,\tj$ of $\cC$ the restriction $\Phi\colon \cC(\ti,\tj)\to \cD(\Phi \ti,\Phi\tj)$ is a $p$-dg functor.

We will later study $p$-dg categories up to \emph{$p$-dg biequivalence}, this uses a weaker concept of $2$-functor, namely that of a \emph{bifunctor} between $2$-categories. The definition in the $p$-dg context is as follows: A {\bf $p$-dg bifunctor} $\Phi\colon \cC\to \cD$ is a bifunctor (see e.g. \cite[4.1]{MP}, \cite[1.1]{Le}) such that the restrictions $\Phi_{\ti,\tj}\colon \cC(\ti,\tj)\to \cD(\Phi\ti,\Phi\tj)$ are $p$-dg functors and there are $p$-dg isomorphisms
\begin{align*}
\one_{\Phi \ti} &\to \Phi_{\ti,\ti} (\one_\ti) , & (-)\circ (\Phi_{\tj,\tk}\times \Phi_{\ti,\tj})\to \Phi_{\ti,\tk}\circ (-),
\end{align*}
satisfying the same coherences as in the case of non-enriched $2$-categories.

\begin{definition}
Let $\cC$ and $\cD$ be $p$-dg $2$-categories. We say that there is a {\bf $p$-dg biequivalence} $\cC\approx \cD$ if there is a biequivalence of $2$-categories given by $p$-dg bifunctors $\Phi\colon \cC\to \cD$, $\Psi \colon \cD\to \cC$ such that the natural isomorphisms $\theta\colon \id_{\ccD}\to \Phi\Psi$, $\lambda\colon \Psi\Phi\to \id_{\ccC}$ are $p$-dg $2$-isomorphisms (graded, of degree zero).
\end{definition}

\begin{lemma}\label{pdgbieqchar}
A $p$-dg bifunctor $\Phi$ is part of a $p$-dg biequivalence if and only if $\Phi$ is surjective on objects (up to $p$-dg equivalence) and locally, i.e. for each pair of objects $\ti$, $\tj$, $\Phi_{\ti,\tj}$ is a $p$-dg equivalence.
\end{lemma}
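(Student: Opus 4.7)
The plan is to follow the classical argument that a bifunctor of $2$-categories is part of a biequivalence if and only if it is essentially surjective on objects and locally an equivalence, but tracking throughout that all chosen inverses and all structural natural isomorphisms are $p$-dg, using Lemma~\ref{pdgequivlemma} and Lemma~\ref{pdgisolemma} as our main tools.

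The ``only if'' direction is immediate: given a $p$-dg biequivalence $(\Phi,\Psi,\theta,\lambda)$, the $p$-dg $2$-isomorphism $\theta$ gives, for every $\tj'\in\cD$, a $p$-dg equivalence $\tj'\simeq \Phi(\Psi\tj')$, so $\Phi$ is surjective on objects up to $p$-dg equivalence. For local $p$-dg equivalence, whiskering the components of $\theta$ and $\lambda$ produces $p$-dg natural isomorphisms $\Phi_{\Psi\tj',\Psi\tk'}\circ\Psi_{\tj',\tk'}\cong \id$ and $\Psi_{\Phi\ti,\Phi\tj}\circ\Phi_{\ti,\tj}\cong\id$ of $p$-dg functors between hom-categories, so each $\Phi_{\ti,\tj}$ is part of a $p$-dg equivalence.

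For the ``if'' direction, I would construct $\Psi$ as follows. For each object $\tj'$ of $\cD$, use the surjectivity assumption to choose an object $\Psi\tj'$ of $\cC$ together with a $p$-dg equivalence $\theta_{\tj'}\colon \tj'\to \Phi(\Psi\tj')$ in $\cD$; for objects in the image of $\Phi$ we may pick $\theta_{\tj'}=\id$. For each pair $\tj',\tk'$, the local assumption together with Lemma~\ref{pdgequivlemma} gives a $p$-dg quasi-inverse $\Phi_{\Psi\tj',\Psi\tk'}^{-1}$ to $\Phi_{\Psi\tj',\Psi\tk'}$, and I set
\[
\Psi_{\tj',\tk'}(g):= \Phi_{\Psi\tj',\Psi\tk'}^{-1}\bigl(\theta_{\tk'}\circ g\circ \theta_{\tj'}^{-1}\bigr),
\]
on $1$-morphisms and analogously on $2$-morphisms. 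Because conjugation by the fixed $p$-dg isomorphisms $\theta_{\tj'}^{\pm 1}$ commutes with $\partial$ (Lemma~\ref{pdgisolemma}) and $\Phi_{\Psi\tj',\Psi\tk'}^{-1}$ is a $p$-dg functor, each $\Psi_{\tj',\tk'}$ is a $p$-dg functor. The coherence isomorphisms for $\Psi$ (compatibility with identities and horizontal composition) are produced as unique $2$-cells via the fully faithful $p$-dg functor $\Phi_{\Psi\tj',\Psi\tk'}$ out of the evident $p$-dg isomorphisms assembled from $\theta$ and the coherence isomorphisms of $\Phi$; their target is a $p$-dg isomorphism, so by the $p$-dg isomorphism of hom-spaces induced by $\Phi$ they themselves are $p$-dg isomorphisms.

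It remains to produce the $p$-dg $2$-isomorphisms $\theta\colon \id_\cD\Rightarrow \Phi\Psi$ and $\lambda\colon \Psi\Phi\Rightarrow \id_\cC$. The component $\theta_{\tj'}$ is already given; naturality in $1$- and $2$-morphisms, as well as invariance under $\partial$, follows from the construction of $\Psi_{\tj',\tk'}$, which was defined precisely so that $\Phi\Psi_{\tj',\tk'}(g)\circ \theta_{\tj'}=\theta_{\tk'}\circ g$ as $p$-dg morphisms. For $\lambda$, at an object $\ti\in\cC$ we take $\Psi\Phi\ti$ together with the $p$-dg equivalence obtained by applying $\Phi_{\ti,\ti}^{-1}$ (which is a $p$-dg equivalence of hom-categories) to $\theta_{\Phi\ti}\in\cD(\Phi\ti,\Phi\Psi\Phi\ti)$, using fullness and faithfulness to invert it in $\cC$. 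The main (though routine) obstacle is bookkeeping: one must verify that all three families of coherence isomorphisms, and the two modifications $\theta$ and $\lambda$, are simultaneously natural, satisfy the triangle identities up to invertible modification, and consist of degree zero $2$-cells annihilated by $\partial$. Each of these checks reduces, via Lemma~\ref{extensionlemma}\eqref{extensionlemma2} applied locally and Lemma~\ref{pdgisolemma}, to the fact that $\theta$, the coherence $2$-isomorphisms of $\Phi$, and the unit/counit of each local $p$-dg equivalence are $p$-dg. This gives the desired $p$-dg biequivalence.
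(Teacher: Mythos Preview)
Your proof is correct and follows essentially the same strategy as the paper: choose for each object of $\cD$ a preimage together with a $p$-dg equivalence, take local $p$-dg quasi-inverses of $\Phi_{\ti,\tj}$, and define $\Psi$ on $1$- and $2$-morphisms by conjugating with the chosen equivalences and then applying these quasi-inverses, building all coherences and the natural isomorphisms $\theta,\lambda$ from those of $\Phi$. One small remark: the reference to Lemma~\ref{extensionlemma}\eqref{extensionlemma2} is misplaced, since that lemma concerns extending natural transformations to the categories $\overline{\C}$ and plays no role here; the verifications you describe reduce simply to the Leibniz rule and Lemma~\ref{pdgisolemma}.
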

\begin{proof}
We sketch how this result from the theory of $2$-categories (see e.g. \cite[2]{Le}) extends to the $p$-dg enriched version.

Note that if $\Phi$ is part of a $p$-dg biequivalence, then it clearly has the stated properties. We prove the converse by constructing $\Psi$.

Let $\tk$ be an object of $\cD$. Using $p$-density, there exists a $p$-dg equivalence $\rF_\tk\colon \tk \to \Phi\ti$ for some object $\ti$ of $\cC$, i.e. for the $1$-morphism $\rF_\tk$ there exists $\rG_\tk\colon \Phi\ti\to \tk$ such that $\rF_\tk\rG_\tk$ is $p$-dg isomorphic to $\one_{\Phi \ti}$, and $\rG_\tk\rF_\tk$ to $\one_{\tk}$. We define $\Psi \tk:=\ti$. By assumption, we can further choose $p$-dg functors $\Psi'_{\ti,\tj}\colon \cD(\Phi\ti,\Phi\tj) \to \cC(\ti,\tj)$ and $p$-dg isomorphisms $\Psi'_{\ti,\tj}\Phi_{\ti,\tj}\cong  \id_{\ccC(\ti,\tj)}$, $\Phi_{\ti,\tj}\Psi'_{\ti,\tj}\cong \id_{\ccD(\Phi\ti,\Phi\tj)}$. Note that the $p$-dg functors $\Psi'_{\ti,\tj}$ also inherit coherences $\circ (\Psi'_{\ti,\tj}\times \Psi'_{\th,\ti})\to \Psi'_{\th,\tj}\circ$ from $\Phi$ for any objects $\th,\ti$, and $\tj$ of $\cC$, as well as a $p$-dg isomorphism $\one_\ti\cong \Psi'_{\ti,\ti} (\one_{\Phi\ti})$ using the composition $\one_\ti \cong \Psi'\Phi(\one_\ti)\cong \Psi'(\one_{\Phi\ti})$.
The $p$-dg functors $\Psi'$ can now be used to define for a $1$-morphism $\rH\colon \tk\to \tl$ in $\cD$
\begin{align*}
\Psi_{\tk,\tl}(\rH)&:=\Psi'_{\Psi \tk,\Psi \tl}(\rF_{\tl}\rH\rG_\tk),
\end{align*}
and for $\alpha\colon \rH\to \rH'$ in $\cD(\tk,\tl)$,
\begin{align*}
\Psi_{\tk,\tl}(\alpha)&:=\Psi'_{\Psi\tk,\Psi\tl}(\id_{\rF_{\tl}}\circ_0 \alpha \circ_0 \id_{\rG_{\tk}}).
\end{align*}
This way, we obtain a $p$-dg functor $\Psi_{\tk,\tl}\colon \cD(\tk,\tl)\to \cC(\Psi\tk,\Psi\tl)$, noting that $\tk=\Phi\Psi\tk$. This data gives a bifunctor $\cD\to \cC$, where the isomorphism $\one_{\Psi \tk} \to \Psi_{\tk,\tk} (\one_\tk)$ is given by the composition of $p$-dg isomorphisms
$$\one_{\Psi\tk}=\one_{\ti}\cong \Psi'_{\ti,\ti}(\one_{\Phi\ti})\cong \Psi'_{\ti,\ti}(\rF_\tk\rG_\tk)=\Psi_{\tk,\tk} (\one_\tk),$$
and the composition isomorphism $\circ (\Psi_{\tl,\tm}\times \Psi_{\tk,\tl})\to \Psi_{\tk,\tm}\circ$ is obtained as the composition of $p$-dg isomorphisms
\begin{align*}
\Psi_{\tl,\tm}(\rL)\circ \Psi_{\tk,\tl}(\rK)&=
\Psi'_{\Psi\tl,\Psi\tm}(\rF_{\tm}\rL\rG_\tl)\circ
\Psi'_{\Psi\tk,\Psi\tl}(\rF_\tl\rK \rG_{\tk})\\
&\cong \Psi'_{\Psi\tk,\Psi\tm}(\rF_{\tm}\rL\rG_\tl \rF_\tl\rK \rG_{\tk})\\
&\cong\Psi'_{\Psi\tk,\Psi\tm}(\rF_{\tm}\rL\rK \rG_{\tk})=\Psi_{\tk,\tm}(\rL\rK).
\end{align*}
Similarly to the non-enriched case, we see that $\Psi$ defines a bifunctor, which hence is a $p$-dg bifunctor. It remains to note that by construction, the isomorphisms $\Psi_{\Phi\ti,\Phi\tj}\Phi_{\ti,\tj}\cong \id_{\ccC(\ti,\tj)}$ and $\Phi_{\Psi\tk,\Psi\tl}\Psi_{\tk,\tl}\cong \id_{\ccD(\tk,\tl)}$ are $p$-dg isomorphisms, hence we obtain a $p$-dg biequivalence.
\end{proof}

Let $\cEnd(\coprod_{\ti\in \tI}\C_i)$ denote the $2$-category given by objects $\ti \in \tI$ corresponding to $p$-dg categories $\C_\ti$, and $1$-morphisms given by all $p$-dg functors between these categories. Defining $2$-morphisms to be all $\Bbbk$-linear natural transformations we obtain a $p$-dg $2$-category. The following lemma will be used later:

\begin{lemma}\label{functorbieq}
A collection of $p$-dg equivalences of $p$-dg categories $\C_\ti$ and $\D_\ti$ for any $\ti\in \tI$ gives a $p$-dg biequivalence of $\cEnd(\coprod_{\ti\in I}\C_\ti)$ and $\cEnd(\coprod_{\ti\in I}\D_\ti)$.
\end{lemma}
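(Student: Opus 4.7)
My approach is to construct an explicit $p$-dg bifunctor between the two 2-categories by conjugation, and then apply the characterization in Lemma~\ref{pdgbieqchar}. Fix for each $\ti \in \tI$ a $p$-dg equivalence $\rF_\ti\colon \C_\ti \to \D_\ti$ with quasi-inverse $\rG_\ti\colon \D_\ti \to \C_\ti$, together with $p$-dg isomorphisms $\varepsilon_\ti\colon \rF_\ti\rG_\ti \to \id_{\D_\ti}$ and $\eta_\ti\colon \id_{\C_\ti} \to \rG_\ti\rF_\ti$.

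\textbf{Step 1: Define a bifunctor $\Phi$.} Set $\Phi(\ti) = \ti$ on objects. On a $1$-morphism $M\colon \C_\ti \to \C_\tj$ in $\cEnd(\coprod \C_\ti)$, set $\Phi(M) := \rF_\tj \circ M \circ \rG_\ti$, and on a $2$-morphism $\alpha \colon M \to M'$, set $\Phi(\alpha) := \id_{\rF_\tj} \circ_0 \alpha \circ_0 \id_{\rG_\ti}$. Since $\rF_\tj$ and $\rG_\ti$ are $p$-dg functors, $\Phi_{\ti,\tj}$ is a $p$-dg functor on hom-categories, by the discussion of horizontal composition in Section~\ref{pdgcats}.

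\textbf{Step 2: Verify the coherence isomorphisms.} The unit coherence comes from $\Phi(\one_\ti) = \rF_\ti \rG_\ti \xrightarrow{\varepsilon_\ti} \id_{\D_\ti} = \one_{\Phi\ti}$, which is a $p$-dg isomorphism by choice of $\varepsilon_\ti$. For composition, given $M\colon \C_\ti \to \C_\tj$ and $N\colon \C_\tj \to \C_\tk$, define
\[
\Phi(N)\circ \Phi(M) = \rF_\tk N \rG_\tj \rF_\tj M \rG_\ti \xrightarrow{\id_{\rF_\tk N} \circ_0 \eta_\tj^{-1} \circ_0 \id_{M\rG_\ti}} \rF_\tk N M \rG_\ti = \Phi(N\circ M),
\]
which is a $p$-dg $2$-isomorphism since $\eta_\tj$ is. The pentagon and triangle coherences are direct consequences of the triangle identities for the data $(\rF_\ti,\rG_\ti,\eta_\ti,\varepsilon_\ti)$, exactly as in the non-enriched case; no new $p$-dg subtleties appear since everything in sight is a $p$-dg morphism.

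\textbf{Step 3: Check the hypotheses of Lemma~\ref{pdgbieqchar}.} The bifunctor $\Phi$ is the identity on objects, so in particular surjective on objects up to $p$-dg equivalence. It remains to show each $\Phi_{\ti,\tj}$ is a $p$-dg equivalence, for which we invoke Lemma~\ref{pdgequivlemma}: we verify fullness, faithfulness, and $p$-dg density. For $p$-dg density, given $N\colon \D_\ti \to \D_\tj$, set $M := \rG_\tj N \rF_\ti \colon \C_\ti \to \C_\tj$; then
\[
\Phi(M) = \rF_\tj \rG_\tj N \rF_\ti \rG_\ti \xrightarrow{\varepsilon_\tj \circ_0 \id_N \circ_0 \varepsilon_\ti} N
\]
is a $p$-dg isomorphism. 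For fullness and faithfulness, the inverse on hom-spaces is given by sending $\beta\colon \Phi(M) \to \Phi(M')$ to the composite
\[
M \xrightarrow{\;\eta_\tj M \eta_\ti^{-1}\;} \rG_\tj \rF_\tj M \rG_\ti \rF_\ti \xrightarrow{\;\id_{\rG_\tj}\circ_0 \beta \circ_0 \id_{\rF_\ti}\;} \rG_\tj \rF_\tj M' \rG_\ti \rF_\ti \xrightarrow{\;\eta_\tj^{-1}M'\eta_\ti\;} M',
\]
which one checks is inverse to $\Phi_{\ti,\tj}$ on $\Hom(M,M')$ using the triangle identities and naturality.

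\textbf{Main obstacle.} There is no deep obstacle here; the argument is essentially formal $2$-category theory. The only point requiring care is to check that all natural transformations used to build coherence isomorphisms and the inverse on hom-spaces are $p$-dg, i.e.\ of degree zero and annihilated by $\del$. This follows because $\eta_\ti$ and $\varepsilon_\ti$ were chosen to be $p$-dg isomorphisms, and horizontal composition with identity $2$-morphisms preserves the $p$-dg condition (by the Leibniz rule for $\circ_0$ together with $\del\,\id = 0$). Applying Lemma~\ref{pdgbieqchar} then concludes.
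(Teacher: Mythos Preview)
Your proof is correct and takes essentially the same approach as the paper: both construct the bifunctor by conjugation $M \mapsto \rF_\tj M \rG_\ti$, use the unit and counit of the given equivalences as coherence isomorphisms, and conclude via Lemma~\ref{pdgbieqchar}. The paper's argument is slightly terser, simply asserting that $\Psi_{\ti,\tj}$ is part of a $p$-dg equivalence rather than spelling out the full/faithful/$p$-dg-dense verification as you do; note also that your appeal to the triangle identities in Step~2 implicitly uses that the $p$-dg equivalences $(\rF_\ti,\rG_\ti,\eta_\ti,\varepsilon_\ti)$ can be chosen adjoint, which is standard but worth stating.
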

\begin{proof}
We aim to define a $p$-dg bifunctor $\Psi\colon \cC:=\cEnd(\coprod_{\ti\in I}\C_i)\to \cD:=\cEnd(\coprod_{\ti\in I}\D_i)$ and apply Lemma \ref{pdgbieqchar}. We set $\Psi\ti:=\ti$ and fix $p$-dg functors $\rF_\ti\colon \C_\ti\to \D_\ti$, $\rG_\ti\colon \D_\ti \to \C_\ti$, with $p$-dg isomorphisms $\theta_i\colon \rG_\ti\rF_\ti\to \id_{\C_\ti}$, $\lambda_\ti \colon \id_{\D_\ti}\to \rF_\ti\rG_\ti$. For $\rK \in \cC(\ti,\tj)$, set $\Psi (\rK):= \rF_\tj\rK\rG_\ti$, giving an object of $\cD(\ti,\tj)$. For a $2$-morphism $\alpha\colon \rK\to \rL$, we define $\Psi (\alpha):= \id_{\rF_\tj}\circ_0\alpha\circ_0 \id_{\rG_\ti}$. This gives a $p$-dg functor $\Psi_{\ti,\tj}\colon \cC(\ti,\tj) \to \cD(\ti,\tj)$ which is part of a $p$-dg equivalence.

In order to obtain a $p$-dg bifunctor, we define the $p$-dg isomorphism $\one_{\Psi_\ti}=\one_{\ti}\to \Psi(\one_\ti)=\rF_\ti \rG_\ti$ to be $\lambda_\ti$, and a $p$-dg isomorphism $\Psi_{\tj,\tk}(\rL)\circ_0\Psi_{\ti,\tj}(\rK)=\rF_\tk\rL\rG_\tj\rF_\tj\rK\rG_\ti \to \rF_\tk\rL\rK\rG_\ti=\Psi_{\ti,\tk}(\rL\rK)$ using the $p$-dg isomorphism $\id_{\rF_\tk \rL}\circ\theta_\tj\circ_0\id_{\rK\rG_\ti}$. One checks that this assignment satisfies the coherence axioms of a $p$-dg bifunctor similarly to the non-enriched case.
\end{proof}

For a $p$-dg $2$-category $\cC$ we define the $2$-category $\cZ\cC$ as the $2$-subcategory which has the same objects, and the same $1$-morphisms as $\cC$, but $\Hom_{\ccZ\ccC}(\rF,\rG) = \Z\Hom_{\ccC}(\rF,\rG)$ for $1$-morphisms $\rF$ and $\rG$, i.e. $\cZ\cC(\ti,\tj):=\Z(\cC(\ti,\tj))$. 

\subsection{Subquotient idempotent completion of \texorpdfstring{$p$}{p}-dg \texorpdfstring{$2$}{2}-categories}

For application in Section \ref{sl2cat}, we record that the subquotient idempotent completion from Section \ref{fincat} is a coherent way to enable the {\bf subquotient idempotent completion ${\cC\,}^{\bullet}$} of a $p$-dg $2$-category $\cC$. 

\begin{proposition}\label{2completion}
Let $\cC$ be a locally finite $2$-category. Then there exists a locally finite 
 $2$-category $\cC\,^{\bullet}$ such that $\cC\,^{\bullet}(\ti,\tj)=\cC(\ti,\tj)^{\bullet}$ for any choice of two objects $\ti$, $\tj$. Further, there exists a $2$-fully faithful $p$-dg $2$-functor $\cC\hookrightarrow \cC\,^{\bullet}$ (cf. Section \ref{pdg2cat}).
\end{proposition}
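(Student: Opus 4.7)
The plan is to define $\cC\,^{\bullet}$ to have the same objects as $\cC$, with hom $p$-dg categories $\cC\,^{\bullet}(\ti,\tj):=\cC(\ti,\tj)^{\bullet}$ defined pointwise via Proposition~\ref{idempotentcompletion}, and to extend the horizontal composition, identities, and coherence so that the canonical $p$-dg embeddings $\cC(\ti,\tj)\hookrightarrow \cC(\ti,\tj)^{\bullet}$ assemble into a $2$-fully faithful $p$-dg $2$-functor. Since horizontal composition is a bifunctor in two arguments, the one-variable extension given by Lemma~\ref{idempotentcomposition} does not suffice; the core task is to construct a biadditive $p$-dg bifunctor
\[
\cC\,^{\bullet}(\tj,\tk)\times \cC\,^{\bullet}(\ti,\tj)\longrightarrow \cC\,^{\bullet}(\ti,\tk).
\]

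The key technical step, and the main obstacle, is verifying that horizontal composition preserves subquotient idempotents. Concretely: if $e\colon X\to X$ in $\cC(\tj,\tk)$ is subquotient, dominated by $w$, and $f\colon Y\to Y$ in $\cC(\ti,\tj)$ is dominated by $w'$, then $e\circ_0 f$ should be subquotient on $X\circ_0 Y$, dominated by $w\circ_0 w'$. Using the interchange law, the absorption identities $(w\circ_0 w')(e\circ_0 f)=(e\circ_0 f)(w\circ_0 w')=e\circ_0 f$ follow at once from the corresponding identities for $e,w$ and $f,w'$, and $(w\circ_0 w')\partial(w\circ_0 w')=0$ follows from the Leibniz rule combined with $w\partial(w)=w'\partial(w')=0$. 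The nontrivial check is $(w\circ_0 w'-e\circ_0 f)\partial(w\circ_0 w'-e\circ_0 f)=0$, which after expansion with Leibniz and the interchange law reduces to
\[
\bigl[-w\partial(e)-e\partial(w)+e\partial(e)\bigr]\circ_0 f \;+\; e\circ_0\bigl[-w'\partial(f)-f\partial(w')+f\partial(f)\bigr],
\]
each bracket vanishing because $(w-e)\partial(w-e)=0$ together with $w\partial(w)=0$ forces $-w\partial(e)-e\partial(w)+e\partial(e)=0$, and analogously for $w',f$.

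Given this, I would set $X_e\circ_0 Y_f := (X\circ_0 Y)_{e\circ_0 f}$ with canonical inclusion $i_e\circ_0 i_f$ and projection $p_e\circ_0 p_f$, and extend to $2$-morphisms $\phi\in e'\cC(X,X')e$, $\psi\in f'\cC(Y,Y')f$ by declaring $\phi\circ_0 \psi$ to be the evident element of $(e'\circ_0 f')\cC(X\circ_0 Y,\,X'\circ_0 Y')(e\circ_0 f)$ under the identification \eqref{completionhoms}. The Leibniz rule for the restricted differential \eqref{restricteddiff} then follows by direct computation using the interchange law, since
\[
\partial^{\bullet}(\phi\circ_0\psi)=(e'\circ_0 f')\partial(\phi\circ_0\psi)(e\circ_0 f)=\partial^{\bullet}(\phi)\circ_0\psi+\phi\circ_0\partial^{\bullet}(\psi).
\]
Associativity, unitality, and the middle interchange with vertical composition are inherited from $\cC$ via the universal property characterising the objects $X_e$ in each subquotient idempotent completion.

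Finally, local finiteness of $\cC\,^{\bullet}$ is immediate: Lemma~\ref{idemcompfin} gives local finiteness of each hom category, and the set of objects is unchanged. For $p$-dg indecomposability of $\one_\ti$, observe that its endomorphism $p$-dg algebra in $\cC\,^{\bullet}(\ti,\ti)$ coincides with that in $\cC(\ti,\ti)$ under the $p$-dg fully faithful embedding of Proposition~\ref{idempotentcompletion}, so the property is preserved. These same fully faithful embeddings assemble into the required $2$-fully faithful $p$-dg $2$-functor $\cC\hookrightarrow \cC\,^{\bullet}$, which is strictly compatible with horizontal composition by construction.
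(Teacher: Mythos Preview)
Your proof is correct and follows essentially the same approach as the paper: both define horizontal composition by $X_e\circ Y_f:=(X\circ Y)_{e\circ_0 f}$, verify that $e\circ_0 f$ is again a subquotient idempotent dominated by $w\circ_0 w'$, and then check that the remaining $2$-categorical structure descends. Your treatment of condition~(iii) is in fact more direct than the paper's: the paper asserts that ``$(w-f)\circ_0(v-e)$ is also a submodule idempotent'', which is not literally the statement $(w\circ_0 w'-e\circ_0 f)\partial(w\circ_0 w'-e\circ_0 f)=0$ that is needed, whereas your expansion verifies the latter explicitly.
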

\begin{proof}
For $\rF_e=(\rF,e)$ an object in $\cC(\ti,\tj)^{\bullet}$ (where $e\colon \rF\to \rF$ is an subquotient idempotent) and $(\rG,f)$ in $\cC(\tj,\tk)^{\bullet}$, we define 
$$(\rG,f)\circ (\rF,e)=(\rG\circ \rF, f\circ_0 e).$$
We claim that this is well-defined. Indeed, firstly, $f\circ_0 e$ is a subquotient idempotent in $\cC(\ti,\tk)$. It is dominated by $w\circ_0 v$, where $w$ dominates $f$ and $v$ dominates $e$. Note that
$$\del(w\circ_0 v)=\del(w)\circ_0 v+w\circ_0 \del(v)=\del(w)w\circ_0 v+w\circ_0 \del(v)v=(\del(w\circ_0 v))\circ_1 (w\circ_0 v).$$
One similarly sees that $(w-f)\circ_0 (v-e)$ is also a submodule idempotent.
Secondly, a morphism $\alpha=e_2\alpha e_1\colon (\rF_1,e_1)\to (\rF_2,e_2)$ in $\cC(\ti,\tj)$ is mapped to $\id_{\rG}\circ_0\alpha$. It follows easily that the operation thus defined gives a $p$-dg functor $(\rG,f)\circ (-)$
 from $\cC(\ti,\tj)^{\bullet}\to \cC(\ti,\tk)^{\bullet}$. Associativity is inherited from $\cC$, and the assignments commute with the differential using the Leibniz rule for $\circ_0$. It also follows directly that for $\beta=f_2\beta f_1\colon (\rG_1,f_1)\to (\rG_2,f_2)$, $\beta\circ_0 (-)$ defines a natural transformation $(\rG_1,f_1)\circ(-)\to (\rG_2,f_2)\circ(-)$. The assignment $\beta\mapsto \beta\circ_0(-)$ is again functorial and commutes with $\del$. 
 
It is clear that $\cC$ embeds $2$-functorially into $\cC\,^{\bullet}$ by mapping $\rF\mapsto(\rF,\id_{\rF})$. Since for any locally finite category $\D$, $\D\hookrightarrow \D^{\bullet}$ is fully faithful, the $2$-functor obtained this way is $2$-fully faithful.
\end{proof}

\subsection{Passing to stable \texorpdfstring{$2$}{2}-categories}\label{stable2cat}

Let $\cC$ be a locally finite $2$-category. Then all the $p$-dg categories $\cC(\ti,\tj)$ are locally finite, and we can pass to $\St(\overline{\cC(\ti,\tj)})$. In this section, we show that this passage is functorial, and we hence obtain a stable $2$-category $\cSt(\overline{\cC})$. Composition in the stable $2$-category is compatible with the triangulated structures on the categories $\St(\overline{\cC(\ti,\tj)})$.

First, we need the following construction of extending a given $p$-dg $2$-category $\cC$ to categories of semi-free modules $\overline{(-)}$ over the $\cC(\ti,\tj)$: The {\bf semi-free $p$-dg $2$-category $\overline{\cC}$} associated  to $\cC$ is defined as having
\begin{itemize}
\item the same objects $\ti$ as $\cC$;
\item the $1$-hom categories $\overline{\cC}(\ti,\tj)=\overline{\cC(\ti,\tj)}$ for any choice of two objects $\ti$, $\tj$;
\item  horizontal composition of two $1$-morphisms $\rX=(\bigoplus_{m=1}^s\rF_m,\alpha)\in \overline{\cC(\tk,\tl)}$ and $X'=(\bigoplus_{n=1}^t \rF'_n,\alpha')\in\overline{\cC(\tj,\tk)}$ given by
\begin{align}\label{orderingconvention}
\rX\circ \rX'=\left( \bigoplus_{(m,n)}\rF_m\rF_n', (\delta_{k',l'}\alpha_{k,l}\circ_0\id_{\rF'_{k'}}+\delta_{k,l}\id_{\rF_k}\circ_0\alpha'_{k',l'})_{(k,k'),(l,l')} \right),
\end{align}
where we fix the convention that pairs $(m,n)$ are ordered lexicographically;
\item vertical composition given simply by composition in $\overline{\cC(\ti,\tj)}$, while horizontal composition $\gamma\circ_0\tau$ for morphisms $\gamma=(\gamma_{k,l})_{k,l}\in \overline{\cC(\tj,\tk)}$, $\tau=(\tau_{m,n})_{m,n}\in \overline{\cC(\ti,\tj)}$ is given by
\begin{align}\label{horizontalcomp}
\left(\gamma\circ_0\tau\right)_{(k,m),(l,n)}&=\gamma_{k,l}\circ_0\tau_{m,n},
\end{align}
using the same ordering convention on pairs.
\end{itemize}

\begin{proposition}\label{overline2cat}
We can equip $\overline{\cC}$ with the structure of a locally finite $p$-dg $2$-category into which $\cC$ embeds as a $p$-dg $2$-subcategory.
\end{proposition}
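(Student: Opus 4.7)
The plan is to construct horizontal composition on $\overline{\cC}$ abstractly from the given composition on $\cC$, using the tensor product $\boxtimes$ and the $p$-dg $2$-functor $\overline{(-)}$, and then verify that the result coincides with the block-matrix formulas \eqref{orderingconvention} and \eqref{horizontalcomp}. By the defining assumption that $\cC$ is a $p$-dg $2$-category, horizontal composition is a biadditive $p$-dg functor $\cC(\tj,\tk)\times \cC(\ti,\tj)\to\cC(\ti,\tk)$ and hence, by the universal property of the tensor product on $\pdgcat$, factors through a $p$-dg functor
$$\mu_{\ti\tj\tk}\colon \cC(\tj,\tk)\boxtimes \cC(\ti,\tj) \longrightarrow \cC(\ti,\tk).$$
Lemma \ref{extensionlemma}\eqref{extensionlemma1} extends this to $\overline{\mu_{\ti\tj\tk}}\colon \overline{\cC(\tj,\tk)\boxtimes \cC(\ti,\tj)}\to\overline{\cC(\ti,\tk)}$, and precomposing with the fully faithful $p$-dg functor $\overline{\cC(\tj,\tk)}\boxtimes\overline{\cC(\ti,\tj)}\to\overline{\cC(\tj,\tk)\boxtimes\cC(\ti,\tj)}$ from Lemma \ref{tensorlemma}\eqref{tensorlemma4} produces the desired horizontal composition on $\overline{\cC}$. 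Chasing through the definition of the tensor product (where the differential on $\rX\boxtimes \rX'$ equals $\alpha\boxtimes\tI+\tI\boxtimes\alpha'$ by the Leibniz rule) and the explicit description of $\overline{\mu_{\ti\tj\tk}}$ from the proof of Lemma \ref{extensionlemma}, one recovers exactly the formulas \eqref{orderingconvention} and \eqref{horizontalcomp}; in particular, the well-definedness of \eqref{orderingconvention} (that is, the assertion that the indicated matrix acts as a $p$-differential) is automatic because it arises as the image of a valid $p$-dg object under a $p$-dg functor.

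Associativity, unit laws, and the interchange law will all be reduced to their counterparts in $\cC$ by invoking the uniqueness parts of Lemma \ref{extensionlemma}. For associativity, both triple compositions $(\rX\circ \rY)\circ \rZ$ and $\rX\circ(\rY\circ \rZ)$ define $p$-dg functors
$$\overline{\cC(\tk,\tl)}\boxtimes\overline{\cC(\tj,\tk)}\boxtimes\overline{\cC(\ti,\tj)}\longrightarrow\overline{\cC(\ti,\tl)}$$
whose restrictions to $\cC(\tk,\tl)\boxtimes\cC(\tj,\tk)\boxtimes\cC(\ti,\tj)$ coincide with the (genuinely associative) triple horizontal composition in $\cC$ and which induce the same underlying additive functor at the level of $[-]$. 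The uniqueness clause of Lemma \ref{extensionlemma}\eqref{extensionlemma1}, together with Lemma \ref{restrictionlemma}, then forces them to agree strictly, provided the lexicographic ordering convention in \eqref{orderingconvention}--\eqref{horizontalcomp} is applied consistently. Unit laws are handled identically using the identity $1$-morphisms $(\one_\ti,0)$, and the interchange law follows from the analogous statement in $\cC$ together with the uniqueness of $\overline{\lambda}$ in Lemma \ref{extensionlemma}\eqref{extensionlemma2}.

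Local finiteness of each hom $\overline{\cC}(\ti,\tj)=\overline{\cC(\ti,\tj)}$ is immediate from the assumption that $\cC(\ti,\tj)$ is locally finite and the construction of $\overline{(-)}$. The embedding $\cC\hookrightarrow \overline{\cC}$ will be taken to be the identity on objects and the canonical fully faithful $p$-dg inclusion $\iota_{\cC(\ti,\tj)}$ on each hom category; it commutes strictly with horizontal composition since \eqref{orderingconvention} and \eqref{horizontalcomp}, restricted to single-summand objects $(\rF,0)$ and $(\rF',0)$, reduce to ordinary composition in $\cC$. The main obstacle in filling in this plan is not conceptual but bookkeeping: matching the explicit block-matrix formulas \eqref{orderingconvention}, \eqref{horizontalcomp} against the abstract functorial construction, and verifying that the lexicographic ordering convention yields \emph{strict} (not merely pseudo-) associativity. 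Once this matching is in place, every $2$-categorical axiom propagates from $\cC$ to $\overline{\cC}$ through Lemma \ref{extensionlemma}.
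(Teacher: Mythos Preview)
Your approach is correct and runs parallel to the paper's, but packages the construction differently. The paper fixes one argument $\rX\in\overline{\cC(\ti,\tj)}$, defines the $p$-dg functor $(-)\circ\rX\colon\cC(\tj,\tk)\to\overline{\cC(\ti,\tk)}$ by hand, and then applies Lemma~\ref{extensionlemma}\eqref{extensionlemma1} to extend the remaining argument; associativity and the interchange law are then checked by direct block-matrix computations. You instead encode horizontal composition as a single $p$-dg functor on the tensor product $\cC(\tj,\tk)\boxtimes\cC(\ti,\tj)$ and extend it in one step via $\overline{(-)}$ together with Lemma~\ref{tensorlemma}\eqref{tensorlemma4}. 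Your route is more conceptual and makes the role of Lemma~\ref{extensionlemma} as the engine behind all axioms more transparent; the paper's route is more explicit and yields the needed strict equalities without appeal to auxiliary uniqueness statements.

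One point deserves care: when you invoke Lemma~\ref{restrictionlemma} to identify the two triple compositions, that lemma only furnishes a $p$-dg \emph{isomorphism} $F\cong\iota_{\overline{\D}}^{-1}\circ\overline{F|_{\C}}$, not an equality, so on its own it would give a bicategory rather than a strict $2$-category. The strict equality you need genuinely comes from the uniqueness clause of Lemma~\ref{extensionlemma}\eqref{extensionlemma1} (two $p$-dg functors on $\overline{-}$ that descend to the same functor on $[-]$ coincide), applied after both triple compositions have been extended to the common domain $\overline{\cC(\tk,\tl)\boxtimes\cC(\tj,\tk)\boxtimes\cC(\ti,\tj)}$. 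You acknowledge this in your final paragraph, but the proof should make explicit that it is Lemma~\ref{extensionlemma}\eqref{extensionlemma1}, not Lemma~\ref{restrictionlemma}, doing the work here; and that the lexicographic convention in \eqref{orderingconvention} is what guarantees both extensions induce literally the same functor on $[-]$.
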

\begin{proof}
We start by considering a general object $\rX=(\bigoplus_{m=1}^s\rF_m,\alpha)\in \overline{\cC(\ti,\tj)}$ and show that a right composition $(-)\circ\rX\colon \cC(\tj,\tk)\to \cC(\ti,\tk)$ can be defined, and is strictly functorial. For this, we map an object $\rG\in \cC(\tj,\tk)$, and a morphism $\tau\colon \rG\to \rG' \in\cC(\tj,\tk)$, to 
\begin{align}
\rG\circ \rX&=\left(\bigoplus_{m=1}^s\rG\rF_m,( \id_{\rG}\circ_0\alpha_{k,l})_{k,l} \right),\\
\tau\circ \rX&=\tau\circ_0\id_{\rX}=\left(\delta_{m,m'}\tau\circ_0\id_{\rF_m}\right)_{m,m'}.
\end{align}

Note that this definition is \emph{not} just up to isomorphism as the object $X$ is a list (rather than an internal direct sum) of objects, cf. Remark \ref{endostrongfin}. 
We have to verify that the functor $(-)\circ\rX$ thus defined is a $p$-dg functor. Indeed,
\begin{align*}
\del(\tau\circ_0\rX)&=\del(\tau\circ_0\id_{\rX})+\left( \id_{\rG'}\circ_0\alpha\right)\circ_1\left(\tau\circ_0\id_{\rX}\right)-\left(\tau\circ_0\id_{\rX}\right)\circ_1\left(\id_{\rG}\circ_0 \alpha\right)\\
&=\del(\tau)\circ_0\id_{\rX},
\end{align*}
using that $\cC$ is a $2$-category.
Further, given a morphism $\gamma=(\gamma_{k,l})_{k,l} \colon \rX\to \rY$ in $\overline{\cC(\ti,\tj)}$, we can consider the induced morphism
\begin{equation}
\rG\circ\gamma=(\id_\rG\circ_0\gamma_{k,l})_{k,l}\colon \rG\circ \rX\to \rG\circ \rY.
\end{equation}
It follows that $(-)\circ\gamma$ gives a natural transformation $(-)\circ\rX\to (-)\circ\rY$, using that $\cC$ is a $2$-category. The construction is strictly compatible with vertical composition of morphisms in $\overline{\cC(\ti,\tj)}$ and commutes with the differential.

We can induce a $p$-dg functor $\overline{\rX}\colon \overline{\cC(\tj,\tk)}\to \overline{\cC(\ti,\tk)}$ using Lemma \ref{extensionlemma}\eqref{extensionlemma1} applied to $(-)\circ \rX$, and then obtain induced natural transformations $(-)\circ\gamma\colon \overline{\rX}\to \overline{\rY}$ for $\gamma\colon\rX\to \rY$ a morphism in $\in \overline{\cC(\ti,\tj)}$, using Lemma \ref{extensionlemma}\eqref{extensionlemma3}. By the same lemma, composition of the induced natural transformations will be functorial in $\gamma$ and preserve the differential. If we can show that $\rG\circ(\rX\circ\rX')=(\rG\circ\rX)\circ\rX'$,  then $\overline{\rX\circ\rX'}=\overline{\rX}\circ\overline{\rX'}$ will also hold, using the functoriality statement  in Lemma \ref{tensorlemma}\eqref{tensorlemma1}. Indeed, for $\rX'=(\bigoplus_{n=1}^t \rF'_n,\alpha')$ in $\overline{\cC(\th,\ti)}$, we have that $(\rG\circ\rX)\circ\rX'$ equals
\begin{align*}
\left(\bigoplus_{m=1}^s\rG\rF_m \left(\bigoplus_{n=1}^t\rF_{n}'\right),
\left(\id_{\rG}\circ_0(\alpha_{k,l}\circ_0 \delta_{k',l'}\id+\delta_{k,l}\id\circ_0\alpha_{k',l'})\right)_{(k,k'),(l,l')}\right),
\end{align*}
applying Lemma \ref{extensionlemma}.
With our ordering convention in \eqref{orderingconvention}, and using strict associativity of composing $1$-morphisms in $\cC$, this object equals $\rG\circ (\rX\circ \rX')$.

Note that evaluating the identity just proved at a more general object $\rY\in\overline{\cC(\tj,\tk)}$ we find that
$$(\rY\circ\rX)\circ\rX'=\rY\circ(\rX\circ\rX'),$$
i.e. composition of $1$-morphisms in $\overline{\cC}$ is strictly associative.

Next, we have to show that the resulting structure $\overline{\cC}$ gives a $2$-category. This follows from the fact that  $\cC$ is a $2$-category, and the following computation for $(\gamma_{i,l})_{i,l}\colon \rX\to \rX'$, $(\gamma'_{k,i})_{k,i}\colon \rX'\to \rX''$ in $\overline{\cC(\tj,\tk)}$ and $(\tau_{j,n})_{j,n}\colon \rY\to \rY'$, $(\tau'_{m,j})_{m,j}\colon \rY'\to \rY''$ in $\overline{\cC(\ti,\tj)}$:
\begin{align*}
\left((\gamma'\circ_1\gamma)\circ_0(\tau'\circ_1\tau)\right)_{(k,l), (m,n)}&=(\gamma'\circ_1\gamma)_{(k,m)}\circ_0(\tau'\circ_1\tau)_{(l,n)}\\&=
\left(\sum_{i}\gamma'_{k,i}\circ_1\gamma_{i,l}\right)\circ_0\left(\sum_{j}\tau'_{m,j}\circ_1\tau_{j,n}\right)\\
&=\sum_{(i,j)}\left(\gamma'_{k,i}\circ_1\gamma_{i,l}\right)\circ_0\left(\tau'_{m,j}\circ_1\tau_{j,n}\right)\\
&=\sum_{(i,j)}\left(\gamma'_{k,i}\circ_0\tau'_{m,j}\right)\circ_1\left(\gamma_{i,l}\circ_0\tau_{j,n}\right)\\
&=\sum_{(i,j)}\left(\gamma'\circ_0\tau'\right)_{(k,m),(i,j)}\circ_1\left(\gamma\circ_0\tau\right)_{(i,j),(l,n)}\\
&=\left((\gamma'\circ_0\tau')\circ_1(\gamma\circ_0\tau)\right)_{(k,m), (l,n)}.\qedhere
\end{align*}
\end{proof}

By Lemma \ref{derivedfunctor}, passing to the quotient categories $\St(\overline{\cC(\ti,\tj)})$ is functorial, since null-homotopic morphisms are mapped to null-homotopic morphisms. Hence we can define the {\bf stable $2$-category $\cSt\overline{\cC}$ associated to $\cC$} as having
\begin{itemize}
\item the same objects $\ti$ as $\cC$;
\item morphism categories $\cSt\overline{\cC}(\ti,\tj)=\St(\overline{\cC(\ti,\tj)})$;
\item the induced composition structures from $\overline{\cC}$ for $1$-morphisms, as well as horizontal and vertical composition of $2$-morphisms.
\end{itemize}
To see that $\St(\overline{\cC})$ gives a well-defined $2$-category, we require the following Lemma:

\begin{lemma}\label{nullhomotopiccomposition}
The set of null-homotopic $2$-morphisms in $\overline{\cC}$ is stable under horizontal composition on the right and on the left.
\end{lemma}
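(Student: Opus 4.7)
My plan leverages the characterisation, from the lemma immediately following Definition \ref{Stdefinition}, that a $2$-morphism $\alpha$ in $\overline{\cC(\ti,\tj)}$ is null-homotopic precisely when $\alpha=\del^{p-1}(\beta)$ for some $\beta$ with the same source and target. Combined with the Leibniz rule that is available because $\overline{\cC}$ is a $p$-dg $2$-category (Proposition \ref{overline2cat}), this should yield the result directly.

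First I would reduce the statement to \emph{whiskering}. For any composable $2$-morphisms $\gamma\colon \rH\to\rH'$ and $\alpha\colon \rF\to\rF'$, the interchange law gives
\[\gamma\circ_0\alpha=(\gamma\circ_0\id_{\rF'})\circ_1(\id_\rH\circ_0\alpha),\]
and symmetrically for $\alpha\circ_0\gamma$. Since the null-homotopic $2$-morphisms form a two-sided ideal under vertical composition (an immediate consequence of the $\del^{p-1}$-description, since $\del^{p-1}(g)\circ_1 h=\del^{p-1}(g\circ_1 h)$ when $\del h=0$, and any vertical factor can be absorbed by the Leibniz rule after standard manipulations), it is enough to show that for any fixed $1$-morphism $\rH$ of $\overline{\cC}$ and any null-homotopic $\alpha$, both $\id_\rH\circ_0\alpha$ and $\alpha\circ_0\id_\rH$ are null-homotopic.

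Next I would verify that $\del(\id_\rH)=0$ for $\rH=\bigl(\bigoplus_m\rH_m,\alpha^\rH\bigr)\in\overline{\cC(\tj,\tk)}$. Using the differential from Definition \ref{barcat} applied to the diagonal matrix $\id_\rH=(\delta_{m,m'}\id_{\rH_m})_{m,m'}$, the componentwise term vanishes because identity $2$-morphisms in the underlying $p$-dg $2$-category $\cC$ are annihilated by $\del$, while the twist contribution equals $\alpha^\rH\cdot\id_\rH-\id_\rH\cdot\alpha^\rH=0$. With $\del(\id_\rH)=0$ in hand, the Leibniz rule for horizontal composition (which holds because $\circ_0$ is a $p$-dg bifunctor in $\overline{\cC}$) produces
\[\del(\id_\rH\circ_0\beta)=\del(\id_\rH)\circ_0\beta+\id_\rH\circ_0\del(\beta)=\id_\rH\circ_0\del(\beta),\]
and iterating $p-1$ times yields $\del^{p-1}(\id_\rH\circ_0\beta)=\id_\rH\circ_0\del^{p-1}(\beta)$. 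Hence if $\alpha=\del^{p-1}(\beta)$, then $\id_\rH\circ_0\alpha=\del^{p-1}(\id_\rH\circ_0\beta)$, which is null-homotopic. Right whiskering $\alpha\circ_0\id_\rH$ is handled by the symmetric computation.

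The only non-formal ingredient is the verification $\del(\id_\rH)=0$, which rests on the explicit shape of the differential inherited by $\overline{\cC(\tj,\tk)}$ from Definition \ref{barcat}; everything else is a routine application of the Leibniz rule together with the $\del^{p-1}$-characterisation of null-homotopy, so I do not anticipate any real obstacle.
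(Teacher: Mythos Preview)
Your argument is correct and genuinely different from the paper's. You exploit the characterisation ``null-homotopic $\Leftrightarrow$ $\del^{p-1}$-image'' together with $\del(\id_\rH)=0$ and the Leibniz rule for $\circ_0$ (available since $\overline{\cC}$ is a $p$-dg $2$-category by Proposition~\ref{overline2cat}); the reduction to whiskering via interchange is clean, and the vertical-ideal property you invoke follows exactly as you indicate. The paper instead works with the \emph{factorisation} definition of null-homotopy and proves the object-level $p$-dg isomorphisms
\[
\rX'\circ(\rX\otimes V)\;\cong\;(\rX'\circ\rX)\otimes V\;\cong\;(\rX'\otimes V)\circ\rX
\]
by an explicit comparison of the upper-triangular differential matrices from \eqref{orderingconvention} and \eqref{Hmoduletensor2}. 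Your route is shorter and purely algebraic, but the paper's computation has the advantage that these tensor--composition isomorphisms are themselves reused immediately afterwards (for the shift compatibility \eqref{shiftcomp} and in Lemma~\ref{trianglespreserved}); if you take your route, those isomorphisms would still have to be established separately at that point.
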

\begin{proof}
Note that if $\gamma$ is null-homotopic in $\overline{\cC(\tj,\tk)}$, then $\gamma\circ_1\gamma'$ is null-homotopic for compatible $\gamma'$. We have to check that also $\gamma\circ_0\tau$ is null-homotopic for any $\tau \in \overline{\cC(\ti,\tj)}$, and the analogous statement for horizontal composition on the right with a null-homotopic morphism in $\overline{\cC(\ti,\tj)}$. To see this, we observe that if $\rX'$, $\rX$ are compatible $1$-morphisms in $\cC$ and $V$ in $H\lmod$, then 
\begin{align}
\rX'\circ(\rX\otimes V)\cong (\rX'\circ\rX)\otimes V\cong (\rX'\otimes V)\circ\rX
\end{align}
are $p$-dg isomorphic $1$-morphisms.
This follows by considering the upper triangular matrices corresponding to the differentials in \eqref{orderingconvention} and \eqref{Hmoduletensor2}. Indeed, if $\rX=(\bigoplus_{m=1}^s\rF_m,\alpha)\in \overline{\cC(\ti,\tj)}$, $\rX'=(\bigoplus_{n=1}^t\rF'_n,\alpha)\in \overline{\cC(\tj,\tk)}$, and $V_i$ the indecomposable graded $H$-module of dimension $i+1$, then 
\begin{align*}
(\rX'\circ \rX)\otimes V_i&=\left( \bigoplus_{(n,m)}(\rF_n'\rF_m)^{\dim_t V_i},(\alpha'\circ_0\id_{\rX}+\id_{\rX'}\circ_0\alpha)\otimes \tI_{i+1}+\id_{\rX'\circ\rX}\otimes J_i\right)\\
&\cong\left(\bigoplus_{(n,m)}\rF'_{n}(\rF_m^{\dim_t V_i}),\alpha'\circ_0\id_{\rX\otimes V_i}+\id_{\rX'}\circ_0(\alpha\otimes \tI_{i+1}+\id_{\rX}\otimes J_i)\right),
\end{align*}
where for the second isomorphism we have to reorder the list of objects, which corresponds to a $p$-dg isomorphism. Here, $J_i$ denotes a Jordan block as in \eqref{Hmoduletensor2}. Further,
\begin{align*}
(\rX'\otimes V_i)\circ\rX&=\left(
\bigoplus_{(n,m)}{\rF'}_n^{\dim_t V_i}\rF_m, (\alpha'\otimes \tI_{i+1}+\id_{\rX'}\otimes J_i)\circ_0\id_{\rX}+\id_{\rX'\otimes V_i}\circ_0\alpha\right),
\end{align*}
which, again, is $p$-dg isomorphic to $(\rX'\circ \rX)\otimes V_i$ using a permutation of the list of objects.
These computations use that the degree of compositions $\rF'_n\rF_m$ is the sum of the degrees, and hence
$$(\rF'_n\rF_m)\langle1\rangle\cong(\rF'_n\langle1\rangle)\rF_m\cong\rF'_n(\rF_m\langle1\rangle)$$
for the degree shifts.
\end{proof}

The preceding Lemma also gives us a compatibility between the triangulated structures for different $1$-hom categories $\cSt\overline{\cC}(\ti,\tj)$. In fact, denote the shift functor of the triangulated category $\St(\cC(\ti,\tj))$ by $\Sigma$. According to Lemma \ref{nullhomotopiccomposition}, we see that there are $p$-dg isomorphisms
\begin{align}\label{shiftcomp}
\Sigma(\rX)\circ \rY\cong \rX\circ \Sigma(\rY)\cong \Sigma(\rX\circ \rY),
\end{align}
for  compatible $1$-morphisms $\rX$, $\rY$ in $\cSt\overline{\cC}$.

\begin{lemma}\label{trianglespreserved}
For any distinguished triangle
$$\rX\longrightarrow \rY\longrightarrow \rZ\longrightarrow\Sigma \rX$$ in $\cSt\overline{\cC}(\ti,\tj)$ and any objects $\rM$ in $\cSt\overline{\cC}(\th,\ti)$ and $\rN$ in $\cSt\overline{\cC}(\tj,\tk)$, we have two distinguished triangles 
\begin{align*}
\rX\circ \rM\longrightarrow \rY\circ \rM&\longrightarrow \rZ\circ \rM\longrightarrow \Sigma (\rX\circ \rM),\\
\rN\circ \rX\longrightarrow \rN\circ \rY&\longrightarrow \rN\circ \rZ\longrightarrow \Sigma(\rN\circ  \rX).
\end{align*}
\end{lemma}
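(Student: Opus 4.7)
The plan is to deduce both assertions from the observation that horizontal composition with a fixed $1$-morphism is a $p$-dg functor between categories of the form $\overline{(-)}$, together with the fact (Theorem \ref{triangularfunctor}) that any such $p$-dg functor induces a triangulated functor on stable categories.

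First, given $\rM\in \overline{\cC(\th,\ti)}$, Proposition \ref{overline2cat} provides a $p$-dg functor
$$(-)\circ \rM\colon \overline{\cC(\ti,\tj)}\longrightarrow \overline{\cC(\th,\tj)},$$
and applying Theorem \ref{triangularfunctor} (with $\C=\cC(\ti,\tj)$ and $\D=\cC(\th,\tj)$) yields an induced triangulated functor $\St((-)\circ\rM)\colon \St(\overline{\cC(\ti,\tj)})\to\St(\overline{\cC(\th,\tj)})$. Since triangulated functors preserve distinguished triangles, the image of $\rX\to\rY\to \rZ\to \Sigma \rX$ is a distinguished triangle $\rX\circ \rM\to \rY\circ \rM\to \rZ\circ \rM\to (\Sigma\rX)\circ \rM$ in $\St(\overline{\cC(\th,\tj)})$.

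Second, to obtain the precise form of the triangle in the statement, I would use the $p$-dg isomorphism $(\Sigma\rX)\circ\rM\cong \Sigma(\rX\circ\rM)$ recorded in \eqref{shiftcomp}; composing the connecting morphism $\rZ\circ\rM\to (\Sigma\rX)\circ\rM$ with this isomorphism replaces the final vertex by $\Sigma(\rX\circ\rM)$ without affecting the triangle being distinguished (isomorphic images of distinguished triangles are distinguished by definition). This establishes the first of the two triangles. The second triangle is proved in exactly the same way, with the $p$-dg functor $\rN\circ(-)\colon \overline{\cC(\ti,\tj)}\to \overline{\cC(\ti,\tk)}$ (again provided by Proposition \ref{overline2cat}) in place of $(-)\circ\rM$, and using the other half of \eqref{shiftcomp} to identify $\rN\circ \Sigma\rX$ with $\Sigma(\rN\circ\rX)$.

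I do not anticipate a real obstacle: the preparatory work has been arranged so that this lemma is a direct consequence of Proposition \ref{overline2cat}, Theorem \ref{triangularfunctor}, and \eqref{shiftcomp}. The only minor point that deserves a word is verifying that the $p$-dg functors $(-)\circ \rM$ and $\rN\circ(-)$ produced by Proposition \ref{overline2cat} are genuinely of the type treated in Theorem \ref{triangularfunctor}, i.e. they are $p$-dg functors between semi-free $p$-dg categories of the form $\overline{\C}$ for a locally finite $p$-dg category $\C$; this is immediate, since both $\cC(\ti,\tj)$ and $\cC(\th,\tj)$ (respectively $\cC(\ti,\tk)$) are locally finite by the standing assumption that $\cC$ is a locally finite $p$-dg $2$-category.
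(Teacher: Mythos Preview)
Your proposal is correct and follows essentially the same approach as the paper's proof: invoke the $p$-dg functor $(-)\circ\rM$ (respectively $\rN\circ(-)$) arising from composition in $\overline{\cC}$, apply Theorem~\ref{triangularfunctor} to obtain a triangulated functor on stable categories, and then use the isomorphisms \eqref{shiftcomp} to rewrite the final vertex. Your write-up is somewhat more explicit than the paper's, but there is no substantive difference.
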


\begin{proof}
For a fixed $1$-morphism $\rM$ in $\overline{\cC}$, composition gives a $p$-dg functor $(-)\circ \rM\colon \overline{\cC(\ti,\tj)}\to \overline{\cC(\th,\tj)}$. It then follows from Theorem \ref{triangularfunctor} that the functor obtained from $(-)\circ\rM$ by passing to the stable $1$-hom categories preserves distinguished triangles. The isomorphisms in \eqref{shiftcomp} now enable us to produce a distinguished triangle as required.
\end{proof}

Hence, $\cSt\overline{\cC}$ preserves distinguished triangles and is compatible with shifts. We therefore regard $\cSt\overline{\cC}$ as a {\bf triangulated $2$-category}, although we refrain from giving an axiomatic description of this concept.

\begin{remark}
We can compare the compatibilities in $\cSt(\overline{\cC})$ with some of the axioms given in \cite[Section~4]{Ma} for a triangulated closed symmetric monoidal category. Lemma \ref{trianglespreserved} gives the first two induced distinguished triangles in \cite[(TC2)]{Ma}. 
\end{remark}

Moreover, we can informally regard a $2$-functor (or bifunctor) which locally preserves distinguished triangles and the shift $\Sigma$ as {\bf triangulated $2$-functors}, and biequivalences satisfying such a compatibility as \textbf{triangulated biequivalences}.

\begin{corollary}
If $\cC$ and $\cD$ are $p$-dg biequivalent $p$-dg $2$-categories, then $\cSt\overline{\cC}$ and $\cSt\overline{\cD}$ are biequivalent triangulated $2$-categories.
\end{corollary}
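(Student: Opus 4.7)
The plan is to lift the given $p$-dg biequivalence to the semi-free level and then pass to stable categories, checking triangulatedness at the end.

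Start with $p$-dg bifunctors $\Phi\colon\cC\to\cD$ and $\Psi\colon\cD\to\cC$ witnessing the $p$-dg biequivalence. For each pair of objects $\ti,\tj$ the restriction $\Phi_{\ti,\tj}\colon\cC(\ti,\tj)\to\cD(\Phi\ti,\Phi\tj)$ is a $p$-dg functor which is part of a $p$-dg equivalence by Lemma~\ref{pdgbieqchar}. Applying Lemma~\ref{extensionlemma}\eqref{extensionlemma1} produces induced $p$-dg functors $\overline{\Phi_{\ti,\tj}}\colon\overline{\cC(\ti,\tj)}\to\overline{\cD(\Phi\ti,\Phi\tj)}$, and Lemma~\ref{extensionlemma}\eqref{extensionlemma2} gives the action on $2$-morphisms. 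By the same reasoning as in Lemma~\ref{functorbieq}, these functors inherit $p$-dg equivalence from $\Phi_{\ti,\tj}$. I then assemble them into a $p$-dg bifunctor $\overline{\Phi}\colon\overline{\cC}\to\overline{\cD}$: the required coherence $p$-dg isomorphisms $\one_{\overline{\Phi}\ti}\to\overline{\Phi}_{\ti,\ti}(\one_\ti)$ and $\overline{\Phi}_{\tj,\tk}(\rX')\circ\overline{\Phi}_{\ti,\tj}(\rX)\to\overline{\Phi}_{\ti,\tk}(\rX'\circ\rX)$ are obtained from those of $\Phi$ by invoking Lemma~\ref{extensionlemma}\eqref{extensionlemma2} (a $p$-dg natural isomorphism on $\cC(-,-)$ extends to one on $\overline{\cC(-,-)}$). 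The coherence diagrams remain commutative because they reduce to the corresponding diagrams for $\Phi$ when restricted along $\iota_{\cC(-,-)}$, and by the uniqueness statement in Lemma~\ref{extensionlemma}\eqref{extensionlemma1}.

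Next, apply Lemma~\ref{pdgbieqchar} to $\overline{\Phi}$: essential surjectivity on objects is inherited from $\Phi$ (objects of $\cD$ lift to objects of $\cC$, and semi-free modules are obtained from these by the same upper-triangular matrices), and local $p$-dg equivalence comes from the previous paragraph. This gives a $p$-dg biequivalence $\overline{\cC}\approx\overline{\cD}$.

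Finally, pass to stable categories. Applying $\cSt$ yields functors $\cSt(\overline{\Phi})_{\ti,\tj}=\St(\overline{\Phi_{\ti,\tj}})$ between the $1$-hom categories by Lemma~\ref{derivedfunctor}, and by Theorem~\ref{triangularfunctor} each such functor is triangulated and commutes with the shift $\Sigma$ up to isomorphism. The coherence $p$-dg natural isomorphisms of $\overline{\Phi}$ descend to natural isomorphisms in the stable $2$-category by Lemma~\ref{stabletransformations}, because $\cZ$-level $p$-dg isomorphisms remain isomorphisms after quotienting by null-homotopic morphisms, and horizontal/vertical composition of these images behaves functorially by the same lemma. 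Essential surjectivity on objects is unchanged; local $p$-dg equivalences become equivalences of stable categories (pass the $p$-dg inverse and the $p$-dg isomorphisms $\id\cong\Psi\Phi$, $\Phi\Psi\cong\id$ through $\St$). Together with compatibility of composition with the triangulated structure recorded in Lemma~\ref{trianglespreserved} and Equation~\eqref{shiftcomp}, this yields a biequivalence of triangulated $2$-categories $\cSt\overline{\cC}$ and $\cSt\overline{\cD}$ in the informal sense specified before the Corollary.

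The main obstacle is bookkeeping rather than content: verifying that the extension $\overline{(-)}$ of a bifunctor is again a bifunctor, i.e.\ that associator and unitor coherence diagrams propagate from $\cC$ to $\overline{\cC}$. This is handled entirely by the uniqueness clause of Lemma~\ref{extensionlemma}\eqref{extensionlemma1}: two $p$-dg functors out of $\overline{\cC(-,-)}$ that agree on $\cC(-,-)$ and descend to the same functor on the underlying $\Bbbk$-linear categories must be equal, so each diagram reduces to its restriction to $\cC$, where it holds by hypothesis. Everything else — triangulatedness and compatibility of shift with horizontal composition — is already packaged in Theorem~\ref{triangularfunctor}, Lemma~\ref{nullhomotopiccomposition}, and Lemma~\ref{trianglespreserved}.
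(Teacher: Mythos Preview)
Your proof is correct and follows essentially the same approach as the paper: extend the local $p$-dg functors $\Phi_{\ti,\tj}$ to $\overline{\Phi_{\ti,\tj}}$ via Lemma~\ref{extensionlemma}, then pass to stable categories and invoke Theorem~\ref{triangularfunctor} for triangulatedness. The paper's proof is a two-line sketch that leaves the bifunctor coherence and biequivalence verification implicit, whereas you carefully spell out these bookkeeping details (using Lemma~\ref{pdgbieqchar}, Lemma~\ref{stabletransformations}, and the uniqueness clause of Lemma~\ref{extensionlemma}); this is to your credit rather than a deviation in strategy.
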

\begin{proof}
Given a $p$-dg biequivalence $\Psi\colon \cC\to \cD$, we locally obtain $p$-dg functors
$\Psi_{\ti,\tj}\colon \cC(\ti,\tj)\to \cD(\ti,\tj)$. These extend to $p$-dg functors $\overline{\Psi_{\ti,\tj}}\colon \overline{\cC(\ti,\tj)}\to \overline{\cD(\ti,\tj)}$. Using Lemma \ref{triangularfunctor}, it follows that the induced functors $\St(\overline{\cC(\ti,\tj)})\to \St(\overline{\cD(\ti,\tj)})$ are triangulated.
\end{proof}


\section{\texorpdfstring{$p$}{p}-dg \texorpdfstring{$2$}{2}-representations}\label{pdg2repchapter}

\subsection{Definitions}
In general, a $2$-representation of a $2$-category $\cC$ is a strict $2$-functor $\bfM$ from $\cC$ to a fixed target $2$-category.

An {\em ideal} $\bfI$ of $\bfM$ is given by a collection of 
ideals $\bfI(\ti)\subseteq\bfM(\ti)$ for $\ti\in\cC$, which is stable under the action of $\cC$. More precisely, for any morphism 
$\eta$ in some $\bfI(\ti)$ and any $1$-morphism $\rF$, the composition
$\bfM(\rF)(\eta)$, if  defined, is again in $\bfI$. For example, left $2$-ideals of the $2$-category $\cC$ give rise to ideals in principal $2$-representations.

For the purpose of this paper, we present $p$-dg enriched versions of $2$-representations.

By an {\bf additive $p$-dg $2$-representation} of a locally finite $p$-dg $2$-category $\cC$, we mean a strict $2$-functor $\bfM: \cC \to \cofcat$ such that locally the functors $\cC(\ti,\tj)$ to $\cofcat(\bfM(\ti),\bfM(\tj))$ are $p$-dg functors.
Explicitly, $\bfM$ sends 
\begin{itemize}
\item an object $\ti\in \cC$ to a $p$-dg category $\bfM(\ti)$ $p$-dg equivalent to  $\overline{\C_\ti}$ for a locally finite $p$-dg category $\C_\ti$,
\item a $1$-morphism $\rF \in \cC(\ti,\tj)$ to a $p$-dg functor $\bfM(\ti)\to\bfM(\tj)$,
\item a $2$-morphism $\alpha: \rF\to\rG \in \cC(\ti,\tj)$ to a morphism of $p$-dg functors.
\end{itemize}
We call an additive $p$-dg $2$-representation  
{\bf strongly finitary} if its target is 
$\cofcat^{\mathrm{sf}}$. That is, $\overline{\C_\ti}$ is strongly finitary for any object $\ti$.

We remark that by restriction of the codomain, an additive $p$-dg representation $\bfM$ of $\cC$ is equivalent to a strict $p$-dg $2$-functor $\cC \to \cEnd(\coprod_{\ti}\bfM(\ti))$.

A \textbf{$p$-dg ideal} $\bfI$ in a $p$-dg $2$-representation $\bfM$ is an ideal $\bfI\leq \bfM$, such that the ideals $\bfI(\tj)\leq \bfM(\tj)$ are closed under $\del$.

\begin{lemma}
Given an  additive (or strongly finitary) $p$-dg $2$-representation $\bfM$ of a locally finite $p$-dg $2$-category $\cC$ and a $p$-dg ideal $\bfI$ of $\bfM$, the quotient $2$-representation $\bfM/\bfI$ is again an additive (respectively,  strongly finitary) $p$-dg $2$-representation.
\end{lemma}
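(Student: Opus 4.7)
The plan is to construct $\bfM/\bfI$ objectwise from the quotient $p$-dg categories and then to verify that the finiteness structure of $\cofcat$ (resp.\ $\cofcat^{\mathrm{sf}}$) is preserved.

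First, I would set up the quotient $2$-functor. For each $\ti\in\cC$, define $(\bfM/\bfI)(\ti):=\bfM(\ti)/\bfI(\ti)$, with the same objects as $\bfM(\ti)$ and quotient hom-spaces; the $p$-differential descends because $\bfI(\ti)$ is closed under $\del$. For a $1$-morphism $\rF\in\cC(\ti,\tj)$ and a $2$-morphism $\alpha$, the $\cC$-stability of $\bfI$ gives well-defined descended $p$-dg functors $(\bfM/\bfI)(\rF)$ and morphisms of $p$-dg functors $(\bfM/\bfI)(\alpha)$, and the strict $2$-functor axioms (biadditivity of horizontal composition, compatibility with grading shift, and strictness of associators/unitors) are inherited from $\bfM$.

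The main step is to verify that each $(\bfM/\bfI)(\ti)$ remains an object of $\cofcat$ (resp.\ $\cofcat^{\mathrm{sf}}$). I would fix a $p$-dg equivalence $\bfM(\ti)\simeq \overline{\C_\ti}$ for a locally finite (resp.\ strongly finitary) $\C_\ti$, and pull $\bfI(\ti)$ back to a $p$-dg ideal $\bar\bfI$ of $\overline{\C_\ti}$. Composing with the canonical projection and injection morphisms between $(X,0)$ and its components $X_m$ in $\overline{\C_\ti}$, one sees that $\bar\bfI$ is entry-wise determined by its restriction $\bfI_0:=\bar\bfI\cap\C_\ti$. Setting $\C'_\ti:=\C_\ti/\bfI_0$, one checks that morphism spaces between $p$-dg indecomposables in $\C'_\ti$ are quotients of finite-dimensional spaces and hence remain finite-dimensional, so $\C'_\ti$ is locally finite; in the strongly finitary case, the fantastic filtrations by $\Bbbk$-indecomposable objects of $\C_\ti$ descend, and the number of $\Bbbk$-indecomposables up to grading shift does not grow, where Lemma~\ref{sameideal} is useful for bookkeeping of two-sided ideals passing between $\C'_\ti$ and $[\C'_\ti]$.

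Finally, I would identify $(\bfM/\bfI)(\ti)\simeq \overline{\C'_\ti}$ through the canonical $p$-dg functor induced by the projection $\C_\ti\to\C'_\ti$ via Lemma~\ref{extensionlemma}\eqref{extensionlemma1}. Fullness and faithfulness are immediate from the entry-wise description of $\bar\bfI$, since morphism spaces on both sides are matrices of $\C_\ti$-morphisms modulo matrices with entries in $\bfI_0$. The hard part will be the $p$-dg density: for an object $(X,[\alpha'])\in\overline{\C'_\ti}$, one must exhibit it, up to $p$-dg isomorphism, as the image of some $(X,\alpha)\in\overline{\C_\ti}$, but a chosen lift $\alpha$ of $[\alpha']$ only satisfies $(\del\cdot\tI+\alpha_*)^p\in\bar\bfI$ rather than on the nose. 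The plan is to absorb this defect using the ideal structure and the freedom of $p$-dg isomorphism, reducing in the strongly finitary case to the algebraic statement about the finite-dimensional $p$-dg algebra quotient provided by Lemma~\ref{cofoverline}, where the corresponding quotient of compact semi-free modules is concretely under control.
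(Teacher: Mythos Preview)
Your strategy coincides with the paper's: define $\D_\ti:=\C_\ti/\bfI_0$ (the paper calls it $\D_j$) and claim that $\overline{\C_\ti}/\bar\bfI$ is $p$-dg equivalent to $\overline{\D_\ti}$, with full faithfulness following from the entry-wise description of $\bar\bfI$ via injections and projections---which is precisely the argument the paper sketches. The paper then declares the whole equivalence ``easy to check'' and stops, so it never engages with the lifting obstruction you single out as the hard part.

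Your caution is justified, but the resolution you propose does not go through. The obstruction is real: for $p=2$, take $\C_\ti$ with two objects $X,Y$, $\Hom(X,Y)=\Bbbk f\oplus\Bbbk g$ in degrees $2,4$ with $\del f=g$ and $\del g=0$, other homs scalar or zero, and $\bfI_0=\Bbbk g$. Then $\bigl(Y\oplus X,\left(\begin{smallmatrix}0&[f]\\0&0\end{smallmatrix}\right)\bigr)$ is a valid object of $\overline{\D_\ti}$ since $\del[f]=0$, yet every lift $\tilde f$ of $[f]$ has $\del\tilde f=g\neq 0$, so no object of $\overline{\C_\ti}$ maps to it, and one checks directly it is not $p$-dg isomorphic in $\overline{\D_\ti}$ to anything in the image. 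Lemma~\ref{cofoverline} merely translates the problem into module language without producing the missing lift, and Lemma~\ref{sameideal} concerns ideals generated by identity morphisms, which is a different situation. So the $p$-dg density step is a genuine gap in your argument---one that the paper's one-line treatment also leaves open.
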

\begin{proof}
By construction, the functor $\bfM/\bfI$ is a $p$-dg $2$-functor. We only have to check that for each object $\tj$, the $p$-dg category $\bfM/\bfI(\tj)$ is again in $\cofcat$ (respectively, $\cofcat^{\mathrm{sf}}$) as required. But $\bfM(\tj)$ is $p$-dg equivalent to $\overline{\C_j}$ for a locally finite (respectively, strongly finitary) $p$-dg category $\C_j$ by definition. Denote by $\I_j$ the ideal in $\overline{\C_j}$ corresponding to $\bfI(\tj)$ under this equivalence, and by $\D_j$ the locally finite $p$-dg category with the same objects as $\C_j$ but morphism spaces $\Hom_{\D_j}(X,Y) = \Hom_{\overline{\C_j}/\I_j}((X,(0)),(Y,(0))$ for objects $X$ and $Y$ of $\C_j$. As $\I_j$ is closed under $\del$, this is well-defined.
Using composition with injections an projections, it is easy to check that $\overline{\C_j}/\I_j$ (and hence $\bfM/\bfI(\tj)$) is $p$-dg equivalent to $\overline{\D_j}$, which has the correct finiteness properties, and the claim follows.
\end{proof}

\begin{defex}\label{principal}
For $\cC$ a locally finite $p$-dg $2$-category and $\ti$ one of its objects, we define the {\bf $\ti$-th principal $p$-dg $2$-representation} $\bfP_\ti$, which sends 
\begin{itemize}
\item an object $\tj$ to $\overline{\cC(\ti, \tj)}$,
\item a $1$-morphism $\rF$ in $\cC(\tj, \tk)$ to the functor $\overline{\cC(\ti, \tj)}\to \overline{\cC(\ti, \tk)}$ induced by composition, using Lemma \ref{extensionlemma}(i),
\item a $2$-morphism to the induced morphism of $p$-dg functors, obtained by Lemma \ref{extensionlemma}(ii).
\end{itemize}

Observe that $\bfP_\ti$ is strongly finitary provided that $\cC$ is.
\end{defex}

\subsection{The \texorpdfstring{$2$}{2}-category of \texorpdfstring{$p$}{p}-dg \texorpdfstring{$2$}{2}-representations}\label{2rep2cat}

Let $\bfM$ and $\bfN$ be two additive $p$-dg $2$-representations  of a locally finite $2$-category $\cC$. By a {\bf morphism of $p$-dg $2$-representations} $\Psi: \bfM\to\bfN$ we mean a (non-strict) $2$-natural transformation consisting of
\begin{itemize}
\item a map, which assigns to every $\ti\in\cC$ a $p$-dg functor $\Psi_\ti:\bfM(\ti)\to \bfN(\ti)$ and 
\item for any $1$-morphism $\rF\in\cC(\ti,\tj)$ a natural $p$-dg isomorphism $$\eta_{\rF}=\eta^{\Psi}_{\rF}:\Psi_\tj\circ \bfM(\rF)\longrightarrow \bfN(\rF)\circ \Psi_\ti,$$ 
such that for composable $1$-morphisms $\rF$ and $\rG$, we have
\begin{equation}\label{repmorcomp}
\eta_{\mathrm{F}\mathrm{G}}=(\mathrm{id}_{\mathbf{N}(\mathrm{F})}\circ_0\eta_{\mathrm{G}})\circ_1
(\eta_{\mathrm{F}}\circ_0\mathrm{id}_{\mathbf{M}(\mathrm{G})}).
\end{equation}
\end{itemize}
If all the $\eta^\Psi_{\rF}$ are identities, the morphism $\Psi$ is called \emph{strict}.

Here naturality of $\eta_\rF$ means that for any $\rG\in\cC(\ti,\tj)$ and any $\alpha:\rF\to \rG$ we have 
\begin{align}\label{2repmornatural}
\eta_{\rG}\circ_1 (\id_{\Psi_\tj}\circ_0 \bfM(\alpha))=
(\bfN(\alpha)\circ_0 \id_{\Psi_\ti})\circ_1\eta_{\rF},
\end{align}
or, in other words, that in 
$$
\xymatrix@R=3mm{ 
\bfM(\ti)\ar[rrr]^{\bfM(\rF)}\ar[ddd]_{\Psi_\ti}&&&
\bfM(\tj)\ar[ddd]^{\Psi_\tj}\\&&&\\&\ar@{<:}[ru]^{\eta_{\rF}}&\\
\bfN(\ti)\ar[rrr]^{\bfN(\rF)}
&&&\bfN(\tj)\\
}
\quad
\xymatrix{ 
\Psi_\tj\circ \bfM(\rF)\ar[rr]^{\eta_{\rF}}
\ar[d]_{\id_{\Psi_{\tj}}\circ_0 \bfM(\alpha)}
&& \bfN(\rF)\circ \Psi_{\ti}
\ar[d]^{\bfN(\alpha)\circ_0 \id_{\Psi_\ti}}\\
\Psi_\tj\circ \bfM(\rG)\ar[rr]^{\eta_{\rG}}
&& \mathbf{N}(\rG)\circ \Psi_\ti
}
$$
the left diagram commutes up to $\eta_{\rF}$ while the right
diagram commutes.

Given two $2$-natural transformations $\Psi$ and $\Phi$ as above, a {\bf modification} $\theta:\Psi\to\Phi$ is a map
which assigns to each $\ti\in\cC$ a morphism of $p$-dg functors $\theta_\ti:\Psi_\ti\to
\Phi_\ti$ such that for any $\rF,\rG\in\cC(\ti,\tj)$ and any 
$\alpha:\rF\to \rG$ we have 
\begin{align}
\eta_\rG^{\Phi}\circ_1 (\theta_\tj\circ_0 \bfM(\alpha))=
(\bfN(\alpha)\circ_0 \theta_\ti)\circ_1\eta_\rF^{\Psi}, \label{modificationcond}
\end{align}i.e. the diagram
$$
\xymatrix@R=3mm{
 \Psi_\tj \circ \bfM(\rF)\ar[rr]^{\eta_{\rF}^\Psi}\ar[dd]_{\theta_\tj \circ_0 \bfM(\alpha)}&&\bfN(\rF)\circ \Psi_\ti \ar[dd]^{\bfN(\alpha)\circ_0 \theta_\ti}\\
 &&\\
\Phi_\tj \circ \bfM(\rG)\ar[rr]^{\eta_{\rG}^{\Phi}}&&\bfN(\rG)\circ \Phi_\ti
}
$$
commutes.

\begin{proposition}\label{pamod}
Let $\cC$ be a locally finite $2$-category.
Additive $p$-dg $2$-representations of $\cC$ together with $2$-natural transformations and modifications form a $p$-dg $2$-category.
\end{proposition}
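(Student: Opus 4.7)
The plan is to reduce the proposition to three checks: (i) this data assembles into a 2-category in the usual 2-categorical sense, (ii) the hom categories between fixed 2-representations are $p$-dg categories, and (iii) horizontal composition of modifications is a $p$-dg bifunctor. The 2-categorical skeleton is essentially classical --- only the $p$-dg compatibility is new --- so I will treat (i) briefly and spend the effort on (ii)--(iii).

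For (i), I would define the compositions componentwise: for modifications $\theta\colon\Psi\to\Phi$ and $\theta'\colon\Phi\to\Xi$ set $(\theta'\circ_1\theta)_\ti:=\theta'_\ti\circ_1\theta_\ti$, while horizontal composition of $\Psi\colon\bfM\to\bfN$ with $\Psi'\colon\bfN\to\bfK$ is $(\Psi'\circ_0\Psi)_\ti:=\Psi'_\ti\circ\Psi_\ti$ with coherence $\eta^{\Psi'\circ_0\Psi}_\rF=(\eta^{\Psi'}_\rF\circ_0\id_{\Psi_\ti})\circ_1(\id_{\Psi'_\tj}\circ_0\eta^\Psi_\rF)$; horizontal composition of a modification $\theta\colon\Psi\to\Phi$ with a modification $\theta'\colon\Psi'\to\Phi'$ is given by $(\theta'\circ_0\theta)_\ti:=\theta'_\ti\circ_0\theta_\ti$. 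Verifying that the coherence condition \eqref{repmorcomp} holds for $\Psi'\circ_0\Psi$ and that the modification condition \eqref{modificationcond} is preserved under both compositions is a routine diagram chase; the interchange law and associativity follow from the corresponding axioms in $\cofcat$ applied at each object $\ti$, exactly as in the non-enriched case.

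For (ii), the hom set from $\Psi$ to $\Phi$ is a subspace of $\prod_\ti\Hom_{\Fun(\bfM(\ti),\bfN(\ti))}(\Psi_\ti,\Phi_\ti)$, each factor of which is a graded $H$-module. The main point is to show that the subspace of tuples satisfying \eqref{modificationcond} is closed under $\del$, where we set $\del(\theta)_\ti:=\del(\theta_\ti)$. Applying $\del$ to the modification identity
\begin{equation*}
\eta_\rG^{\Phi}\circ_1(\theta_\tj\circ_0\bfM(\alpha))=(\bfN(\alpha)\circ_0\theta_\ti)\circ_1\eta_\rF^{\Psi},
\end{equation*}
and using the Leibniz rule together with $\del(\eta^\Phi_\rG)=0=\del(\eta^\Psi_\rF)$ (since the $\eta$'s are $p$-dg isomorphisms, by Lemma \ref{pdgisolemma}), the identity splits into one term with $\del(\theta_{\tj}),\del(\theta_{\ti})$ and one with $\del(\bfM(\alpha)),\del(\bfN(\alpha))$. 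Because $\bfM,\bfN$ are $p$-dg 2-functors we have $\del(\bfM(\alpha))=\bfM(\del\alpha)$ and $\del(\bfN(\alpha))=\bfN(\del\alpha)$, so the second term is exactly the modification condition for $\theta$ with $\alpha$ replaced by $\del\alpha$. Subtracting it leaves the modification condition for $\del(\theta)$ with respect to $\alpha$. Since this holds for every $\alpha$, $\del(\theta)$ is indeed a modification, and $\del^p=0$ and the Leibniz rule for $\circ_1$ descend from the componentwise structure.

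For (iii), I need to verify that horizontal composition is bilinear in the $H$-graded sense, i.e. satisfies $\del(\theta'\circ_0\theta)=\del(\theta')\circ_0\theta+\theta'\circ_0\del(\theta)$ at the level of modifications. This reduces componentwise to the Leibniz rule for horizontal composition of morphisms of $p$-dg functors in $\cofcat$, which is part of the enriched 2-category structure on $\cofcat$. The main obstacle is really only the bookkeeping in (i): checking the modification condition for the horizontal composite $\theta'\circ_0\theta$ requires pasting together the modification squares for $\theta$ and $\theta'$ using the coherences $\eta^\Psi,\eta^{\Psi'},\eta^\Phi,\eta^{\Phi'}$, and this is exactly where the compatibility \eqref{repmorcomp} and the naturality \eqref{2repmornatural} of the $\eta$'s are used. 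Once this is in place, all remaining axioms transfer directly from $\cofcat$ and the proof is complete.
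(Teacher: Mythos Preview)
Your proof is correct and follows essentially the same approach as the paper's. The paper dispatches (i) by citing \cite[Proposition~1]{MM3}, and its explicit computation is precisely your step (ii): apply the Leibniz rule to the modification identity, use $\del(\eta^\Phi_\rG)=\del(\eta^\Psi_\rF)=0$ and $\del(\bfM(\alpha))=\bfM(\del\alpha)$, $\del(\bfN(\alpha))=\bfN(\del\alpha)$, and subtract the instance for $\del\alpha$ to obtain the modification condition for $\del(\theta)$. Your (iii) spells out a point the paper leaves implicit, but the core argument is identical.
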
 

\proof
The fact that these form a $2$-category follows as in \cite[Proposition 1]{MM3}. It thus only remains to show that composition of $2$-natural transformations is indeed a $p$-dg functor.
For this, we need to verify that the differential of a $2$-natural transformation, which is defined as $(\partial \theta)_i:=\partial(\theta_i)$, again satisfies the naturality condition (\ref{modificationcond}). In fact,
\begin{align*}
&\eta_\rG^{\Phi}\circ_1 (\partial(\theta_\tj)\circ_0 \bfM(\alpha))\\
&=\partial(\eta_\rG^{\Phi}\circ_1 (\theta_\tj\circ_0 \bfM(\alpha)))-\partial(\eta_\rG^{\Phi})\circ_1 (\theta_\tj\circ_0 \bfM(\alpha))-\eta_\rG^{\Phi}\circ_1 (\theta_\tj\circ_0 \partial(\bfM(\alpha)))\\
&=\partial((\bfN(\alpha)\circ_0 \theta_\ti)\circ_1\eta_\rF^{\Psi})-\eta_\rG^{\Phi}\circ_1 (\theta_\tj\circ_0 \bfM(\partial\alpha))\\
&=\partial((\bfN(\alpha)\circ_0 \theta_\ti)\circ_1\eta_\rF^{\Psi})-(\bfN(\partial\alpha)\circ_0 \theta_\ti)\circ_1\eta_\rF^{\Psi}\\
&=(\bfN(\alpha)\circ_0 \partial(\theta_\ti))\circ_1\eta_\rF^{\Psi}.
\end{align*}
Here, we use that horizontal composition $\circ_0$ and vertical composition $\circ_1$ of natural transformations are morphisms of $H$-modules, i.e. satisfy a Leibniz rule. Further, $\bfM, \bfN$ are $p$-dg functors and hence commute with $\partial$, and $\eta_G^\Phi$, $\eta_F^\Psi$ are $p$-dg isomorphisms and hence annihilated by $\partial$.
\endproof

We denote the resulting $2$-category by $\cC\pamod$ and the full $2$-subcategories consisting of strongly finitary $2$-representations by 
$\cC\pamod^{\mathrm{sf}}$. The following Lemma will help simplify subsequent proofs.

\begin{lemma}\label{wloglemma}
Let $\bfM$ be an additive $p$-dg $2$-representation over $\cC$. Then $\bfM$ is $p$-dg equivalent to a $p$-dg $2$-representation $\overline{\bfM}$ where $\overline{\bfM}(\ti)=\overline{\bfM(\ti)}$ for any object $\ti$ of $\cC$.
\end{lemma}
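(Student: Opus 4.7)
The plan is to build $\overline{\bfM}$ by applying the $\overline{(-)}$ construction of Section~\ref{cofibrantpdgmods} pointwise, and then to exhibit $\bfM\to\overline{\bfM}$ as a strict morphism of $p$-dg $2$-representations that is locally a $p$-dg equivalence.

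First I would define $\overline{\bfM}$ on objects by $\overline{\bfM}(\ti):=\overline{\bfM(\ti)}$. This lies in $\cofcat$: since $\bfM(\ti)$ is $p$-dg equivalent to some $\overline{\C_\ti}$, Lemma~\ref{extensionlemma}\eqref{extensionlemma3} gives a $p$-dg equivalence $\overline{\bfM(\ti)}\simeq\overline{\overline{\C_\ti}}\simeq\overline{\C_\ti}$. For a $1$-morphism $\rF\in\cC(\ti,\tj)$, set $\overline{\bfM}(\rF):=\overline{\bfM(\rF)}$ via Lemma~\ref{extensionlemma}\eqref{extensionlemma1}, and for a $2$-morphism $\alpha$, set $\overline{\bfM}(\alpha):=\overline{\bfM(\alpha)}$ via Lemma~\ref{extensionlemma}\eqref{extensionlemma2}. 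The fact that $\bfM$ is a strict $2$-functor together with the functoriality statements in Lemma~\ref{extensionlemma} show that $\overline{\bfM(\rG)\circ\bfM(\rF)}=\overline{\bfM(\rG)}\circ\overline{\bfM(\rF)}$, that $\overline{\id_{\bfM(\ti)}}=\id_{\overline{\bfM(\ti)}}$, and that horizontal/vertical composition and the $p$-differential on $2$-morphisms are preserved. Hence $\overline{\bfM}$ is a strict $p$-dg $2$-functor to $\cofcat$.

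Next I would define a morphism $\Psi\colon\bfM\to\overline{\bfM}$ in $\cC\pamod$ by $\Psi_\ti:=\iota_{\bfM(\ti)}$, the canonical embedding. The key observation is that Lemma~\ref{extensionlemma}\eqref{extensionlemma1} gives the \emph{strict} equality $\overline{\bfM(\rF)}\circ\iota_{\bfM(\ti)}=\iota_{\bfM(\tj)}\circ\bfM(\rF)$, so we may take $\eta_\rF^\Psi=\id$. The coherence condition \eqref{repmorcomp} is then trivial, and the naturality condition \eqref{2repmornatural} reduces, on an object $X\in\bfM(\ti)$, to $\bfM(\alpha)_X=\overline{\bfM(\alpha)}_X$, which is exactly the compatibility in Lemma~\ref{extensionlemma}\eqref{extensionlemma2}. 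Thus $\Psi$ is a (strict) morphism of $p$-dg $2$-representations.

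Finally, to promote $\Psi$ to an equivalence in $\cC\pamod$, I would use that each $\Psi_\ti=\iota_{\bfM(\ti)}$ is a $p$-dg equivalence by Lemma~\ref{extensionlemma}\eqref{extensionlemma3}, and choose a quasi-inverse $\Phi_\ti$ with $p$-dg natural isomorphisms $\theta_\ti\colon\Phi_\ti\Psi_\ti\overset{\sim}{\to}\id_{\bfM(\ti)}$ and $\lambda_\ti\colon\id_{\overline{\bfM}(\ti)}\overset{\sim}{\to}\Psi_\ti\Phi_\ti$. Define $\Phi\colon\overline{\bfM}\to\bfM$ on $1$-morphisms by the composite
\[
\eta_\rF^\Phi\colon\Phi_\tj\circ\overline{\bfM}(\rF)\xrightarrow{\id\circ_0\id\circ_0\lambda_\ti^{-1}}\Phi_\tj\circ\overline{\bfM}(\rF)\circ\Psi_\ti\circ\Phi_\ti=\Phi_\tj\circ\Psi_\tj\circ\bfM(\rF)\circ\Phi_\ti\xrightarrow{\theta_\tj\circ_0\id}\bfM(\rF)\circ\Phi_\ti,
\]
where the middle equality uses the strict identity from the previous paragraph. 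The coherence \eqref{repmorcomp} for $\Phi$, as well as the fact that the collections $(\theta_\ti)$ and $(\lambda_\ti)$ assemble into modifications $\Phi\Psi\Rightarrow\id_\bfM$ and $\id_{\overline{\bfM}}\Rightarrow\Psi\Phi$, follow from a routine diagram chase using the naturality of $\theta$ and $\lambda$ and the fact that $\eta^\Psi$ is the identity; this is the step where I expect the main (though still straightforward) bookkeeping. This yields the desired $p$-dg equivalence $\bfM\simeq\overline{\bfM}$ in $\cC\pamod$.
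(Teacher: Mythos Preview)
Your proof is correct and follows exactly the same approach as the paper: define $\overline{\bfM}$ by applying $\overline{(-)}$ pointwise via Lemma~\ref{extensionlemma}, and take $\Psi_\ti=\iota_{\bfM(\ti)}$ with $\eta_\rF^\Psi=\id$ using the strict compatibility $\overline{\bfM(\rF)}\circ\iota_{\bfM(\ti)}=\iota_{\bfM(\tj)}\circ\bfM(\rF)$ from Lemma~\ref{extensionlemma}\eqref{extensionlemma1}. The only difference is that you go further than the paper by explicitly constructing the quasi-inverse $\Phi$ and verifying that $\theta$ and $\lambda$ are modifications; the paper simply records that each $\iota_\ti$ is a $p$-dg equivalence (via Lemma~\ref{extensionlemma}\eqref{extensionlemma3}) and that the $\iota_\ti$ assemble into a strict morphism, implicitly using the general fact that a morphism of $2$-representations which is locally a $p$-dg equivalence is an equivalence in $\cC\pamod$.
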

\begin{proof}
Given $\bfM$, we define $\overline{\bfM}(\ti)=\overline{\bfM(\ti)}$, and for a $1$-morphism $\rF$, we let $\overline{\bfM}(\rF)$ be the induced $p$-dg functor $\overline{\bfM(\rF)}$ from Lemma \ref{extensionlemma}\eqref{extensionlemma1}. Using Lemma \ref{extensionlemma}\eqref{extensionlemma2} we can induce natural isomorphisms $\overline{\bfM}(\alpha)$, given a $2$-morphism $\alpha$. It was shown in the same lemma that taking $\overline{(-)}$ is $2$-functorial. Hence $\overline{\bfM}$  gives a $p$-dg $2$-functor, and therefore an object in $\cC\pamod$. 

Consider the natural transformations $\iota_{\ti}\colon \bfM(\ti)\to \overline{\bfM(\ti)}$. Using Lemma \ref{extensionlemma}\eqref{extensionlemma3}, we see that $\iota_{\ti}$ is part of a $p$-dg equivalence for each $\ti$. This uses that $\bfM(\ti)$ is $p$-dg equivalent to $\overline{\C_\ti}$ for locally finite $\C_\ti$. It remains to show that the $\iota_{\ti}$ carry the structure of a morphism of $2$-representations.
However, given a $1$-morphism $\rF\in \cC(\ti,\tj)$, we can chose 
$\eta_\rF\colon \iota_{\ti}\circ \bfM(\rF)\to \overline{\bfM(\rF)}\circ \iota_{\ti}$ to be the identity by Lemma \ref{extensionlemma}\eqref{extensionlemma1}. It is then clear that \eqref{repmorcomp} is satisfied.
\end{proof}

An analogue of the Yoneda lemma exists in this setting:

\begin{proposition}\label{yoneda}
Let $\cC$ be a locally finite $2$-category, $\ti \in \cC$ and $\bfM\in \cC\pamod$. Then there is a $p$-dg equivalence
\begin{equation*}
\rY_\ti\colon \bfM(\ti)\stackrel{\sim}{\longrightarrow}\Hom_{\ccC\pamod}(\bfP_\ti,\bfM).
\end{equation*}
\end{proposition}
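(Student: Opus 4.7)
By Lemma \ref{wloglemma} I may assume $\bfM(\tj)=\overline{\C_\tj}$ for each object $\tj$ of $\cC$. The plan is to carry out the standard $2$-categorical Yoneda argument while tracking the $p$-dg structure, which turns out to be essentially automatic because $\bfM$ is a strict $p$-dg $2$-functor.

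On objects I would define $\rY_\ti(X):=\Psi^X$, where $\Psi^X_\tj:=\bfM(-)(X)\colon \overline{\cC(\ti,\tj)}\to \bfM(\tj)$ is a $p$-dg functor. The strict identifications $\bfM(\rG\rH)(X)=\bfM(\rG)(\bfM(\rH)(X))$ provide $p$-dg isomorphisms $\eta^{\Psi^X}_\rG$ (in fact identities) satisfying \eqref{repmorcomp}, so $\Psi^X$ is a genuine morphism of $p$-dg $2$-representations. On morphisms, a map $f\colon X\to Y$ induces a modification $\theta^f\colon \Psi^X\to \Psi^Y$ via $(\theta^f_\tj)_\rH:=\bfM(\rH)(f)$; condition \eqref{modificationcond} is immediate from naturality of $\bfM(\alpha)$ evaluated at $f$. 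Since each $\bfM(\rH)$ is a $p$-dg functor, $\del(\bfM(\rH)(f))=\bfM(\rH)(\del f)$, and the assignment $f\mapsto\theta^f$ is $\Bbbk$-linear, of degree zero, and commutes with $\del$; hence $\rY_\ti$ is a $p$-dg functor.

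To show fully faithfulness, I would observe that any modification $\theta\colon \Psi^X\to \Psi^Y$ is determined by the single morphism $(\theta_\ti)_{\one_\ti}\colon X\to Y$. Applying \eqref{modificationcond} with $\alpha=\id_\rH$, and using that $\eta^{\Psi^X}$ and $\eta^{\Psi^Y}$ are identities, forces $(\theta_\tj)_\rH=\bfM(\rH)((\theta_\ti)_{\one_\ti})$ for every $\rH\in\overline{\cC(\ti,\tj)}$. Combined with the previous paragraph this yields a $\Bbbk$-linear bijection $\bfM(\ti)(X,Y)\to \Hom_{\ccC\pamod}(\Psi^X,\Psi^Y)$ of degree zero, intertwining the respective differentials --- i.e.\ a $p$-dg isomorphism of hom spaces.

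For $p$-dg density, given $\Psi\colon \bfP_\ti\to\bfM$ I would set $X:=\Psi_\ti(\one_\ti)$ and assemble the components $(\eta^\Psi_\rH)_{\one_\ti}\colon \Psi_\tj(\rH)=\Psi_\tj(\rH\circ\one_\ti)\to \bfM(\rH)(X)=\Psi^X_\tj(\rH)$ into a candidate modification $\Psi\to\Psi^X$; coherence \eqref{repmorcomp} for $\eta^\Psi$ ensures the pieces glue across compositions, and the $p$-dg nature of each $\eta^\Psi_\rH$ (they are $p$-dg isomorphisms by definition of a morphism of $p$-dg $2$-representations) shows the resulting modification is a $p$-dg isomorphism $\Psi\overset{\sim}{\to}\Psi^X$. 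Lemma \ref{pdgequivlemma} then upgrades fullness, faithfulness, and $p$-dg density to a $p$-dg equivalence. The main thing to get right is keeping track that every naturality and coherence square that appears is built out of $p$-dg data, but since $\bfM$ is strict and a $p$-dg $2$-functor, the $\eta^{\Psi^X}$ can be chosen to be identities, and the $p$-dg verifications all reduce to the fact that $\bfM$ commutes with $\del$ on $2$-morphisms.
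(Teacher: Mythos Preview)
Your sketch follows essentially the same route as the paper's proof: define the functor by $X\mapsto \Psi^X$ with $\Psi^X_\tj(\rF)=\bfM(\rF)(X)$, take the coherences $\eta^{\Psi^X}$ to be identities, build modifications from morphisms via $\bfM(\rH)(f)$, and recover everything by evaluation at $\one_\ti$, invoking Lemma~\ref{pdgequivlemma} at the end. The one point you gloss over, and which the paper makes explicit, is that $\bfM$ is a $2$-functor on $\cC$, not on $\overline{\cC}$, so the expression $\bfM(\rH)$ for a general $\rH\in\overline{\cC(\ti,\tj)}$ is not a priori defined; the paper first defines $\rF\mapsto\bfM(\rF)(X)$ as a $p$-dg functor $\cC(\ti,\tj)\to\bfM(\tj)$ and then extends to $\overline{\cC(\ti,\tj)}$ via Lemma~\ref{extensionlemma}\eqref{extensionlemma1} (using $\bfM(\tj)\simeq\overline{\bfM(\tj)}$ from Lemma~\ref{wloglemma}). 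The same issue arises in your faithfulness step: the modification condition \eqref{modificationcond} is only stated for $\alpha$ in $\cC$, so your argument directly pins down $(\theta_\tj)_\rH$ only for $\rH\in\cC(\ti,\tj)$; you then need Lemma~\ref{extensionlemma}\eqref{extensionlemma2} to conclude that $\theta_\tj$ is determined on all of $\overline{\cC(\ti,\tj)}$. Once this extension machinery is invoked, your argument is complete and matches the paper's.
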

\begin{proof} 
Assume given any object $M \in \bfM(\ti)$.
For $\tj\in \cC$ define the $p$-dg functor $\Phi^M_{\tj}:
\overline{\cC(\ti,\tj)}\to \bfM(\tj)$ by applying Lemma \ref{extensionlemma} to the $p$-dg functor $\cC(\ti,\tj)\to \bfM(j)$ that maps $\rF$ to $\bfM(\rF)(M)$ and a morphism $\alpha\colon \rF\to \rG$ in $\cC(\ti,\tj)$ to $\bfM(\alpha)_M$. Referring to Lemma \ref{wloglemma}, we can (without loss of generality) assume that $\bfM(\ti)$ is of the form $\overline{\M_\tj}$ for some locally finite $p$-dg category $\M_j$, and as $\overline{\overline{\M_\tj}}$ is $p$-dg equivalent to $\overline{\M_\tj}$ we get a $p$-dg functor $\Phi^M_\tj$ as required. This functor maps the object $X$ in $\overline{\cC(\ti,\tj)}$ given by the pair $(\bigoplus_{m=1}^s \rF_m, (\alpha_{k,l})_{k,l})$ with $\alpha_{k,l}\colon \rF_l\to \rF_k \in \cC(\ti, \tj)$ to the object in $\bfM(\tj)$ given by the pair  $\left(\bigoplus_{m=1}^s \bfM(\rF_m)(M), (\bfM(\alpha_{kl})_M)_{k,l}\right)$ in $\bfM(\tj)$.

Consider a $1$-morphism $\rG\in \cC(\tj,\tk)$. We require a natural $p$-dg isomorphism $\eta_\rG \colon \Phi^M_\tk\circ \bfP_\ti(\rG)\to \bfM(\rG)\circ \Phi_{\tj}^M$. Given an object $\rF$ of $\cC(\ti,\tj)$, to define this $p$-dg isomorphism,
$$(\eta_{\rG})_{\rF} \colon \bfM(\rG\rF)(M)\to  \bfM(\rG)(\bfM(\rF)(M))$$ can be taken to be the identity as $\bfM$ is a strict $2$-functor. We now use Lemma \ref{extensionlemma}(ii) in order to obtain a natural transformation $\eta_{\rG}$ as required. Compatibility (\ref{repmorcomp}) with respect to composition of $1$-morphisms is clear for an object $\rF$ as it amounts to composition of identities.
For more a general object $X\in \overline{\cC(\ti,\tj)}$, 
$(\eta_{\rK\rG})_X$ is the diagonal matrix with entries $(\eta_{\rK\rG})_{\rF_m}$, and the same compatibility holds. 

Hence we have constructed a morphism of $2$-representations $\Phi^M\colon \bfP_\ti \to \bfM$. We can make this construction functorial. Given a morphism $\tau \colon M \to N$ in $\bfM(\ti)$, we construct a modification $\theta^\tau \colon \Phi^M\to \Phi^N$. Again using Lemma \ref{extensionlemma}, we can define $\theta^\tau_\tj\colon \Phi^M_\tj \to \Phi^N_\tj$ by extending  the natural transformation given for $\rF\in \cC(\ti,\tj)$ by
$$
(\theta^\tau_\tj)_{\rF}:=\bfM(\rF)(\tau)\colon \left(\Phi_\tj^M(F)=\bfM(\rF)(M)\right)\longrightarrow \left(\Phi_\tj^N(F)=\bfM(\rF)(N)\right)
$$
to all of $\overline{\cC(\ti,\tj)}$.
Next, consider the compatibility condition (\ref{modificationcond}) for $\alpha \colon \rG\to \rK$ in $\cC(\tj,\tk)$, which is a diagram of natural transformations between functor $\cC(\ti,\tj)\to \bfM(\tk)$. Evaluated at $\rF\in \cC(\ti,\tj)$, this now amounts to 
\begin{align*}\left((\bfM(\alpha)\circ_0 \theta_\tj^\tau)\circ_{1}\eta_{\rG}\right)_{\rF}
=&\bfM(\rG\rF)(\tau\circ_0 \alpha)\\=&\bfM(\tau)\circ_0\bfM(\rF)(\alpha)=\left(\eta_\rK\circ_1 \left(\theta^\tau_\tk\circ_0 \bfP_{\ti}(\alpha)\right) \right)_{\rF}.
\end{align*}
It is clear that this condition will still hold for more general objects and morphisms in $\overline{\cC(\ti,\tj)}$ reasoning as above. Functoriality is easy to check when looking at objects $\rF\in \cC(\ti,\tj)$, and follows for the extensions of the $\Phi_\tj^\tau$ to all of $\overline{\cC(\ti,\tj)}$ from Lemma \ref{extensionlemma}.

This way we obtain a $p$-dg functor $\rY_\ti\colon \bfM(\ti)\to \Hom_{\ccC\pamod}(\bfP_\ti,\bfM)$, with $\rY_\ti(M)=\Phi^M$ and $\rY_\ti(\tau)=\Phi^\tau$. Indeed, $\partial(\Phi_\tj^\tau)$ is defined as the differential of natural transformations $\partial((\Phi_\tj^\tau)_X)$, for all $\tj$. But this morphism is a diagonal matrix with diagonal entries $(\Phi_\tj^\tau)_{\rF_{m}}$ for $X=(\bigoplus_{m=1}^s{\rF_m},\alpha)$, and hence its differential in $\overline{\cC(\ti,\tj)}$ is given by 
$$(\delta_{k,l}\del(\bfM(\rF_k)(\tau)))_{k,l}=(\delta_{k,l}\bfM(\rF_k)(\del\tau))_{k,l},
$$
as the diagonal matrix commutes with the upper triangular matrix $\alpha$ of $X$.

It remains to show that $\rY_\ti$ is a part of a $p$-dg equivalence.
For any morphism $\Psi\colon \bfP_\ti \to \bfM$, we consider the image $M^{\Psi}$ of $\one_\ti$ under $\Psi_\ti\colon \overline{\cC(\ti,\ti)}\to \bfM(\ti)$, which is an object in $\bfM(\ti)$; and for any modification $\theta\colon \Psi\to \Upsilon$, $(\theta_\ti)_{\one_\ti}\colon \Psi_\ti(\one_\ti)\to \Upsilon_\ti(\one_\ti)$ gives a morphism $\tau^\theta$ in $\bfM(\ti)$ between the respective objects. It is clear that $\Phi^M_\ti(\one_\ti)=M$ and $\theta^\tau(\one_\ti)=\tau$ by construction. This shows that the $p$-dg functor $\rY_\ti$ is full. 

Assume we are given a modification $\theta\colon \Psi\to \Upsilon$ and $\rF\in \cC(\ti,\tj)$. We can use the $p$-dg isomorphisms $(\eta^\Psi_{\rF})_{\one_\ti} \colon \Psi_{\tj}(\rF)\to \bfM(\rF)(\Psi_\ti(\one_\ti))$, and $(\eta^\Upsilon_\rF)_{\one_\ti}$. Under these $p$-dg isomorphisms, $(\theta_{\tj})_{\rF}$ corresponds to the isomorphism 
$$(\theta^{(\tau^\theta)})_{\rF}= \bfM(\rF)(\tau^\theta)=\bfM(\rF)((\theta_\ti)_{\one_\ti}),$$ showing faithfulness. We also see that $(\eta^\Psi(-))_{\one_{\ti}}$ provides a $p$-dg isomorphism $\Psi_\tj \cong \Phi_{\tj}^{M^{\Psi}}$, and hence a $p$-dg isomorphism of morphisms of $p$-dg representations $\Psi\cong \Phi^{M^\Psi}$ showing $p$-density of $\rY_\ti$.  
\end{proof}

\subsection{Closure under \texorpdfstring{$p$}{p}-dg quotients}

Given a $2$-representation $\bfM\in \cC\pamod$ of a locally finite $2$-category $\cC$, 
we can define the {\bf $p$-dg quotient completed $2$-representation} $\vv{\bfM}$ by setting 
$\vv{\bfM}(\ti):=\vv{\bfM(\ti)}$ and defining the action of $1$- and $2$-morphisms componentwise (recall the definition of $\vv{\rF}$ and $\vv{\alpha}$ from \ref{pdgquots}). 

In analogy to $\cC\pamod$, we can define a new $p$-dg $2$-category of $p$-dg quotient completed $2$-representations which we denote by $\cC\pcmod$. It consists of representations $\bfA$ which send
\begin{itemize}
\item an object $\ti\in \cC$ to a $p$-dg category $\bfA(\ti)$ $p$-dg equivalent to  $\vv{\C}_\ti$ for a locally finite $p$-dg category $\C_\ti$,
\item a $1$-morphism $\rF \in \cC(\ti,\tj)$ to a $p$-dg functor $\bfA(\ti)\to\bfA(\tj)$ which preserves semi-free objects, i.e. objects $p$-dg isomorphic to those in $\overline{\C}\subset\vv{\C}$,
\item a $2$-morphism $\alpha: \rF\to\rG \in \cC(\ti,\tj)$ to a morphism of $p$-dg functors.
\end{itemize}
Now $\cC\pcmod$ is the $p$-dg $2$-category of such $p$-dg $2$-representations together with morphisms of $p$-dg representations, and modifications as defined in Section \ref{2rep2cat}. 

We obtain a $2$-faithful $p$-dg $2$-functor
\begin{align*}
\vv{(~)}\colon \cC\pamod & \longrightarrow\cC\pcmod,&
\bfM&\longmapsto \vv{\bfM}.
\end{align*}

In the other direction, note that, by definition, the $p$-dg subcategory of semi-free objects in $\coprod_{\ti\in\cC} \bfN(\ti)$ is stable under the action of $\cC$. Then we can start with a $2$-representation $\bfN$ in $\cC\pcmod$ and restrict to this subcategory, giving a $2$-representation $\bfN_{\mathrm{csf}}$ in $\cC\pamod$. It is easy to see that, for $\bfM\in \cC\pamod$, the $2$-representation $(\vv{\bfM})_{\mathrm{csf}}$ is $p$-dg equivalent to $\bfM$ and, for $\bfN\in \cC\pcmod$, the $2$-representation $\vv{\bfN_{\mathrm{csf}}}$ is $p$-dg equivalent to $\bfN$.
Note that we do \emph{not} require that morphisms of $2$-representations in $\cC\pcmod$ preserve semi-free objects and hence this bijection between objects does not give rise to any form of equivalence between $\cC\pamod$ and $\cC\pcmod$.

We will need an analogue of the Yoneda Lemma for $p$-dg quotient complete $2$-representations:

\begin{proposition}\label{quotyoneda}
Let $\cC$ be a locally finite $2$-category, $\ti \in \cC$ and $\bfN\in \cC\pamod$. Then there is a $p$-dg equivalence
\begin{equation*}
\vv{\rY}_\ti\colon \vv{\bfN}(\ti)\stackrel{\sim}{\longrightarrow}\Hom_{\ccC\pcmod}(\vv{\bfP_\ti},\vv{\bfN}),
\end{equation*}
extending the $p$-dg equivalence $\rY_\ti$ from Proposition \ref{yoneda}.
\end{proposition}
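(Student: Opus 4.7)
Plan. I will prove this by following the template of Proposition~\ref{yoneda}, propagating the construction one level through the functorial $p$-dg quotient closure $\vv{(-)}$ from Section~\ref{pdgquots}. The underlying idea is that $\vv{\bfN}$ is obtained from $\bfN$ by closing each $1$-hom category under $p$-dg quotients, and similarly $\vv{\bfP_\ti}$ from $\bfP_\ti$; hence a morphism $\vv{\bfP_\ti}\to\vv{\bfN}$ in $\cC\pcmod$ should correspond to an object of $\vv{\bfN(\ti)}$ rather than one of $\bfN(\ti)$, with everything else being ``componentwise bookkeeping.''

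Construction of $\vv{\rY}_\ti$. Given $M=(M_0\xrightarrow{f}M_1)\in \vv{\bfN(\ti)}$ (so $\deg f=0$ and $\del f=0$), I produce a morphism of $p$-dg $2$-representations $\vv{\Phi^M}\colon \vv{\bfP_\ti}\to\vv{\bfN}$ in $\cC\pcmod$. At each object $\tj$, the component $(\vv{\Phi^M})_\tj$ sends a one-morphism $\rF\in\cC(\ti,\tj)$ (viewed inside $\vv{\bfP_\ti(\tj)}=\vv{\overline{\cC(\ti,\tj)}}$ via the canonical embedding) to $\vv{\bfN}(\rF)(M)=(\bfN(\rF)(M_0)\xrightarrow{\bfN(\rF)(f)}\bfN(\rF)(M_1))\in\vv{\bfN(\tj)}$. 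This assignment extends additively to matrices with upper-triangular differentials in $\overline{\cC(\ti,\tj)}$ via Lemma~\ref{extensionlemma}(i)--(ii), and then to diagrams in $\vv{\overline{\cC(\ti,\tj)}}$ via the $\vv{(-)}$-functoriality of Section~\ref{pdgquots}. The natural $p$-dg isomorphisms $\eta^{\vv{\Phi^M}}_\rG$ are chosen to be identities on one-morphisms (since $\bfN$ is strict), and propagated to all of $\vv{\overline{\cC(\ti,\tj)}}$ via Lemma~\ref{extensionlemma}(ii); coherence~\eqref{repmorcomp} and naturality~\eqref{2repmornatural} then follow from strictness of $\bfN$. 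A morphism $\tau=[(\tau_0,\tau_1)]\colon M\to M'$ in $\vv{\bfN(\ti)}$ induces a modification $\vv{\Phi^M}\to\vv{\Phi^{M'}}$ by applying $\bfN(\rF)$ componentwise, which is well defined modulo the homotopy ideal because $p$-dg functors send homotopies to homotopies.

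Inverse and verification. Define $\vv{\rZ}_\ti\colon \Hom_{\ccC\pcmod}(\vv{\bfP_\ti},\vv{\bfN})\to\vv{\bfN(\ti)}$ by $\vv{\rZ}_\ti(\Psi):=\Psi_\ti(\one_\ti)$ and analogously on modifications. The composite $\vv{\rZ}_\ti\circ\vv{\rY}_\ti$ is the identity on the nose because $\vv{\bfN}(\one_\ti)(M)=M$. For the other composition, given $\Psi$ and setting $M:=\Psi_\ti(\one_\ti)$, the $p$-dg isomorphisms $(\eta^\Psi_\rF)_{\one_\ti}\colon \Psi_\tj(\rF)\to\vv{\bfN}(\rF)(M)=(\vv{\Phi^M})_\tj(\rF)$ at one-morphisms $\rF$ assemble via Lemma~\ref{extensionlemma}(ii) and $\vv{(-)}$-functoriality into a $p$-dg isomorphism of morphisms of $2$-representations $\Psi\cong\vv{\Phi^M}$; together with fullness and faithfulness (checked as in Proposition~\ref{yoneda}), Lemma~\ref{pdgequivlemma} then yields a $p$-dg equivalence. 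The fact that $\vv{\rY}_\ti$ extends $\rY_\ti$ is immediate from the construction: for $N\in\bfN(\ti)$ viewed as $(0\to N)\in\vv{\bfN(\ti)}$, the morphism $\vv{\Phi^N}$ restricts along $\vv{\iota}\colon \bfP_\ti\hookrightarrow\vv{\bfP_\ti}$ to $\Phi^N$.

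Main obstacle. The principal subtlety is the diagrammatic extension of $(\vv{\Phi^M})_\tj$ to objects $\rX=(X\to Y)\in\vv{\overline{\cC(\ti,\tj)}}$: applying $\vv{\bfN}(\rX)$ to $M$ produces a naive ``bidiagram'' in $\vv{\vv{\bfN(\tj)}}$, which must be flattened into a single object of $\vv{\bfN(\tj)}$. This flattening uses that $\Z(\vv{\bfN(\tj)})$ is abelian by Lemma~\ref{abelianlemma}, giving a canonical $p$-dg equivalence $\vv{\vv{\bfN(\tj)}}\simeq\vv{\bfN(\tj)}$ via the total-complex cokernel of the bidiagram. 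Once this identification is fixed, compatibility with the $\eta$-data, naturality, and independence modulo homotopy follow by componentwise arguments parallel to those of Proposition~\ref{yoneda}, where most of the remaining work is careful bookkeeping rather than new ideas.
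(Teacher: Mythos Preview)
Your overall strategy---evaluate at $\one_\ti$, build $\vv{\Phi^M}$ from the original Yoneda data $\Phi^{M_0},\Phi^{M_1},\theta^f$, and flatten a bidiagram via a total-complex construction---is exactly the paper's. The gap is in how you justify the flattening. You invoke Lemma~\ref{abelianlemma} to obtain a ``canonical $p$-dg equivalence $\vv{\vv{\bfN(\tj)}}\simeq\vv{\bfN(\tj)}$,'' but that lemma only asserts that $\Z(\vv{\C})$ is abelian; it says nothing about the enriched $p$-dg category $\vv{\C}$, and the paper explicitly notes that $\vv{\C}$ itself is \emph{not} abelian. No such $p$-dg equivalence is established anywhere in the paper, and in the stated generality (merely locally finite $\C$) it is not obvious it holds. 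Similarly, your appeal to Lemma~\ref{extensionlemma} to extend from $\cC(\ti,\tj)$ to $\overline{\cC(\ti,\tj)}$ lands you in $\overline{\vv{\bfN(\tj)}}$, not in $\vv{\bfN(\tj)}$, and you do not justify that these agree.

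The paper sidesteps all of this by writing the total complex down explicitly: for $M=(X\overset{f}{\to}Y)$ and $(\rF\overset{\gamma}{\to}\rG)\in\vv{\bfP_\ti(\tj)}$ it sets
\[
\Phi_\tj^f(\rF\overset{\gamma}{\to}\rG)\;=\;\Bigl(\Phi_\tj^Y(\rF)\oplus \Phi_\tj^X(\rG)\xrightarrow{\bigl(\Phi_\tj^Y(\gamma),\;(\theta^f_\tj)_{\rG}\bigr)}\Phi_\tj^Y(\rG)\Bigr)\in\vv{\bfN(\tj)},
\]
and then checks by hand, via explicit block matrices, that this is well defined on homotopy classes, functorial, compatible with $\del$, and that the coherences $\vv{\eta}_\rH$ (built as block-diagonal matrices from the original $\eta^{\Phi^X},\eta^{\Phi^Y}$) satisfy~\eqref{repmorcomp} and~\eqref{2repmornatural}. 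This explicit formula \emph{is} your ``total-complex cokernel''; the point is that it must be written down directly as an object of $\vv{\bfN(\tj)}$ and verified there, not deduced from an unproved equivalence of $p$-dg categories. Once you do that, the remaining bookkeeping is indeed parallel to Proposition~\ref{yoneda}, as you say.
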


\proof
We assume, without loss of generality, that $\bfN(\ti)=\overline{\A_{\ti}}$ for some locally finite $p$-dg category $\A_\ti$, cf. Lemma \ref{wloglemma}.

First, assume given an object $X\overset{f}{\to}Y$ in $\vv{\bfN(\ti)}$. We construct a morphism $\phi^f\colon \vv{\bfP_\ti}\to \vv{\bfN}$ in $\cC\pcmod$ such that $\phi_\tj^f(0\to\one_{\ti})=X\overset{f}{\to}Y$. For this, we use the morphisms $\Phi^X$, $\Phi^Y$ and the modification $\theta^f$ defined in Proposition \ref{yoneda}. For an object $\rF\overset{\gamma}{\to}\rG$ in $\vv{\cC(\ti,\tj)}$ we set
$$
\Phi_\tj^f(\rF\overset{\gamma}{\to}\rG)=\Phi_\tj^Y(\rF)\oplus \Phi_\tj^X(\rG)\xrightarrow{\left(\begin{smallmatrix}\Phi_\tj^Y(\gamma),&\left(\theta^f_\tj\right)_{\rG}\end{smallmatrix}\right)}\Phi_\tj^Y(\rG).
$$
We assign to a morphism, given by a diagram
$$\begin{array}{ccc}
\vcenter{\hbox{\xymatrix{\rF\ar_{\varphi_0}[d]\ar^{\gamma}[r]&\rG\ar^{\varphi_1}[d]\\
\rF'\ar^{\gamma'}[r]&\rG'
}}}&\in&\vv{\cC(\ti,\tj)}
\end{array},
$$
the diagram 
\begin{equation}\label{Phiarrow}
\vcenter{\hbox{\xymatrix{\Phi_\tj^Y(\rF)\oplus \Phi_\tj^X(\rG)\ar_-{\left(\begin{smallmatrix}\Phi_\tj^Y(\varphi_0)&0\\ 0&\Phi_\tj^X(\varphi_1) \end{smallmatrix}\right)}[dd]  \ar^-{\left(\begin{smallmatrix}\Phi_\tj^Y(\gamma),&\left(\theta^f_\tj\right)_{\rG}\end{smallmatrix}\right)}[rrrr]&&&&\Phi_\tj^Y(\rG)\ar^-{\left(\begin{smallmatrix}\Phi_\tj^Y(\varphi_1)\end{smallmatrix}\right)}[dd]\\ \\
\Phi_\tj^Y(\rF')\oplus \Phi_\tj^X(\rG')\ar^-{\left(\begin{smallmatrix}\Phi_\tj^Y(\gamma'),&\left(\theta^f_\tj\right)_{\rG'}\end{smallmatrix}\right)}[rrrr]&&&&\Phi_\tj^Y(\rG')
}}}
\end{equation}
in $\bfN(\tj)$.
First of all, it is clear that this gives a morphism in $\vv{\bfN(\tj)}$ as $\del(\Phi_\tj^Y(\gamma))=\Phi_\tj^Y(\del\gamma)=0$, $\del\theta^f=\theta^{\del f}=0$; and the diagram commutes using functoriality of $\Phi^Y_\tj$ applied to $\varphi_1\gamma=\gamma'\varphi_0$ in the first $\oplus$-component, and naturality of $\theta_\tj^f$ in the second $\oplus$-component. It is easy to see that the assignment is functorial, using functoriality of the functors $\Phi^X_\tj$ and $\Phi^Y_\tj$. As these functors are indeed $p$-dg functors, $\Phi^f_\tj$ is also a $p$-dg functor. Finally, given a homotopy $h\colon \rG\to \rF'$ rendering the morphism in $\vv{\cC(\ti,\tj)}$ given by the same diagram zero, we can use the morphism
$$\begin{pmatrix}
\Phi_\tj^Y(h)\\0
\end{pmatrix}\colon \Phi_\tj^Y(\rG)\to \Phi_\tj^Y(\rF')\oplus \Phi_\tj^X(\rG')$$
to factor $\begin{pmatrix}\Phi_\tj^Y(\varphi_1)\end{pmatrix}$  through the bottom left corner object. This shows that $\Phi^f_\tj$ gives a well-defined $p$-dg functor from $\vv{\cC(\ti,\tj)}$ to $\vv{\bfN(\tj)}$.

Next, we show that $\Phi^f=\lbrace \Phi^f_\tj\rbrace_{\tj\in\cC}$ can be given the structure of a morphism in $\cC\pcmod$. For this, let $\rH\in \overline{\cC(\tj,\tk)}$. We need to construct a natural transformation
$$\vv{\eta}_\rH\colon \Phi_\tk^f\circ\vv{\bfP_\ti}(\rH)\to \vv{\bfN}(\rH)\circ\Phi_\tj^f.$$
For an object $\rF\overset{\gamma}{\to}\rG$, we define $\left(\vv{\eta}_\rH\right)_{\rF\overset{\gamma}{\to}\rG}$ as the diagram
\begin{equation}
\vcenter{\hbox{\xymatrix{
\Phi_\tk^Y(\rH\rF)\oplus \Phi_\tk^X(\rH\rG)\ar^-{\left(\begin{smallmatrix}\left(\eta^{\Phi^Y}_{\rH}\right)_{\rF}&0\\ 0&\left(\eta^{\Phi^X}_{\rH}\right)_{\rG}\end{smallmatrix}\right)}[dd]  \ar^-{\left(\begin{smallmatrix}\Phi_\tk^Y(\rH\gamma),&\left(\theta^f_\tk\right)_{\rH\rG}\end{smallmatrix}\right)}[rrrr]&&&&\Phi_\tk^Y(\rH\rG)\ar_-{\left(\begin{smallmatrix}\left(\eta^{\Phi^Y}_{\rH}\right)_{\rG}\end{smallmatrix}\right)}[dd]\\ \\
\bfN(\rH)\left(\Phi_\tj^Y(\rF)\right)\oplus \bfN(\rH)\left(\Phi_\tj^X(\rG)\right)\ar^-{\left(\begin{smallmatrix}\bfN(\rH)\Phi_\tj^Y(\gamma),&\bfN(\rH)\left( \left(\theta^f_\tj\right)_{\rG}\right)\end{smallmatrix}\right)}[rrrr]&&&&\bfN(\rH)\Phi_\tj^Y(\rG).
}}}\label{etadiag}
\end{equation}
Note that the bottom line object equals
$$\vv{\bfN}(\rH)\left(\Phi_\tj^f(\rF\overset{\gamma}{\to}\rG)\right)=\vv{\bfN(\rH)}\left(\Phi_\tj^Y(\rF)\oplus \Phi_\tj^X(\rG)\xrightarrow{\left(\begin{smallmatrix}\Phi_\tj^Y(\gamma),&\left(\theta^f_\tj\right)_{\rG}\end{smallmatrix}\right)}\Phi_\tj^Y(\rG)\right).$$
We remark that here we crucially use that the induced functor $\overline{\bfN(\rH)}$ is given by componentwise application of $\bfN(\rH)$ since the symbol $\oplus$ in $\overline{\A_\tj}$ denotes a list of objects (not the direct sum in $\A_\tj$).

The diagram \eqref{etadiag} commutes using in the first $\oplus$-component that $\eta^{\Phi^Y}$ is part of the data of a morphism in $\cC\pamod$, where in the second $\oplus$-summand we use that $\theta^f$ is a modification. The horizontal morphisms differentiate to zero since all functors applied are $p$-dg functors and $\del(\theta^f)=0$ as above. Finally, the vertical arrows are clearly $p$-dg isomorphisms inheriting this property from their components using Proposition \ref{yoneda}.

Next, we need to show coherence of $\vv{\eta}$. For a given morphism $\beta\colon \rH\to \rH'\in \overline{\cC(\tj,\tk)}$ we require that the diagram
$$
\xymatrix{\Phi_\tk^f\circ \vv{\bfP_\ti(\rH)}\ar_-{\id_{\Phi_\tk^f}\circ_0\vv{\bfP_{\ti}(\beta)}}[d]\ar^-{\vv{\eta}_{\rH}}[rr]&&\vv{\bfN(\rH)}\circ \Phi_\tj^f\ar^-{\vv{\bfN(\beta)}\circ_0\id_{\Phi_{\tj}^f}}[d]\\
\Phi_\tk^f\circ \vv{\bfP_\ti(\rH')}\ar^-{\vv{\eta}_{\rH'}}[rr]&&\vv{\bfN(\rH')}\circ \Phi_\tj^f
}
$$
of natural transformations commutes strictly. Evaluating at an object $\rF\xrightarrow{\gamma}\rG$, this amounts to the equality of the pairs of matrices:

\resizebox{\linewidth}{!}{
  \begin{minipage}{\linewidth}
\begin{align*}
\left(\begin{smallmatrix}
(\bfN(\beta)\circ_0\id)\circ_1\left(\eta_\rH^{\Phi^Y}\right)_{\rF}&0\\
0& (\bfN(\beta)\circ_0\id)\circ_1\left(\eta_\rH^{\Phi^X}\right)_{\rG}
\end{smallmatrix}\right)
&=
\left(\begin{smallmatrix}
\left(\eta_{\rH'}^{\Phi^Y}\right)_{\rF}\circ_1(\id\circ_0\bfP_\ti(\beta))&0\\
0& \left(\eta_{\rH'}^{\Phi^X}\right)_{\rG}\circ_1(\id\circ_0\bfP_\ti(\beta))\end{smallmatrix}\right)\\
(\bfN(\beta)\circ_0\id)\circ_1\left(\eta_\rH^{\Phi^Y}\right)_{\rG}&=\left(\eta_{\rH'}^{\Phi^Y}\right)_{\rG}\circ_1(\id\circ_0\bfP_\ti(\beta)),
\end{align*}
\end{minipage}}

which follow componentwise from the corresponding identities in Proposition \ref{yoneda}.
This shows that $(\Psi^f, \vv{\eta})$ defines a morphism in $\cC\pcmod$.

It remains to show that a morphism $$\begin{array}{ccc}\varphi=\vcenter{\hbox{\xymatrix{X\ar^-{f}[r]\ar_-{\varphi_0}[d]&Y\ar^-{\varphi_1}[d]\\X'\ar^-{f'}[r]&Y'}}}&\in&\vv{\bfN}(\ti)\end{array}$$ induces a modification $\theta^\varphi\colon \theta^f\to \theta^{f'}$. We can define, at an object $\rF\overset{\gamma}{\to}\rG$,
\begin{equation}\label{thetaarrow}
\left(\theta_\tj^\varphi\right)_{\rF\overset{\gamma}{\to}\rG}=\left(\left(\begin{smallmatrix}\left(\theta_\tj^{\varphi_1}\right)_{\rF}&0\\0&\left(\theta_\tj^{\varphi_0}\right)_{\rG}\end{smallmatrix}\right), \left(\theta_\tj^{\varphi_1}\right)_{\rG}\right),
\end{equation}
which is a morphism in $\vv{\bfN(\tj)}$ as required, using that $\theta^{\varphi_0}$ is a natural transformation from Proposition \ref{yoneda} in the first $\oplus$-component, and that the assignment $\rY_\ti$ is functorial, applied to the diagram defining $\varphi$, in the second $\oplus$-component.

We have to check that $\theta_\ti^\varphi$ is natural. For this, we observe that both $\theta^\varphi_\ti$, and $\Phi_\tj^f(\gamma)$ defined in \eqref{Phiarrow} are given by pairs of block diagonal matrices using the corresponding constructions in Proposition \ref{yoneda} as entries. Hence naturality follows from naturality there (combined with functoriality of $\rY_\ti$). The same reasoning works to verify condition \eqref{modificationcond} using that the morphisms $\vv{\eta}$ are also defined using block diagonal matrices of the corresponding construct in the Yoneda Lemma for $\cC\pamod$, as displayed in \eqref{etadiag}.

From the definition of $\theta^\varphi$ in \eqref{thetaarrow} we see that the assignment $\varphi\mapsto \theta^\varphi$ is functorial, using functoriality of $\rY_\ti$ in \ref{yoneda}. It is also clear with this description that a homotopy $h\colon Y\to X'$ induces a homotopy $\begin{pmatrix}
0\\\left(\theta_\tj^{h}\right)_{\rG}
\end{pmatrix}$.
That the assignment commutes with $\del$ is seen in the same way, giving a $p$-dg functor $\vv{\rY_\ti}$ as desired.

Conversely, given a morphism $\Phi$ and a modification $\theta$, we send them to their evaluations at $\one_\ti$ as usual. It can be verified as in Proposition \ref{yoneda} that this defines a p-dg functor providing a quasi-inverse to $\vv{\rY_\ti}$ as desired.
\endproof

\subsection{Derived \texorpdfstring{$p$}{p}-dg \texorpdfstring{$2$}{2}-representations}

For this subsection, assume given a $2$-repre\-sentation $\bfM\in \cC\pamod$ of a locally finite $2$-category $\cC$. Recall that for every object $\ti$, $\bfM(\ti)$ is, up to $p$-dg equivalence, of the form $\overline{\C_{\ti}}$, where $\C_\ti$ is locally finite.
Using the Lemma \ref{derivedfunctor}, we are hence able to derive $p$-dg $2$-representations by passing to stable categories as in Section \ref{derivedcats}:

\begin{definition}\label{stablerep}
The assignment
\begin{align*}
\ti &\mapsto \St(\bfM(\ti)),\\
(\rF\colon \ti \to \tj) & \mapsto \St(\bfM(\rF))
\end{align*}
extends to a $2$-functor defined on $\cZ\cC$. We denote this $2$-representation by $\bfD\bfM$, the \textbf{derived representation of $\bfM$}.
\end{definition}

The rest of this subsection will discuss on which $2$-categories (instead of $\cZ\cC$) the representation $\bfD\bfM$ can be defined. First, we discuss a maximal quotient $\cD\cC$ on which $\bfD\bfM$ is defined. This construction does not have triangulated $1$-hom categories. To render the latter, we also describe how $\bfD\bfM$ can be defined on $\cSt(\overline{\cC})$.

\begin{lemma}
The set of  all $2$-morphisms $\alpha\colon \rF\to \rF'$ in $\cZ\cC$ which induce the zero natural transformation $\St(\rF)\to \St(\rF')$ in $\bfD\bfM$ forms a $2$-ideal.
\end{lemma}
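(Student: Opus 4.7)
The plan is to verify directly, from the defining property, that the stated collection is closed under the four operations making up a $2$-ideal: $\Bbbk$-linear combination, left/right vertical composition with arbitrary $2$-morphisms in $\cZ\cC$, and whiskering on the left and right by $1$-morphisms. The key inputs I will lean on are that in each $p$-dg category $\bfM(\tj)$ the null-homotopic morphisms form a two-sided ideal (by definition of $\St(\bfM(\tj))$), that every $p$-dg functor preserves null-homotopic morphisms (Lemma \ref{derivedfunctor}), and strict $2$-functoriality of $\bfM$.

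Let me denote by $\I(\rF,\rF')\subseteq \cZ\cC(\ti,\tj)(\rF,\rF')$ the set of $\alpha\colon \rF\to\rF'$ such that $\St(\bfM(\alpha))=0$, i.e. such that for every object $X\in \bfM(\ti)$ the morphism $\bfM(\alpha)_X$ is null-homotopic in $\bfM(\tj)$. Since $\St(\bfM(-))$ is $\Bbbk$-linear on $2$-morphisms, $\I(\rF,\rF')$ is a $\Bbbk$-subspace. For vertical composition, given $\alpha\in \I(\rF,\rF')$ and $\beta\colon \rF'\to\rF''$, $\gamma\colon \rF''\to\rF$ in $\cZ\cC$, strict $2$-functoriality gives $\bfM(\beta\circ_1\alpha)_X = \bfM(\beta)_X\circ \bfM(\alpha)_X$ and $\bfM(\alpha\circ_1\gamma)_X = \bfM(\alpha)_X\circ \bfM(\gamma)_X$; since $\bfM(\alpha)_X$ is null-homotopic and the null-homotopic morphisms in $\bfM(\tj)$ form a two-sided ideal, both composites are null-homotopic, hence $\beta\circ_1\alpha, \alpha\circ_1\gamma\in \I$.

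For whiskering on the right, let $\rG\colon \th\to \ti$ be a $1$-morphism of $\cC$ and $\alpha\in\I(\rF,\rF')$. By strict $2$-functoriality, $\bfM(\alpha\circ_0 \id_{\rG}) = \bfM(\alpha)\circ_0 \id_{\bfM(\rG)}$, whose component at $X\in \bfM(\th)$ is the morphism $\bfM(\alpha)_{\bfM(\rG)(X)}$; this is null-homotopic because $\alpha\in\I$. For whiskering on the left, let $\rG\colon \tj\to\tk$ and consider $\bfM(\id_\rG\circ_0\alpha)=\id_{\bfM(\rG)}\circ_0 \bfM(\alpha)$, whose component at $X\in \bfM(\ti)$ is $\bfM(\rG)(\bfM(\alpha)_X)$. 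Since $\bfM(\rG)$ is a $p$-dg functor and $\bfM(\alpha)_X$ is null-homotopic, Lemma \ref{derivedfunctor} shows that $\bfM(\rG)(\bfM(\alpha)_X)$ is again null-homotopic in $\bfM(\tk)$; hence $\id_\rG\circ_0 \alpha\in\I$.

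There is essentially no obstacle here — all four axioms reduce to two facts that have already been established: (i) the null-homotopic morphisms form a two-sided ideal in the hom spaces of the target $p$-dg categories, and (ii) $p$-dg functors preserve null-homotopic morphisms. The only small thing to be careful about is using strict $2$-functoriality of $\bfM$ (which is part of the definition of an additive $p$-dg $2$-representation) to rewrite $\bfM$ of a horizontal/vertical composite as the corresponding composite of $\bfM$-images, and then to evaluate component-wise at objects of $\bfM(\ti)$ or $\bfM(\th)$.
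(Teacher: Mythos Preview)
Your proof is correct and follows essentially the same approach as the paper's: both reduce the $2$-ideal axioms to the facts that null-homotopic morphisms form a two-sided ideal in each $\bfM(\tj)$ and that the $p$-dg functors $\bfM(\rG)$ preserve null-homotopic morphisms (Lemma~\ref{derivedfunctor}). Your version is simply more explicit, spelling out the component-wise evaluations and treating left and right whiskering separately where the paper is terse.
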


\begin{proof}
If a morphism $\alpha\colon \rF\to \rF'$ induces the zero natural transformation on the stable categories, then $\bfM(\alpha)_X$ is null-homotopic for all objects $X$ of $\bfM(\ti)$. This condition is closed under left and right composition by other morphisms of functors as null-homotopic morphisms form an ideal. Further, the functors $\bfM(\rG)$ preserve null-homotopic morphisms by Lemma \ref{derivedfunctor}. Therefore, we obtain a $2$-ideal.
\end{proof}

Now we turn our attention to the principal $2$-representations $\bfP_{\ti}$. By Definition \ref{principal}, we see that for a $1$-morphism $\rF\colon \tj\to \tk$, the $p$-dg functor $\bfP_\ti\colon \overline{\cC(\ti,\tj)}\to \overline{\cC(\ti,\tk)}$ is induced by the composition functor $\cC(\ti,\tj)\to \cC(\ti,\tk)$. Hence we obtain an induced $2$-functor defined on $\cZ\cC$, mapping $\tj$ to $\St(\overline{\cC(\ti,\tj)})$.

Similarly to the above lemma, we find that the collection of all $2$-morphisms $\alpha \in \cC(\ti,\tj)$ which induce null-homotopic natural transformations in $\bfP_\ti$ (for the same object $\ti$) forms a $2$-ideal in $\cZ\cC$, which we denote by $\cN$. We further denote by $\cD\cC$ the quotient $\cZ\cC/\cN$, where we can disregard {\bf acyclic $1$-morphisms} (i.e. $1$-morphisms $\rF$ whose identity $\id_{\rF}$ is acyclic). We call $\cD\cC$ the {\bf stable quotient $2$-category of $\cC$}.

\begin{proposition}
Given an additive $p$-dg $2$-representation $\bfM$ as above, the $2$-functor $\bfD\bfM$ from Definition \ref{stablerep} descends to a well-defined $2$-functor on $\cD\cC$.
\end{proposition}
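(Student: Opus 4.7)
The plan is to reduce the proposition to the Yoneda Lemma (Proposition~\ref{yoneda}) and the fact that $p$-dg functors preserve null-homotopy (Lemma~\ref{derivedfunctor}). Two things must be verified: first, for every $\alpha \in \cN$, the natural transformation $\bfM(\alpha)$ is componentwise null-homotopic, so that $\St(\bfM(\alpha)) = 0$; second, for every acyclic $1$-morphism $\rF$ in $\cZ\cC$, the functor $\St(\bfM(\rF))$ vanishes, making the removal of acyclic $1$-morphisms in $\cD\cC$ harmless for $\bfD\bfM$.

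For the first point, let $\alpha : \rF \to \rG$ with $\rF, \rG \in \cC(\tj,\tk)$ lie in $\cN$. Using strict unitality of horizontal composition, $\bfP_\tj(\alpha)_{\one_\tj}$ equals $\alpha$ viewed as a morphism of $\overline{\cC(\tj,\tk)}$, so the defining condition of $\cN$ implies that $\alpha$ itself is null-homotopic in $\overline{\cC(\tj,\tk)}$. For each object $X \in \bfM(\tj)$, Proposition~\ref{yoneda} yields a morphism $\Phi^X : \bfP_\tj \to \bfM$ of $p$-dg $2$-representations with $\Phi^X_\tj(\one_\tj) \cong X$ and $\Phi^X_\tk : \overline{\cC(\tj,\tk)} \to \bfM(\tk)$ a $p$-dg functor. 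Applying naturality~\eqref{2repmornatural} of $\eta^{\Phi^X}$ to $\alpha$ and evaluating at $\one_\tj$ gives
\[
(\eta^{\Phi^X}_{\rG})_{\one_\tj} \circ_1 \Phi^X_\tk(\alpha) \;=\; \bfM(\alpha)_X \circ_1 (\eta^{\Phi^X}_{\rF})_{\one_\tj},
\]
which exhibits $\bfM(\alpha)_X$ as the conjugate of $\Phi^X_\tk(\alpha)$ by the $p$-dg isomorphisms $(\eta^{\Phi^X}_{\rF})_{\one_\tj}$ and $(\eta^{\Phi^X}_{\rG})_{\one_\tj}$. Since $\Phi^X_\tk$ is a $p$-dg functor, Lemma~\ref{derivedfunctor} guarantees that $\Phi^X_\tk(\alpha)$ is null-homotopic, and conjugation by $p$-dg isomorphisms preserves null-homotopy, yielding the claim.

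For the acyclicity point, if $\rF$ is acyclic then $\id_\rF$ is null-homotopic, so each component $\id_{\rF\rH} = \id_\rF \circ_0 \id_\rH$ of $\bfP_\tj(\id_\rF)$ is null-homotopic by the argument in Lemma~\ref{nullhomotopiccomposition}; hence $\id_\rF \in \cN$, and the first step applied to $\alpha = \id_\rF$ gives $\id_{\bfM(\rF)(X)}$ null-homotopic for every $X$. Thus $\bfM(\rF)(X)$ is zero in $\St(\bfM(\tk))$, so $\St(\bfM(\rF))$ is the zero functor. Combining the two points, $\bfD\bfM$ descends to a well-defined $2$-functor on $\cD\cC$. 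I do not anticipate a serious obstacle: the core content is the Yoneda identification $\bfM(\alpha)_X \cong \Phi^X_\tk(\alpha)$, after which the passage to null-homotopy is a direct application of Lemma~\ref{derivedfunctor}.
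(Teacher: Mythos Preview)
Your proof is correct. The paper takes a slightly more hands-on route: after establishing (exactly as you do) that $\alpha$ itself is null-homotopic in $\overline{\cC(\tj,\tk)}$ by evaluating $\bfP_\tj(\alpha)$ at $\one_\tj$, it explicitly unpacks the factorization of $\alpha$ through some $Z\otimes V$ with $Z=(\bigoplus_m \rF_m,\beta)\in\overline{\cC(\tj,\tk)}$ and $V$ a projective $H$-module, and then observes directly that $\bfM(\alpha)_X$ factors through $\bigl(\bigoplus_m\bfM(\rF_m)(X),(\bfM(\beta_{k,l})_X)_{k,l}\bigr)\otimes V$. Your route via the Yoneda lemma is a cleaner packaging of the same content: the $p$-dg functor $\Phi^X_\tk$ constructed in Proposition~\ref{yoneda} sends $\alpha$ (up to conjugation by $p$-dg isomorphisms) to $\bfM(\alpha)_X$, and invoking Lemma~\ref{derivedfunctor} replaces the paper's explicit computation by a black-box appeal to functoriality. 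Your separate treatment of acyclic $1$-morphisms is correct but does not appear in the paper's proof: since such $\rF$ have $\id_\rF\in\cN$ (as you verify via Lemma~\ref{nullhomotopiccomposition}), they are already zero objects in $\cZ\cC/\cN$, so ``disregarding'' them is purely cosmetic and needs no further argument for $\bfD\bfM$ to descend.
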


\begin{proof}
Let $\alpha\colon \rF\to\rF'$ be a morphism in $\cC(\ti,\tj)$ such that $\bfP_{\ti}(\alpha)$ is null-homotopic. By specializing, this gives that the morphism $\bfP_{\ti}(\alpha)_{\one_{\ti}}=\alpha$ in $\overline{\cC(\ti,\tj)}$ is null-homotopic. Hence, $\alpha$ factors through an object of the form $X\otimes V$, where $X=\left(\bigoplus_{m=1}^s\rF_m, (\beta_{k,l})_{k,l}\right)\in \overline{\cC(\ti,\tj)}$ and $V$ is a projective $H$-module.
Observe that, using this factorization of $\alpha$, the morphism $\bfM(\alpha)_Y$ factors through the object $\left(\bigoplus_{m=1}^s\bfM(\rF_m)(Y), \left(\bfM(\beta_{k,l})_Y\right)_{k,l}\right)\otimes V$ in $\bfM(\tj)$,  for any object $Y$ of $\bfM(\ti)$. This shows that $\bfM(\alpha)_Y$ is a null-homotopic morphism in $\bfM(\tj)$. Hence $\bfM(\alpha)$ is a null-homotopic  natural transformation. As such, it induces the zero natural transformation when passing to the stable categories.
\end{proof}

We also denote the resulting $2$-functor defined on $\cD\cC$ obtained this way by the same symbol $\bfD\bfM$ as the $2$-functor defined on $\cZ\cC$.  Note that Theorem \ref{triangularfunctor} yields that the images of $\ti$ under $\bfD\bfM$ are triangulated categories, and $1$-morphisms induce triangulated functors.

\begin{proposition}\label{stable2functoriality}
Given a morphism $\Phi\colon\bfM\to \bfN$ in $\cC\pamod$, there exists an induced morphism $\bfD\Phi\colon\bfD\bfM\to \bfD\bfN$. This assignment satisfies 
\begin{align}\bfD\Phi\circ \bfD\Psi=\bfD(\Phi\circ\Psi).\end{align}

Moreover, given a modification $\theta\colon \Psi\to \Phi$ such that $\del\theta=0$, there is an induced modification $\bfD\theta\colon \bfD\Psi\to \bfD\Phi$. For such modifications, $\bfD$ commutes with horizontal and vertical composition.
\end{proposition}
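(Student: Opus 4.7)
The plan is to build $\bfD\Phi$ componentwise from the data of $\Phi$: for each object $\ti$, set $(\bfD\Phi)_\ti := \St(\Phi_\ti)\colon \St(\bfM(\ti))\to \St(\bfN(\ti))$, which is well-defined by Lemma \ref{derivedfunctor}. Each accompanying natural $p$-dg isomorphism $\eta_\rF^\Phi$ satisfies $\partial \eta_\rF^\Phi = 0$, so it descends to a natural isomorphism on the stable categories, which I take as $\eta_\rF^{\bfD\Phi}$. Well-definedness here uses that null-homotopic morphisms are closed under whiskering by natural transformations, as reflected in Lemma \ref{stabletransformations}.

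Next I would verify that $\bfD\Phi$ is indeed a morphism of $2$-representations of $\cD\cC$. The coherence axiom (\ref{repmorcomp}) for $\bfD\Phi$ follows from the corresponding axiom for $\Phi$ by applying $\St$ and using Lemma \ref{stabletransformations} to commute $\St$ past horizontal and vertical compositions of $p$-dg natural transformations. The naturality square (\ref{2repmornatural}) is the $\St$-image of the one for $\Phi$; only $2$-morphisms $\alpha$ with $\partial\alpha=0$ arise here, so this makes sense. For the compositionality statement, $(\Phi\circ \Psi)_\ti = \Phi_\ti\circ \Psi_\ti$ and Lemma \ref{derivedfunctor} gives $\St(\Phi_\ti\circ \Psi_\ti)=\St(\Phi_\ti)\circ \St(\Psi_\ti)$; the natural isomorphism of the composite morphism is constructed from $\eta^\Phi_\rF$ and $\eta^\Psi_\rF$ via $\circ_0$ and $\circ_1$, and Lemma \ref{stabletransformations} guarantees that $\St$ respects this combination.

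For a modification $\theta\colon \Psi\to \Phi$ with $\partial\theta=0$, each component $\theta_\ti$ is a $p$-dg natural transformation, and I set $(\bfD\theta)_\ti := \St(\theta_\ti)$. The modification axiom (\ref{modificationcond}) is a commuting diagram of $p$-dg natural transformations, whose image under $\St$ commutes by Lemma \ref{stabletransformations}. Compatibility of $\bfD$ with horizontal and vertical compositions of such modifications is then the direct $2$-categorical consequence of the same lemma applied componentwise.

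The argument is thus entirely formal: the main obstacle is bookkeeping rather than substance. All nontrivial content already sits in the functoriality of $\St$ on $p$-dg functors and $p$-dg natural transformations (Lemmas \ref{derivedfunctor} and \ref{stabletransformations}); the task here is simply to unpack the definitions of morphism and modification between $p$-dg $2$-representations and check that pointwise application of $\St$ preserves each coherence axiom, which I expect to be routine.
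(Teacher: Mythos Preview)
Your proposal is correct and follows essentially the same approach as the paper: define $(\bfD\Phi)_\ti=\St(\Phi_\ti)$ and $\eta_\rF^{\bfD\Phi}=\St(\eta_\rF^\Phi)$ via Lemma~\ref{derivedfunctor}, then verify the coherences \eqref{repmorcomp}, \eqref{2repmornatural}, \eqref{modificationcond} by appealing to the $2$-functoriality of $\St$ encoded in Lemma~\ref{stabletransformations}. The paper additionally remarks that if $\alpha$ is null-homotopic then both sides of \eqref{2repmornatural} vanish (so the naturality square descends to $\cD\cC$, not merely $\cZ\cC$), a point you do not make explicit but which is equally routine.
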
 
\begin{proof}
Given $\Phi$, and an object $\ti$ of $\cC$, the $p$-dg functor $\Phi_\ti\colon \bfM(\ti)\to \bfN(\ti)$ preserves null-homotopic morphisms by Lemma \ref{derivedfunctor}. Hence we have an induced functor $\St(\bfM(\ti))\to \St(\bfN(\ti))$. The data of $\Phi$ includes,  for each $1$-morphism $\rF$, a $p$-dg isomorphism $\eta_\rF\colon \Phi_\tj\circ \bfM(\rF)\to \bfN(\rF)\circ\Phi_\ti$, which induces a morphism $\St(\eta_\rF)$ of $p$-dg functors after passing to the stable categories.
Since passing to the stable categories is $2$-functorial, we see that \eqref{repmorcomp} still holds for $\St(\eta)$. Further, \eqref{2repmornatural} holds, in particular, for all morphisms annihilated by the differential. If a $2$-morphism $\alpha$ is null-homotopic, then both $\bfM(\alpha)$ and $\bfN(\alpha)$ are zero on the stable categories, and hence both terms appearing in \eqref{2repmornatural} are zero.

With the above construction of $\bfD\Psi$, it follows directly that $\bfD\Phi\circ \bfD\Psi=\bfD(\Phi\circ\Psi)$ is functorial, using the functoriality in Lemma \ref{derivedfunctor}.

Now let $\theta\colon \Psi\to\Phi$ be a modification such that $\del\theta=0$. For each object $\ti$, there exists an induced morphism of functors $\St(\theta)\colon \St(\Psi_\ti)\to \St(\Phi_\ti)$. This assignment is functorial using Lemma \ref{stabletransformations}. Condition \eqref{modificationcond} still holds after passing to the stable categories, using that horizontal composition preserves null-homotopic morphisms.
\end{proof}

\begin{definition}Let $\cC$ be a locally finite $p$-dg $2$-category.
The $2$-category of all stable $2$-representations $\bfD\bfM$ for $\bfM\in \cC\pamod$ is denoted by $\cC\dmod$, with morphisms and modifications induced from $\cC\pamod$ via Proposition \ref{stable2functoriality}.
\end{definition}

Note that the assignments in Proposition \ref{stable2functoriality} provide a $2$-functor
$$\bfD\colon \cZ(\cC\pamod)\to \cC\dmod.$$

\begin{example}
We will start with a small example. Let $\cC$ be the $p$-dg $2$-category with one object $\star$ corresponding to $D\cof$, where $D=\Bbbk[x]/(x^p)$, and $1$-morphisms generated (under multiplication, $p$-dg direct sums and shifts) by the $\Bbbk$-indecomposable objects $\one=D\otimes_D(-)$ and $\rF=F\otimes_D(-)$ for the regular projective bimodule $F=D\otimes_\Bbbk D$. The $2$-morphisms correspond to morphisms of $D$-$D$-bimodules. (We will treat such $p$-dg $2$-categories in more generality in Section \ref{CAsec}.)

The principal $2$-representation $\bfP=\bfP_\star$ is fully faithful so that we have $p$-dg isomorphisms $\Hom_{\bfP(\star)}(\bfP(\rX),\bfP(\rX'))\cong \Hom_{D\otimes D^{\op}}(X,X')$. Hence, we can describe the null-homotopic morphisms in $\cN$ between the $\Bbbk$- indecomposables $\one$ and $\rF$ explicitly. 

In the case $p=3$, after passing to $\cD$, the only remaining $2$-morphisms  of indecomposables (up to degree shifts) are
\begin{align*}
1,~ x^2\colon D\to D, && 1,~x^2\colon F\to D, &&11, ~x^21,~1x^2,~ x^2x-xx^2\colon F\to F.
\end{align*}
The path category of $\cD\C(\star,\star)$ is hence fully described by
\begin{gather*}
\xymatrix{\rF\ar[rr]^-{p}\ar@(ul,ur)^{l}\ar@(dr,dl)^{r}\ar@(dl,ul)^{s} &&\one\ar@(ur,dr)^-{t}},\\
pl=pr=tp,\quad ps=lr=rl=sl=ls=rs=sr=l^2=r^2=s^2=t^2=0.
\end{gather*}
Here, the morphisms $r,l,t$ are of degree two, and $s$ is of degree three.
\end{example}

Observe that $\cD\cC(\ti,\tj)$ is no longer a $p$-dg category.
Note further that $\cD\cC(\ti,\tj)$ is not necessarily triangulated as cone objects may not exist.
To remedy this, recalling the definition of the stable $2$-category $\cSt\overline{\cC}$ associated to $\cC$ from Section \ref{stable2cat}, we can obtain derived representations in an alternative way: We first complete a $p$-dg $2$-representation of $\cC$ to one of $\overline{\cC}$ and then apply the above constructions, giving a $2$-representation of $\cSt\overline{\cC}$.

\begin{proposition}\label{overline2reps}
Let $\cC$ be a locally finite $2$-category and $\bfM$ an additive $p$-dg $2$-representation over $\cC$. Then $\bfM$ extends to an additive $p$-dg $2$-representation $\overline{\bfM}$ over $\overline{\cC}$. This way, we obtain a $2$-faithful $p$-dg $2$-functor
\begin{align*}
\bfInd \colon \cC\pamod&\longrightarrow \overline{\cC}\pamod, &\bfM\longmapsto\overline{\bfM}.
\end{align*}
\end{proposition}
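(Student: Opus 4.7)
The plan is to build $\overline{\bfM}$ locally from the extension procedure of Lemma \ref{extensionlemma}, keeping the object assignment unchanged, and then verify strict $2$-functoriality. By Lemma \ref{wloglemma}, I may assume $\bfM(\ti)=\overline{\C_\ti}$ for locally finite $p$-dg categories $\C_\ti$, and I set $\overline{\bfM}(\ti):=\bfM(\ti)$. For each pair $(\ti,\tj)$, the restriction $\bfM_{\ti,\tj}\colon\cC(\ti,\tj)\to\Fun(\bfM(\ti),\bfM(\tj))$ is a $p$-dg functor, and since $\bfM(\tj)$ is closed (up to $p$-dg equivalence) under the $\overline{(-)}$-operation by Lemma \ref{extensionlemma}(iii), applying Lemma \ref{extensionlemma}(i) produces a $p$-dg functor $\overline{\bfM}_{\ti,\tj}\colon\overline{\cC}(\ti,\tj)=\overline{\cC(\ti,\tj)}\to\Fun(\bfM(\ti),\bfM(\tj))$. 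Explicitly, $\rX=(\bigoplus_{m=1}^s\rF_m,\alpha)$ is sent to the functor $Y\mapsto\bigl(\bigoplus_{m=1}^s\bfM(\rF_m)(Y),(\bfM(\alpha_{k,l})_Y)_{k,l}\bigr)$, and matrices of $2$-morphisms are mapped componentwise; on $2$-morphisms of $\overline{\cC}$, the extension provided by Lemma \ref{extensionlemma}(ii) is used.

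The next step is to verify that $\overline{\bfM}$ is a strict $2$-functor using the structure of $\overline{\cC}$ from Proposition \ref{overline2cat}. Identities and vertical composition compatibilities are immediate from the componentwise definitions, as is compatibility with the differential. The essential point is the identity $\overline{\bfM}(\rX\circ \rX')=\overline{\bfM}(\rX)\circ\overline{\bfM}(\rX')$ for $\rX\in\overline{\cC}(\tj,\tk)$ and $\rX'\in\overline{\cC}(\ti,\tj)$. Using the horizontal composition formula \eqref{orderingconvention}, both sides, evaluated at $Y\in\bfM(\ti)$, have the same underlying biproduct $\bigoplus_{(m,n)}\bfM(\rF_m)\bfM(\rF'_n)(Y)$ with pairs $(m,n)$ in lexicographic order, and the block-matrix differentials agree by strict $2$-functoriality of $\bfM$ combined with the Leibniz rule for $\circ_0$ in $\cC$. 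Compatibility with horizontal composition of $2$-morphisms, using \eqref{horizontalcomp}, is a direct componentwise comparison of the same form.

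To define $\bfInd$ on $1$- and $2$-cells, I send a morphism $\Phi=(\Phi_\ti,\{\eta^\Phi_\rF\}_\rF)\colon \bfM\to\bfN$ in $\cC\pamod$ to the morphism with the same components $\Phi_\ti\colon\overline{\bfM}(\ti)=\bfM(\ti)\to\bfN(\ti)=\overline{\bfN}(\ti)$, where, for $\rX\in\overline{\cC}(\ti,\tj)$, the $p$-dg isomorphism $\eta^{\bfInd(\Phi)}_\rX\colon\Phi_\tj\circ\overline{\bfM}(\rX)\to\overline{\bfN}(\rX)\circ\Phi_\ti$ is obtained by extending $\{\eta^\Phi_\rF\}_\rF$ via Lemma \ref{extensionlemma}(ii). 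The coherence \eqref{repmorcomp} and naturality \eqref{2repmornatural} in $\overline{\cC}$ follow from the same properties over $\cC$ by the uniqueness in Lemma \ref{extensionlemma}(ii), since both sides of each equation are extensions of the same data. Modifications $\theta\colon\Phi\to\Psi$ are sent to $\bfInd(\theta)$ with identical components $\theta_\ti$; the modification axiom \eqref{modificationcond} over $\overline{\cC}$ is obtained from its version over $\cC$ by the same uniqueness argument. Functoriality of $\bfInd$ with respect to horizontal and vertical composition of morphisms and modifications, and its compatibility with $\del$, are inherited from $\cC\pamod$.

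Finally, $2$-faithfulness of $\bfInd$, both on morphisms of $2$-representations and on modifications, is a direct consequence of the uniqueness in Lemma \ref{extensionlemma}(ii): two morphisms (respectively, modifications) whose images under $\bfInd$ coincide must have the same components $\Phi_\ti$ (respectively, $\theta_\ti$), and the restriction along $\iota\colon\cC\hookrightarrow\overline{\cC}$ of the extended $\eta$-data (respectively, of the extended modification) recovers the original $\eta^\Phi_\rF$ (respectively, $\theta_\ti$), forcing equality already in $\cC\pamod$. The main obstacle I anticipate is the calculation in the second paragraph: one must carefully track the block-matrix differential from \eqref{orderingconvention}, respect the lexicographic ordering convention for pairs, and check that the induced differential of $\overline{\bfM}(\rX)\circ\overline{\bfM}(\rX')$ coincides with $\overline{\bfM}$ applied to the horizontal composite differential, which ultimately reduces to the Leibniz rule for $\circ_0$ in the $p$-dg $2$-category $\cC$.
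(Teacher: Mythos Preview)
Your proposal is correct and follows essentially the same approach as the paper: the same reduction via Lemma~\ref{wloglemma}, the same explicit formula for $\overline{\bfM}(\rX)$, the same block-matrix verification of strict $2$-functoriality via the lexicographic ordering and the Leibniz rule for $\circ_0$, and the same extension of $\eta^\Phi$ and $\theta$ by keeping the components $\Phi_\ti$, $\theta_\ti$ unchanged. One small wording point: applying Lemma~\ref{extensionlemma}(i) to $\bfM_{\ti,\tj}$ literally lands in $\overline{\Fun(\bfM(\ti),\bfM(\tj))}$ rather than $\Fun(\bfM(\ti),\bfM(\tj))$, but your explicit formula (which the paper also uses) is precisely the identification of such an object with a single $p$-dg functor into $\overline{\bfM(\tj)}\simeq\bfM(\tj)$, so this is only a matter of phrasing.
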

\begin{proof}
Without loss of generality, assume that $\bfM(\ti)=\overline{\C_{\ti}}$ for any object $\ti$ of $\cC$, cf. Lemma \ref{wloglemma}.

Assume given an additive $p$-dg representation $\bfM$ over $\cC$. We want to construct a $p$-dg $2$-functor $\overline{\bfM}\colon \overline{\cC}\to \cofcat$. On objects, we assign $\overline{\bfM}(\ti)=\bfM(\ti)$. Given an object $\rX=\left( \bigoplus_{m}\rF_m,\alpha\right)\in \overline{\cC}(\ti,\tj)=\overline{\cC(\ti,\tj)}$ we define $\overline{\bfM}(\rX)$ to be the $p$-dg functor $\bfM(\ti)\to\bfM(\tj)$ extended from \begin{align*}M\longmapsto \left( \bigoplus_m\bfM(\rF_m)(M),\bfM(\alpha)\right), &&M\in \C_\ti,\end{align*}
using Lemma \ref{extensionlemma}\eqref{extensionlemma1}.
Explicitly, given an object $Y=\left(\bigoplus_{n}M_n,\beta\right)$ in $\overline{\C_{\ti}}$, we have that $\overline{\bfM}(\rX)(Y)$ equals the object
\begin{align}
\left(\bigoplus_{(m,n)}\bfM\rF_m(M_n), (\delta_{m,m'}\bfM\rF_m(\beta_{n,n'})+\delta_{n,n'}\bfM(\alpha_{m,m'})_{M_n})_{(m,n),(m',n')}\right),
\end{align}
where we fix the convention to order pairs $(m,n)$ lexicographically.
Given a morphism $\tau\colon Y\to Y'$ in $\bfM(\ti)$, we obtain
\begin{align}
\overline{\bfM}(\rX)(\tau)=\left(\delta_{m,m'}\bfM(\rF_m)(\tau_{n,n'})\right)_{(m,n),(m',n')},
\end{align}
which is a block diagonal matrix.

To a morphism $\gamma\colon \rX\to \rX'$ in $\overline{\cC(\ti,\tj)}$, we associate the matrix
\begin{align*}
\overline{\bfM}(\gamma)_Y=\left(\delta_{n,n'}\bfM(\gamma_{m,m'})_{M_n}\right)_{(m,n),(m',n')}.
\end{align*}
Using Lemma \ref{extensionlemma}\eqref{extensionlemma2} we find that this gives a natural transformation $\overline{\bfM}(\gamma)\colon \overline{\bfM}(\rX)\to \overline{\bfM}(\rX')$. Moreover, the assignment $\gamma\mapsto\overline{\bfM}(\gamma)$ commutes with the differential. Indeed, first, restricting to $\C_\ti$, the natural transformations are just given by $\bfM(\gamma)_M$, for an object $M$. Since $\bfM(\gamma_{m,m'})_M$ is natural in $M$ for any $m,m'$, this gives a natural transformation, and the assignment commutes with the differential. Now Lemma \ref{extensionlemma}\eqref{extensionlemma2} implies that $\bfM(\gamma)_Y$ is also natural for a general object $Y$ in $\bfM(\ti)$.

Next, we have to show $2$-functoriality of $\overline{\bfM}$. Given $\rX$ in $\overline{\cC}(\ti,\tj)$ and $\rX'$ in $\overline{\cC}(\tj,\tk)$, we compare $\overline{\bfM}(\rX')\overline{\bfM}(\rX)$ and $\overline{\bfM}(\rX'\rX)$. It suffices to evaluate these compositions on an object $M$ of $\C_\ti$. We compare the lists of objects
\begin{align*}
\overline{\bfM}(\rX')\left(\overline{\bfM}(\rX)(M)\right)&=\bigoplus_{m'}\bfM(\rF_{m'})\left(\bigoplus_{m}\bfM(\rF_m)(M)\right)\\
&=\bigoplus_{(m',m)}\bfM(\rF_{m'})\left(\bfM(\rF_m)(M)\right)\\
&=\bigoplus_{(m',m)}\bfM(\rF_{m'}\rF_m)(M)\\
&=\overline{\bfM}(\rX'\rX)(M),
\end{align*} 
using that $\bfM$ is a $2$-functor and the convention about ordering pairs lexicographically. Further, we compare the upper triangular matrices giving the differential. We see that the differential on $\overline{\bfM}(\rX)(M)$ is the matrix $\bfM(\alpha_{m,m'})$, for $\alpha$ being the upper triangular matrix of $\rX$ encoding the differential. Hence we compute that the entry at the pair $(n,m),(n',m')$ of the differential on $\overline{\bfM}(\rX')\left(\overline{\bfM}(\rX)(M)\right)$ is given by
\begin{align*}
\left(\delta_{n,n'}\bfM(\rF'_n)(\bfM(\alpha_{m,m'}))+\delta_{m,m'}\bfM(\alpha'_{n,n'})\right)_{\bfM(\rF_m)(M)},
\end{align*}
where $\alpha'$ is the differential matrix of $\rX'$. On the other hand, considering the differential matrix for $\bfM(\rF'\rF)(M)$ we see that the $(n,m),(n',m')$-entry is
$\bfM(\beta_{(n,m),(n',m')})_M$ where $\beta$ denotes the differential on $\rX'\rX$ as defined in \eqref{orderingconvention} in Section \ref{stable2cat}. This gives 
\begin{align*}
\bfM ( \beta_{(n,m),(n',m')})_M&=\bfM\left(\delta_{m,m'}\alpha'_{n,n'}\circ_0\id_{\rF_m}+\delta_{n,n'}\id_{\rF'_n}\circ_0\alpha_{m,m'}\right)_M\\
&=\left(\delta_{n,n'}\bfM(\rF'_n)(\bfM(\alpha_{m,m'}))+\delta_{m,m'}\bfM(\alpha'_{n,n'})_{\bfM(\rF_m)(M)}\right).
\end{align*}
Hence the matrices encoding the differential are the same.

Now assume given a morphism $\Psi\colon \bfM\to \bfN$ of representations in $\cC\pamod$. We want to define a morphism of $p$-dg $2$-representations $\overline{\Psi}\colon \overline{\bfM}\to \overline{\bfN}$. For any object $\ti$ of $\cC$ we require a $p$-dg functor $\overline{\Psi}_\ti\colon \bfM(\ti)\to \bfN(\ti)$. However, we can simply use the original $p$-dg functor $\Psi_\ti$ as $\overline{\cC}$ acts on the same $p$-dg categories. 
We have to extend the definition of $\eta_\rF\colon \Psi_\tj\circ \bfM(\rF)\to \bfN(\rF)\circ\Psi_\ti$ to a general object $\rX$ of $\overline{\cC}$. It suffices to construct $\left(\eta_\rX\right)_M$ for an object $M$ of $\bfM(\ti)$ and extend to a natural transformation of functors defined on $\overline{\C_\ti}$ by Lemma \ref{extensionlemma}. For this, we set $\left(\eta_\rX\right)_M$ to be the diagonal matrix with diagonal entries $\eta_{\rF_m}(M)$. As $\overline{(-)}$ is a $p$-dg $2$-functor, $\eta_\rX$ will be a $p$-dg isomorphism. The compatibility condition \eqref{repmorcomp} follows from the one for $\eta_{\rF_m}$ as $\eta_\rX$ is a diagonal matrix, and remains valid after extending to all of $\overline{\C_\ti}$. It further follows that $\overline{\Psi'\Psi}=\overline{\Psi'}~\overline{\Psi}$ for compatible morphisms of $p$-dg $2$-representations.

Next, given a modification $\theta\colon \Psi\to \Psi'$, we want to induce a modification $\overline{\theta}\colon \overline{\Psi}\to \overline{\Psi'}$. As the morphism of $p$-dg functors $\overline{\theta}_\ti$ we can use $\theta_\ti$ since $\overline{\Psi}_\ti=\Psi_\ti$ as $p$-dg functors. It remains to check a more general version of \eqref{modificationcond} for objects $\rX$, $\rX'$ and a morphism $\gamma\colon \rX\to \rX'$ in $\overline{\cC}(\ti,\tj)$. But as $\eta_{\rX}$ is a diagonal matrix with $\eta_{\rF_m}$ appearing on the diagonal, the condition generalizes immediately, and the assignment commutes with the differential.

It is clear that $\bfInd$ is functorial with respect to horizontal and vertical composition of $2$-morphisms since the assignment is the identity on modifications. This also proves that the $p$-dg $2$-functor $\bfInd$ is $2$-faithful.
\end{proof}

\begin{proposition}
Let $\cC$ be a locally finite $2$-category and $\bfM$ an additive $p$-dg representation over $\cC$. Then $\overline{\bfM}$ descends to $2$-representation $\bfD\overline{\bfM}$ of $\cSt\overline{\cC}$. Further, the $2$-category $\cSt\overline\cC$ equals the category $\cD\overline{\cC}$, and we obtain a $2$-functor
\begin{align*}
\bfD \colon \cZ(\cC\pamod)&\longrightarrow \overline{\cC}\dmod, &\bfM\longmapsto\bfD\overline{\bfM}.
\end{align*}
\end{proposition}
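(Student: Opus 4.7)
The plan is to assemble the statement in three steps, each leveraging an earlier result from the paper. First I would unpack what needs to be checked: (a) that the extended $2$-representation $\overline{\bfM}$, which is defined on $\overline{\cC}$, factors through the quotient to $\cSt\overline{\cC}$; (b) that the two $2$-categories $\cSt\overline{\cC}$ and $\cD\overline{\cC}$ literally agree as quotients of $\cZ\overline{\cC}$; and (c) that the assignment $\bfM\mapsto \bfD\overline{\bfM}$ assembles functorially.

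For (a), Proposition \ref{overline2reps} gives $\overline{\bfM}\in \overline{\cC}\pamod$, and the construction in Definition \ref{stablerep} together with Proposition \ref{stable2functoriality} produces a $2$-functor on $\cZ\overline{\cC}$ landing in the triangulated stable $1$-hom categories. To descend to $\cSt\overline{\cC}$, whose $1$-hom categories $\St(\overline{\cC(\ti,\tj)})$ are obtained by quotienting by null-homotopic $2$-morphisms, it suffices to check that any null-homotopic $\alpha \in \Z(\overline{\cC(\ti,\tj)})$ is sent to a natural transformation with null-homotopic components. Writing $\alpha = \del^{p-1}\mu$ using the characterization from Section~\ref{derivedcats}, and using that $\overline{\bfM}$ is a $p$-dg $2$-functor (hence commutes with $\del$), we obtain $\overline{\bfM}(\alpha)_Y = \del^{p-1}\overline{\bfM}(\mu)_Y$ for every $Y$, which is null-homotopic.

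For (b), both $2$-categories share the same objects and $1$-morphisms (those of $\overline{\cC}$), so we only need to show that the $2$-ideal $\cN$ defining $\cD\overline{\cC}$ coincides, in each $1$-hom category, with the ideal of null-homotopic $2$-morphisms. If $\alpha\in \overline{\cC}(\tj,\tk)\cap\cN$, then $\bfP^{\overline{\cC}}_\tj(\alpha)$ induces the zero natural transformation on stable categories; evaluating this natural transformation at $\one_\tj$ recovers $\alpha$ itself, forcing $\alpha$ to be null-homotopic. Conversely, if $\alpha$ is null-homotopic, Lemma \ref{nullhomotopiccomposition} gives that $\alpha\circ_0 \id_\rG$ is null-homotopic for every $\rG$, so each component of $\bfP^{\overline{\cC}}_\ti(\alpha)$ vanishes in the stable category and $\alpha \in \cN$. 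Any acyclic $1$-morphism in $\cD\overline{\cC}$ then becomes a zero object in $\cSt\overline{\cC}$, so the ``disregard acyclic $1$-morphisms'' prescription yields no further identifications.

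For (c), I define $\bfD$ as the composition
\[
\cZ(\cC\pamod) \xrightarrow{\cZ(\bfInd)} \cZ(\overline{\cC}\pamod) \xrightarrow{\bfD} \overline{\cC}\dmod,
\]
where the first arrow is $\bfInd$ from Proposition \ref{overline2reps} restricted to $\del$-closed modifications, and the second is the $2$-functor of Proposition \ref{stable2functoriality} applied to $\overline{\cC}$. Strict $2$-functoriality of each factor (as already proved) immediately gives strict $2$-functoriality of the composition; on objects it reads $\bfM\mapsto \bfD\overline{\bfM}$ as required. The most delicate step is (b): in particular, the converse inclusion requires Lemma \ref{nullhomotopiccomposition} to translate pointwise null-homotopy into a null-homotopy of natural transformations, and care must be taken to identify the role of acyclic $1$-morphisms so that the two quotient constructions match on the nose rather than merely up to $p$-dg biequivalence.
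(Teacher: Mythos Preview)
Your proposal is correct and follows essentially the same three-step outline as the paper: descend $\overline{\bfM}$ through the null-homotopic ideal, identify $\cN$ with the null-homotopic $2$-morphisms via evaluation at $\one_\tj$, and compose $\bfInd$ with $\bfD$ for functoriality. The only genuine difference is in step~(a): the paper argues ``similarly to Lemma~\ref{nullhomotopiccomposition}'' that $\overline{\bfM}(\rX\otimes V)\cong \overline{\bfM}(\rX)\otimes V$ so that factorizations through projectives are preserved, whereas you use the characterization $\alpha=\del^{p-1}\mu$ and the fact that $\overline{\bfM}$ is a $p$-dg $2$-functor to conclude $\overline{\bfM}(\alpha)_Y=\del^{p-1}\overline{\bfM}(\mu)_Y$ directly. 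Your route is slightly slicker here and avoids checking compatibility of $\overline{\bfM}$ with the $H$-module tensor action; the paper's route has the advantage of making the parallel with Lemma~\ref{nullhomotopiccomposition} explicit, which is also what underlies your converse inclusion in~(b).
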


\begin{proof}
First, since $\overline{\bfM}$ is a $p$-dg $2$-functor (i.e. it commutes with differentials on $2$-hom spaces), we obtain a $2$-functor $\bfZ\overline{\bfM}$ defined on $\cZ\overline{\cC}$, which associates to an object $\ti$ the category $\Z(\bfM(\ti))$.
It follows from Lemma \ref{derivedfunctor} that the $p$-dg functor $\overline{\bfM}(\rX)$ preserves null-homotopic morphisms and hence descends to a functor $\St(\bfM(\ti))\to \St(\bfM(\tj))$. Similarly to Lemma \ref{nullhomotopiccomposition}, we see that $\bfZ\overline{\bfM}$ maps null-homotopic $2$-morphisms in $\overline{\cC}$ to zero. Hence we obtain an induced $2$-functor $\bfD\overline{\bfM}$ defined on $\cSt\overline{\cC}$.

Let $\bfM=\bfP_\ti$ be a principal $2$-representation. We claim that the $2$-ideal $\cN$ of all $2$-morphisms in $\cZ\overline{\cC}$ which are mapped to zero under the $2$-functor $\bfD\overline{\bfP_\ti}\colon \cZ\cC\to \preaddcat$ (for any $\ti$) is precisely the $2$-ideal of null homotopic $2$-morphisms, which we denote by $\cN\,'$. This implies that $\cSt\overline{\cC}=\cD\overline{\cC}$.

Indeed, we saw above that if $\alpha$ is null homotopic, then it acts by zero in each $2$-representation $\bfD\overline{\bfM}$. This shows that $\cN\,'\subset \cN$. Now consider $\beta\in \overline{\cC(\ti,\tj)}$ such that $\St(\overline{\bfP_\ti}(\beta))=0$. This implies, in particular, that $\St(\overline{\bfP_\ti}(\beta))_{\one_\ti}=0$. But this means that $\beta\circ_0 \id_{\one_\ti}=\beta$ is null-homotopic. Hence $\beta\in \cN\,'$.
\end{proof}

Note that the above Proposition justifies to denote the $2$-representations induced on $\cSt(\overline{\cC})$ by $\bfD\overline{\bfM}$, which is the same notation used before for the $2$-representation induced on $\cD\overline{\cC}$.

\section{Simple transitive and \texorpdfstring{$p$}{p}-dg cell \texorpdfstring{$2$}{2}-representations}\label{cellchapter}

\subsection{Cell combinatorics}

For a strongly finitary $p$-dg $2$-category $\cC$, 
we write $\mathcal{S}(\cC)$ for the set of $p$-dg isomorphism classes of $\Bbbk$-indecomposable $1$-morphisms in $\cC$ up to shift. This set forms a multi-semigroup and can be equipped with several natural preorders as in \cite[Section~3]{MM2}. Namely, given two $\Bbbk$-indecomposable $1$-morphisms $\mathrm{F}$ and $\mathrm{G}$, we say 
$\mathrm{G}\geq_L\mathrm{F}$ in the {\em left preorder} if there is a $1$-morphism $\mathrm{H}$ such that 
$[\mathrm{G}]$ appears as a direct summand of $[\mathrm{H}]\circ [\mathrm{F}]$ in $[\cC]$. 
A {\bf left cell} is an equivalence class for this preorder. Similarly one defines the {\em right} and 
{\em two-sided} preorders $\geq_R$ and $\geq_J$ and the corresponding {\bf right} and {\bf two-sided} 
cells, respectively.  Note, in particular, that all $\Bbbk$-isomorphic indecomposable $1$-morphisms belong to the same left and right cell.

Observe that $\geq_L$ defines a genuine partial order on the set of left cells, and similarly  for $\geq_R$ and right cells, and for $\geq_J$ and two-sided cells.

We call a two-sided cell $\J$ \textbf{strongly regular}, provided that no two of its left (right) cells are comparable with respect to the left (right) order, and the intersection of any left and any right cell contains precisely one element of $\mathcal{S}(\cC)$.

For a fixed left cell $\L$ in $\cC$, notice that there exists a unique object $\ti_\L\in \cC$ such that each $\rF\in \L$ is in $\cC(\ti_\L,\tj)$ for some $\tj\in \cC$.

Note that by defining the $2$-cell structure using $\Bbbk$-indecomposable $1$-morphisms we use the same cell-combinatorics as in the underlying finitary $2$-category $[\cC]$.

\subsection{Construction of \texorpdfstring{$p$}{p}-dg cell \texorpdfstring{$2$}{2}-representations}\label{seccellrep}

 We assume that $\cC$ is strongly finitary for the remainder of this section.
Fix a left cell $\L$ in $\cC$ and set $\ti = \ti_\L$. Recall the $\ti$-th principal $2$-representation $\bfP_\ti\in \cC\pamod^{\mathrm{sf}}$ from Definition/Example \ref{principal}.

For each $\tj\in\cC$, let $\bfR_\L(\tj)$ be the full $p$-dg subcategory of $\overline{\cC(\ti,\tj)}$ given by the bar closure (cf. Definition \ref{barclosure}) of the set $\{\rF\rX | \rX\in\L,\rF\in\cC\}$. That is, objects in $\bfR_\L(\tj)$ are of the form $(\bigoplus_{m=1}^s \rG_m, \alpha)$ such that $[\rG_m]$ appears as a direct summand of $[\rF\rX]$, for some $\rX\in\L,\rF\in\cC$.

\begin{lemma}
This choice of $\bfR_\L(\tj)$ defines a $2$-subrepresentation $\bfR_\L$ of $\bfP_\ti$ in $\cC\pamod^{\mathrm{sf}}$.                                                                                                                                                                                                                                                                                                                                                                                                                                                                                                                                                                                                                                                                                                                                                  \end{lemma}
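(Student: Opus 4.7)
The plan is to verify three items: (i) each $\bfR_\L(\tj)$ is a strongly finitary $p$-dg category of the required form (i.e.\ $p$-dg equivalent to $\overline{\C_\tj}$ for some locally finite $\C_\tj$); (ii) for every $1$-morphism $\rH \in \cC(\tj,\tk)$, the $p$-dg functor $\bfP_\ti(\rH)$ sends $\bfR_\L(\tj)$ into $\bfR_\L(\tk)$; and (iii) $2$-morphisms of $\cC$ restrict to give natural transformations of the resulting $p$-dg functors. Once (i)--(iii) are in place, strict $2$-functoriality of $\bfR_\L$, the $p$-dg enrichment, and the fact that $\bfR_\L$ is a genuine sub-$2$-representation are inherited directly from the analogous properties of $\bfP_\ti$ established in Definition/Example~\ref{principal}.

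For (i), by construction $\bfR_\L(\tj) = \widehat{\S_\tj}$, the bar closure of the full $p$-dg subcategory $\S_\tj$ of $\overline{\cC(\ti,\tj)}$ on the objects $\{\rF\rX : \rX \in \L,\ \rF \in \cC\}$. Since $\cC$ is strongly finitary, each $\rF\rX$ has a finite fantastic filtration by $\Bbbk$-indecomposables in $\cC(\ti,\tj)$, drawn from the finite set (up to shift and $p$-dg isomorphism) of $\Bbbk$-indecomposable $1$-morphisms in $\cC(\ti,\tj)$. By the paragraph following Definition~\ref{barclosure}, $\widehat{\S_\tj}$ is therefore strongly finitary and of the form $\overline{\C_\tj}$, where $\C_\tj := \S_\tj^{\dagger}$ is the closure under $\Bbbk$-isomorphism and grading shift of the indecomposable summands encountered.

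The main point is (ii). Let $Y = (\bigoplus_{m=1}^s \rG_m, \alpha) \in \bfR_\L(\tj)$, where each $\rG_m$ is a $\Bbbk$-indecomposable summand of some $\rF_m \rX_m$ with $\rX_m \in \L$, $\rF_m \in \cC$. By Lemma~\ref{extensionlemma}\eqref{extensionlemma1} applied to the composition functor $\cC(\ti,\tj) \to \cC(\ti,\tk)$, we have
\[
\bfP_\ti(\rH)(Y) \;=\; \Bigl(\bigoplus_{m=1}^s \rH\rG_m,\ \overline{\rH\circ(-)}(\alpha)\Bigr).
\]
Each $\rH\rG_m$ is a $\Bbbk$-summand of $(\rH \rF_m)\rX_m$, and since $\cC$ is strongly finitary, $\rH\rG_m$ itself admits a finite fantastic filtration by $\Bbbk$-indecomposables in $\cC(\ti,\tk)$; each such indecomposable factor is again a summand of $(\rH\rF_m)\rX_m$, hence lies in the generating set $\{\rF'\rX_m : \rF' \in \cC,\ \rX_m \in \L\}$ for $\S_\tk$. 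Refining the outer filtration of $Y$ by these inner filtrations exhibits $\bfP_\ti(\rH)(Y)$ as an object of $\widehat{\S_\tk} = \bfR_\L(\tk)$, so $\bfP_\ti(\rH)$ restricts to a $p$-dg functor $\bfR_\L(\tj) \to \bfR_\L(\tk)$, which we define to be $\bfR_\L(\rH)$.

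For (iii), a $2$-morphism $\beta \colon \rH \to \rH'$ of $\cC(\tj,\tk)$ gives, via Lemma~\ref{extensionlemma}\eqref{extensionlemma2}, a natural transformation $\bfP_\ti(\beta) \colon \bfP_\ti(\rH) \to \bfP_\ti(\rH')$ whose restriction to $\bfR_\L(\tj)$ lands in $\bfR_\L(\tk)$ by (ii). This restriction is compatible with the differential and with vertical and horizontal composition simply because these are all computed componentwise and agree with those of $\bfP_\ti$. The only step requiring genuine attention is the passage between ``$\Bbbk$-direct summand of $\rF\rX$'' and ``$\Bbbk$-indecomposable factor in a fantastic filtration'' used in (ii); everything else is bookkeeping.
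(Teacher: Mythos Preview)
Your proof is correct and follows essentially the same approach as the paper's. The paper phrases the key closure argument in step (ii) in terms of the left preorder---a $\Bbbk$-indecomposable $\rG$ lies in the bar closure if and only if $\rG \geq_L \rX$ for $\rX \in \L$, and then transitivity of $\geq_L$ gives that any indecomposable factor $\rG'$ of $\rH\rG$ again satisfies $\rG' \geq_L \rX$---whereas you unwind this directly via ``$\rH\rG_m$ is a $\Bbbk$-summand of $(\rH\rF_m)\rX_m$''. These are the same argument in different clothing; your version is more explicit and the paper's is more conceptual, but neither adds content the other lacks.
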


\proof
Note that $\bfR_\L(\tj)$ is strongly finitary since it is defined using the bar closure.
Now, the only thing needing to be checked is that $\coprod_{\tj \in \ccC}\bfR_\L(\tj)$ is closed under the action of $\cC$. This follows from the following transitivity observation. A $\Bbbk$-indecomposable $\rG$ is in the bar closure of $\{\rF\rX | \rX\in\L,\rF\in\cC\}$ if and only if $\rG \geq_L \rX$ for one, and hence all $\rX$ in the cell $\L$. But in this case, every $\Bbbk$-indecomposable $\rG'$ appearing in the bar closure of $\{\rH\rG|\rH\in\cC\}$ satisfies $\rG' \geq_L \rG\geq_L \rX$, so it also lies in $\bfR_\L$, and thanks to strong finitarity, this shows $\bfR_\L$ is $\cC$-stable.
\endproof

\begin{lemma}\label{Rmaxideal}
The set of $p$-dg ideals $\bfJ$ of $\bfR_\L$ such that 
$\bfJ$ does not contain $\id_\rF$ for any $\rF\in \L$
has a unique maximal element $\bfI_\L$.                                                                                                                                                                                                                                                                                                                                                                                                                                                                                                                                                                                                                                                                                                                                               \end{lemma}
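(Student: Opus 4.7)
The plan is to take $\bfI_\L$ to be the pointwise sum $\bfI_\L(\tj):=\sum_\alpha \bfJ_\alpha(\tj)$ over all $p$-dg ideals $\bfJ_\alpha$ of $\bfR_\L$ having the stated property. The proof then splits into two checks: that $\bfI_\L$ is itself a $p$-dg ideal of $\bfR_\L$, and that $\id_\rF\notin \bfI_\L$ for every $\rF\in \L$. Together these show $\bfI_\L$ is a member of the collection and contains every other member, hence is the unique maximal element.

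The first check is a routine formal verification: at each hom space of $\bfR_\L(\tj)$, the pointwise sum is a $\Bbbk$-linear graded subspace which inherits closure under $\partial$, pre- and post-composition with arbitrary morphisms of $\bfR_\L(\tj)$, and stability under the $\cC$-action through the $\Bbbk$-linear functors $\bfR_\L(\rF)$, all from the corresponding properties of the individual summands.

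The second check is the crux. Suppose, for contradiction, that $\id_\rF\in \bfI_\L(\tj)$ for some $\rF\in \L$ with target $\tj$, and write $\id_\rF=\sum_{i=1}^n f_i$ with $f_i\in \bfJ_{\alpha_i}(\tj)(\rF,\rF)$. Since $p$-dg ideals are graded $H$-submodules of the hom spaces and $\id_\rF$ is homogeneous of degree zero, I replace each $f_i$ by its degree-zero component and assume $f_i$ lies in the degree-zero part of $\End_{\cC}(\rF)$. Since $\rF$ is $\Bbbk$-indecomposable, $\End_{\cC}(\rF)$ has only the idempotents $0$ and $\id_\rF$, so the same is true of its degree-zero part; the latter is a finite-dimensional $\Bbbk$-algebra by strong finitarity of $\cC$, and over the algebraically closed field $\Bbbk$ a finite-dimensional algebra with only the trivial idempotents is local. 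Hence some $f_i$, say $f_1$, is a unit there, and then $\id_\rF=f_1^{-1}\circ f_1\in \bfJ_{\alpha_1}(\tj)$ since $\bfJ_{\alpha_1}$ is closed under composition, contradicting the hypothesis on $\bfJ_{\alpha_1}$.

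The essential obstacle is the locality of the degree-zero part of $\End_{\cC}(\rF)$, which uses $\Bbbk$-indecomposability in an essential way; absent locality one could present $\id_\rF=f+(\id_\rF-f)$ with neither summand a unit, so that the sum of two ``small'' ideals could in principle contain $\id_\rF$. Everything else in the argument --- closure of the sum under $\partial$, composition, and the $\cC$-action, together with the reduction to degree zero --- is formal.
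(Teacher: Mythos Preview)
Your proof is correct and follows essentially the same approach as the paper: define $\bfI_\L$ as the sum of all $p$-dg ideals with the stated property and use that $\End_{\bfR_\L(\tj)}(\rF)$ is local for $\Bbbk$-indecomposable $\rF\in\L$ to conclude that the sum still omits $\id_\rF$. The paper phrases the key step as ``each $\bfJ(\tj)\cap\End(\rF)$ lies in the radical, hence so does their sum'', whereas you argue contrapositively that a decomposition $\id_\rF=\sum f_i$ forces some $f_i$ to be a unit; these are equivalent uses of locality, and your reduction to degree zero is a harmless extra precaution.
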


\proof
Assume $\rF\in\L\cap\cC(\ti,\tj)$.
Since $\rF$ is $\Bbbk$-indecomposable, its endomorphism algebra is local, so for any $p$-dg ideal $\bfJ$ not containing $\id_\rF$, $\End_{\bfJ(\tj)}(\rF)$ is contained in the radical of $\End_{\bfR_\L(\tj)}(\rF)$ and closed under $\del$. Hence the sum of any two ideals not containing $\id_\rF$ for  any $\rF\in \L$ again satisfies these condition. Thus the sum of all $p$-dg ideals not containing $\id_\rF$ for  any $\rF\in \L$ is the unique maximal such $p$-dg ideal as desired.
\endproof

\begin{definition}\label{cell2rep}
We define $\bfC_\L:=\bfR_\L/\bfI_\L \in \cC\pamod^{\mathrm{sf}}$ to be the {\bf $p$-dg cell $2$-re\-pre\-sen\-tation} corresponding to $\L$.
\end{definition}

\begin{lemma}\label{biggercellskill}
Any $2$-sided cell not less than or equal to $\J$ annihilates $\bfC_\L$.
\end{lemma}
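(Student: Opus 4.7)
The plan is to show that every $\Bbbk$-indecomposable $1$-morphism $\rH$ in a two-sided cell $\J'$ with $\J' \not\leq_J \J$ acts as the zero $p$-dg functor on $\bfC_\L$. Since the $\cC$-action is additive and $\bfC_\L = \bfR_\L/\bfI_\L$, it suffices to verify, for every $\Bbbk$-indecomposable $1$-morphism $\rG$ occurring as a $\Bbbk$-indecomposable factor of some object of $\bfR_\L$ (equivalently, $\rG \geq_L \rF$ for some $\rF \in \L$), that $\id_{\rH\rG}$ lies in $\bfI_\L$; this will imply $\rH\rG \cong 0$ in $\bfC_\L$ and, since the image of $\bfC_\L(\rH)$ consists of zero objects, that the functor $\bfC_\L(\rH)$ is itself zero.

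Strong finitarity of $\cC$ supplies a fantastic filtration of $\rH\rG$ in $\cC$ by $\Bbbk$-indecomposable $1$-morphisms $\rK_1,\ldots,\rK_n$. Each $\rK_i$ appears as a summand of $[\rH\rG]$ in $[\cC]$, whence $\rK_i\geq_R\rH$, so the two-sided cell of $\rK_i$ lies $\geq_J\J'$. If any $\rK_i$ belonged to $\L\subseteq \J$, we would obtain $\J\geq_J\J'$, forcing $\J'\leq_J\J$ and contradicting the hypothesis. Therefore each $\rK_i$ satisfies $\rK_i>_L\rF$ strictly (using $\rK_i\geq_L\rG\geq_L\rF$), so $\rK_i\notin \L$.

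Next, I would verify that $\id_{\rK_i}\in \bfI_\L$ for every $i$. By Lemma \ref{Rmaxideal}, this reduces to checking that the $p$-dg ideal of $\bfR_\L$ generated by $\id_{\rK_i}$ does not contain any $\id_\rF$ with $\rF\in\L$. By $\cC$-stability, this ideal is generated (in the ordinary sense) by the elements $\id_{\rG'\rK_i}$ for $\rG'\in\cC$, each of whose $\Bbbk$-indecomposable summands lies $\geq_L\rK_i>_L\rF$; in particular no $\rF\in\L$ appears as a summand, so no $\id_\rF$ factors through such objects. Lemma \ref{sameideal} guarantees that closing under $\del$ does not enlarge this ideal, so it is indeed contained in $\bfI_\L$.

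To finish, I would use the fantastic filtration morphisms $u_i\colon \rH\rG\to \rK_i$ and $v_i\colon \rK_i\to \rH\rG$ with $u_iv_i=\id_{\rK_i}$ and $\sum_i v_iu_i=\id_{\rH\rG}$. Each term $v_iu_i = v_i\circ \id_{\rK_i}\circ u_i$ factors through $\id_{\rK_i}\in \bfI_\L$ and hence lies in $\bfI_\L$, so $\id_{\rH\rG}\in \bfI_\L$, completing the argument. The main subtlety to handle carefully is the $p$-dg aspect of the maximality argument in the third paragraph, for which Lemma \ref{sameideal} is the key input allowing us to reduce the comparison of ideals to the underlying finitary setting governed purely by the left-cell order.
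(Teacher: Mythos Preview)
Your proof is correct and follows essentially the same approach as the paper: both arguments show that the $\cC$-stable $p$-dg ideal generated by the identities on the relevant $\Bbbk$-indecomposables (your $\rK_i$, the paper's $\rG$) does not contain any $\id_\rF$ with $\rF\in\L$, invoking Lemma~\ref{sameideal} to reduce the $\del$-closure question to the underlying finitary cell order. The paper's proof is terser and works directly with the ideal generated by $\id_\rG$, whereas you explicitly pass through a fantastic filtration of $\rH\rG$; this is an expansion of the same argument rather than a different one.
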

\proof Let $\rF\in \L\cap \cC(\ti,\tj)$ and consider the $[\cC]$-stable ideal in $[\bfR_\L]$ generated by $\id_\rG$ for $\rG$ from a cell not less than or equal to $\L$. 
Similarly to Lemma \ref{sameideal}, one checks that this is equal to $[\bfI]$ for $\bfI$ the $\cC$-stable $p$-dg ideal generated by $\id_\rG$.

Thus the component of $\bfI$ in $\End_{\bfR(\tj)}(\rF)$ is the same as that of the ideal generated in the underlying finitary category, and hence does not contain $\id_\rF$ by definition of the two-sided order. It is therefore contained in the kernel of $\bfC_\L$ . 
\endproof

\begin{proposition}\label{cellcomp} Let $\L$ be a left cell in a two-sided cell $\J$ of $\cC$ and denote by $[\L]$ and $[\J]$ the corresponding cells in $[\cC]$. 
\begin{enumerate}[$($i$)$]
\item\label{cellcomp1} The cell $2$-representation $\bfC^{[\ccC]}_{[\L]}$ of $[\cC]$ is equivalent to a quotient of $[\bfC_\L]$. 
\item\label{cellcomp2} Assume that  $[\cC]$ is weakly fiat and $[\J]$-simple and that $[\J]$ is strongly regular. Then $[\bfC_\L]$ is an inflation of $\bfC^{[\ccC]}_{[\L]}$ by a local algebra (cf. \cite[Section 3.6]{MM6}).
\end{enumerate}
\end{proposition}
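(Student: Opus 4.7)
The plan for part (i) is to match the building blocks of $\bfC_\L$ with those of $\bfC^{[\ccC]}_{[\L]}$ and then use maximality of the defining ideal. First, I would identify $[\bfR_\L]$ with $\bfR^{[\ccC]}_{[\L]}$ as additive $2$-sub\-re\-pre\-sen\-tations of $[\bfP_\ti]$. This uses two facts noted earlier in the paper: the cell combinatorics on $\mathcal{S}(\cC)$ agrees with that of the underlying finitary $2$-category $[\cC]$, and the bar closure in $\overline{\cC(\ti,\tj)}$, after forgetting $\del$, produces the same additive subcategory of $[\cC(\ti,\tj)]$ as the additive closure of $\{\rF\rX \mid \rF \in \cC, \rX \in \L\}$. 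Second, I would observe that $[\bfI_\L]$ is a $[\cC]$-stable ideal in $[\bfR_\L]$ which does not contain $\id_{[\rF]}$ for any $[\rF] \in [\L]$ (since $\bfI_\L$ contains no such $\id_\rF$). By the additive analogue of Lemma~\ref{Rmaxideal}, which defines $\bfI^{[\ccC]}_{[\L]}$ as the unique maximal ideal of this sort, one concludes $[\bfI_\L] \subseteq \bfI^{[\ccC]}_{[\L]}$, yielding the desired quotient map $[\bfC_\L] \twoheadrightarrow \bfC^{[\ccC]}_{[\L]}$.

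For part (ii), the plan is to verify that $[\bfC_\L]$ is a transitive $2$-representation of $[\cC]$ with apex $[\J]$, and then invoke \cite[Theorem~4]{MM6} to obtain the inflation structure. Transitivity will follow from the fact that $\bfR_\L$ is generated as a $\cC$-representation by the identities $\{\id_\rF \mid \rF \in \L\}$: every $\Bbbk$-indecomposable in $\bfR_\L$ is a summand of some $\rH \rF$ with $\rF \in \L$, hence is reached from $\id_\rF$ by the action of $\rH$; this property descends to $[\bfC_\L]$. The apex is $[\J]$ because Lemma~\ref{biggercellskill} forces cells strictly above $[\J]$ to annihilate $[\bfC_\L]$, while $[\J]$-simplicity prevents the apex from being strictly smaller than $[\J]$. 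Under strong regularity of $[\J]$, the additive cell $2$-representation $\bfC^{[\ccC]}_{[\L]}$ is simple transitive by \cite[Theorem~15]{MM5} (as referenced in the introduction). Combining these, \cite[Theorem~4]{MM6} applies to the transitive $2$-representation $[\bfC_\L]$, and the surjection from part (i) identifies the simple transitive quotient with $\bfC^{[\ccC]}_{[\L]}$. The conclusion is that $[\bfC_\L]$ is an inflation of $\bfC^{[\ccC]}_{[\L]}$ by a local algebra $A$, where $A$ is identified with an endomorphism ring attached to the inflation data.

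The main obstacle will be the verification that $[\bfC_\L]$ is genuinely transitive (and not merely generated in the appropriate sense). The subtlety is that after forgetting $\del$, the $\Bbbk$-indecomposables of $\cC$ remain indecomposable in $[\cC]$, but the quotient by $[\bfI_\L]$ may alter their endomorphism rings; one must confirm that these endomorphism rings remain local, so that the hypotheses of \cite[Theorem~4]{MM6} (and of the inflation construction in \cite[Section~3.6]{MM6}) are actually met. This is where the weakly fiat assumption plays its role, via adjunctions that control the structure of the endomorphism algebras of cell $2$-representations, while strong regularity of $[\J]$ ensures that the discrepancy between $[\bfI_\L]$ and $\bfI^{[\ccC]}_{[\L]}$ concentrates precisely in such local algebras and not across distinct indecomposables.
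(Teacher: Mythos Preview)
Your proposal follows essentially the same approach as the paper's proof: identify $[\bfR_\L]$ with $\bfR^{[\ccC]}_{[\L]}$ via the equivalence $\bfP^{[\ccC]}_\ti\simeq[\bfP_\ti]$, deduce $[\bfI_\L]\subseteq\bfI^{[\ccC]}_{[\L]}$ by maximality, and for (ii) use transitivity of $[\bfC_\L]$ together with \cite[Theorem~4]{MM6}.

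One remark on your ``main obstacle'': the locality concern is not a genuine difficulty and the paper does not raise it. Since $[\bfR_\L]\simeq\bfR^{[\ccC]}_{[\L]}$ is transitive, an ideal containing $\id_\rG$ for any $\Bbbk$-indecomposable $\rG$ would, by $[\cC]$-stability and projection onto summands, contain $\id_\rF$ for all $\rF\in\L$; hence $[\bfI_\L]$ contains no identity on any indecomposable. The endomorphism rings in the quotient are therefore quotients of local rings by proper ideals, which are automatically local. The paper's treatment of (ii) is correspondingly more direct: it simply records that $[\bfR_\L]$ is transitive by construction, that its simple transitive quotient is $\bfC^{[\ccC]}_{[\L]}$ (so the same holds for $[\bfC_\L]$), and then invokes \cite[Theorem~4]{MM6}. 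Your detour through the apex and \cite[Theorem~15]{MM5} is not needed.
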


\begin{proof}
Start by considering the principal $2$-representation $\bfP^{[\ccC]}_\ti$ of $[\cC]$ and the $2$-representation $[\bfP_\ti]$ (which is a $2$-functor as applying $[~]$ is $2$-functorial). Using the Yoneda Lemma from \cite[Lemma~9]{MM2}
we can construct a morphism of additive $2$-representations $\eta\colon \bfP^{[\ccC]}_\ti\to [\bfP_\ti]$ by sending $\one_\ti$ to $[\one_\ti]$. Then, for each object $\tj$, we obtain an additive functor 
$$\eta_{\ti}\colon \bfP^{[\ccC]}_\ti(\tj)=[\cC(\ti,\tj)]\longrightarrow [\bfP_\ti(\tj)]=[\overline{\cC(\ti,\tj)}].$$
This functor is an equivalence (cf. Section \ref{fincat}). Hence $\eta$ is an equivalence of additive $2$-representations.

Note that two $\Bbbk$-indecomposable $1$-morphisms $\rF$, $\rG$ of $\cC$ are $\Bbbk$-isomorphic if and only if $[\rF]$ and $[\rG]$ are isomorphic. Hence $\rF\geq_L\rG$ if and only if $[\rF]\geq_L[\rG]$. This shows that the indecomposable objects in $[\L]$ correspond to the set of indecomposable objects in a cell in $[\cC]$.
We can use these observation to see that an object $\rX=(\bigoplus_{m=1}^s\rF_m, \alpha)$ is in $\bfR_{\L}(\tj)$ if and only if $[\rX]$ is in $\bfR^{[\ccC]}_{[\L]}(\tj)$. This yields that the equivalence of $2$-representations of $\bfP^{[\ccC]}_\ti$ and $[\bfP_\ti]$ restricts to an equivalence of $\bfR^{[\ccC]}_{[\L]}$ and $[\bfR_\L]$.

Let $\bfI_\L$ be the maximal $p$-dg ideal from Lemma \ref{Rmaxideal}. We immediately see that $[\bfI_\L]\subseteq \bfI^{[\ccC]}_{[\L]}$. This proves \eqref{cellcomp1}.

For \eqref{cellcomp2}, notice that $ [\bfR_\L]$ is a transitive finitary $2$-representation by construction, and, thanks to the equivalence between $ [\bfR_\L]$ and $\bfR^{[\ccC]}_{[\L]}$, the simple transitive quotient of the latter is $\bfC^{[\ccC]}_{[\L]}$. Thus, $[\bfC_\L]$ is a transitive finitary $2$-representation of $[\cC]$ with simple transitive quotient $\bfC^{[\ccC]}_{[\L]}$.
Under the conditions on $[\cC]$ assumed in \eqref{cellcomp2}, \cite[Theorem~4]{MM6} asserts that $[\bfC_\L]$ is an inflation of $\bfC^{[\ccC]}_{[\L]}$.
\end{proof}

\begin{example}
We include an example where $[\bfC_\L]$ is not a cell $2$-representation.
Let $p=3$ and $K=\Bbbk[x]/(x^3)$, with differential determined by $\del(1)=0$ and $\del(x)=1$. Note that this requires that $\deg x=-2$. We may consider the $p$-dg $2$-category $\cC$ generated (under composition, addition, grading shifts and $p$-dg isomorphism), by one non-identity indecomposable $2$-morphism $\rF$ corresponding to the $K$-bimodule $K\otimes K$. The composition $\rF^2$ is given by $\left(\rF\oplus\rF\langle 2\rangle\oplus\rF\langle 4\rangle, \begin{pmatrix}
0&\id\langle -2\rangle&0\\0&0&2\id\langle -2\rangle\\0&0&0
\end{pmatrix}\right)$.
Note that $\cC$ has one non-identity left cell $\L$, which contains the $\Bbbk$-indecomposable $\rF$. To find the cell $2$-representation, we need to determine the ideal $\bfI_\L$ as in Lemma \ref{Rmaxideal}. However, $\End_{\ccC}(\rF)^\op\cong K\otimes K$ as a $p$-dg algebra. In $K\otimes K$, $\id_{\rF}=1\otimes 1$ lies in the image of the differential since $\del(1\otimes x)=\id_{\rF}$. Hence, $1\otimes x$ cannot be contained in $\bfI_\L$. However, in $[\cC]$, the ideal $\bfI_{[\L]}$ is given by matrices with entries of the form $K\otimes \rad K$, which contains $1\otimes x$. Hence $[\bfI_{\L}]$ is strictly contained in $\bfI_{[\L]}$, and $[\bfC_\L]$ is \emph{not} a cell $2$-representation (and not simple transitive) over $[\cC]$.
\end{example}

\subsection{Simple transitive \texorpdfstring{$p$}{p}-dg \texorpdfstring{$2$}{2}-representations}

We can now adapt the definition of a \emph{simple transitive} $2$-representation from \cite{MM5} to the $p$-dg setting.

\begin{definition} 
Let $\cC$ be a locally finite $2$-category and $\bfM \in \cC\pamod^{\mathrm{sf}}$. Fix $\ti\in\cC$ and $X\in \bfM(\ti)$. We denote by $\bfG_\bfM(X)$ the smallest strongly finitary $2$-subrepresentation of $\bfM$ containing $X$, that is, $\coprod_{\tj\in\ccC}\bfG_\bfM(X)(\tj)$ is the bar closure of $\{\rF X| \rF \in \cC\}$.
We say that $\bfM $ is {\bf transitive} if, for every  $\ti\in\cC$ and $X\in \bfM(\ti)$, 
we have $\bfM = \bfG_\bfM(X)$.
\end{definition}

\begin{lemma}
Let $\cC$ be a locally finite $2$-category and $\bfM \in \cC\pamod^{\mathrm{sf}}$. 
Then $\bfM$ is transitive if and only if  $[\bfM]$ is transitive as a $2$-representation of $[\cC]$ in the sense of \cite[3.1]{MM5}.
\end{lemma}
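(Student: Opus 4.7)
The plan is to relate the bar closure defining $\bfG_\bfM(X)$ to the ordinary additive closure governing transitivity of $[\bfM]$, and deduce that the two notions of transitivity match levelwise.

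First, I would unpack the definitions. By construction (see Definition \ref{barclosure} and the subsequent remark), for any set $\mathcal{S}$ of objects in some $\bfM(\tj)$ the bar closure $\widehat{\mathcal{S}}$ satisfies $[\widehat{\mathcal{S}}]\simeq \add([\mathcal{S}])$, since the $\Bbbk$-indecomposables appearing in fantastic filtrations of objects in $\mathcal{S}$ are, under $[-]$, precisely the indecomposable summands (up to isomorphism and shift) of objects in $[\mathcal{S}]$. Applying this to $\mathcal{S}=\{\rF X\mid \rF\in\cC\}$ inside $\bfM(\tj)$ gives
\begin{equation*}
[\bfG_\bfM(X)(\tj)]\simeq \add\bigl\{[\rF][X]\,\big|\,\rF\in\cC\bigr\}=\bfG_{[\bfM]}([X])(\tj),
\end{equation*}
where the right-hand side is the transitive $2$-subrepresentation of $[\bfM]$ generated by $[X]$ in the sense of \cite[3.1]{MM5}. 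Thus passing to $[-]$ sends the $p$-dg generation procedure to the additive one.

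For the forward direction, suppose $\bfM$ is transitive and fix $\ti\in\cC$ and $Y\in [\bfM(\ti)]$. Since the objects of $[\bfM(\ti)]$ are the same as those of $\bfM(\ti)$, we may regard $Y$ as some $X\in\bfM(\ti)$. Transitivity of $\bfM$ gives $\bfG_\bfM(X)=\bfM$, hence $[\bfG_\bfM(X)]=[\bfM]$, and by the displayed equivalence this reads $\bfG_{[\bfM]}(Y)=[\bfM]$. As $Y$ was arbitrary, $[\bfM]$ is transitive.

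For the converse, assume $[\bfM]$ is transitive and take $\ti\in\cC$, $X\in\bfM(\ti)$. Transitivity of $[\bfM]$ at $[X]$ yields $\bfG_{[\bfM]}([X])(\tj)=[\bfM(\tj)]$ for every $\tj$, so in particular every indecomposable object of $[\bfM(\tj)]$ is (up to isomorphism and shift) a direct summand of some $[\rF][X]=[\rF X]$. Because $[-]$ induces a bijection between $p$-dg isomorphism classes of $\Bbbk$-indecomposables of $\bfM(\tj)$ (up to grading shift) and isomorphism classes of indecomposables of $[\bfM(\tj)]$, and because the bar closure is by definition closed under $\Bbbk$-isomorphism, grading shift, and passage to $\Bbbk$-indecomposable summands of fantastic filtrations, it follows that every $\Bbbk$-indecomposable of $\bfM(\tj)$ lies in $\bfG_\bfM(X)(\tj)$. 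Invoking the strong finitarity assumption (every object admits a finite fantastic filtration by $\Bbbk$-indecomposables) we conclude $\bfG_\bfM(X)(\tj)=\bfM(\tj)$ for all $\tj$, so $\bfM$ is transitive. The main subtlety to verify carefully is the bookkeeping in this last step: namely that containment of all $\Bbbk$-indecomposables in the bar closure really does force the subrepresentation to exhaust $\bfM$, which is exactly what strong finitarity and the definition of $\widehat{(-)}$ guarantee.
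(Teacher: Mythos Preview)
Your proof is correct and follows essentially the same approach as the paper, using $[\widehat{\S}]\simeq\add([\S])$ together with strong finitarity to pass between the two notions of transitivity. One minor imprecision: $[-]$ need not induce a \emph{bijection} between $p$-dg isomorphism classes of $\Bbbk$-indecomposables and isomorphism classes in $[\bfM(\tj)]$ (distinct $p$-dg isomorphism classes can be $\Bbbk$-isomorphic, cf.\ Remark~\ref{endostrongfin}), but your argument does not actually rely on this, since, as you correctly observe, the bar closure is closed under $\Bbbk$-isomorphism.
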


\proof
Assume $\bfM$ is transitive. Then for any $\ti\in\cC$ and $X\in \bfM(\ti)$, 
we have $\coprod_{\tj\in\ccC}\bfM(\tj) = \coprod_{\tj\in\ccC}\bfG_\bfM(X)(\tj)=\widehat{\S}$ for $\S$ the full $p$-dg subcategory on objects $\{\rF X| \rF \in \cC\}$. In particular 
$\left[\coprod_{\tj\in\ccC}\bfM(\tj)\right]=\left[\coprod_{\tj\in\ccC}\bfG_\bfM(X)(\tj)\right]=\left[\widehat{\S}\right]=\add\left([\S]\right)$ and $[\bfM]$ is transitive as a $2$-representation of $[\cC]$.

Conversely, assume $[\bfM]$ is transitive as a $2$-representation of $[\cC]$, that is $\add\left([\S]\right) = \left[\coprod_{\tj\in\ccC}\bfM(\tj)\right]$ for $\S=\{\rF X| \rF \in \cC\}$. Then $\widehat{\S}$ in particular contains a representative of each $p$-dg isomorphism class of $\Bbbk$-indecomposable objects in $\coprod_{\tj\in\ccC}\bfM(\tj)$, and thus, since every object in $\coprod_{\tj\in\ccC}\bfM(\tj)$ has a fantastic filtration by $\Bbbk$-indecomposables, $\widehat{\S} =\coprod_{\tj\in\ccC}\bfM(\tj)$ and $\bfM$ is transitive.
\endproof

\begin{lemma}\label{Mmaximalideal} Let $\cC$ be a locally finite $2$-category and assume that $\bfM \in \cC\pamod^{\mathrm{sf}}$ is transitive.
The set of $p$-dg ideals $\bfJ$ of $\bfM$ such that 
$\bfJ$ does not contain $\id_X$ for any $X\in \coprod_{\ti\in \ccC}\bfM(\ti)$
has a unique maximal element $\bfI_\bfM$.            
\end{lemma}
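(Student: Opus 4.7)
The plan is to mirror the argument for Lemma \ref{Rmaxideal} almost verbatim, with one preliminary reduction needed to handle arbitrary $X$ rather than just $\Bbbk$-indecomposables.

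First, I would reduce the condition ``$\bfJ$ does not contain $\id_X$ for any $X \in \coprod_{\ti\in\ccC}\bfM(\ti)$'' to the same condition for $\Bbbk$-indecomposable $X$ only. This uses strong finitarity of $\bfM$: every object $X$ admits a fantastic filtration with $\Bbbk$-indecomposable subquotients $X_1,\dots,X_s$, together with maps $u_i\colon X\to X_i$ and $v_i\colon X_i\to X$ satisfying $u_iv_j=\delta_{i,j}\id_{X_i}$ and $\sum_j v_ju_j=\id_X$. Hence $\id_{X_i}=u_i\circ \id_X\circ v_i$ belongs to any ideal containing $\id_X$, so excluding all $\id_X$ is equivalent to excluding $\id_Y$ for every $\Bbbk$-indecomposable $Y$.

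Next, because $\bfM(\tj)$ is $p$-dg equivalent to $\overline{\C_\tj}$ for a strongly finitary $\C_\tj$, the endomorphism algebra $\End_{\bfM(\tj)}(Y)$ of a $\Bbbk$-indecomposable $Y$ is local, with unique maximal ideal equal to its radical. Thus for any $p$-dg ideal $\bfJ$ of $\bfM$, the condition $\id_Y\notin \bfJ(\tj)$ is equivalent to $\End_{\bfJ(\tj)}(Y)\subseteq \rad\,\End_{\bfM(\tj)}(Y)$. Given two $p$-dg ideals $\bfJ_1,\bfJ_2$ of $\bfM$, both excluding every such $\id_Y$, their componentwise sum $\bfJ_1+\bfJ_2$ is again a $p$-dg ideal (sums of subspaces closed under $\del$, composition, and the $\cC$-action are closed under these operations), and for every $\Bbbk$-indecomposable $Y$ we have
\begin{equation*}
\End_{(\bfJ_1+\bfJ_2)(\tj)}(Y)=\End_{\bfJ_1(\tj)}(Y)+\End_{\bfJ_2(\tj)}(Y)\subseteq \rad\,\End_{\bfM(\tj)}(Y),
\end{equation*}
so $\id_Y$ is not contained in $\bfJ_1+\bfJ_2$. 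The same argument applies to arbitrary (even infinite) sums, since checking the containment of an identity element is pointwise.

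Finally, I would define $\bfI_\bfM$ to be the sum of all $p$-dg ideals of $\bfM$ not containing $\id_X$ for any $X$; by the previous paragraph it still avoids all such identities, and by construction it contains every other such ideal, proving uniqueness and maximality. I do not anticipate a serious obstacle here: the only subtlety beyond Lemma \ref{Rmaxideal} is the reduction to $\Bbbk$-indecomposables, which follows directly from the splitting data of a fantastic filtration. Transitivity of $\bfM$ is not strictly needed for existence of $\bfI_\bfM$, but one expects it to be essential when identifying $\bfM/\bfI_\bfM$ as a suitable ``simple transitive'' quotient in the sequel.
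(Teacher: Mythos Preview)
Your proposal is correct and follows essentially the same approach as the paper's proof: the paper also first reduces the condition to $\Bbbk$-indecomposables via fantastic filtrations, then invokes the argument of Lemma \ref{Rmaxideal} verbatim. Your observation that transitivity is not actually used in this step is accurate and matches the paper's Remark following the lemma, where it is noted that strong finitarity (rather than transitivity) is the crucial assumption.
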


\proof
Since $\bfM(\ti)$ is strongly finitary and, in particular, every object has a fantastic filtration by $\Bbbk$-indecomposable objects,
the condition of not containing $\id_X$ for any $X\in \coprod_{\ti\in \ccC}\bfM(\ti)$ is equivalent to not containing $\id_Y$ for any $\Bbbk$-indecomposable $Y\in \coprod_{\ti\in \ccC}\bfM(\ti)$. Then the lemma follows using the same argument as in the proof of Lemma \ref{Rmaxideal}.
\endproof

\begin{remark}
Observe that in Lemmas \ref{Rmaxideal} and \ref{Mmaximalideal} we have crucially used the assumption that $\bfM$ is a strongly finitary $2$-representation. Otherwise, we might not have any object with local endomorphism ring in any of the $\bfM(\ti)$.
\end{remark}

\begin{definition} For a strongly finitary $2$-representation $\bfM$, we call $\bfM/\bfI_\bfM$ the {\bf simple transitive quotient} of $\bfM$ and say that $\bfM$ is {\bf simple transitive} if $\bfI_\bfM = 0$.
\end{definition}

We immediately have the following lemma.

\begin{lemma}
The cell $2$-representations of strongly finitary $p$-dg $2$-categories are simple transitive.
\end{lemma}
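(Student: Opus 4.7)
The plan is to verify the two defining conditions separately: transitivity of $\bfC_\L$, and the vanishing of the unique maximal $p$-dg ideal $\bfI_{\bfC_\L}$ provided by Lemma~\ref{Mmaximalideal}.

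First I would identify the non-zero $\Bbbk$-indecomposable objects of $\bfC_\L$ as precisely the elements of $\L$ (up to $p$-dg isomorphism and grading shift). For any $\Bbbk$-indecomposable $\rY$ in $\bfR_\L$ with $\rY>_L\rX$ strictly for $\rX\in \L$, consider the $\cC$-stable $p$-dg ideal $\bfJ$ of $\bfR_\L$ generated by $\id_\rY$. Every morphism in $\bfJ$ lies in the two-sided $[\cC]$-ideal generated by $[\id_\rY]$ in $[\bfR_\L]$ by (the argument of) Lemma~\ref{sameideal}, and every $\Bbbk$-indecomposable summand of any $\rF\rY$ satisfies $\geq_L\rY>_L\L$, so $\bfJ$ cannot contain $\id_\rF$ for any $\rF\in\L$. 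By maximality (Lemma~\ref{Rmaxideal}), $\bfJ\subseteq\bfI_\L$, so $\id_\rY\in\bfI_\L$ and $\rY$ becomes a zero object in $\bfC_\L$.

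Next, for transitivity, fix $\rX\in \L\cap\cC(\ti,\tj_0)$. For any $\rX'\in\L$ there exists $\rH\in\cC$ with $[\rX']$ a direct summand of $[\rH\rX]$; hence $\rX'$ occurs as a subquotient in a fantastic filtration of $\rH\rX$, and therefore $\rX'\in\bfG_{\bfR_\L}(\rX)$ after bar closure (cf.\ Definition~\ref{barclosure}). Since every object of $\bfR_\L$ admits a fantastic filtration by $\Bbbk$-indecomposables, each of which either lies in $\L$ or becomes zero in $\bfC_\L$ by the preceding paragraph, it follows that the image of $\bfG_{\bfR_\L}(\rX)$ in $\bfC_\L$ equals $\bfC_\L$, yielding $\bfG_{\bfC_\L}(\rX)=\bfC_\L$. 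The same argument applies to any non-zero $\Bbbk$-indecomposable object of $\bfC_\L$.

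Finally, for simplicity, let $\bfI_{\bfC_\L}$ be the maximal $p$-dg ideal from Lemma~\ref{Mmaximalideal}, and let $\bfJ'$ denote its preimage in $\bfR_\L$ under the quotient $\bfR_\L\twoheadrightarrow\bfC_\L$. Then $\bfJ'$ is a $p$-dg ideal with $\bfI_\L\subseteq\bfJ'$, and cannot contain $\id_\rF$ for any $\rF\in\L$ — otherwise its image in $\bfC_\L$ would contain the identity of the non-zero $\Bbbk$-indecomposable object $\rF$, contradicting the defining property of $\bfI_{\bfC_\L}$. Maximality of $\bfI_\L$ (Lemma~\ref{Rmaxideal}) then forces $\bfJ'=\bfI_\L$, so $\bfI_{\bfC_\L}=0$. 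The main obstacle is the first step: one must verify carefully that the $p$-dg ideal generated by $\id_\rY$ — which a priori incorporates arbitrary iterated derivatives and compositions — is still controlled by the underlying cell combinatorics of $[\cC]$, and Lemma~\ref{sameideal} is the key bridge that makes the translation between the $p$-dg and purely additive ideal structures possible.
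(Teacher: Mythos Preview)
Your argument is correct and fills in exactly the details the paper omits: the paper states this lemma with no proof, writing only ``We immediately have the following lemma.'' Your three steps---identifying the non-zero $\Bbbk$-indecomposables of $\bfC_\L$ with $\L$ via the $\cC$-stable $p$-dg ideal generated by $\id_\rY$ for $\rY>_L\L$, deducing transitivity from the left-cell relation, and obtaining $\bfI_{\bfC_\L}=0$ by pulling back to $\bfR_\L$ and invoking maximality of $\bfI_\L$---constitute precisely the standard unpacking one would give, and they mesh with the surrounding apparatus (Lemmas~\ref{sameideal}, \ref{Rmaxideal}, \ref{biggercellskill}, \ref{Mmaximalideal}) in the way you indicate.

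One small remark on your Step~1: rather than invoking Lemma~\ref{sameideal} (which is literally about two-sided ideals in a single $p$-dg category rather than $\cC$-stable ideals in a $2$-representation), it is slightly cleaner to observe directly that the $[\cC]$-stable ideal generated by $[\id_\rY]$ consists of morphisms factoring through objects in $\add\{\rF\rY\mid \rF\in\cC\}$, and any such factorisation $\phi=\beta\alpha$ through a genuine object of $\bfR_\L$ has $\del\phi=(\del\beta)\alpha+\beta(\del\alpha)$ factoring through the same object; so the ideal is already $\del$-closed without further work. This is the same mechanism as in the proof of Lemma~\ref{biggercellskill}, and your reference to ``the argument of'' Lemma~\ref{sameideal} captures it adequately.
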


\subsection{Reduction to one cell}

In this section, we prove that in order to find the cell $2$-representations of a strongly finitary $p$-dg category $\cC$, under some additional conditions, it suffices to consider a certain restriction of $\cC$ that only has one $2$-sided cell apart from the identity cells. We therefore assume for this section that $[\cC]$ is (weakly) fiat and that all its $2$-sided cells are strongly regular.

Let $\J$ be a $2$-sided cell in $\cC$, and $\L$ a left cell in $\J$. Consider the cell $2$-representation $\bfC_\L$. 
Thanks to Lemma \ref{biggercellskill} we may, without loss of generality (if necessary passing to the quotient modulo the kernel of  $\bfC_\L$), assume that  $\J$  is the unique maximal $2$-sided cell of $\cC$. 

Denote by $\cC_\J$ the $2$-full $p$-dg $2$-subcategory of $\cC$ generated by all $1$-morphisms all of whose $\Bbbk$-indecomposable components are identity $1$-morphisms or in $\J$.

It then follows from \cite[Proposition 32]{MM1}, and the fact that cells are defined on the underlying finitary $2$-categories, that $\J$ is again a $2$-sided cell in $\cC_\J$ with the same left and right cells as $\J$ has in $\cC$. By restriction, the cell $2$-representation $\bfC_\L$ of $\cC$ becomes a strongly finitary $p$-dg $2$-representation $\bfC_{\L}$ of $\cC_\J$.

\begin{proposition}\label{redtoonecell}
The restriction of $\bfC_{\L}$ to $\cC_\J$ is the cell $2$-representation of  $\cC_\J$ for the left cell $\L$.
\end{proposition}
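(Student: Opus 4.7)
The strategy is to identify the restriction $\bfC_\L|_{\ccC_\J}$ and the cell $2$-representation $\bfC_\L^{\ccC_\J}$ of $\cC_\J$ for $\L$ as quotients of a common $p$-dg $2$-representation by the same maximal ideal.

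First, I will show that $\bfR_\L$ and $\bfR_\L^{\ccC_\J}$ coincide as $\cC_\J$-representations. Using strong regularity of $\J$ combined with the assumption that $\J$ is the maximal two-sided cell of $\cC$, any $\Bbbk$-indecomposable summand $\rG$ of $\rF\rX$ with $\rF\in\cC$ and $\rX\in\L$ satisfies $\rG\geq_L\rX$, placing $\rG$ in a left cell $\geq_L\L$ within a two-sided cell $\geq_J\J$. Maximality of $\J$ forces $\rG\in\J$, and strong regularity forces the left cell of $\rG$ to be $\L$, so $\rG\in\L\subseteq\cC_\J$. Consequently $\bfR_\L(\tj)$ is the bar closure of $\L\cap\cC(\ti_\L,\tj)$ in $\overline{\cC(\ti_\L,\tj)}$, and similarly $\bfR_\L^{\ccC_\J}(\tj)$ is this bar closure inside $\overline{\cC_\J(\ti_\L,\tj)}$. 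By $2$-fullness of $\cC_\J\hookrightarrow\cC$ the corresponding $2$-morphism spaces agree, so these bar closures are equal as $p$-dg categories, and the $\cC_\J$-action on $\bfR_\L^{\ccC_\J}$ tautologically coincides with the restriction of the $\cC$-action on $\bfR_\L$.

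Second, since $\bfI_\L$ is $\cC$-stable, it is a fortiori $\cC_\J$-stable, and it avoids identities of cell $1$-morphisms; hence $\bfI_\L\subseteq\bfI_\L^{\ccC_\J}$ by maximality of the latter. This yields a canonical surjection $\bfC_\L|_{\ccC_\J}\twoheadrightarrow\bfC_\L^{\ccC_\J}$ in $\cC_\J\pamod^{\mathrm{sf}}$.

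For the reverse inclusion, I will argue that $\bfI_\L^{\ccC_\J}$ is already $\cC$-stable, so that by maximality of $\bfI_\L$ we obtain $\bfI_\L^{\ccC_\J}\subseteq\bfI_\L$. It suffices to verify stability under horizontal composition with $\id_\rZ$ for each $\Bbbk$-indecomposable $\rZ\in\cC$; the case $\rZ\in\cC_\J$ is immediate. For $\rZ$ in a cell strictly below $\J$ and $\rX\in\L$, the object $\rZ\rX$ admits a fantastic filtration in $\overline{\cC(\ti_\L,\tk)}$ whose subquotients lie in $\L$, and the associated structure maps are $2$-morphisms in $\cC_\J$ by $2$-fullness. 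Expanding $\id_\rZ\circ_0\alpha$ for $\alpha\in\bfI_\L^{\ccC_\J}$ via these structure maps yields a matrix of $\cC_\J$-$2$-morphisms between cell $1$-morphisms. The main obstacle will be to verify that each such matrix entry lies in $\bfI_\L^{\ccC_\J}$: this requires an induction on the filtration length together with strong regularity of $\J$ and a Lemma~\ref{sameideal}-type argument applied to the endomorphism algebras of cell $1$-morphisms, in order to rewrite these entries as images of the $\cC_\J$-action on morphisms already known to lie in $\bfI_\L^{\ccC_\J}$.
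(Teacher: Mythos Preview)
Your first two steps are fine and match the paper: the identification of $\bfR_\L$ with $\bfR_\L^{\ccC_\J}$ as $\cC_\J$-representations is correct (using maximality of $\J$ and strong regularity), and the inclusion $\bfI_\L\subseteq\bfI_\L^{\ccC_\J}$ is immediate.

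The genuine gap is in your third step. You correctly identify that the crux is showing $\bfI_\L^{\ccC_\J}$ is $\cC$-stable, and you correctly observe that for $\rZ\notin\cC_\J$ the object $\rZ\rX$ still lies in $\bfR_\L$ and the filtration structure maps are $2$-morphisms in $\cC_\J$. But the matrix entries $u'_k(\id_\rZ\circ_0\alpha)v_l$ are \emph{compositions} in $\bfR_\L$ of $\alpha\in\bfI_\L^{\ccC_\J}$ with morphisms that genuinely involve $\id_\rZ$; there is no evident way to rewrite them as $\bfR_\L(\rF)(\beta)$ for some $\rF\in\cC_\J$ and $\beta$ already in the ideal. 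Your proposed ``induction plus Lemma~\ref{sameideal}-type argument'' does not supply this: Lemma~\ref{sameideal} compares a $p$-dg ideal with its underlying additive ideal, but does not produce the needed factorisation through the $\cC_\J$-action. As written, this step is a statement of what would need to be proved, not a proof.

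The paper circumvents this direct combinatorial problem entirely. It passes to the underlying finitary $2$-category: given a $\del$-stable, $\cC_\J$-stable ideal $\bfI$ not containing any $\id_\rF$, the ideal $[\bfI]$ is $[\cC_\J]$-stable, hence contained in the maximal such ideal of $[\bfR_\L]$. By \cite[Corollary~33]{MM1} the finitary cell $2$-representation of $[\cC_\J]$ coincides with the restriction of that of $[\cC]$, so $[\bfI]$ is $[\cC]$-stable. Since stability under the $\cC$-action on a $2$-representation only involves applying $1$-morphisms, $[\cC]$-stability of $[\bfI]$ together with $\del$-stability of $\bfI$ gives $\cC$-stability of $\bfI$. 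The key point you are missing is this reduction to the already-established finitary result.
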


\proof 
In order to compute the cell $2$-representation $\bfC_{\L,\J}$ of  $\cC_\J$, we need to consider the $p$-dg $2$-representation $\bfR_{\L,\J}$ of $\cC_J$ as constructed in Section \ref{seccellrep} and take the simple transitive quotient. Notice that $\bfR_{\L,\J}$  is simply the restriction of $\bfR_{\L}$ to $\cC_\J$ by definition and hence ideals in the underlying category which are stable under $\del$ and do not contain the identity on any $2$-morphism in $\L$ coincide. Furthermore, any  ideal that is stable under $\cC$ is also stable under $\cC_J$. Therefore $\bfI_\L$ is contained in $\bfI_{\L,\J}$. 

Now assume there exists an ideal $\coprod_{\ti\in\ccC}\bfI(\ti)$ in $\coprod_{\ti\in\ccC}\bfR_{\L}(\ti)$ that does not contain  $\id_\rF$ for any $\rF\in \L$, is stable under $\del$ and under $\cC_J$, but not under $\cC$. Notice that in particular $[\coprod_{\ti\in\ccC}\bfI(\ti)]$ is stable under $[\cC_\J]$ and hence is contained in the maximal ideal of the fiat $2$-representation $[\bfR_\L]$ of $[\cC_\J]$, which needs to be factored out to obtain the (finitary) cell $2$-representation of $[\cC_\J]$. We know that the latter is equivalent to the restriction of the (finitary) cell $2$-representation of $[\cC]$ by \cite[Corollary 33]{MM1}, and hence $[\coprod_{\ti\in\ccC}\bfI(\ti)]$ is stable under $[\cC]$.
Now being stable under $[\cC]$ and under $\del$ implies that this ideal is stable under $\cC$.
\endproof

\subsection{Endomorphisms of \texorpdfstring{$p$-dg $2$-representations}{p-dg 2-representations}}\label{pdgendocats}

In this section, we record how results of \cite{MM3} about the endomorphism categories of cell $2$-representations can partially be extended to $p$-dg $2$-categories under particularly nice assumptions.
For this, let $\cC$ be a strongly finitary $p$-dg $2$-category such that $[\cC]$ is fiat and all $2$-sided cells are strongly regular. Let $\L$ be a left cell in $\cC$ and $\bfC_\L$ the associated cell $2$-representation. Let $\bfC_\L^{[\ccC]}$ be the cell $2$-representation of $[\cC]$ associated to $\L$.

\begin{proposition}\label{kaction}
For any $p$-dg $2$-representation $\bfM$ of $\cC$, we have a fully faithful functor $\Bbbk\plmod$ to $\End_{\ccC}(\bfM)$.
\end{proposition}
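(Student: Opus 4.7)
The plan is to construct a $p$-dg functor $\Theta\colon \Bbbk\plmod \to \End_{\ccC}(\bfM)$ by sending a graded $H$-module $V$ to the endomorphism $\Phi_V$ of $\bfM$ given by ``tensoring with $V$''. Explicitly, at each object $\ti$ of $\cC$, $(\Phi_V)_\ti\colon \bfM(\ti) \to \bfM(\ti)$ is the $p$-dg functor $X \mapsto X \otimes V$ from Section \ref{tensorproducts}, using that $\bfM(\ti)$ is $p$-dg equivalent to some $\overline{\C_\ti}$. Since $\bfM(\rF)$ is a $p$-dg functor for every $1$-morphism $\rF$, and the tensor action \eqref{Hmoduletensor2} is defined componentwise on the list structure of objects in $\overline{\C_\ti}$, there is a canonical natural $p$-dg isomorphism $\eta_\rF^{\Phi_V}\colon (\Phi_V)_\tj \circ \bfM(\rF) \to \bfM(\rF) \circ (\Phi_V)_\ti$ satisfying the coherence \eqref{repmorcomp}. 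The construction of $\Theta$ on morphisms sends $f \colon V \to W$ to the modification with components $\Theta(f)_\ti(X) := \id_X \otimes f$; the modification condition \eqref{modificationcond} holds because the coherence isomorphisms $\eta^{\Phi_V}$ and $\eta^{\Phi_W}$ are built componentwise. The Leibniz rule $\partial(\id_X \otimes f) = \id_X \otimes \partial(f)$ shows $\Theta$ commutes with differentials on hom spaces and is therefore a $p$-dg functor.

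Faithfulness is immediate: if $\Theta(f) = 0$, then $\id_X \otimes f = 0$ for every $X$ in every $\bfM(\ti)$, and evaluating at any $\Bbbk$-indecomposable $X$ with $\id_X \neq 0$ forces $f = 0$ via the explicit matrix description \eqref{Hmoduletensor2}.

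Fullness is the main obstacle, and I would approach it as follows. Given a modification $\theta\colon \Phi_V \to \Phi_W$, using \eqref{Hmoduletensor2}, the component $\theta_\ti(X)\colon X \otimes V \to X \otimes W$ is a matrix of morphisms between shifts of $X$ in $\bfM(\ti)$. Naturality of $\theta_\ti$ applied to arbitrary $g\colon X \to X'$ in $\bfM(\ti)$ (which act as block-diagonal matrices on $X \otimes V$ and $X \otimes W$) forces each matrix entry of $\theta_\ti(X)$ to be of the form $\lambda_{ij}\,\id_X$ (up to the appropriate shift) for scalars $\lambda_{ij} \in \Bbbk$ that are independent of $X$. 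These scalars assemble into a $\Bbbk$-linear map $f\colon V \to W$, and compatibility of $\theta_\ti$ with the differential on $\Hom$-spaces in $\bfM(\ti)$ verifies $\partial(f) = \partial(\theta)$, placing $f$ in $\Hom_{\Bbbk\plmod}(V, W)$. The modification condition \eqref{modificationcond} with respect to every $1$-morphism $\rF$ of $\cC$ ensures $f$ is independent of the object $\ti \in \cC$.

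The technical heart is extracting the scalar entries cleanly: this is essentially an enriched Yoneda argument saying that the $\Bbbk\plmod$-action on $\bfM(\ti)$ endows $\bfM(\ti)$ with a ``free module structure'' in the $V$-direction, so that natural transformations between tensor endofunctors are controlled by morphisms in the free generator $\Bbbk\plmod$. Care will be needed because for a general $X = (\bigoplus X_m, \alpha)$ the endomorphism ring $\End(X)$ may be larger than $\Bbbk$; one first establishes the scalar-matrix form on $\Bbbk$-indecomposable objects via naturality over their endomorphism rings (combined with local finiteness so these are local) and then extends to arbitrary objects using naturality over the injection/projection morphisms arising from the fantastic filtration structure in $\overline{\C_\ti}$.
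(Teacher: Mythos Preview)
Your construction of $\Theta$ and the faithfulness argument are correct and match the paper's approach; the paper simply defines $\Psi^V_\ti$ as tensoring with $V$, takes identity coherences, and asserts that the assignment is ``obviously functorial and fully faithful.''

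However, your fullness argument has a genuine gap. You claim that for a $\Bbbk$-indecomposable $X$, naturality of $\theta_\ti$ over the local ring $\End(X)$ forces each matrix entry of $\theta_\ti(X)$ to be a scalar multiple of $\id_X$. This is not correct: naturality only yields that the entry lies in $Z(\End(X))$, and a local ring such as $\Bbbk[t]/(t^n)$ is commutative yet strictly larger than $\Bbbk$. Globally, a natural transformation between shifts of $\id_{\bfM(\ti)}$ is a graded element of $\End(\id_{\bfM(\ti)})$, which for $\bfM(\ti)\simeq\overline{\A_\ti}$ is the centre of the associated $p$-dg algebra; the modification condition over $\cC$ cuts this down to elements compatible with all $\bfM(\rF)$, but does not in general force them into $\Bbbk$. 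For instance, taking $\bfM=\bfP_\ti$, Proposition~\ref{yoneda} identifies modifications $\id_{\bfP_\ti}\to\id_{\bfP_\ti}$ with $\End_{\ccC}(\one_\ti)$, which under the standing hypotheses is only required to be local. The paper offers no further argument here, so in the stated generality fullness appears to need an extra hypothesis; it does hold in the setting of Theorem~\ref{nicecell}, where one already has the embedding $[\End_{\ccC}(\bfM)]\hookrightarrow\Bbbk\lmod$ to fall back on.
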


\proof
For $V\in \Bbbk\plmod$, define an endomorphism $\Psi^V$ of $\bfM$ as follows. The functor $\Psi^V_\ti$ is the $p$-dg endofunctor of $\bfM(\ti)$ given by tensoring with $V$ as in Section \ref{tensorproducts}. Choosing identities as coherences, this gives rise to an endomorphism of $\bfM$ and this assignment is obviously functorial and fully faithful.
\endproof

\begin{theorem}\label{nicecell}
If $\bfM$ is a $p$-dg $2$-representation of $\cC$ such that $[\bfM]$ is equivalent to $\bfC_\L^{[\ccC]}$, then the category $\End_{\ccC}(\bfM)$ is $p$-dg equivalent to $\Bbbk\plmod$.
\end{theorem}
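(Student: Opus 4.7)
The plan is to invoke Lemma~\ref{pdgequivlemma} to reduce the claim to showing that the fully faithful $p$-dg functor $\Bbbk\plmod \longrightarrow \End_{\ccC}(\bfM)$ from Proposition~\ref{kaction} is $p$-dg dense: every $\Phi\in \End_{\ccC}(\bfM)$ must be $p$-dg isomorphic to $\Psi^V$ for some $V\in \Bbbk\plmod$. The underlying finitary input is \cite[Theorem~16]{MM3} applied to $[\bfM]\simeq\bfC_{\L}^{[\ccC]}$ over $[\cC]$ (which is fiat with strongly regular cells by hypothesis): this yields that $[\Phi]$ is isomorphic to $V_0\otimes_{\Bbbk}(-)$ for some finite-dimensional $V_0\in\Bbbk\lmod$, and in particular that the underlying functor $\Bbbk\lmod\to \End_{[\ccC]}([\bfM])$ is essentially surjective.

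To upgrade $V_0$ to a graded $H$-module, fix a distinguished $\Bbbk$-indecomposable object $X_0\in \bfM(\ti_\L)$ corresponding under $[\bfM]\simeq\bfC_\L^{[\ccC]}$ to the class of $\one_{\ti_\L}$; by the description of cell $2$-representations in the fiat strongly-regular setting (cf. \cite[Theorem~16]{MM3}), $\End_{[\bfM]}(X_0)\cong\Bbbk$ and hence $\End_{\bfM}(X_0)\cong\Bbbk$ as a $p$-dg algebra (with trivial $\del$). Since $\bfM(\ti_\L)\simeq \overline{\C_{\ti_\L}}$, Lemma~\ref{matcatequiv} and the fact that $[\Phi_{\ti_\L}(X_0)]\cong V_0\otimes [X_0]$ force $\Phi_{\ti_\L}(X_0)$ to be $p$-dg isomorphic to an object of the form $(X_0^{n(t)},\alpha)$ with $n(t)=\dim_t V_0$ and $\alpha$ an upper-triangular $p$-nilpotent matrix whose entries are scalar multiples of shifted identities of $X_0$. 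Such an $\alpha$ is precisely the datum of a graded $H$-module structure on the multiplicity space, which we take as the definition of $V\in\Bbbk\plmod$. Comparing with \eqref{Hmoduletensor2}, this produces a $p$-dg isomorphism $\Phi_{\ti_\L}(X_0)\cong X_0\otimes V=\Psi^V_{\ti_\L}(X_0)$.

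To globalise, we invoke the $p$-dg Yoneda Lemma (Proposition~\ref{yoneda}): morphisms $\bfP_{\ti_\L}\to\bfM$ in $\cC\pamod$ are controlled by evaluation on $\one_{\ti_\L}$, and $\Phi,\Psi^V\in \End_{\ccC}(\bfM)$ restrict along the morphism $\bfP_{\ti_\L}\to \bfM$ classifying $X_0$ to give morphisms $\bfP_{\ti_\L}\to\bfM$. Transitivity of $\bfM$ (as $[\bfM]\simeq \bfC_\L^{[\ccC]}$ is transitive and has the same underlying finitarity) ensures that every object of $\coprod_\tj\bfM(\tj)$ appears, up to fantastic filtration, as a summand of some $\bfM(\rF)(X_0)$. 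The coherence isomorphisms $\eta^{\Phi}_{\rF}\colon \Phi_{\tj}\circ \bfM(\rF)\to \bfM(\rF)\circ\Phi_{\ti_\L}$ and the identities $\eta^{\Psi^V}_\rF$ then transport the single $p$-dg isomorphism at $X_0$ coherently to all of $\bfM$, yielding a $p$-dg modification $\Phi\cong \Psi^V$.

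The main obstacle is verifying that the finitary isomorphism coming from \cite[Theorem~16]{MM3} actually lifts to a $p$-dg isomorphism, i.e. that the matrix $\alpha$ extracted from $\Phi_{\ti_\L}(X_0)$ encodes a genuine graded $H$-module $V$ such that $\Psi^V$ recovers $\Phi$ compatibly with $\del$. This is forced by the hypothesis that $\Phi$ is a $p$-dg $2$-natural transformation (so the $\eta^\Phi_\rF$ are annihilated by $\del$) together with the triviality of $\End_\bfM(X_0)=\Bbbk$ as a $p$-dg algebra, which rigidifies the choice: the only ambiguity in writing $\Phi_{\ti_\L}(X_0)\cong (X_0^{n(t)},\alpha)$ is the choice of $p$-dg isomorphism, and all such choices produce $p$-dg isomorphic graded $H$-modules $V$.
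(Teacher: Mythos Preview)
Your approach differs from the paper's, which does not construct $V$ directly. Instead the paper observes that there is a fully faithful functor $[\End_{\ccC}(\bfM)]\to \End_{[\ccC]}([\bfM])\simeq\Bbbk\lmod$ (the last equivalence by \cite[Theorem~16]{MM3}), then applies the technical Lemma~\ref{kmod} to deduce that $\End_{\ccC}(\bfM)$ is $p$-dg equivalent to a full subcategory of $\Bbbk\plmod$, and concludes by sandwiching this with the fully faithful functor from Proposition~\ref{kaction}.

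Your argument has a genuine gap: the claim that $\End_{[\bfM]}(X_0)\cong\Bbbk$ for a distinguished $\Bbbk$-indecomposable $X_0$ is unjustified and in general false. The citation to \cite[Theorem~16]{MM3} computes the endomorphism \emph{category} of the cell $2$-representation (modifications of the identity $2$-natural transformation), not the endomorphism \emph{algebra of an object} inside it. In the prototypical case $\cC=\cC_A$, the cell $2$-representation is the natural one on $A\text{-}\mathrm{proj}$; its indecomposables are the $Ae_i$ with $\End(Ae_i)\cong (e_iAe_i)^{\op}$, a local algebra that is typically larger than $\Bbbk$ (already for $A=\Bbbk[x]/(x^2)$). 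There is, moreover, no object corresponding to $\one_{\ti_\L}$: the identity $1$-morphism generally does not lie in $\L$ and so has no image in $\bfC_\L^{[\ccC]}(\ti_\L)$. Once $\End_{\bfM}(X_0)$ is larger than $\Bbbk$, the matrix $\alpha$ you extract from $\Phi_{\ti_\L}(X_0)\cong (X_0^{n(t)},\alpha)$ has entries in a non-trivial local ring and no longer encodes a graded $H$-module in the way \eqref{Hmoduletensor2} requires; your identification with $\Psi^V$ collapses at this point. The paper sidesteps this by applying the same style of reasoning (packaged as Lemma~\ref{kmod}) to the category $\End_{\ccC}(\bfM)$ itself, whose unique $\Bbbk$-indecomposable up to isomorphism is the identity endomorphism, and \emph{its} endomorphism ring genuinely is $\Bbbk$ by \cite[Theorem~16]{MM3}.

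A secondary issue: even granting a $p$-dg isomorphism $\Phi_{\ti_\L}(X_0)\cong\Psi^V_{\ti_\L}(X_0)$, the Yoneda argument only yields an isomorphism of the composites $\Phi\circ\Upsilon\cong\Psi^V\circ\Upsilon$ as morphisms $\bfP_{\ti_\L}\to\bfM$, where $\Upsilon$ classifies $X_0$. You have not argued that post-composition with $\Upsilon$ reflects $p$-dg isomorphisms in $\End_{\ccC}(\bfM)$, which is what is needed to conclude $\Phi\cong\Psi^V$.
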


Before proving the theorem, we state a technical lemma.

\begin{lemma}\label{kmod}
Assume $\C$ is a locally finite $p$-dg category such that $[\C]\simeq \Bbbk\lmod$, then $\C$ is $p$-dg equivalent to a full subcategory of $\Bbbk\plmod$.
\end{lemma}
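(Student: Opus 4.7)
The plan is to construct a fully faithful $p$-dg functor $F \colon \C \to \Bbbk\plmod$; its $p$-dg essential image will then be the desired full $p$-dg subcategory by Lemma~\ref{pdgequivlemma}. First I would pick a $\Bbbk$-indecomposable object $X_0 \in \C$, which exists since $[\C] \simeq \Bbbk\lmod$. The underlying $\Bbbk$-vector space of $\End_\C(X_0)$ is $\End_{[\C]}([X_0]) \cong \Bbbk$, so $\End_\C(X_0)$ is a one-dimensional graded $H$-module; since $\id_{X_0}$ is a non-zero element of degree $0$, this space is concentrated in degree $0$, and $\partial$ vanishes on it for degree reasons (it would map degree $0$ to degree $2$, which is zero).

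Next, I would show that every $\Bbbk$-indecomposable of $\C$ is $p$-dg isomorphic to some shift $X_0\langle d\rangle$. For such a $Y$, the same dimension count gives $\Hom_\C(X_0,Y)$ one-dimensional, concentrated in some single degree $d$ with $\partial = 0$; a non-zero element then provides a morphism $X_0 \to Y\langle d\rangle$ of degree $0$ annihilated by $\partial$, which is an isomorphism in $[\C]$ (hence in $\C$, since morphism spaces coincide) and therefore a $p$-dg isomorphism by Lemma~\ref{pdgisolemma}. Moreover, since $[\C] \simeq \Bbbk\lmod$, any $Y \in \C$ has $[Y]$ a finite biproduct of copies of $[X_0]$ in $[\C]$; the corresponding orthogonal idempotent decomposition lies in $\End_\C(Y) = \End_{[\C]}([Y])$ and thus yields a decomposition of $Y$ in $\C$ into $\Bbbk$-indecomposables. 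Consequently every object of $\C$ is $p$-dg isomorphic to some biproduct $\bigoplus_i X_0\langle m_i\rangle$ of shifts of $X_0$.

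Now I would take $F := \Hom_\C(X_0,-) \colon \C \to \Bbbk\plmod$, which is a $p$-dg functor by the discussion in Section~\ref{cofibrantpdgmods}; its target lies in $\Bbbk\plmod$ since $[\Hom_\C(X_0,Y)] \cong \Hom_{[\C]}([X_0],[Y])$ is finite-dimensional for any $Y$. For full faithfulness, additivity of $F$ together with the previous paragraph reduces the check to objects of the form $X_0\langle m\rangle$ and $X_0\langle n\rangle$. A direct computation using the shift convention gives $\Hom_\C(X_0\langle m\rangle, X_0\langle n\rangle)$ one-dimensional and concentrated in degree $m-n$, while $F(X_0\langle m\rangle) = \Bbbk\langle m\rangle$ and hence $\Hom_{\Bbbk\plmod}(F X_0\langle m\rangle, F X_0\langle n\rangle)$ is also one-dimensional in degree $m-n$; as $F$ sends a non-zero morphism to the non-zero map given by post-composition, it is an isomorphism on these hom spaces, and by additivity on general objects. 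The main bookkeeping obstacle is keeping the shift and degree conventions consistent throughout; the lifting of decompositions from $[\C]$ to $\C$ and the verification that $F$ is a $p$-dg functor are both essentially automatic.
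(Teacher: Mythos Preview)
Your argument is essentially correct and more direct than the paper's, but one intermediate claim is false as stated. The assertion that every object of $\C$ is \emph{$p$-dg isomorphic} to a biproduct $\bigoplus_i X_0\langle m_i\rangle$ fails in general: the orthogonal idempotent decomposition of $\id_Y$ in $\End_\C(Y)\cong M_n(\Bbbk)$ lifts from $[\C]$ to $\C$ only as a $\Bbbk$-linear decomposition, since the idempotents need not be degree-$0$ or $\partial$-annihilated. For a concrete counterexample, take $\C$ to be the full $p$-dg subcategory of $\Bbbk\plmod$ generated under biproducts and shifts by $\Bbbk$ and the two-dimensional indecomposable $H$-module $V_1$; then $[\C]\simeq\Bbbk\lmod$, but $V_1$ is $p$-dg indecomposable and hence not $p$-dg isomorphic to any $\Bbbk\langle m_1\rangle\oplus\Bbbk\langle m_2\rangle$. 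The fix is painless: for full faithfulness of $F=\Hom_\C(X_0,-)$ you only need $Y$ to be \emph{isomorphic in $\C$} to such a biproduct, since an arbitrary isomorphism $\phi\colon Y\to Y'$ induces compatible bijections $\phi^*$ and $(F\phi)^*$ on hom-sets. So replace ``$p$-dg isomorphic'' by ``isomorphic in $\C$'' and your reduction goes through. (Your closing remark that ``the lifting of decompositions from $[\C]$ to $\C$ \ldots\ is essentially automatic'' is thus slightly too optimistic in the $p$-dg sense, but harmless for what you actually need.)

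By comparison, the paper also uses the representable functor at a $\Bbbk$-indecomposable, but routes through $\overline{\C}$: it shows $\C$ is strongly finitary (by conjugating the differential on $P_Z^{\op}$ into upper-triangular form over copies of $P_X^{\op}$), invokes Lemma~\ref{ABsubs} to identify $\overline{\C}\simeq\overline{\B}\simeq\Bbbk\plmod$, and then embeds $\C\hookrightarrow\overline{\C}$. Your approach avoids the bar completion and fantastic filtrations entirely, which is more elementary; the paper's route has the side benefit of establishing strong finitarity of $\C$ and meshes with the ambient machinery.
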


\proof
We first claim that $\C$ is indeed strongly finitary and only has one $\Bbbk$-indecomposable object up to $p$-dg isomorphism. Indeed, thanks to the equivalence $[\C]\simeq \Bbbk\lmod$, we have at least one $\Bbbk$-indecomposable object $X$ with endomorphism ring $\Bbbk$. Consider the subquotient idempotent completion $\C^{\bullet}$ of $\C$ from Section \ref{fincat}, and take another $\Bbbk$-indecomposable object $Y$ in $\C^{\bullet}$. Since it is $\Bbbk$-isomorphic to $X$, we have an isomorphism $X\to Y$ in $\Hom_\C(X,Y)\cong \Bbbk$ and this is necessarily annihilated by $\del$ and thus a $p$-dg isomorphism. This proves that $\C^{\bullet}$  only has one $\Bbbk$-indecomposable object.

Under the Yoneda embedding $\C \to \C^\op\cof$, the image of an object $Z$ in $\C$ is $p$-dg isomorphic to a direct sum of copies of $P_X^\op$, together with a differential $\del \tI+\left((\alpha_{rs})_*\right))_{r,s}=\left((\alpha_{rs})_*\right))_{r,s}$ where all $\alpha_{rs}$ are necessarily scalars. Given nilpotency of the matrix $(\alpha_{rs})_{r,s}$, we can find a basis in our direct sum of copies of $P_X^\op$ such that it is upper triangular, and thus $\C$ is strongly finitary.

Let $\B$ be the additive subcategory of $\C$ consisting of direct sums of $\Bbbk$-indecomposable objects, thus $\B$ is $p$-dg equivalent to the additive closure of $X$ and equivalent to $\Bbbk\lmod$ (understood as a $p$-dg category with trivial differential). By the previous paragraph and Lemma \ref{ABsubs}, this implies $\overline{C}$ is $p$-dg equivalent to $\overline{\B}$, which is clearly $p$-dg equivalent to $\Bbbk\plmod$. As $\C$ is a full subcategory of $\overline{\C}$, the lemma follows.
\endproof

We now prove the theorem.

\begin{proof}[Proof of Theorem \ref{nicecell}]
First note that we have a fully faithful functor $[\End_{\ccC}(\bfM)]\to \End_{[\ccC]}([\bfM])$. Under the assumption of an equivalence between $[\bfM]$ and $\bfC_\L^{[\ccC]}$, we thus have a fully faithful functor $[\End_{\ccC}(\bfM)]\to\End_{[\ccC]}(\bfC_\L^{[\ccC]})$. The latter is equivalent to $\Bbbk\lmod$ by \cite[Theorem 16]{MM3}. By Lemma \ref{kmod}, $\End_{\ccC}(\bfM)$ is $p$-dg equivalent to a full subcategory of $\Bbbk\plmod$.
By Proposition \ref{kaction}, $\Bbbk\plmod$ is also a full subcategory of $\End_{\ccC}(\bfM)$. The composition $\Bbbk\plmod\hookrightarrow \End_{\ccC}(\bfM)\hookrightarrow \Bbbk\plmod$, sends $\Bbbk$ to the identity and back to $\Bbbk$, and the theorem follows.
\end{proof}

\section{The \texorpdfstring{$p$}{p}-dg \texorpdfstring{$2$}{2}-category \texorpdfstring{$\cC_\A$}{CA}}\label{CAsec}

\subsection{Recollections from the finitary world}

\begin{definition}\label{AsubbC}
Let $n\in\mathbb{N}$ and $A:=(A_1,A_2,\dots,A_n)$ be a collection of  connected, 
finite dimensional associative $\Bbbk$-algebras. For $i\in\{1,2,\dots,n\}$, choose some small category $\A_i$ equivalent to $A_i\text{-}\mathrm{proj}$.
Set $\A=(\A_1,\A_2,\dots,\A_n)$.
Denote by  $\AsubC$ the $2$-full finitary $2$-subcategory of $\mathfrak{A}_{\Bbbk}$ with objects $\A_i$, whose $1$-morphisms consist of functors isomorphic to direct sums of identity functors and  functors given tensoring with projective $A_i\text{-}A_j$-bimodules. The $2$-morphisms of $\AsubC$ are given by all natural transformations of such functors.
\end{definition}

\begin{remark}
In \cite{MM3} the algebras $A_i$ are assumed to be basic. However, it is easy to see that any different, but Morita equivalent, choice of algebras leads to a biequivalent $2$-category.
\end{remark}

By \cite[Theorem 13]{MM3}, a fiat $2$-category $\cC$, such that $\cC$ has only one two-sided cell $\J$ apart from the identities, this two-sided cell is strongly regular, and such that $\cC$ has no nonzero $2$-ideals not containing the identity on some $1$-morphisms (i.e. it is $\J$-simple),
 is biequivalent to $\AsubC$ for a suitable choice of self-injective $A$ (apart from possibly having slightly smaller endomorphism rings of the identities). 
Further, the natural (defining) representation $\bfN\colon \AsubC\to \cEnd(A\proj)$ (denoted by $\bfD$ in \cite{MM5}) is isomorphic to the cell representation $\bfC_\L$ for any left cell $\L$ in $\J$ by \cite[Proposition~9]{MM5}.

In this section, we generalize these definitions and results to the $p$-dg enriched setup.

\subsection{Definition of \texorpdfstring{$\cC_{\A}$}{CA}}

Motivated by \cite[Theorem 13]{MM3}, we will now define a particularly nice class of $p$-dg $2$-categories.
For this, suppose we are given a list  $\A_1,\dots \A_n$ of locally finite $p$-dg categories, each of the form considered in Section \ref{algcat}.  That is, we assume for any $i=1,\ldots, n$ the existence of  finite set of objects $\mathtt{X}^i$ such that every object in $\C$ has a fantastic filtration by shifts of objects in $\mathtt{X}^i$. Recall that in this case we can associate to $\A_i$ a finite-dimensional $p$-dg algebra $A_i$ as in Example \ref{algebraex}.
We set $\A:=\coprod_{i=1}^n \A_i$, and $\mathtt{X}:=\cup_{i=1}^n \mathtt{X}^i$.

\begin{definition}\label{AsubCgen}
We define the  $p$-dg $2$-category $\Csub{\A}$ whose
\begin{itemize}
\item objects $\ti$ are identified with the categories $\overline{\A_i}$ for $i=1,\dots, n$;
\item $1$-morphisms are the closure under sums and grading shifts of the identity functors and compositions of functors isomorphic to tensoring with objects in $\left\lbrace X\otimes Y\middle| X,Y \in \mathtt{X}\right\rbrace\subset \A\boxtimes \A^\op$, using the tensor action described in \eqref{bimoduletensor} of Section \ref{tensorproducts};
\item $2$-morphisms are $\Bbbk$-linear natural transformations (morphisms) of such functors.
\end{itemize}
\end{definition}
The $2$-category $\Csub{\A}$ comes with its {\bf defining} or {\bf natural representation} $\bfN: \Csub{\A} \to \cEnd(\overline{\A})$. Both $\Csub{\A}$ and its defining representation are strongly finitary if  $\A$ is.

\begin{lemma}
In $\Csub{\A}$, the endomorphism ring of $\one_\ti$ is $p$-dg isomorphic to the center $Z_i$ of the $p$-dg algebra $A_i$ associated to $\A_i$ as in Example \ref{algebraex}.
\end{lemma}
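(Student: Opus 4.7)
The plan is to reduce the computation of $\End_{\Csub{\A}}(\one_\ti)$ to a question purely about $\A_i$ and then identify the answer with $Z_i$ by a naturality argument. Since by Definition \ref{AsubCgen} the $1$-morphism $\one_\ti$ is literally the identity functor on $\overline{\A_i}$, the endomorphism ring in question is the graded $H$-module of $\Bbbk$-linear natural transformations $\id_{\overline{\A_i}} \to \id_{\overline{\A_i}}$, equipped with the componentwise differential and the algebra structure given by vertical composition.

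First I would invoke Lemma \ref{extensionlemma}\eqref{extensionlemma2} applied to $F = G = \id$ along $\iota_{\A_i}\colon \A_i \hookrightarrow \overline{\A_i}$; it guarantees a $p$-dg isomorphism
\[
\End_{\Fun(\overline{\A_i},\overline{\A_i})}(\id_{\overline{\A_i}}) \;\cong\; \End_{\Fun(\A_i,\A_i)}(\id_{\A_i}),
\]
so it suffices to compute the right-hand side. Using the presentation of $\A_i$ from Example \ref{algebraex} via a $p$-dg idempotent decomposition $1=e_1+\cdots +e_r$ of $A_i$ with generating objects $Y_1,\ldots,Y_r$ and $\Hom_{\A_i}(Y_k,Y_l) = e_kA_ie_l$, a natural transformation $\eta$ is determined by its components $\eta_{Y_k}\in e_kA_ie_k$. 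Setting $z:=\sum_k \eta_{Y_k}\in A_i$, the naturality condition against an arbitrary morphism $a\in e_kA_ie_l$ becomes the commutation $az = za$; summing over $k,l$ this says $z$ commutes with every element of $A_i$, i.e.\ $z\in Z_i$. Conversely, any central $z$ produces a natural transformation via $\eta_{Y_k}:=e_kz=ze_k$, and these assignments are mutually inverse; the algebra structures match up to taking opposites, which is harmless since $Z_i$ is commutative.

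The last step is the $p$-dg compatibility. The differential on $\eta$ is defined componentwise, so $(\partial\eta)_{Y_k}=\partial(\eta_{Y_k})$. Using $\partial(e_k)=0$ and the Leibniz rule, this corresponds to $\partial(e_kz)=e_k\partial(z)$ under the above bijection, so the correspondence $\eta\leftrightarrow z$ intertwines differentials. The center $Z_i$ is automatically $\partial$-stable (Leibniz applied to $za=az$ forces $(\partial z)a = a(\partial z)$), so the identification is well-defined as $p$-dg algebras. The main step is not conceptual but bookkeeping: making sure the reduction furnished by Lemma \ref{extensionlemma}\eqref{extensionlemma2} is fully compatible with the $H$-action and the algebra structure; once this is verified, the identification with $Z_i$ is the standard computation of natural endotransformations of the identity on a module category.
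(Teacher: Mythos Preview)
Your proof is correct and follows essentially the same approach as the paper's: both invoke Lemma \ref{extensionlemma}\eqref{extensionlemma2} to reduce natural endotransformations of $\id_{\overline{\A_i}}$ to data on the finite generating set $\mathtt{X}^i$, then identify this data with central elements of $A_i$ via naturality. The only cosmetic difference is that the paper phrases the reduction as ``$\lambda_X$ is a diagonal matrix'' on $\overline{\A_i}$, while you first restrict along $\iota_{\A_i}$ and then to the generators; and note that in the paper's setup $\A_i$ is not literally the category $\A_{A_i}$ from Example \ref{algebraex} but only satisfies the fantastic filtration hypothesis, so your phrase ``the presentation of $\A_i$ from Example \ref{algebraex}'' tacitly uses Lemma \ref{cofoverline} (or the additive observation that fantastic filtrations determine the natural transformation on all objects).
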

\begin{proof}
Using Lemma \ref{extensionlemma}\eqref{extensionlemma2} we see that a natural transformation $\lambda \colon \id_{\overline{\A_i}}\to \id_{\overline{\A_i}}$ consists of diagonal matrices $\lambda_X=(\lambda_{X_i})_i$ for an object $X=(\bigoplus_i{X_i},\alpha)$. By assumption, any object of $\A_i$ has a fantastic filtration by the objects in $\mathtt{X}^i$. Hence, the natural transformation is determined by the data of the $\lambda_X$ for $X\in \mathtt{X}^i$. The sum of these morphisms gives an element of $A_i$ which has to be in the center by naturality of $\lambda$. Conversely, any element of the center $Z_i$ of $A_i$ has to be a diagonal matrix and induces a natural transformation $\id_{\overline{\A_i}}\to \id_{\overline{\A_i}}$. These assignments give a $p$-dg isomorphism $\End(\one_\ti)^\op\cong Z_i$.
\end{proof}

As in \cite[Section~4.5]{MM3},  let $Z_i'$ be the subalgebra of $Z_i$ generated by the identity and all elements that factor through $1$-morphisms given by tensoring with $p$-dg indecomposable objects in $\A\boxtimes \A^\op$. We can slightly generalize $\Csub{\A}$ to $\Csub{\A, X}$, where $X=(X_1, \dots,X_n)$ is a list of $p$-dg subalgebras of $Z_i$ containing $Z_i'$, by letting $\Csub{\A, X}$ be the $2$-subcategory of $\Csub{\A}$ on the same objects, same $1$-morphisms and same $2$-morphisms except for the endomorphism rings of the $\one_\ti$, which are now given by $X_i$. In the following, we suppress the $X$ from the notation, but will always work in the generality of $\Csub{\A,X}$.

Observe that, up to possible variations in the endomorphism rings of identities,  $\Csub{\A}$ is the smallest $2$-full $p$-dg $2$-subcategory of the $2$-category of endofunctors  of $\overline{\A}$ that contains all functors $p$-dg isomorphic to tensoring with objects in $\A\boxtimes \A^\op$ isomorphic to tensoring with $X\otimes Y$, for $X,Y \in \mathtt{X}$. On the other extreme, we could take the smallest $2$-full $p$-dg $2$-subcategory of the $p$-dg $2$-category of endofunctors  of $\overline{\A}$ that contains all functors $p$-dg isomorphic to tensoring with any objects in $\overline{\A\boxtimes \A^\op}$ (recall the construction of this action from Section \ref{tensorproducts}). We denote this $2$-category by $\Csub{\A}^+$, respectively $\Csub{\A,X}^+$ if we want to specify endomorphism rings of identities. Its defining $p$-dg $2$-representation is the same as that of $\Csub{\A}$ and the same remarks about  local finiteness and strong finitarity apply.

Recall, from Section \ref{idempfant}, the subquotient idempotent completion $\A^{\bullet}$ of $\A$, and the category $\B$, which is the additive closure of $\Bbbk$-indecomposable objects in $\A^{\bullet}$.  
From Lemma \ref{ABsubs} and Corollary \ref{ABequiv}, we immediately have the following conclusions.

\begin{lemma}\label{cattoalg}$~$
\begin{enumerate}[(i)]
\item There  are natural $2$-full $p$-dg embeddings of $\Csub{\A}$ and $\Csub{\B}$  into $\Csub{\A^{\bullet}}$. Similarly, there  are natural $2$-full $p$-dg embeddings of $\Csub{\A}^+$ and $\Csub{\B}^+$  into $\Csub{\A^{\bullet}}^+$.
\item 
Viewed as a $2$-full $p$-dg $2$-subcategories of $\Csub{\A^{\bullet}}^+$, the $2$-category
$\Csub{\A}^+$  is (up to $p$-dg isomorphism) contained in $\Csub{\B}^+$  if and only if every $X\in \A$ has a fantastic filtration by objects in $\B$.
\item Viewed as $2$-full $p$-dg $2$-subcategories of $\Csub{\A^{\bullet}}^+$, the $2$-category
$\Csub{\B}^+$ is (up to $p$-dg isomorphism) contained in  $\Csub{\A}^+$   if and only if  for every $Y\in \B$, $P_Y^{\op}$ is semi-free over $\A$.
\item If every $X\in \A$ has a fantastic filtration by objects in $\B$ and, for every $Y\in \B$, $P_Y^{\op}$ is semi-free over $\A$, then the $2$-categories $\Csub{\A}^+$ 
 and $\Csub{\B}^+$ are $p$-dg biequivalent. 
\end{enumerate}
\end{lemma}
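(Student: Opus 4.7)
The overall strategy is to reduce all four parts of the lemma to Lemma \ref{ABsubs} and Corollary \ref{ABequiv}, applied at the $1$-categorical level to the bimodule category $\A\boxtimes\A^\op$ (with $\B\boxtimes\B^\op$ playing the role of its ``$\B$''), viewed inside $\A^{\bullet}\boxtimes(\A^{\bullet})^\op$. The key observation tying the $2$-categorical statements to this $1$-categorical input is that, by construction, the $1$-morphisms of $\Csub{\A}^+$ are (up to $p$-dg isomorphism) precisely those given by tensoring with objects of $\overline{\A\boxtimes\A^\op}$ via \eqref{bimoduletensor}, and similarly for $\Csub{\B}^+$ and $\Csub{\A^{\bullet}}^+$. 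Furthermore, since biproducts and morphisms in $\A\boxtimes\A^\op$ are defined componentwise on tensor generators, the $\Bbbk$-indecomposables of $(\A\boxtimes\A^\op)^{\bullet}$ are exactly tensor products of $\Bbbk$-indecomposables of $\A^\bullet$ and $(\A^\op)^\bullet$, so the appropriate ``$\B$''-analogue for $\A\boxtimes\A^\op$ is $\B\boxtimes\B^\op$.

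For part (i), I would start from the canonical $p$-dg inclusions $\A\hookrightarrow\A^{\bullet}$ and $\B\hookrightarrow\A^{\bullet}$; these induce inclusions of bimodule categories, which on passing to tensor functors produce strict $p$-dg $2$-functors $\Csub{\A}\to\Csub{\A^{\bullet}}$ and $\Csub{\B}\to\Csub{\A^{\bullet}}$, as well as their $+$-versions. That these $2$-functors are $2$-full amounts to the assertion that every natural transformation between bimodule-tensor endofunctors of $\overline{\A^{\bullet}}$ whose source and target come from $\Csub{\A}$ arises from a natural transformation on $\overline{\A}$; this will follow from a Yoneda-type identification of natural transformations of tensor functors with bimodule morphisms (using fullness of $\A\hookrightarrow\A^{\bullet}$), combined with Lemma \ref{extensionlemma}\eqref{extensionlemma2} to extend natural transformations along $\iota$.

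For parts (ii) and (iii), I would translate the statement that $\Csub{\A}^+$ is $p$-dg essentially contained in $\Csub{\B}^+$ (as $2$-full subcategories of $\Csub{\A^{\bullet}}^+$) into: every object of $\overline{\A\boxtimes\A^\op}$ is $p$-dg isomorphic to an object of $\overline{\B\boxtimes\B^\op}$ inside $\overline{\A^{\bullet}\boxtimes(\A^{\bullet})^\op}$. This reduction is justified because, between $2$-full subcategories sharing the same $2$-morphism formation, containment reduces to essential containment of $1$-morphism classes. Lemma \ref{ABsubs}\eqref{ABsubs1} applied to $\A\boxtimes\A^\op$ then yields (ii) once one notes that an object $X\otimes Y\in\A\boxtimes\A^\op$ admits a fantastic filtration by objects of $\B\boxtimes\B^\op$ iff both $X$ and $Y$ admit such filtrations by objects of $\B$, via taking tensor products of the defining $u_i,v_i$ data. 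Part (iii) follows analogously from Lemma \ref{ABsubs}\eqref{ABsubs2}, using that $P_{X\otimes Y}^\op \cong P_X^\op\boxtimes P_Y^\op$ is semi-free over $\A\boxtimes\A^\op$ iff $P_X^\op$ and $P_Y^\op$ are semi-free over $\A$. Part (iv) is then an immediate consequence of (ii), (iii), and Corollary \ref{ABequiv}.

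The main obstacle I expect is the bookkeeping in verifying that $\B\boxtimes\B^\op$ genuinely plays the role of the ``$\B$'' attached to $\A\boxtimes\A^\op$, and that fantastic filtrations and semi-free structure on tensor generators $X\otimes Y$ really assemble, componentwise, from fantastic filtrations and semi-free structures on $X$ and $Y$ separately. A secondary technicality is the possible discrepancy in endomorphism rings of identity $1$-morphisms when the variant $\Csub{\A, X}^{(+)}$ is used, but since the embedding and the bimodule structure leaves the non-identity $1$-morphisms and their natural transformations intact and each $Z_i'$ is defined in the same way inside each of the three $2$-categories, this will not affect the essential containment arguments.
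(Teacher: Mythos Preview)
Your proposal is correct and takes essentially the same approach as the paper: the paper's entire proof consists of the sentence ``From Lemma \ref{ABsubs} and Corollary \ref{ABequiv}, we immediately have the following conclusions,'' and your argument is precisely a careful unpacking of what that reduction entails, passing through the bimodule category $\A\boxtimes\A^\op$ and reducing the tensor conditions back to the componentwise conditions on $\A$ and $\B$ stated in the lemma. The bookkeeping you flag (that $\B\boxtimes\B^\op$ plays the role of the ``$\B$'' for $\A\boxtimes\A^\op$, and that fantastic filtrations and semi-freeness on $X\otimes Y$ decompose into the corresponding properties on $X$ and $Y$) is exactly the implicit content the paper suppresses.
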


\begin{remark}\label{annihilatedidempotents}
For $\A=\coprod_{i=1}^n\A_i$ as above, such that all $\A_i$ are strongly finitary, consider the $p$-dg subcategories $\B_i$ of $\A_i$.
Note that in the $p$-dg algebra $B_i$ associated to $\B_i$ (using Example \ref{algebraex}), we can find a decomposition
\[
1=e_1+\ldots+e_s
\]
where $e_1,\ldots, e_s$ are pairwise orthogonal primitive idempotents annihilated by $\del$. They correspond to a choice of representatives of $p$-dg isomorphism classes of $\Bbbk$-indecomposable objects. In the following, we consider $B=\prod_{i=1}^nB_i$ and $\B=\coprod_{i=1}^n\B_i$.
\end{remark}

\begin{proposition}\label{AsubBprop}
In the setup of Remark \ref{annihilatedidempotents}, the $2$-categories $\cC_{B}$ and $[\cC_{\B}]$ are biequivalent.
\end{proposition}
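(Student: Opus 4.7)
The plan is to construct a bifunctor $\Phi\colon \cC_B \to [\cC_\B]$ and verify it is a biequivalence using the non-enriched analogue of Lemma \ref{pdgbieqchar}, i.e.\ essential surjectivity on objects plus local equivalence. The key ingredient is the dictionary between $p$-dg algebras and $p$-dg categories established in Example \ref{algebraex} and exploited in Lemmas \ref{cofoverline} and \ref{moduletranslation}: by construction $B_i=\End_{\B_i}(\bigoplus_j X_j)^{\op}$ for $X_j$ running over representatives of $p$-dg isomorphism classes of $\Bbbk$-indecomposables in $\B_i$ (up to shift), with idempotent decomposition $1=e_1+\ldots+e_s$. Crucially, since $\del(e_j)=0$ by hypothesis, each $e_j$ remains a primitive idempotent after forgetting the $p$-differential, so the indecomposable projective $[B_i]$-modules are in natural bijection with the $\Bbbk$-indecomposables of $\B_i$ (after identification of grading shifts, which are collapsed by $[-]$).

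For the bifunctor itself: on objects, send the object of $\cC_B$ corresponding to $B_i\lmod$-proj to the object $\overline{\B_i}$ of $[\cC_\B]$, which is a plain $\Bbbk$-linear equivalence by Remark \ref{endostrongfin}. On $1$-morphisms, observe that tensoring with the indecomposable projective bimodule $[B_i]e_k \otimes e_l [B_j]$ (which, together with identities, generates all $1$-morphisms of $\cC_B$ under sums) corresponds under the equivalences of Lemma \ref{moduletranslation} to the functor $(-)\otimes_\B(X_k\otimes X_l)$ on $\overline{\B}$, which is precisely a generating $1$-morphism of $\cC_\B$; after applying $[-]$, grading shifts are identified, matching the ungraded setting of $\cC_B$. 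On $2$-morphisms, both $\cC_B$ and $[\cC_\B]$ have as $2$-hom spaces \emph{all} $\Bbbk$-linear natural transformations between such functors --- in $[\cC_\B]$ we have not restricted to natural transformations annihilated by $\del$, because $[-]$ merely forgets the $H$-action rather than passing to $\Z$ --- so $\Phi$ acts as the identity on these spaces under the identifications already fixed.

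To verify biequivalence, essential surjectivity on objects is immediate, and local essential surjectivity on $1$-morphisms follows because both sides are generated (under finite biproducts, respectively finite biproducts and shifts which $[-]$ collapses) by tensoring with the same set of rank-one projective bimodules, together with the identities. Local fullness and faithfulness on $2$-morphisms follows from the fact that, via the extension of Lemma \ref{moduletranslation} to bimodules, $\Bbbk$-linear natural transformations of functors of this form are described by $[B_i]$-$[B_j]$-bimodule homomorphisms on one side and $\B_i$-$\B_j^{\op}$-bimodule homomorphisms on the other, which agree as $\Bbbk$-vector spaces. Finally, one must match the endomorphism algebras of identities: working in the generality of $\cC_{\A,X}$, the subalgebra $X_i\subseteq Z_i$ for $\cC_\B$ transfers to its underlying $\Bbbk$-algebra $[X_i]\subseteq Z([B_i])$ for $\cC_B$, and it is a direct check that $[Z_i']=Z_i'([B])$ using Lemma \ref{sameideal}.

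The main obstacle I foresee is the careful bookkeeping of which natural transformations correspond to which bimodule maps across the equivalences of Section \ref{algcat}, specifically in verifying that collapsing grading shifts via $[-]$ in $\cC_\B$ produces exactly the same $2$-hom spaces as those of $\cC_B$, and that the $p$-dg idempotent assumption $\del(e_j)=0$ prevents new $\Bbbk$-indecomposable summands from appearing in $[\cC_\B]$ beyond those already realized in $\cC_\B$. Once these bookkeeping checks are made, the biequivalence is essentially a restatement of the correspondences set up in Lemmas \ref{cofoverline} and \ref{moduletranslation} with the $p$-dg enrichment forgotten.
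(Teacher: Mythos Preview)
Your approach is correct and reaches the same conclusion, but the paper takes a somewhat different route. Rather than constructing a bifunctor $\cC_B \to [\cC_\B]$ directly by matching generating $1$-morphisms, the paper exploits that both $2$-categories come with defining $2$-representations into $\cEnd(B\proj)$: the equivalence $[\overline{\B}]\simeq B\proj$ from Remark~\ref{endostrongfin} together with Lemma~\ref{functorbieq} lets $[\bfN^{\cCsub{\B}}]$ land in $\cEnd(B\proj)$, and since $\bfN^{\cCsub{B}}$ is $2$-fully faithful, one obtains a bifunctor $[\cC_\B]\to \cC_B$ by factoring through this common target. The local equivalence is then checked on $\Bbbk$-indecomposable $1$-morphisms exactly as you do, by identifying $2$-hom spaces with bimodule hom spaces.

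The practical difference is that the paper's route sidesteps the explicit bookkeeping you flag as the main obstacle: by passing through the defining representations, one never has to track directly how natural transformations on $\overline{\B}$ correspond to bimodule maps, since both sides are already realized as subcategories of endofunctors of the \emph{same} category $B\proj$. Your direct construction is perfectly valid and arguably more elementary, but it does require the careful identification of $2$-hom spaces across the equivalences of Section~\ref{algcat} that you anticipate; the paper's approach packages this into the single observation that $\bfN^{\cCsub{B}}$ is $2$-fully faithful.
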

\begin{proof}
We have that $[\overline{\B}]\simeq [\B]\simeq B\text{-proj}$, cf. Remark \ref{endostrongfin}. Hence, the defining $p$-dg $2$-representation $\bfN$ of $\cC_\B$ descends to a bifunctor $[\bfN^{\ccC_\B}]\colon [\cC_\B]\to \cEnd(B\proj)$ using Lemma \ref{functorbieq}. We further have a $2$-functor $\bfN^{\ccC_{B}}\colon \cC_{B}\to \cEnd(B\proj)$ using the defining $2$-representation of $\cC_{B}$. For a $\Bbbk$-indecomposable $1$-morphism $\rG\colon \ti\to \tj$ in $\cC_\B$, the functor $[\rG]$ corresponds, under the equivalences of $[\overline{\B_i}]$ and $B_i\proj$, to tensoring with a $B_j$-$B_i$-bimodule. This shows that the image of $[\bfN^{\ccC_\B}]$ in $\cEnd(B\proj)$ is contained in the image of $\bfN^{\ccC_{B}}$. The latter is a $2$-fully faithful $2$-representation, and hence we obtain a bifunctor from $[\cC_{\B}]$ to $\cC_{B}$. This bifunctor is a bijection on objects and restricts to an equivalence of categories between $[\cC_\B(\ti,\tj)]$ and $\cC_{B}(\ti,\tj)$. 
Indeed, the functor given by restricting to this $1$-hom category is fully faithful since for two $\Bbbk$-indecomposable $1$-morphisms $\rG_1$ and  $\rG_2$,
\begin{align*}
\Hom_{[\ccC_\B]}(\rG_1,\rG_2)\cong \Hom_{\ccC_{B}}(P_1,P_2),
\end{align*}
where $P_m$ is the projective $B\otimes B^\op$-module 
such that tensoring with $P_m$ is isomorphic to the image of $\rG_m$ under the bifunctor, $m=1,2$.
Further, this functor is dense by construction of $B$.
Thus, the bifunctor provides the desired biequivalence between $\cC_{B}$ and $[\cC_{\B}]$.
\end{proof}

\begin{proposition}\label{CAoverlineplus}
Let $\A$ be a strongly finitary $p$-dg category. Then $\overline{\cC_\A}$ is $p$-dg biequivalent to $\cC_\A^+$.
\end{proposition}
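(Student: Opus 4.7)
The plan is to construct a $p$-dg bifunctor $\Phi\colon \overline{\cC_\A}\to \cC_\A^+$ which is the identity on objects and locally a $p$-dg equivalence, and then to invoke Lemma~\ref{pdgbieqchar}. The natural starting point is the defining $p$-dg $2$-representation $\bfN\colon \cC_\A\to \cEnd(\overline{\A})$, which is $2$-fully faithful with image inside $\cC_\A^+$ by construction. Applying Proposition~\ref{overline2reps} extends $\bfN$ to a strict $p$-dg $2$-functor $\overline{\bfN}\colon \overline{\cC_\A}\to \cEnd(\overline{\A})$ that is the identity on objects. A $1$-morphism $\rX=(\bigoplus_{m=1}^s \rF_m,\alpha)\in \overline{\cC_\A(\ti,\tj)}$ is sent to the $p$-dg functor $\overline{\A_i}\to \overline{\A_j}$ mapping $M$ to $(\bigoplus_m \bfN(\rF_m)(M), \bfN(\alpha)_M)$. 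Each $\bfN(\rF_m)$ is $p$-dg isomorphic to tensoring with some $Z_m\in\A_j\boxtimes\A_i^{\op}$, and each $\bfN(\alpha_{k,l})$ corresponds to a bimodule morphism $\tilde\alpha_{k,l}\colon Z_l\to Z_k$; so this functor is $p$-dg isomorphic to tensoring with $W_\rX:=(\bigoplus_m Z_m,\tilde\alpha)\in \overline{\A_j\boxtimes\A_i^{\op}}$ under the action from Section~\ref{tensorproducts}. Hence the image of $\overline{\bfN}$ lies inside $\cC_\A^+$, yielding the desired $\Phi$.

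To verify that each $\Phi_{\ti,\tj}\colon \overline{\cC_\A(\ti,\tj)}\to \cC_\A^+(\ti,\tj)$ is a $p$-dg equivalence, I check fullness, faithfulness and $p$-dg density. Fullness and faithfulness reduce, via Lemma~\ref{extensionlemma}\eqref{extensionlemma2}, to the corresponding properties of $\bfN_{\ti,\tj}$: any $2$-morphism in $\cC_\A^+(\ti,\tj)$ between the images of two twisted complexes decomposes into a matrix of natural transformations between tensor-product functors, each of which lifts uniquely to a $2$-morphism in $\cC_\A(\ti,\tj)$ by the $2$-fully faithfulness of $\bfN$. For $p$-dg density, an arbitrary $1$-morphism in $\cC_\A^+(\ti,\tj)$ is $p$-dg isomorphic to tensoring with some $W\in\overline{\A_j\boxtimes\A_i^{\op}}$. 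Combining the fantastic-filtration property of $\A_j$ and $\A_i$ with the biadditivity of $\boxtimes$, one sees that $\A_j\boxtimes\A_i^{\op}$ has every object admitting a fantastic filtration by objects in $\{X\otimes Y\mid X\in\mathtt{X}^j,\,Y\in\mathtt{X}^i\}$; consequently $W$ admits a fantastic filtration by such $X\otimes Y$, and the associated twisted complex in $\overline{\cC_\A(\ti,\tj)}$ maps under $\Phi_{\ti,\tj}$ to tensoring with an object $p$-dg isomorphic to $W$.

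The bifunctorial data consisting of coherent $p$-dg isomorphisms $\one_{\Phi(\ti)}\cong \Phi_{\ti,\ti}(\one_\ti)$ and $\Phi_{\tj,\tk}(\rY)\circ \Phi_{\ti,\tj}(\rX)\cong \Phi_{\ti,\tk}(\rY\circ\rX)$ is inherited from the strict $2$-functoriality of $\overline{\bfN}$, together with the identification of composition of tensor-product functors with the relative tensor product $\otimes_{\A_j}$ applied to bimodule twisted complexes, which uses Lemma~\ref{tensorlemma}\eqref{tensorlemma4}. The main obstacle is reconciling the lexicographic ordering used for horizontal composition in $\overline{\cC_\A}$ in~\eqref{orderingconvention} with the direct-sum decomposition naturally arising from the relative tensor product of bimodule twisted complexes; however, these differ only by an evident permutation of summands, and the resulting permutation $p$-dg isomorphisms satisfy the required coherence axioms. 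Applying Lemma~\ref{pdgbieqchar} to $\Phi$ then completes the proof.
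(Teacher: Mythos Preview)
Your proposal is correct and follows essentially the same approach as the paper: extend the defining representation $\bfN$ to $\overline{\bfN}\colon \overline{\cC_\A}\to \cEnd(\overline{\A})$ via Proposition~\ref{overline2reps}, check that $\overline{\bfN}$ is locally fully faithful using Lemma~\ref{extensionlemma}\eqref{extensionlemma2}, and match its essential image with that of $\bfN^+$ using fantastic filtrations, then apply Lemma~\ref{pdgbieqchar}. The only difference is that the paper avoids your final paragraph entirely: since $\overline{\bfN}$ is a \emph{strict} $p$-dg $2$-functor and $\bfN^+$ realises $\cC_\A^+$ as a $2$-full $p$-dg $2$-subcategory of $\cEnd(\overline{\A})$, comparing essential images inside $\cEnd(\overline{\A})$ already yields the biequivalence with no additional coherence data to verify; your discussion of permutation isomorphisms and lexicographic ordering is therefore unnecessary, though not incorrect.
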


\begin{proof}
Consider the defining $p$-dg $2$-representation $\bfN\colon \cC_{\A}\to \cEnd(\overline{\A})$. Using Proposition \ref{overline2reps}, we obtain a $p$-dg $2$-functor $\overline{\bfN} \colon \overline{\cC_{\A}}\to\cEnd(\overline{\A})$. Note that $\overline{\bfN}(\ti)=\overline{\A_i}$, so the $2$-functor $\overline{\bfN}$ is just the identity on objects. For any pair of objects $\ti$, $\tj$ we obtain a commutative diagram of $p$-dg functors
$$
\xymatrix{
&\overline{\cC_{\A}}(\ti,\tj)\ar[rd]^{\overline{\bfN}}& \\
\cC_{\A}(\ti,\tj)\ar@{^{(}->}[ru] \ar@{^{(}->}[rd]\ar[rr]^{\bfN}&&\Hom(\overline{\A_\ti},\overline{\A_\tj}).\\
&\cC^+_{\A}(\ti,\tj)\ar@{^{(}->}[ru]_{\bfN^+}&
}
$$
By definition, $\bfN^+$, the natural $p$-dg $2$-representation of $\cC_\A^+$, is just the inclusion of $p$-dg functors obtained by tensoring with objects in $\overline{\A_\tj\boxtimes \A_\ti^\op}$. First note that $\overline{\bfN}$ is fully faithful. This follows as $\bfN$ is fully faithful by Lemma \ref{extensionlemma}\eqref{extensionlemma2}. We show that its image coincides with the essential image of the $p$-dg functor obtained by the restriction of $\bfN^+$ to prove that $\overline{\cC_{\A}}(\ti,\tj)$ is $p$-dg equivalent to $\cC_\A^+(\ti,\tj)$. Indeed, via Lemma \ref{matcatequiv} and the construction in Proposition \ref{overline2reps}, any functor $F$ in the image of $\overline{\bfN}$ has the form $\left(\bigoplus_m(X_m\otimes Y_m)\otimes_{\A_i}(-), \lambda\right)$, for $X_m\otimes Y_m$ objects in $\A_\tj\boxtimes \A_\ti^\op$, and $\lambda=(\lambda_{m',m})_{m',m}$ an upper triangular matrix consisting of natural transformations between these functors. The natural transformations $\lambda_{m',m}\colon (X_m\otimes Y_m)\otimes_{\A_i}(-)\to (X_{m'}\otimes Y_{m'})\otimes_{\A_i}(-)$ are induced by morphisms $\alpha_{m',m}\colon X_m\otimes Y_m\to X_{m'}\otimes Y_{m'}$ in $\A_\tj\boxtimes \A_{\ti}^\op$. This follows by identifying $\lambda_{m',m}$ with a morphism $P_X^\op\otimes P_Y\to P_{X'}^\op\otimes P_{Y'}$ in $\A_\tj^\op\boxtimes \A_\ti\cof$    and applying the enriched Yoneda Lemma from \cite{K}.
We can thus construct an object $\left(\bigoplus_m {X_m\otimes Y_m},(\alpha_{m',m})_{m',m}\right)$ in $\overline{\A_\tj\boxtimes \A_{\ti}^\op}$, and the $p$-dg functor obtained from its action via $\boxtimes_{\A_i}$ recovers $F$. Conversely, using Lemma \ref{ABsubs}\eqref{ABsubs1}, we see that every $p$-dg functor in $\cC^+_\A(\ti,\tj)$ has a fantastic filtration by functors in $\cC_\A(\ti,\tj)$ and hence the essential image of $\overline{\bfN}$ is contained in $\overline{\cC_\A}(\ti,\tj)$. Hence, by Lemma \ref{pdgbieqchar}, we obtain  a $p$-dg biequivalence as claimed.
\end{proof}

\subsection{\texorpdfstring{$p$}{p}-dg cell \texorpdfstring{$2$}{2}-representations of \texorpdfstring{$\cC_{\A}$}{CA}}

Throughout this subsection, we assume that $\A$ is strongly finitary and $\del(\rad \A)\subset \rad \A$. Note that in a more general situation, one can sometimes pass to $\B\subset \A^{\bullet}$ as in the previous section to ensure strong finitarity. An example where this is possible is discussed in Section \ref{sl2cat}.


\begin{remark}\label{cellidemp}
Notice that, by Proposition \ref{AsubBprop}, the cell structure in $\cC_{\A}$ is precisely the same as that in $\Csub{A}$, where $A=\prod_{i=1}^n A_i$ is the $p$-dg algebra associated to $\A=\coprod_{i=1}^n \A_i$ as in Example \ref{algebraex}, described in \cite[Section 5.1]{MM5}. Also notice that the structure described there does not need self-injectivity of $A$.
In the setup of the present paper we do not impose that $A$ is a basic algebra as  the same underlying indecomposable projective $A$-module can have different $p$-differentials so that the resulting modules are non $p$-dg isomorphic (but $\Bbbk$-isomorphic).
\end{remark}

We now fix a non-identity cell $\L$ in  $\cC_{\A}$.
By Remark \ref{cellidemp} we can find $\ti:=\ti_\L$ and a $\Bbbk$-isomorphism class $\S$ of $\Bbbk$-indecomposable objects in $\A_i$, such that  all $\Bbbk$-indecomposable $1$-morphisms in $\L$ are given by functors $\Bbbk$-isomorphic to tensoring with $X_s \otimes Y \in \A\boxtimes \A^\op$ for $Y\in \S$ and  any $\Bbbk$-indecomposable object $X_s$ of $\A_j$. Fix a set of representatives $X_{\L,1}, \ldots, X_{\L,d} $ of $p$-dg isomorphism classes in $\S$.
We denote by $e_{\L,1},\dots, e_ {\L,d}$ the idempotents corresponding to, respectively,  $\id_{X_{\L, 1}}, \dots \id_{X_{\L, d}}$ in the algebra $A_\ti$ associated to $\A_i$ via the construction of Example \ref{algebraex}. We further set $e_{\L}= e_{\L,1}+\cdots +e_ {\L,d}$.
Similarly for any $\Bbbk$-indecomposable object $X_t\in \A_j$, we write $e_t$ for the idempotent in $A_j$ corresponding to $\id_{X_t}$.

Furthermore, let $\rF$ denote the direct sum of a complete set of $p$-dg isomorphism classes of $\Bbbk$-indecomposable $1$-morphisms in $\J$, so that there is a $p$-dg isomorphism of $p$-dg algebras $\phi: \End_{\ccC_\A}(\rF) \to A^\op\otimes A (\cong \End_{A\text{-}A\text{-bimod}}(A\otimes A))$. 

Consider the $p$-dg $2$-representation $\bfR_\L$ of $\cC_\A$. An object in $\bfR_\L(\tj)$ is of the form  $(\bigoplus_{m=1}^a\rG_m, \alpha)$ for $\rG_m \in \L\cap \cC_\A(\ti,\tj)$ and each component of the matrix $\alpha$ corresponds, under the $p$-dg isomorphism $\phi$, to an element in $A_j\otimes e_\L A_ie_\L$. Similarly, morphisms $\gamma$ between such objects have entries which correspond to elements in $A_j\otimes e_\L A_ie_\L$ under $\phi$.
The ideal $\bfJ(\tj)$ in $\coprod_{\tj\in{\ccC_{\A}}}\bfR_\L(\tj)$ generated by those morphisms $\gamma$ whose components all lie in $A_j\otimes \rad e_\L A_ie_\L$ is $\del$-stable thanks to $\rad \A$ being closed under $\del$, and it is straightforward to check that it is
$\cC_\A$-stable and proper, using the fact that horizontal composition with $2$-morphisms is induced by tensoring on the left with elements in $A^\op\otimes A$ under $\phi$.
The ideals $\bfJ(\tj)$ therefore form a $p$-dg ideal $\bfJ$ of the $2$-representation $\bfR_\L$. 
We consider the quotient $2$-representation $\bfR_\L/\bfJ$.

\begin{lemma}\label{cellrepdes}The quotient $\bfR_\L/\bfJ$ is the $p$-dg cell $2$-representation $\bfC_\L$.
\end{lemma}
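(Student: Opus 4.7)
The plan is to show $\bfJ=\bfI_\L$ as $p$-dg ideals in $\bfR_\L$, from which $\bfR_\L/\bfJ=\bfR_\L/\bfI_\L=\bfC_\L$ follows via Definition \ref{cell2rep}. For the easier inclusion $\bfJ\subseteq \bfI_\L$, I observe that every $\Bbbk$-indecomposable $\rF\in \L\cap \cC_\A(\ti,\tj)$ has the form $X_t\otimes X_{\L,k}$, so $\id_\rF$ corresponds under $\phi$ to the primitive idempotent $e_t\otimes e_{\L,k}\in A_j\otimes e_\L A_ie_\L$; since $e_{\L,k}\notin \rad A_i$, this element lies outside $A_j\otimes \rad e_\L A_ie_\L$. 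Identities on direct sum objects of $\bfR_\L$ are block diagonal matrices of such entries and hence also lie outside $\bfJ$. Maximality of $\bfI_\L$ among $p$-dg ideals avoiding such identities (Lemma \ref{Rmaxideal}) then yields $\bfJ\subseteq \bfI_\L$.

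For the reverse inclusion, I will show that every $\gamma\in \bfR_\L\setminus \bfJ$ generates a $p$-dg ideal containing $\id_\rF$ for some $\rF\in \L$, so no such $\gamma$ can belong to $\bfI_\L$. Pre- and post-composing with coordinate morphisms in $\bfR_\L$ (matrices with a single non-zero identity entry) isolates an individual matrix component of $\gamma$ not in $A_j\otimes \rad e_\L A_ie_\L$; thus I may assume $\gamma\colon \rG_l\to \rG_k$ is a morphism between $\Bbbk$-indecomposables in $\L$ corresponding via $\phi$ to an element of $e_{t_k}A_je_{t_l}\otimes e_{\L,s_k}A_ie_{\L,s_l}$ with non-zero image in $e_{t_k}A_je_{t_l}\otimes (e_\L A_ie_\L/\rad)$. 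Because $\S$ is a single $\Bbbk$-isomorphism class of primitive idempotents in $A_i$, the semisimple quotient $e_\L A_ie_\L/\rad$ is the simple algebra $M_d(\Bbbk)$. Combining left and right multiplication on the $e_\L A_ie_\L$-tensorand (realized by vertical composition in $\bfR_\L$) with left and right multiplication on the $A_j$-tensorand (realized by horizontal composition with identity $2$-morphisms of $1$-morphisms in $\cC_\A(\tj,\tj)$), I transport $\gamma$ to an endomorphism $\gamma'\in\End_{\ccC_\A}(\rG)$ of some $\Bbbk$-indecomposable $\rG=X_t\otimes X_{\L,s}\in\L$ whose image in the semisimple quotient of $\End_{\ccC_\A}(\rG)$ is non-zero. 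Such a $\gamma'$ is a unit in the local $p$-dg algebra $\End_{\ccC_\A}(\rG)\cong e_tA_je_t\otimes e_{\L,s}A_ie_{\L,s}$, and vertical composition with its inverse in $\End_{\ccC_\A}(\rG)\subseteq \bfR_\L$ produces $\id_\rG$ in the ideal.

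The main technical obstacle is the precise orchestration of horizontal and vertical compositions in the above reduction: one must verify that the combined action of $A_j$ on the first tensorand and of $e_\L A_ie_\L$ on the second can send any element whose image is non-zero in $A_j\otimes M_d(\Bbbk)$ into a prescribed summand $e_tA_je_t\otimes e_{\L,s}A_ie_{\L,s}$ carrying a non-radical image. This relies on the $M_d(\Bbbk)$-structure of $e_\L A_ie_\L/\rad$ coming from strong regularity of $\J$ and on $\End_{\ccC_\A}(\rF)\cong A^{\mathrm{op}}\otimes A$ being faithfully realized through horizontal composition in $\cC_\A$. The hypothesis $\del(\rad \A)\subseteq \rad \A$ plays no role here beyond ensuring, as noted before the lemma, that $\bfJ$ is $\del$-closed; the extraction of $\id_\rG$ takes place entirely within a local algebra and is insensitive to the $p$-differential.
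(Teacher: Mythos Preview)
Your overall strategy --- prove $\bfJ=\bfI_\L$ by establishing both inclusions, with the reverse inclusion obtained by producing an identity on some $\rF\in\L$ from any morphism outside $\bfJ$ via the $\cC_\A$-action together with vertical composition --- is exactly the paper's. Your argument for $\bfJ\subseteq\bfI_\L$ and the reduction to a single matrix entry via inclusions and projections match the paper as well.

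The imprecision is in your claim that horizontal composition with $\id_\rH$ for $\rH\in\cC_\A(\tj,\tj)$ realizes ``left and right multiplication on the $A_j$-tensorand''. It does not: applying $\rH=X_u\otimes X_v$ to a morphism $\gamma$ corresponding to $a\otimes b$ yields a morphism between larger objects in which the first tensorand is absorbed into a multiplicity space $e_vA_je_{?}$, not multiplied by an element of $A_j$. The paper handles this step concretely: horizontally compose with $X_u\otimes X_s$ (taking $e_s$ to be the target idempotent of $a$), then use inclusions and projections to extract the entry $e_u\otimes e_{\L,q}be_{\L,r}$; since $b\notin\rad e_\L A_ie_\L$ and all $X_{\L,k}$ are $\Bbbk$-isomorphic, this is a $\Bbbk$-isomorphism, and one further vertical composition with $e_u\otimes e_{\L,r}b'e_{\L,q}$ (for $b'$ a $\Bbbk$-inverse of $b$) yields $\id_{X_u\otimes X_{\L,r}}$. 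Your abstract route through a unit in a local endomorphism ring is fine once this mechanism is invoked, but the device you name for controlling the first tensorand is not the correct one --- in particular, when $a\in\rad A_j$, no amount of two-sided multiplication in $A_j$ alone will escape the radical.
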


\proof
It suffices to prove that $\bfJ = \bfI_\L$ for the maximal $p$-dg ideal defined in Lemma \ref{Rmaxideal}. From the above considerations, we deduce that $\bfJ\subset \bfI_\L$. It remains to check that $\bfI_\L\subset \bfJ$. 
Suppose $\gamma
\colon \rX\to \rY$ is a morphism in $\bfR^{\AsubcC}_\L/\bfJ(\tj)$ between objects $\rX$ 
 and $\rY$ 
  and assume there is a matrix entry in $\gamma$ giving a morphism between two (without loss of generality $\Bbbk$-indecomposable) summands $\rF_t$ and $\rF_s$ of $\rX$ respectively $\rY$, 
which corresponds under $\phi$ from Remark \ref{cellidemp} to some $e_sae_t\otimes e_{\L,q} be_{\L,r} \in e_sA_je_t\otimes e_\L A_ie_\L $ (for some idempotents $e_s,e_t$ in $A_j$)  such that $e_{\L,q} be_{\L,r}$ is not in $e_\L \,\rad A_ie_\L $. 
Then the ideal generated by $\gamma$, in particular, contains 
this matrix component via composition with inclusions of and projections onto $\Bbbk$-indecomposable $1$-morphisms.
Tensoring with $\rX_u\otimes \rX_s$ (which corresponds to tensoring with $A_je_u\otimes e_sA_j$) for any $\Bbbk$-indecomposable $\rX_u\in \A$, and again pre- and post-composing with appropriate inclusions and projections, produces a $\Bbbk$-isomorphism of the form $e_{X_u}\otimes  e_{\L,q} b e_{\L,r} \colon X_u\boxtimes X_{\L,r} \to  X_u\boxtimes X_{\L,q}$ in the left cell $\L$. Composing this with $e_{X_u}\otimes  e_{\L,r} b'e_{\L,q}$, where $b'$ is an inverse $\Bbbk$-isomorphism to $b$, we obtain the identity on $X_u\boxtimes X_{\L,r}$ in our ideal.
This provides a contradiction to $\bfI_\L$ containing a morphism not contained in $\bfJ$.
Thus $\bfC_\L = \bfR^{\cCsub{\A}}_\L/\bfJ$, as claimed. \endproof

\begin{remark}
We would like to point out that in the cell $2$-representation, $ X_u\boxtimes X_{\L,r}$ and $ X_u\boxtimes X_{\L,q}$ in fact become $p$-dg isomorphic. The reason is that for a $\Bbbk$-isomorphism $e_{\L,q} be_{\L,e}$ between them $\del(e_{\L,q} be_{\L,e})$  is necessarily contained in the radical of $e_\L A_ie_\L$ by degree reasons, and is hence zero in the quotient.
\end{remark}

\begin{theorem}\label{cellthm}
The cell $2$-representation $\bfC_\L$ and $\bfN^{\ccC_\A}$ are $p$-dg equivalent.
\end{theorem}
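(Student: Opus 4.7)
The strategy is to adapt \cite[Proposition~9]{MM5} to the $p$-dg setting by constructing a $p$-dg morphism $\Psi\colon \bfC_\L \to \bfN^{\cC_\A}$ and verifying it is locally a $p$-dg equivalence via Lemma~\ref{pdgequivlemma}.

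The construction uses the Yoneda lemma for $p$-dg quotient completed $2$-representations (Proposition~\ref{quotyoneda}) applied to the simple $A_\ti$-module $L_\ti$ at the idempotent $e_{\L,1}$, realised as the object $\rad X_{\L,1} \hookrightarrow X_{\L,1}$ of $\vv{\overline{\A_\ti}} = \vv{\bfN^{\cC_\A}(\ti)}$; the standing hypothesis $\del(\rad \A_\ti) \subset \rad \A_\ti$ makes this inclusion a $p$-dg morphism. Proposition~\ref{quotyoneda} then yields a morphism $\Phi^{L_\ti}\colon \vv{\bfP_\ti} \to \vv{\bfN^{\cC_\A}}$ in $\cC_\A\pcmod$. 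Evaluated on a basic $1$-morphism $\rG_{s,r}\in \L\cap \cC_\A(\ti,\tj)$ corresponding to tensoring with $X_s\otimes X_{\L,r}$, the image has cokernel (under the equivalence of Lemma~\ref{vecmod}) $p$-dg isomorphic to $X_s \otimes \Hom_{\A_\ti}(X_{\L,r}, L_\ti)$. By strong regularity of $\J$, the $X_{\L,r}$ are pairwise $\Bbbk$-isomorphic to $X_{\L,1}$, so this hom space is $\Bbbk$ concentrated in degree zero with trivial $H$-action, and the cokernel is $p$-dg isomorphic to $X_s$, a semi-free object of $\bfN^{\cC_\A}(\tj)$. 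Choosing such cokernel representatives functorially and restricting to $\bfR_\L \subset \vv{\bfR_\L}$ produces a morphism $\Psi^\circ\colon \bfR_\L \to \bfN^{\cC_\A}$ in $\cC_\A\pamod^{\mathrm{sf}}$.

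The ideal $\bfJ = \bfI_\L$ from Lemma~\ref{cellrepdes} has components in $A_\tj \otimes \rad(e_\L A_\ti e_\L)$, and $\rad(e_\L A_\ti e_\L)\subset \rad A_\ti$ annihilates the simple $L_\ti$, so $\Phi^{L_\ti}$ sends $\bfJ$ to zero and $\Psi^\circ$ descends to $\Psi\colon \bfC_\L \to \bfN^{\cC_\A}$. By Lemma~\ref{pdgequivlemma}, it then suffices to show that each $\Psi_\tj$ is fully faithful and $p$-dg dense. Density is immediate since $\Psi_\tj(\rG_{s,r}) \cong X_s$ realises every $\Bbbk$-indecomposable of $\overline{\A_\tj}$ and strong finitarity handles the rest. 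Fully faithfulness follows from the cell-theoretic identification
\[
\Hom_{\bfC_\L(\tj)}(\rG_{s,r}, \rG_{s',r'}) \;\cong\; e_s A_\tj e_{s'} \otimes (e_{\L,r} A_\ti e_{\L,r'}/\rad),
\]
in which the second factor is $\Bbbk$ in degree zero with vanishing $\del$ (forced by degree reasons in the quotient), matching $\Hom_{\overline{\A_\tj}}(X_s, X_{s'}) = e_s A_\tj e_{s'}$ together with its natural $p$-differential.

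The principal obstacle lies in making the choice of semi-free cokernel representatives in $\vv{\bfN^{\cC_\A}}$ functorial and compatible with the $2$-representation coherences $\eta_\rF$ satisfying \eqref{repmorcomp} and \eqref{2repmornatural}, so that $\Phi^{L_\ti}$ genuinely promotes from a morphism in $\cC_\A\pcmod$ to one in $\cC_\A\pamod^{\mathrm{sf}}$; this amounts to a careful application of the equivalence $\vv{\bfN(\tj)}\simeq (\bfN(\tj))^{\op}\plmod$ together with naturality of the cokernel construction. A secondary but essential point is ensuring that the radical quotient of $e_{\L,r} A_\ti e_{\L,r'}$ is compatible with the $p$-differential, which rests on $\del(\rad\A_\ti)\subset\rad\A_\ti$ and the degree-zero nature of the $\Bbbk$-isomorphism representatives linking the $p$-dg classes within the cell.
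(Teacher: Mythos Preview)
Your approach is essentially the paper's: both invoke Proposition~\ref{quotyoneda} to send $\one_\ti$ to (a presentation of) the simple $A_\ti$-module at the cell idempotent, observe that on $\bfR_\L$ the image lands in the semi-free part, and identify the resulting kernel with $\bfJ$ from Lemma~\ref{cellrepdes}. Two minor differences in execution are worth noting. First, the paper avoids your ``principal obstacle'' entirely: rather than functorially choosing semi-free cokernel representatives, it simply keeps the already-constructed morphism $\vv{\Psi}$ and observes that its restriction to $\bfR_\L$ has values $p$-dg isomorphic to objects of $\overline{\A}$, so it \emph{is} a morphism in $\cC_\A\pamod$ without further choices. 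Second, for faithfulness the paper does not compute the hom spaces in $\bfC_\L$ directly as you do; instead it notes that $[\Psi]$ is literally the morphism of \cite[Proposition~9]{MM5}, so the identification of the kernel with $\bfJ$ is inherited from the finitary result. Your direct hom computation is correct (and the degree/$\del$-compatibility of the second tensor factor is exactly the content of the Remark following Lemma~\ref{cellrepdes}), but the paper's reduction to \cite{MM5} is shorter and makes the relation to the underlying finitary theory explicit.
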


\proof
We first consider the $p$-dg quotient completed $2$-representations $\vv{\bfP_\ti}$ and $\vv{\bfN^{\ccC_\A}}$, and define the morphism $$\vv{\Psi}\colon \vv{\bfP_\ti}\to \vv{\bfN^{\ccC_\A}}$$ in $\cC_\A\pcmod$ induced, using Lemma \ref{quotyoneda}, by sending the object $0 \to \one_i$ to a diagram $X \overset{\gamma}{\to} Y$ whose cokernel is isomorphic to the object corresponding to $\rS:=A_ie_{\L,q}/\rad A_ie_{\L,q}$ (for some fixed $q$) under the $p$-dg equivalence of Lemma \ref{vecmod}. Under the identification in Lemma \ref{vecmod}, applying a $\Bbbk$-indecomposable $1$-morphism $\rG$ in $\J$ to S produces either zero or a projective and, in particular, semi-free module, hence an object in $\A^{\op}\cof$. Using Lemma \ref{matcatequiv} and \eqref{equivcommute}, we see that  $\rG(X  \overset{\gamma}{\to} Y)$ is $p$-dg isomorphic to an object in $\overline{\A}$.
Hence $\vv{\Psi}$ restricts to a morphism $\Psi$ from $\bfR^{\ccC_\A}_\L$ to $\bfN^{\ccC_\A}$ in $\cC\pamod$. Note that, using the correspondences from \eqref{equivcommute}, if  the $\Bbbk$-indecomposable $\rG=X_s\otimes X_{\L,r}$ corresponds to the $A$-$A$-bimodule $A_je_s\otimes e_{\L,r} A_i$, we have $\rG(X  \overset{\gamma}{\to} Y)\cong X_s$.

The morphism $\Psi$ is $p$-dg dense at each object $\ti$, as any $\Bbbk$-indecomposable of $\bfN^{\ccC_\A}(\ti)=\overline{\A_\ti}$ is $p$-dg isomorphic to an object in the image by construction, and both the source and the target are complete under compact semi-free extensions. Similarly, it is $2$-full by construction.

We claim that he kernel of $\Psi$ is given by the ideal $\bfJ$ consisting of all morphisms $\gamma$ corresponding to matrices with entries in $A\otimes e_\L \,\rad A_i e_\L$ under $\phi$ from Remark \ref{cellidemp} and induces an equivalence between $\bfR^{\ccC_\A}_\L/\bfJ$ and $\bfN^{\ccC_\A}$. Indeed, notice that using Proposition \ref{AsubBprop}, $[\Psi]$ is precisely the morphism constructed in  \cite[Proposition~9]{MM5}, and that the proof there does not use self-injectivity of $A$ (or equivalently, the fact, that the $2$-category considered there is weakly fiat). Then the claim follows immediately from the same statement in the proof of \cite[Proposition~9]{MM5} since by construction our morphisms between two objects are the same as those between the direct sums of their $\Bbbk$-indecomposable components. This shows that the induced morphism in $\cC\pamod$ from $\bfR_\L/\bfJ$ to $\bfN^{\ccC_\A}$ is $2$-faithful and hence an equivalence. 
Applying Lemma \ref{cellrepdes}, we deduce the theorem.\endproof

\section{Application to \texorpdfstring{$\mathfrak{sl}_2$}{sl2}-categorification at roots of unity}\label{sl2cat}

In this section, we apply some of our constructions to Elias--Qi's categorification of Lusztig's idempotented form of the small quantum group $\dot{u}_q(\mathfrak{sl}_2)$ of $\mathfrak{sl}_2$ at a $p$-th root of unity, for $p$ prime \cite{EQ}. In particular, we can show that the cell $2$-representations of the  cyclotomic quotient of this categorification are given by the natural defining $2$-representation obtained from nil-Hecke algebras.

In \cite[Chapter~4]{EQ}, the categorification of quantum $\mathfrak{sl}_2$ of \cite{La} is equipped with $p$-differentials to give a $p$-dg $2$-category $\cU$. We fix a dominant integral weight $0\leq \lambda\leq p-1$ of $\mathfrak{sl}_2$. For finiteness reasons, we consider a cyclotomic quotient $p$-dg $2$-category $\cU_\lambda$ of $\cU$. This quotient is obtained by considering the $2$-representation $\mathbf{V}^\lambda$ which is used in \cite[Section~6.3]{EQ} (denoted there by $\mathcal{V}^\lambda$) to categorify the finite-dimensional simple graded $\dot{u}_q(\mathfrak{sl}_2)$-modules $V^\lambda$. More precisely, $\cU_\lambda$ is the $p$-dg $2$-category $\cU/\cI$, where $\cI$ is the kernel of the $p$-dg $2$-functor $\cU\to \cEnd(\mathbf{V}^\lambda)$ obtained from \cite{EQ}.

Thus, the $p$-dg $2$-category $\cU_\lambda$ has objects $\mu\in \mathbb{Z}$ such that $\mu\leq \lambda$, and $1$-morphisms are generated by $\one_{\mu+2} \rE\one_\mu$, $\one_{\mu-2}\rF\one_{\mu}$ (in \cite{EQ}, the symbols $\E$, respectively $\F$ are used). This $2$-category is \emph{not}  strongly finitary, but we can consider the cell structure of its  subquotient idempotent completion $\cU_\lambda^{\,\bullet}$ from Proposition \ref{2completion}, which is strongly finitary as by \cite[Section~5.2]{EQ} every indecomposable projective module in $\cU$ (and hence in $\cU_\lambda$) has a fantastic filtration by a (locally, for fixed objects) finite set of $\Bbbk$-indecomposables (up to grading shift and $p$-dg isomorphism). A full list of $p$-dg isomorphism classes of non-identity $\Bbbk$-indecomposable $1$-morphisms is given by $\one_{\mu+2r-2s} \rE^{(r)}\rF^{(s)}\one_\mu$, $0\leq r,s\leq p-1$ (see \cite[Equation~(6.8)]{EQ}).

The $2$-category $[\cU_\lambda^{\,\bullet}]$ can be identified with the $\cU_\lambda$ studied in \cite[Section~7.2]{MM5}. Note that all cells in $[\cU_\lambda^{\bullet}]$ are strongly regular (see e.g. \cite[Section 7.2]{MM5}). In \cite{MM5} it is also shown that the finitary additive $2$-category $[\cU_\lambda^{\,\bullet}]$ is fiat. By Proposition \ref{redtoonecell}, in order to understand cell $2$-representations of $\cU_\lambda^{\,\bullet}$, it therefore suffices to consider the restriction $\cU_{\lambda,\J}^{\,\,\bullet}$ of $\cU_\lambda^{\,\bullet}$ to one particular two-sided cell.

Let $\J$ be the lowest two-sided cell of $\cU_\lambda^{\,\bullet}$.
The $\Bbbk$-indecomposable $1$-morphisms in  $\J$ within the subquotient idempotent completion $\cU_\lambda^{\,\bullet}$ of 
$\cU_\lambda$ are given by $\rF^{(r)}\one_\lambda\rE^{(s)}$ for $0 \leq r,s \leq \lambda$ (denoted in \cite{EQ} by $\F^{(r)}\one_\lambda\E^{(s)}$) which --- identifying their action on representables with a tensor action of nil-Hecke algebra bimodules on $p$-dg module categories for nil-Hecke algebras --- correspond to the functors given by tensoring with $NH_r^\lambda\epsilon_r\otimes_\Bbbk \epsilon_s^*NH_s^\lambda$ as a $NH_r^\lambda$-$NH_s^\lambda$-bimodules  (see \cite[(59)]{KQ} for the definition of the idempotents $\epsilon_r$, $\epsilon_s^*$).

Since the $\Bbbk$-indecomposable $1$-morphisms appearing as factors in a $p$-dg indecomposable $1$-morphism in $\cU_\lambda$ all belong to the same left/right/two-sided cells, it makes sense to talk about cells in $\cU_\lambda$ (despite the fact that we previously only defined this for strongly finitary $2$-categories).
The lowest cell in $\cU_\lambda$ is then generated by the $1$-morphisms $\rF^{r}\one_\lambda\rE^{s}$ for $0 \leq r,s \leq \lambda$ which --- identifying their action on representables with a tensor action of nil-Hecke algebra bimodules on $p$-dg module categories for nil-Hecke algebras --- corresponds to the functor given by tensoring with $NH_r^\lambda\otimes_\Bbbk NH_s^\lambda$ as a $NH_r^\lambda$-$NH_s^\lambda$-bimodule.

We set $\A:=\A_\lambda$ to be the $p$-dg category with objects $\mathtt{1}, \dots, \mathtt{\lambda}$  where $\End_\A(\ti)^\op = NH_i^\lambda$ and $\Hom_\A(\ti,\tj)=0$ for $\ti\neq \tj$, then $\cU_{\lambda,\J}$ is $p$-dg biequivalent to $ \Csub{\A}$.

As before, let $\A^{\bullet}$ denote the subquotient idempotent completion of $\A$ and $\B$ the additive closure of the category of $\Bbbk$-indecomposable objects in $\A^{\bullet}$.  Then $\cU_{\lambda,\J}^{\,\,\bullet}$ is $p$-dg biequivalent to  $\Csub{\A^{\bullet}}$ .

Since, by \cite[Corollary~5.17]{EQ}, any $\rF^{r}$ and $\rE^{s}$ have fantastic filtrations by $\rF^{(r)}$ and $\rE^{(s)}$, respectively, we have $p$-dg biequivalences between $\Csub{\A}^+$ 
and  $ \Csub{\A^{\bullet}}^+$ 
and $\Csub{\B}^+$ (cf. also Lemma \ref{cattoalg}).

To summarize, we formulate a proposition.

\begin{proposition}\label{uqprop}
We have the following $p$-dg biequivalences of $p$-dg $2$-categories:
\begin{enumerate}
\item $\cU_{\lambda,\J}\approx \Csub{\A}$;
\item $\Csub{\A}^+\approx\Csub{\A^{\bullet}}^+\approx\Csub{\B}^+$;
\item $\cU_{\lambda,\J}^{\,\,\bullet}\approx \Csub{\A^{\bullet}}$.
\end{enumerate}
\end{proposition}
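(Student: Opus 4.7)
The plan is to establish the three $p$-dg biequivalences in turn, using the defining $p$-dg $2$-representation $\mathbf{V}^\lambda$ of \cite[Section~6.3]{EQ} as the bridge between $\cU_\lambda$ and the bimodule $2$-categories built from nil-Hecke algebras. In each case Lemma \ref{pdgbieqchar} provides the criterion for $p$-dg biequivalence.

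For part (1), I will construct a $p$-dg bifunctor $\Phi\colon \cU_{\lambda,\J}\to \Csub{\A}$ as follows. On objects, $\Phi$ sends the weight $\mu=\lambda-2r$ (for $0\leq r\leq \lambda$) to the object $\ti_r$ of $\Csub{\A}$ corresponding to the component of $\A$ with $\End(\ti_r)^{\op}=NH_r^\lambda$. The restriction of $\mathbf{V}^\lambda$ sends the $\Bbbk$-indecomposable $1$-morphism $\rF^{r}\one_\lambda\rE^{s}$ to the $p$-dg functor given by tensoring with the $NH_r^\lambda$-$NH_s^\lambda$-bimodule $NH_r^\lambda\otimes_\Bbbk NH_s^\lambda$, which, under the relative tensor product action \eqref{bimoduletensor}, is precisely the action of the corresponding object of $\A\boxtimes \A^{\op}$; extending to compositions and sums yields the bifunctor $\Phi$. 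By construction $\Phi$ is surjective on objects. Local $p$-dg essential surjectivity is immediate because the generating $1$-morphisms of $\Csub{\A}(\ti_r,\ti_s)$ all appear in the image. Local $p$-dg faithfulness uses that $\cU_\lambda$ was defined as the quotient of $\cU$ by the kernel of $\mathbf{V}^\lambda$, while local fullness follows from the computation of the $2$-morphism spaces between $1$-morphisms in the lowest cell in \cite[Section~6]{EQ}.

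Part (2) is a direct application of Lemma \ref{cattoalg}(iv). Two conditions must be checked. First, every object of $\A$ must admit a fantastic filtration by objects in $\B$; passing via Example \ref{algebraex} to the algebra side, this reduces to the statement that the free $NH_r^\lambda$-module has a fantastic filtration by grading shifts of $NH_r^\lambda\epsilon_r$, which is precisely \cite[Corollary~5.17]{EQ}. Second, for each $Y\in \B$, the module $P_Y^{\op}$ must be semi-free over $\A$; here each $\Bbbk$-indecomposable of $\B$ corresponds to $NH_r^\lambda\epsilon_r$, which is a $p$-dg summand of a free module via the $p$-dg idempotent $\epsilon_r$, hence is itself $p$-dg isomorphic to a compact semi-free module. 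Lemma \ref{cattoalg} then delivers the two biequivalences.

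Part (3) follows by combining part (1) with the $2$-functoriality of subquotient idempotent completion from Proposition \ref{2completion}, giving $\cU_{\lambda,\J}^{\,\,\bullet}\approx \Csub{\A}^{\,\bullet}$. It remains to identify $\Csub{\A}^{\,\bullet}$ with $\Csub{\A^{\bullet}}$: this is routine because subquotient idempotents on $1$-morphisms of $\Csub{\A}$ (which are given by tensoring with objects of $\A\boxtimes \A^{\op}$) correspond, under the enriched Yoneda lemma, to subquotient idempotents on the corresponding objects of $\A\boxtimes\A^{\op}$, and these are exactly the additional objects present in $\Csub{\A^\bullet}$. The main obstacle will be part (1), specifically verifying local $p$-dg fullness and checking that $\Phi$ commutes with the $p$-differential on $2$-morphism spaces; this reduces to a careful comparison of the differential on the $2$-morphism generators of $\cU_\lambda$ from \cite[Chapter~4]{EQ} with the natural differential on morphisms of nil-Hecke bimodules coming from the Hopfological structure, but once the generators are matched the coherences propagate automatically.
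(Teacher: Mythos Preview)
Your overall strategy matches the paper's: the proposition is a summary of the discussion preceding it, and the paper proves (1) and (3) by identifying $1$-morphisms in the lowest cell of $\cU_\lambda$ (respectively $\cU_\lambda^{\,\bullet}$) with tensor functors by nil-Hecke bimodules via $\mathbf{V}^\lambda$, and proves (2) by invoking the fantastic filtrations of \cite[Corollary~5.17]{EQ} together with Lemma~\ref{cattoalg}. Your treatments of (1) and (3) are essentially the same as the paper's, with (3) taking a mildly different route through $(\Csub{\A})^{\bullet}$; this is fine, though the identification $(\Csub{\A})^{\bullet}\approx\Csub{\A^{\bullet}}$ deserves a sentence more than ``routine'', since not every subquotient idempotent on a functor $X\otimes Y\otimes_{\A}(-)$ need split as $e\otimes f$.

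There is, however, a genuine gap in your argument for (2). You try to verify the second hypothesis of Lemma~\ref{cattoalg}(iv) by asserting that $\epsilon_r$ is a $p$-dg idempotent and that therefore $NH_r^\lambda\epsilon_r$ is a $p$-dg direct summand, hence compact semi-free over $\A$. Both steps fail. First, the idempotents $\epsilon_r$ are in general \emph{not} annihilated by $\del$; the whole purpose of the fantastic-filtration machinery in \cite{EQ} is precisely to compensate for this. Second, even if $\epsilon_r$ were a $p$-dg idempotent, a $p$-dg direct summand of a semi-free module is cofibrant but not necessarily semi-free over $\A$: semi-freeness over $\A$ means a filtration whose subquotients are the \emph{representables} $P_{\ti}^{\op}$, i.e.\ copies of the full free module $NH_r^\lambda$. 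Since $\dim_\Bbbk NH_r^\lambda\epsilon_r = \tfrac{1}{r!}\dim_\Bbbk NH_r^\lambda$, no such filtration exists for $r\geq 2$. Thus your justification of the semi-freeness condition is incorrect, and you should not claim to have verified the hypotheses of Lemma~\ref{cattoalg}(iv); the paper's own discussion of this point is also terse and cites only the fantastic filtration.
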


Moreover, by Proposition \ref{AsubBprop}, $[\Csub{\B}]$ is biequivalent to $\Csub{B}$  for $B$ the endomorphism algebra of a complete set of representatives of isomorphism classes of indecomposable $1$-morphisms in $B$, which by \cite[(6.14)]{EQ} is given by the coinvariant algebra $B:=\prod_{j=0}^\lambda H_{j, \lambda}$ (in the notation from \cite[Section 6.3]{EQ}). This is a strongly positive $p$-dg algebra, in particular, its idempotents are annihilated by $\del$ and its radical is stable under the differential (cf. Remark \ref{annihilatedidempotents}).  
Thus we know by Theorem \ref{cellrepdes} that the cell $2$-representation of $\Csub{\B}$ (and of $\Csub{\B}^+$) is just the natural $2$-representation. Thus the $p$-dg $2$-representation $\mathbf{V}^\lambda$ categorifying $V^\lambda$ gives the $p$-dg cell $2$-representation for the lowest two-sided cell of the $2$-category $\cU_{\lambda}^{\,\bullet}$.

\begin{theorem}
The endomorphism category of the cell $2$-representation of $\cU_\lambda^{\,\bullet}$, or equivalently, of the categorification of the finite-dimensional simple graded module $V^\lambda$ of $\dot{u}_q(\mathfrak{sl}_2)$ on cyclotomic $p$-dg NilHecke algebras, is equivalent to $\Bbbk\plmod$.
\end{theorem}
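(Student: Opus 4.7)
The plan is to apply Theorem \ref{nicecell} directly to the cell $2$-representation $\bfC_\L$ of $\cU_\lambda^{\,\bullet}$, where $\L$ is any left cell inside the lowest two-sided cell $\J$. To do so, I need to verify the two hypotheses of that theorem: that $[\cU_\lambda^{\,\bullet}]$ is fiat with all two-sided cells strongly regular (this is the finitary input), and that the decategorification $[\bfC_\L]$ of the $p$-dg cell $2$-representation is equivalent to the underlying finitary cell $2$-representation $\bfC_\L^{[\ccU_\lambda^{\,\bullet}]}$.

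The fiatness and strong regularity of cells in $[\cU_\lambda^{\,\bullet}]$ are recalled in the discussion preceding the theorem from \cite[Section~7.2]{MM5}, so this is immediate. For the cells other than the lowest one $\J$, the statement follows from analysing the other two-sided cells similarly, but the main case to handle is $\J$, since all other non-identity cells can be treated by the same argument applied to appropriate cell quotients (and identity cells are trivial). For $\J$, I would first use Proposition \ref{redtoonecell} to reduce to computing the cell $2$-representation of the restriction $\cU_{\lambda,\J}^{\,\,\bullet}$. Then Proposition \ref{uqprop}(3) identifies this $p$-dg biequivalently with $\cC_{\A^{\bullet}}$, and after passing to $\cC_\A^+$ (equivalently $\cC_{\A^{\bullet}}^+$ or $\cC_\B^+$ by Proposition \ref{uqprop}(2) and Lemma \ref{cattoalg}) the cell $2$-representation is, by Theorem \ref{cellthm}, the defining $2$-representation $\bfN$, which was identified earlier with $\mathbf{V}^\lambda$.

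The key identification on the decategorified level then comes from Proposition \ref{AsubBprop}: we have a biequivalence $[\cC_\B]\approx \cC_B$ for the coinvariant algebra $B=\prod_{j=0}^\lambda H_{j,\lambda}$. Under this biequivalence the defining $p$-dg $2$-representation $\bfN$ passes to the defining finitary $2$-representation of $\cC_B$, which by \cite[Proposition~9]{MM5} is exactly the cell $2$-representation of $[\cC_\B]$ (and hence, by transport along the biequivalence chain, of $[\cU_{\lambda,\J}^{\,\,\bullet}]$). Thus $[\bfC_\L]\simeq \bfC_\L^{[\ccU_\lambda^{\,\bullet}]}$, as needed. Here I rely on the fact that $B$ is strongly positive so that $\del(\rad B)\subseteq \rad B$ and the idempotents of $B$ are $p$-dg idempotents (cf.\ Remark \ref{annihilatedidempotents}), which puts us in the exact setup in which Theorem \ref{cellthm} applies.

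With both hypotheses verified, Theorem \ref{nicecell} gives a $p$-dg equivalence $\End_{\ccU_\lambda^{\,\bullet}}(\bfC_\L)\simeq \Bbbk\plmod$. The main obstacle is keeping track of the biequivalences carefully — the reductions between $\cU_{\lambda,\J}^{\,\,\bullet}$, $\cC_{\A^{\bullet}}$, $\cC_\B$ and $\cC_B$ go through several different completions, and I need to check that the endomorphism categories of the cell $2$-representations are preserved under each step. This is essentially formal, since $p$-dg biequivalences induce $p$-dg equivalences on endomorphism categories of matching $2$-representations, but it deserves careful bookkeeping before the final appeal to Theorem \ref{nicecell}.
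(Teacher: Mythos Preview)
Your proposal is correct and uses essentially the same ingredients as the paper: Theorem \ref{cellthm} to identify the $p$-dg cell $2$-representation with the natural one $\bfN$ over $\cC_\B$, \cite[Proposition~9]{MM5} via Proposition \ref{AsubBprop} to see that $[\bfN]$ is the finitary cell $2$-representation, the biequivalences of Proposition \ref{uqprop}, Proposition \ref{redtoonecell}, and finally Theorem \ref{nicecell}. The only difference is the order of operations: the paper applies Theorem \ref{nicecell} first at the level of $\cC_\B^+$ (where the hypothesis $[\bfN]\simeq\bfC_\L^{[\ccC_\B]}$ is immediate) and then transports the conclusion through the biequivalence chain and Proposition \ref{redtoonecell}, whereas you transport first and apply Theorem \ref{nicecell} directly at the level of $\cU_\lambda^{\,\bullet}$. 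Both are valid; the paper's ordering is marginally cleaner because verifying the hypothesis of Theorem \ref{nicecell} is automatic at the $\cC_\B$ level and one only needs that $p$-dg biequivalences preserve endomorphism categories, while your ordering requires transporting the decategorified identification $[\bfC_\L]\simeq\bfC_\L^{[\ccC]}$ back up through the chain, which is the ``careful bookkeeping'' you flag.
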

\begin{proof}
The cell $2$-representation for a non-identity left cell of $\cC_{\B}^+$ is the natural one on $\overline{\B}$, and, by Theorem \ref{nicecell}, its endomorphism category is $p$-dg equivalent to $\Bbbk\plmod$. Using the $p$-dg biequivalence from Proposition \ref{uqprop}, we see that the endomorphism category of a cell $2$-representation $\bfC_\L$  for a non-identity left cell  $\L$  of $\overline{\cU_{\lambda,\J}^{\,\,\bullet}}$ is also $p$-dg equivalent to $\Bbbk\plmod$. By Proposition \ref{redtoonecell}, the same statement is true for the restriction of $\bfC_\L$  to $\cU_{\lambda,\J}^{\,\bullet}$. Notice that the latter restriction is the categorification of the finite-dimensional simple graded module $V^\lambda$ of $\dot{u}_q(\mathfrak{sl}_2)$ constructed in \cite[Section 6.3]{EQ}.
\end{proof}


\end{document}